\let\amsamp=&
\tikzset{>=stealth',
  head/.style = {fill = white, text=black},
  plaque/.style = {draw, rectangle, minimum size = 10mm, fill=white}, 
     pil/.style={->,thick},
  junct/.style = {draw,circle,inner sep=0.5pt,outer sep=0pt, fill=black}
  }
\tikzset{
  otimes/.style={
    draw=none,
    every to/.append style={
      edge node={node [sloped, allow upside down, auto=false]{$\otimes$}}}
  }
}
\definecolor{light-gray}{gray}{0.90}
\definecolor{light-blue}{rgb}{0.9, 0.9, 1}
\newcommand{\blue}[1]{\textcolor{blue}{#1}}
\newcommand{\lblue}[1]{\textcolor{Cerulean}{#1}}
\newtheorem{theorem}{Theorem}[section]
\newtheorem{lemma}[theorem]{Lemma}
\newtheorem{proposition}[theorem]{Proposition}
\newtheorem{corollary}[theorem]{Corollary}
\theoremstyle{definition}
\newtheorem{definition}[theorem]{Definition}
\newtheorem{example}[theorem]{Example}
\newtheorem{remark}[theorem]{Remark}
\newtheorem{notation}[theorem]{Notation}
\numberwithin{equation}{section}
\DeclareMathOperator{\BK}{\mathsf{BK}}
\DeclareMathOperator{\ft}{FT}
\DeclareMathOperator{\id}{id}
\DeclareMathOperator{\diagram}{\mathcal{D}}
\DeclareMathOperator{\prom}{\mathsf{prom}}
\DeclareMathOperator{\promotion}{\mathcal{P}}
\DeclareMathOperator{\devacuation}{\mathcal{E}^*}
\DeclareMathOperator{\evacuation}{\mathcal{E}}
\DeclareMathOperator{\Ediagram}{\mathcal{E}-\mathsf{diagram}}
\DeclareMathOperator{\Eddiagram}{\mathcal{E}^*-\mathsf{diagram}}
\DeclareMathOperator{\Pdiagram}{\mathcal{P}-\mathsf{diagram}}
\DeclareMathOperator{\PEdiagram}{\mathcal{P}\mathcal{E}-\mathsf{diagram}}
\DeclareMathOperator{\PM}{\mathbf{M}}
\DeclareMathOperator{\PMP}{\PM_{\promotion}}
\DeclareMathOperator{\PME}{\PM_{\evacuation}}
\DeclareMathOperator{\PMEd}{\PM_{\devacuation}}
\DeclareMathOperator{\PMPE}{\PM_{\promotion\!\evacuation}}
\DeclareMathOperator{\PMr}{\overline{\mathbf{M}}}
\DeclareMathOperator{\std}{\osc}
\DeclareMathOperator{\osc}{\mathsf{osc}}
\DeclareMathOperator{\sgn}{\mathsf{sgn}}
\DeclareMathOperator{\wt}{\mathsf{wt}}
\DeclareMathOperator{\GL}{GL}
\DeclareMathOperator{\SL}{SL}
\DeclareMathOperator{\SO}{SO}
\DeclareMathOperator{\Sp}{Sp}
\DeclareMathOperator{\Spin}{Spin}
\DeclareMathOperator{\Inv}{\mathsf{Inv}}
\DeclareMathOperator{\rev}{\mathsf{rev}}
\DeclareMathOperator{\sort}{\mathsf{sort}}
\DeclareMathOperator{\type}{\mathsf{type}}
\DeclareMathOperator{\toggle}{\mathsf{switch}}
\DeclareMathOperator{\JDT}{\mathsf{jdt}}
\DeclareMathOperator{\hw}{hw}
\DeclareMathOperator{\Aexc}{\mathrm{Aexc}}
\DeclareMathOperator{\rot}{\mathsf{rot}}
\DeclareMathOperator{\refl}{\mathsf{refl}}
\newcommand{\bbb}{\mathsf{b}}
\newcommand{\too}[1]{\stackrel{#1}{\to}}
   \def\MR#1{}
\title{Promotion permutations for tableaux}
\author[Gaetz]{Christian Gaetz}
\address[Gaetz]{Department of Mathematics, University of California, Berkeley, CA, USA.}
\email{gaetz@berkeley.edu}
\author[Pechenik]{Oliver Pechenik}
\address[Pechenik]{Department of Combinatorics \& Optimization, University of Waterloo,  ON, Canada.}
\email{oliver.pechenik@uwaterloo.ca}
\author[Pfannerer]{Stephan Pfannerer}  
\address[Pfannerer]{Institute of Discrete Mathematics and Geometry,
Technische Universität Wien, Austria.}
\email{math@pfannerer-mittas.net}
\author[Striker]{Jessica Striker}
\address[Striker]{Department of Mathematics, North Dakota State University, Fargo, ND, USA.}
\email{jessica.striker@ndsu.edu}
\author[Swanson]{Joshua P. Swanson}
\address[Swanson]{Department of Mathematics, University of Southern California, Los Angeles, CA, USA.}
\email{swansonj@usc.edu}
\thanks{Gaetz was partially supported by a Klarman Postdoctoral Fellowship and by an NSF Postdoctoral Research Fellowship (DMS-2103121). Pechenik was partially supported by a Discovery Grant (RGPIN-2021-02391) and Launch Supplement (DGECR-2021-00010) from the Natural Sciences and Engineering Research Council of Canada. Pfannerer was partially supported by  the  Austrian  Science  Fund (FWF) P29275 and is a recipient of a DOC Fellowship of the Austrian Academy of Sciences. Striker was partially supported by a Simons Foundation/SFARI grant (527204, JS) and NSF grant DMS-2247089}
\date{\today}
\begin{document}

\begin{abstract}
  We introduce \textit{fluctuating tableaux}, which subsume many classes of tableaux that have been previously studied, including (generalized) oscillating, vacillating, rational, alternating, standard, and transpose semistandard tableaux. Our main contribution is the introduction of \emph{promotion permutations} and \emph{promotion matrices}, which are new even for standard tableaux. We provide characterizations in terms of Bender--Knuth involutions, \textit{jeu de taquin}, and crystals. We prove key properties in the rectangular case about the behavior of promotion permutations under promotion and evacuation. We also give a full development of the basic combinatorics and representation theory of fluctuating tableaux.

  Our motivation comes from our companion paper \cite{Four-row-paper}, where we use these results in the development of a new rotation-invariant $\SL_4$-\emph{web basis}. Basis elements are given by certain planar graphs and are constructed so that important algebraic operations can be performed diagrammatically. These planar graphs are indexed by fluctuating tableaux, tableau promotion corresponds to graph rotation, and promotion permutations correspond to key graphical information. 

\end{abstract}

\maketitle

\section{Introduction}

Tableaux are elementary, yet powerful, objects. They are defined simply as `numbers in boxes with rules,' yet they are central objects in algebraic combinatorics. Tableaux exhibit excellent properties from many perspectives, including enumerative (hook and Jacobi--Trudi formulas), algebraic (plactic monoid, crystals), dynamical (promotion, evacuation), and representation-theoretic (bases of Specht and Schur modules, etc.). Many of the most important families of tableaux are permuted by the action of a \emph{promotion} operator (see, e.g., \cite{Schutzenberger-promotion, Pechenik, Patrias}), which will be our main focus in this paper. While promotion on tableaux of an arbitrary-shaped partition may have large order, an amazing fact is that promotion on various types of rectangular tableaux has order dividing $n$, the maximum allowed entry. This is not only true for standard and semistandard Young tableaux (see, e.g., \cite{Haiman, Rhoades}), but also for \emph{generalized oscillating} tableaux \cite{Patrias}. 

Our motivation for this project comes from combinatorial representation theory. Specifically, in the companion papers \cite{Four-row-paper,Two-column} we develop new rotation-invariant \emph{web bases} for the Specht modules $\mathsf{S}^{(d^4)}$ and $\mathsf{S}^{(2^d)}$, building on analogous constructions for $\mathsf{S}^{(d^2)}$ (see \cite{Temperley.Lieb,Kung-Rota}) and $\mathsf{S}^{(d^3)}$ \cite{Kuperberg}. Web bases have basis elements given by certain planar graphs and are constructed in such a way that important algebraic operations can be carried out diagrammatically. A guiding principle behind our new construction in \cite{Four-row-paper} is that the long cycle $c = (1\:2\:\cdots\:n) \in \mathfrak{S}_n$ should act (up to signs) on basis elements as the permutation given by rotation of planar graphs. Moreover, the key bijection between web diagrams and tableaux should carry the action of rotation to the promotion action on tableaux. In this paper, we focus solely on tableau combinatorics without reference to webs.

To carry out the above program, we need to develop notions of \emph{promotion permutations} and \emph{promotion matrices}. These ideas are the main contribution of the current paper and are new even in the case of standard tableaux. For the sake of being able to make inductive arguments in \cite{Four-row-paper}, we need however to work in a setting that generalizes Patrias' generalized oscillating tableaux. To avoid the monstrous name `generalized generalized oscillating tableaux,' we propose the new name \emph{fluctuating tableaux} for this class (see \Cref{def:ft}) and work in that generality throughout this paper. As discussed in \Cref{rem:other_types}, fluctuating tableaux subsume many classes of tableaux that have been previously studied, including standard, dual semistandard, oscillating, generalized oscillating, vacillating, rational, and alternating tableaux.

Our main construction associates a \emph{promotion matrix} to a fluctuating tableau of general shape. Indeed, we give six different characterizations of this construction:
\begin{enumerate}[(1)]
    \item Bender--Knuth involution swap positions (see \Cref{def:PM}),
    \item decorated local rule diagrams (see \Cref{prop:PMr_sum}),
    \item row slides in jeu de taquin (see \Cref{prop:promi_alt_def}),
    \item antiexcedance sets (see \Cref{thm:antiexcedance}),
    \item first balance point conditions (see \Cref{prop:first_balance}), and
    \item a Kashiwara crystal raising algorithm (see \Cref{thm:Mbar_crystal}).
\end{enumerate}

When the shape is rectangular of $r$ rows, we further transform the promotion matrix to an $(r-1)$-tuple $(\prom_1,\ldots,\prom_{r-1})$ of \emph{promotion permutations}. The tableau may be uniquely reconstructed from its promotion permutations. For example, for a standard tableau $T$, the values appearing in the first $i$ rows are exactly the antiexcedances of the permutation $\prom_i$.
Our main result, \Cref{thm:prom_perms}, establishes important structural properties of these permutations, which are critical to the companion paper \cite{Four-row-paper} and also likely of independent interest. We also show in \Cref{thm:ft.prom_evac} that promotion has order dividing $n$ even in the rectangular fluctuating setting, unifying various special cases appearing throughout the literature on promotion.

For $r=2$, $\prom_1$ is the involution seen diagrammatically as the noncrossing matching corresponding to the $2$-row standard Young tableaux via the usual Catalan bijection (see, e.g., \cite{ec2,Tymoczko}). The promotion permutation $\prom_1$ of a $3$-row rectangular standard Young tableaux was defined and studied in~\cite{Hopkins-Rubey}, where it was further noted that $\prom_2$ could also be defined, but that it would necessarily be the inverse of $\prom_1$. 
We show in \Cref{thm:prom_perms} these properties hold more generally for rectangular fluctuating tableaux with an arbitrary number $r$ of rows: $\prom_i$ is always the inverse of $\prom_{r-i}$, and, when $r$ is even, $\prom_{r/2}$ is a fixed-point free involution. \Cref{thm:antiexcedance} shows that the antiexcedances of the promotion permutations determine the entries of the corresponding tableau. 

We give an example of promotion permutations for a rectangular standard Young tableau  below. Precise definitions  appear later in the paper.
\ytableausetup{boxsize=0.6cm}
\begin{example}\label{ex:standard_prom}
Consider the standard Young tableau below. Its promotion permutations in one-line notation are shown, with the antiexcedances highlighted in blue \textcolor{Cerulean}{$\blacksquare$}.
\begin{center}
\begin{tikzpicture}
\node (tab) at (0,0) {$E=\scalebox{0.90}{\ytableaushort{1256, 3 47{10}, 8 9 {11} {14}, {12} {13} {15} {16}}}$};
\node [right = 1em of tab] (promperm2)
    {$\prom_2(E) = 14\ \ 9\ \ 16\ \ 15\ \ 11\ \ 8\ \ 13\ \ \lblue6\ \ \lblue2\ \ 12\ \ \lblue5\ \ \lblue{10}\ \ \lblue7\ \ \lblue1\ \ \lblue4\ \ \lblue3 $};
\node [above = .1em of promperm2] (promperm1)  {$\prom_1(E) = 4\ \ 3\ \ 14\ \ 10\ \ 9\ \ 7\ \ 8\ \ 16\ \ 13\ \ 11\ \ 12\ \ \lblue6\ \ \lblue5\ \ 15\ \ \lblue2\ \ \lblue1$};
\node [below = .1em of promperm2] (promperm3)    {$\prom_3(E) = 16\ \ 15\ \ \lblue2\ \ \lblue1\ \ 13\ \ 12\ \ \lblue6\ \ \lblue7\ \ \lblue5\ \ \lblue4\ \ \lblue{10}\ \ \lblue{11}\ \ \lblue9\ \ \lblue3\ \ \lblue{14}\ \ \lblue8$};
\end{tikzpicture}
\end{center}
The reader may check that the permutations $\prom_1(E)$ and $\prom_3(E)$ are inverses of each other and that $\prom_2(E)$ is a fixed-point free involution. Note also that the antiexcedances of $\prom_i(E)$ are exactly the entries in the first $i$ rows of $E$. 
\end{example}
\ytableausetup{boxsize=0.8cm}

\Cref{sec:dihedral} illustrates our main results, \Cref{thm:ft.prom_evac,thm:prom_perms}, by showing in \Cref{cor:dihedral.1} that promotion and evacuation act as rotation and reflection on promotion permutation diagrams. This yields a dihedral model of promotion and evacuation on rectangular fluctuating tableaux (see \Cref{ex:dihedral}). These diagrams are new even for standard tableaux.

\Cref{sec:crystals} gives a crystal-theoretic interpretation of the purely combinatorial results of the preceding sections. The main result (\Cref{thm:Mbar_crystal}) is a crystal raising algorithm to compute promotion permutations and matrices. We also use crystal techniques to prove the balance point characterization, \Cref{prop:first_balance}.

The rest of this paper is devoted to proving many other beautiful properties of tableaux in the fluctuating setting, such as interpretations of promotion via growth diagrams (\Cref{sec:ft:local_rules}), Bender--Knuth involutions (\Cref{sec:BK}), jeu de taquin (\Cref{sec:jdt}), and crystals (\Cref{sec:crystals}). Much of this material is known to experts in the semistandard setting. However, even for semistandard tableaux, it is hard or impossible to find explicit proofs in the literature for many of these facts. We hope that in giving the technicalities needed for the general fluctuating case, our paper can also serve as a useful compendium of these details in more traditional settings.

This paper is organized as follows.
\Cref{sec:ft} defines fluctuating tableaux and relates these tableaux to representation theory. \Cref{sec:ft:local_rules,sec:ft:BK_jdt} give rigorous proofs of many standard facts about tableaux at the level of generality of fluctuating tableaux. 
In  \Cref{sec:prom_stuff,sec:promperm}, we move to ideas that are new even for standard Young tableaux: promotion matrices, promotion permutations, and their properties. \Cref{sec:dihedral} applies promotion permutations to give a dihedral model of promotion and evacuation for rectangular fluctuating tableaux. \Cref{sec:crystals} relates the constructions of this paper to Kashiwara's theory of crystals. 

An extended abstract describing part of this work appears in the proceedings of FPSAC 2023 \cite{fpsac-abstract}.
\section{Fluctuating tableaux}
\label{sec:ft}

Here we define fluctuating tableaux, describe their connections to representation theory, and introduce some associated basic combinatorial notions.

\subsection{Generalized partitions and fluctuating tableaux}\label{subsec:GPart}

A \emph{generalized partition} with $r$ rows is a tuple $\lambda = (\lambda_1,\ldots, \lambda_r) \in \mathbb{Z}^r$ where $\lambda_1 \geq \cdots \geq \lambda_r$. We visualize generalized partitions as \textit{diagrams}, which are semi-infinite collections of \textit{cells} (or \textit{boxes}) as in \Cref{fig:gparex}, namely
  \[ \{ (i,j) \in \mathbb{Z} \times \mathbb{Z} :  1 \leq i \leq r \text{ and } j \leq \lambda_i\} \]
using matrix indexing so that row $1$ is on top.
(Stembridge \cite{Stembridge:rational} refers to generalized partitions as \textit{staircases}; we follow \cite{Patrias} in preferring to avoid potential confusion with \textit{staircase partitions} $(k, k-1, \ldots, 1)$.)

Let $\mathcal{A}_r$ be the collection of subsets of
$\{\pm 1,\ldots,\pm r\}$ whose elements are all of the same sign. We write $\mathbf{e}_i$ for the $i$-th standard basis vector of $\mathbb{Z}^r$. If $S \in \mathcal{A}_r$ is a positive subset of $\{\pm 1,\ldots,\pm r\}$, we define $\mathbf{e}_S = \sum_{i \in S} \mathbf{e}_i$, while if $S$ is a negative subset, we define $\mathbf{e}_S = - \sum_{i \in S} \mathbf{e}_{-i}$.
We say two $r$-row generalized partitions $\lambda, \mu$ \emph{differ by a skew column} if $\lambda=\mu + \mathbf{e}_S$ for some $S \in \mathcal{A}_r$. For $c \geq 0$, we furthermore write $\mu \too{c} \lambda$ if $\lambda$ is obtained from $\mu$ by adding a skew column of $c$ boxes and $\mu \too{-c} \lambda$ if $\lambda$ is obtained from $\mu$ by removing a skew column of $c$ boxes. We often write $-c$ as $\overline{c}$. We further write $\mathbf{c} = (c, \ldots, c)$.

The following is the fundamental combinatorial object considered in this paper.

\newsavebox{\gparex}
\sbox{\gparex}{%
  \begin{tikzcd}
  \begin{tikzpicture}[inner sep=0in,outer sep=0in]
    \node (n) {\begin{varwidth}{5cm}{
    \begin{ytableau}
      \none[\cdots] & \  & \  & \  & \  & \  \\
      \none[\cdots] & \  & \  & \  & \  & \  \\
      \none[\cdots] & \  & \  & \  & \  \\
      \none[\cdots] & \  & \  & \  \\
      \none[\cdots] & \  \\
    \end{ytableau}}\end{varwidth}};
    \draw[line width=4pt,black]
        ($(n.north west) + (4*0.815cm, 0)$)
      --++(2*0.815cm, 0)
      --++(0, -2*0.815cm)
      --++(-1*0.815cm, 0)
      --++(0, -1*0.815cm)
      --++(-1*0.815cm, 0)
      --++(0, -1*0.815cm)
      --++(-2*0.815cm, 0)
      --++(0, -1*0.815cm)
      --++(2*0.815cm, 0);
  \end{tikzpicture}
  \end{tikzcd}
}

\begin{figure}[ht]
  \[
    \begin{tikzcd}
      \lambda = (2, 2, 1, 0, -2) \ar[Leftrightarrow]{r}
        & \scalebox{0.65}{\usebox{\gparex}}
    \end{tikzcd}
  \]
\caption{A generalized partition with $r=5$ rows. The right border has been drawn in bold.}\label{fig:gparex}
\end{figure}

\begin{definition}\label{def:ft}
  An $r$-row \emph{skew fluctuating tableau} of \textit{length} $n$ is a sequence
    \[ T = \lambda^0 \too{c_1} \lambda^1 \too{c_2} \cdots \too{c_n} \lambda^n \]
  of $r$-row generalized partitions such that $\lambda^{i-1}$ and $\lambda^i$ differ by a skew column obtained by adding $c_i$ or removing $-c_i$ cells for all $1 \leq i \leq n$. The partition $\lambda^0$ is called the \textit{initial shape} of $T$ and $\lambda^n$ is called the \emph{final shape} of $T$. The sequence $\underline{c} = (c_1, \ldots, c_n) \in \{0, \pm 1,  \ldots, \pm r\}^n$ is the \textit{type} of $T$. When $\lambda^0 = \mathbf{0} = (0, \dots, 0)$, we drop the adjective ``skew'' and refer to the final shape as simply the \textit{shape}. Let $\mathrm{FT}(r, n, \mu, \lambda, \underline{c})$ be the set of skew fluctuating tableaux with $r$ rows, length $n$, initial shape $\mu$, final shape $\lambda$, and type $\underline{c}$. We will drop some parameters from $\mathrm{FT}(r, n, \mu, \lambda, \underline{c})$ as convenient.
\end{definition}

Building on work of Patrias \cite{Patrias}, we visualize fluctuating tableaux by writing $i$ in the added cells of $\lambda^i - \lambda^{i-1}$ or $\overline{i}$ in the removed cells of $\lambda^{i-1} - \lambda^i$; see \Cref{fig:ft-example} for an example. We indicate the outline of the initial shape with a bold line as in \Cref{fig:gparex}.

\ytableausetup{boxsize=0.85cm}
\newsavebox{\runningT}
\sbox{\runningT}{%
\begin{tikzpicture}[inner sep=0in,outer sep=0in]
\node (n) {\begin{varwidth}{5cm}{
\begin{ytableau}
  \none & *(light-gray)1 & 3\,\overline{7} \\
  \none & *(light-gray)1 & 4\,\overline{5} \\
  \none & *(light-gray)3\,\overline{5}\,6 \\
  \overline{2}\,3 & *(light-gray)\,6 \\
\end{ytableau}}
\end{varwidth}};
\draw[ultra thick,black] ([xshift=0.85cm]n.south west)--([xshift=0.85cm]n.north west);
\end{tikzpicture}
}
\ytableausetup{boxsize=0.8cm}

\begin{figure}
\begin{tikzpicture}
  \node (begin) at (0,0) {$T = 0000 \too{2} 1100 \too{\overline{1}} 110\overline{1} \too{3} 2110 \too{1} 2210 \too{\overline{2}} 2100 \too{2} 2111 \too{\overline{1}} 1111 $};
  \node [below = 1em of begin] (runningT) {\usebox{\runningT}};
\end{tikzpicture}
\caption{The visualization of our running example fluctuating tableau $T$ with $4$ rows, length $7$, shape $\mathbf{1} = (1, 1, 1, 1)$, and type $(2, \overline{1}, 3, 1, \overline{2}, 2, \overline{1})$. Cells in the (final) shape are in light grey. The thick line indicates the outline of the initial shape $\mathbf{0} = (0, 0, 0, 0)$.}\label{fig:ft-example}
\end{figure}

Our main applications of fluctuating tableaux involve the natural fluctuating analogue of rectangular standard tableaux.

\begin{definition}
  A fluctuating tableau is \emph{rectangular} if its shape $\lambda$ is a \emph{generalized rectangle}, meaning $\lambda_1=\cdots=\lambda_r$. We say a fluctuating tableau of type $(c_1,\ldots,c_n)$ is of \emph{oscillating type}  
  if each $c_j\in \{\pm 1\}$.
\end{definition}

\begin{remark}
  Fluctuating tableaux whose type consists of non-negative integers are \emph{row-strict} or ``transpose semistandard'' tableaux. Standard tableaux are fluctuating tableaux of type $(1, \ldots, 1)$. In the context of semistandard tableaux, ``type'' is often referred to as ``content.'' For the purposes of promotion matrices and permutations, the reader who is not interested in the full generality of fluctuating tableaux will lose little by considering the transpose semistandard case.

  Fluctuating tableaux of oscillating type are referred to as \emph{up-down staircase tableaux} in \cite{Stembridge:rational} and as \emph{generalized oscillating tableaux} in \cite{Patrias}. We use the terminology from \cite{Patrias}, but drop the word `generalized' for brevity. In \cite{Patrias-Pechenik},  the rectangular $r=3$ cases with type consisting of $1$'s and $2$'s are called \emph{Russell tableaux} after \cite{Russell}. Allowing skew columns to be added or removed is essential our work in \cite{Four-row-paper} on applications of fluctuating tableaux to webs and the representation category of $\SL_4$. 
\end{remark}

\subsection{Fluctuating tableaux and \texorpdfstring{$\GL_r(\mathbb{C})$}{GLrC} representation theory}\label{sec:GL_r_rep_theory}

The irreducible rational representations of $\GL_r(\mathbb{C})$ are naturally indexed by $r$-row generalized partitions (see, e.g.,~\cite[\S2]{Stembridge:rational}). Write $V(\lambda)$ for the irreducible associated to $\lambda$. Consider the tuples $\omega_i = (1^i, 0^{r-i})$ and $\overline{\omega}_i = \omega_{\overline{i}} = (0^{r-i}, \overline{1}^i)$, which correspond to adding or removing a column of size $i$.  These tuples also correspond to exterior powers,
  \[ V(\omega_i) \cong \bigwedge\nolimits^i V \qquad\text{and}\qquad V(\overline{\omega}_i) \cong \bigwedge\nolimits^i V^* \qquad(0 \leq i \leq r), \]
where $V \cong \mathbb{C}^r$ is the defining representation and $V^*$ is the dual of $V$. In particular, $V(\mathbf{1}) = V(\omega_r) \cong \det$ and $V(\mathbf{\overline{1}}) = V(\omega_r^*) \cong \det^* \cong 1/\det = \det^{-1}$. As a shorthand, we write $\bigwedge^{-i} V \coloneqq \bigwedge^i V^*$.

Given a type $\underline{c} = (c_1, \ldots, c_n)$, let
\begin{equation}\label{eq:V_c_wedge}
  \bigwedge\nolimits^{\underline{c}} V = \bigwedge\nolimits^{c_1} V \otimes \cdots \otimes \bigwedge\nolimits^{c_n} V.
\end{equation}

The dual Pieri rule gives the following. A crystal-theoretic proof is described in \Cref{sec:crystals.tableaux}.

\begin{theorem}\label{thm:ft.irreps}
  The multiplicity of the irreducible $\GL_r(\mathbb{C})$ representation $V(\lambda)$ in $\bigwedge^{\underline{c}} V$ is the number of $r$-row fluctuating tableaux of shape $\lambda$ and type $\underline{c}$.
\end{theorem}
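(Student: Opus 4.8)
The plan is a short induction on the length $n$, with the inductive step supplied by the (dual) Pieri rule for rational $\GL_r(\mathbb{C})$-representations. The base case $n=0$ is immediate: $\bigwedge^{\underline{c}} V$ is the empty tensor product, hence the trivial module $V(\mathbf{0})$, and the only length-$0$ fluctuating tableau is the one-term sequence $\mathbf{0}$, of shape $\mathbf{0}$, so both sides agree.

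For the inductive step, write $\underline{c}' = (c_1,\dots,c_{n-1})$, assume the theorem for $\underline{c}'$, and use complete reducibility of finite-dimensional $\GL_r(\mathbb{C})$-modules to write
\[ \bigwedge\nolimits^{\underline{c}} V \;=\; \Bigl(\bigwedge\nolimits^{\underline{c}'} V\Bigr)\otimes \bigwedge\nolimits^{c_n} V \;\cong\; \bigoplus_{\mu} V(\mu)^{\oplus m_\mu}\otimes \bigwedge\nolimits^{c_n} V, \]
where, by the inductive hypothesis, $m_\mu$ is the number of $r$-row fluctuating tableaux of shape $\mu$ and type $\underline{c}'$. The crux is then the multiplicity-free decomposition
\[ V(\mu)\otimes \bigwedge\nolimits^{c_n} V \;\cong\; \bigoplus_{\mu\,\too{c_n}\,\lambda} V(\lambda). \]
For $c_n\ge 0$ we have $\bigwedge^{c_n}V\cong V(\omega_{c_n})$, and this is the Pieri rule for exterior powers: the irreducibles that occur are exactly the $V(\lambda)$ with $\lambda=\mu+\mathbf{e}_S$ for a positive $S\in\mathcal{A}_r$ with $|S|=c_n$, i.e.\ $\lambda/\mu$ a vertical strip, which is precisely the relation $\mu\too{c_n}\lambda$. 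For $c_n<0$ we have $\bigwedge^{c_n}V\cong\bigwedge^{-c_n}V^{*}\cong\bigwedge^{\,r+c_n}V\otimes\det^{-1}$; applying the Pieri rule and then twisting by $\det^{-1}$ turns a vertical-strip addition of $r+c_n$ boxes into a vertical-strip removal of $-c_n$ boxes, again exactly $\mu\too{c_n}\lambda$. The forms of these rules valid for arbitrary generalized partitions (not just ordinary partitions) are recorded in Stembridge \cite{Stembridge:rational}.

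Combining the two displays, the multiplicity of $V(\lambda)$ in $\bigwedge^{\underline{c}}V$ equals $\sum_{\mu\,:\,\mu\too{c_n}\lambda} m_\mu$. Since an $r$-row fluctuating tableau of shape $\lambda$ and type $\underline{c}$ is precisely an $r$-row fluctuating tableau of some shape $\mu$ and type $\underline{c}'$ followed by a last step $\mu\too{c_n}\lambda$, this sum counts exactly such tableaux, completing the induction. I expect the only delicate point to be bookkeeping: checking that ``adding or removing a skew column'' of $|c_n|$ boxes in the sense of \Cref{def:ft} (automatically a vertical strip, since $S\in\mathcal{A}_r$ is a set) matches verbatim the vertical-strip condition in the (dual) Pieri rule, with the edge cases $|c_n|=0$ (tensoring by $\mathbb{C}$) and $|c_n|=r$ (tensoring by $\det^{\pm 1}$) behaving correctly and the sign conventions for $\mathbf{e}_S$ and for $\bigwedge^{-i}V$ aligning. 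Two alternatives sidestep even this: one can run the whole argument on characters, using that $V(\lambda)$ has character the rational Schur function $s_\lambda(x_1,\dots,x_r)$ and $\bigwedge^{\pm k}V$ has character $e_k(x_1^{\pm 1},\dots,x_r^{\pm 1})$, so the statement reduces to the classical Pieri rule for (skew) Schur and elementary symmetric functions; or one can argue crystal-theoretically, as the authors do in \Cref{sec:crystals.tableaux}, which has the advantage of tracking the individual tableaux rather than just multiplicities.
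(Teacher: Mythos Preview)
Your proof is correct and takes essentially the same approach the paper indicates: the paper simply says ``The dual Pieri rule gives the following,'' and your induction on $n$ with the Pieri/dual Pieri step is exactly the unwinding of that sentence (the paper also points to the crystal-theoretic argument in \Cref{sec:crystals.tableaux}, which you mention as an alternative).
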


\begin{example}
  Recall that $V \otimes V^*$ may be identified with the set of $r \times r$ matrices. Thus $\bigwedge^{(1, -1)} V$ is the adjoint representation of $\GL_r(\mathbb{C})$. When $\underline{c} = (1, -1, 1, -1, \ldots, 1, -1)$ has length $2k$, $\bigwedge\nolimits^{\underline{c}} V$ is the $k$th tensor power of the adjoint representation. Stembridge \cite[\S4]{Stembridge:rational} refers to the corresponding fluctuating tableaux as \emph{alternating tableaux}, since they alternately add and remove single cells. The $\GL_r(\mathbb{C})$-irreducible decomposition of $(V \otimes V^*)^{\otimes k}$ may be computed by enumerating all alternating tableaux of length $2k$ according to their final shape.
\end{example}

Let $\mathfrak{S}_n$ denote the symmetric group of permutations of $[n] \coloneqq \{1, \dots, n\}$.
The factors in the tensor products $\bigwedge\nolimits^{\underline{c}} V$ may be permuted without changing the isomorphism type. Consequently, we have the following symmetry in fluctuating tableaux.

\begin{corollary}\label{cor:ft.rearrangement}
  For any permutation $\sigma \in \mathfrak{S}_n$, 
    \[ \#\ft(r, n, \lambda, (c_1, \ldots, c_n)) = \#\ft(r, n, \lambda, (c_{\sigma_1}, \ldots, c_{\sigma_n})). \]
\end{corollary}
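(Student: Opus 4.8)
The plan is to obtain this at once from \Cref{thm:ft.irreps} together with the symmetry of the tensor product. First I would observe that every factor $\bigwedge^{c_i} V$ is a finite-dimensional rational $\GL_r(\mathbb{C})$-representation (it is $\bigwedge^{|c_i|} V$ when $c_i \geq 0$ and $\bigwedge^{|c_i|} V^*$ when $c_i < 0$), and the category of finite-dimensional rational representations is symmetric monoidal. Hence, for any $\sigma \in \mathfrak{S}_n$, permuting tensor factors gives a $\GL_r(\mathbb{C})$-equivariant isomorphism
\[
  \bigwedge\nolimits^{c_1} V \otimes \cdots \otimes \bigwedge\nolimits^{c_n} V
  \;\cong\;
  \bigwedge\nolimits^{c_{\sigma_1}} V \otimes \cdots \otimes \bigwedge\nolimits^{c_{\sigma_n}} V ;
\]
it suffices to check this when $\sigma$ is an adjacent transposition, where the relevant map is the usual flip $v \otimes w \mapsto w \otimes v$, and then compose, since adjacent transpositions generate $\mathfrak{S}_n$. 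In particular both sides contain $V(\lambda)$ with the same multiplicity, and applying \Cref{thm:ft.irreps} to each of the two types identifies these multiplicities with $\#\ft(r,n,\lambda,(c_1,\ldots,c_n))$ and $\#\ft(r,n,\lambda,(c_{\sigma_1},\ldots,c_{\sigma_n}))$ respectively.

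There is no real obstacle here; the only point to keep straight is that \Cref{thm:ft.irreps} is stated for an arbitrary type, so it may legitimately be invoked for both orderings of $\underline{c}$.

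Should one prefer a bijective proof not routed through representation theory — which may be desirable, since \Cref{thm:ft.irreps} is itself established crystal-theoretically later in the paper — I would instead use that the crystal of $\bigwedge^{\underline{c}} V = \bigwedge^{c_1} V \otimes \cdots \otimes \bigwedge^{c_n} V$ depends, up to isomorphism of abstract crystals, only on the multiset $\{c_1,\ldots,c_n\}$, because the tensor product of crystals is associative and commutative up to isomorphism. Its highest-weight vertices of weight $\lambda$ are in bijection with the fluctuating tableaux being counted, so following such a crystal isomorphism — built by composing the commutor on adjacent factors — produces an explicit bijection $\ft(r,n,\lambda,(c_1,\ldots,c_n)) \to \ft(r,n,\lambda,(c_{\sigma_1},\ldots,c_{\sigma_n}))$. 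The technical heart of that alternative is constructing and analyzing the adjacent commutor and checking that it behaves well with respect to shape; for the corollary exactly as stated, however, the representation-theoretic argument above already suffices.
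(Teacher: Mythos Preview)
Your proposal is correct and matches the paper's own derivation: the corollary is stated immediately after \Cref{thm:ft.irreps} with the remark that permuting the tensor factors of $\bigwedge^{\underline{c}} V$ does not change the isomorphism type, which is precisely your argument. The paper also notes, as you do, that there are alternative bijective proofs---via Bender--Knuth involutions in \Cref{sec:BK} and via crystals in \Cref{sec:crystals.tableaux}---but the immediate proof is the representation-theoretic one you gave.
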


We give a direct combinatorial proof of \Cref{cor:ft.rearrangement} in \Cref{sec:BK} by defining Bender--Knuth involutions on (skew) fluctuating tableaux. A crystal-theoretic argument is discussed in \Cref{sec:crystals.tableaux}.

It is well-known that the representations $\bigwedge\nolimits^{\underline{c}} V$ contain all of the irreducible rational representations $V(\lambda)$ of $\GL_r(\mathbb{C})$. For many purposes, one may study the $\bigwedge\nolimits^{\underline{c}} V$ instead of the $V(\lambda)$. To make this statement precise, we first need an additional definition.

\begin{definition}
  The \emph{extremal fluctuating tableau} of type $\underline{c}$ is the unique fluctuating tableau obtained by starting at $\varnothing$ and successively adding top-justified columns of size $c_i$ if $c_i \geq 0$ or removing bottom-justified columns of size $-c_i$ if $c_i \leq  0$. 
\end{definition}

\begin{example}
  The extremal fluctuating tableau of type $(2, \overline{1}, 3, 2, \overline{2}, 2, \overline{1})$ has shape $(4, 4, 0, \overline{3})$:
\ytableausetup{boxsize=0.85cm}
\newsavebox{\runningTss}
\sbox{\runningTss}{%
\begin{tikzpicture}[inner sep=0in,outer sep=0in]
\node (n) {\begin{varwidth}{5cm}{
\begin{ytableau}
  \none & \none & \none & 1 & 3 & 4 & 6 \\
  \none & \none & \none & 1 & 3 & 4 & 6 \\
  \none & \none & \none & 3\,\overline{5} \\
  \overline{7} & \overline{5} & \overline{2} \\
\end{ytableau}}
\end{varwidth}};
\draw[ultra thick,black] ([xshift=0.88cm*3]n.south west)--([xshift=0.88cm*3]n.north west);
\end{tikzpicture}
}
\ytableausetup{boxsize=0.8cm}

\begin{center}
\begin{tikzpicture}
  \node [below = 1em of begin] (runningTss) {\usebox{\runningTss}};
\end{tikzpicture}.
\end{center}
\end{example}

The shape of the extremal fluctuating tableau of type $\underline{c}$ is the generalized partition
  \[ \omega_{\underline{c}} = \sum_{i=1}^n \omega_{c_i}. \]
Moreover, every generalized partition is of the form $\omega_{\underline{c}}$ for some $\underline{c}$.
Define \emph{lexicographic order} on tuples of integers by $\underline{c} <_{\mathrm{lex}} \underline{d}$ if the leftmost nonzero entry of $\underline{d} - \underline{c}$ is positive.
It is straightforward to see that the extremal fluctuating tableau of type $\underline{c}$ is the unique fluctuating tableau of type $\underline{c}$ and shape $\omega_{\underline{c}}$, and that all other fluctuating tableaux of type $\underline{c}$ have smaller final shape. In particular, we have the following.
\begin{proposition}
  There is a unique copy of $V(\omega_{\underline{c}})$ in $\bigwedge\nolimits^{\underline{c}} V$. Each other $V(\lambda)$ in $\bigwedge\nolimits^{\underline{c}} V$ has $\lambda <_{\mathrm{lex}} \omega_{\underline{c}}$. 
\end{proposition}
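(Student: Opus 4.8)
The plan is to reduce the statement to a purely combinatorial fact about fluctuating tableaux, namely: among all fluctuating tableaux of type $\underline{c}$, the extremal one is the unique one of maximal (indeed $\omega_{\underline{c}}$) final shape, and every other final shape is strictly smaller in lexicographic order. Granting this combinatorial claim, the proposition follows immediately from \Cref{thm:ft.irreps}: the multiplicity of $V(\lambda)$ in $\bigwedge^{\underline{c}} V$ counts fluctuating tableaux of shape $\lambda$ and type $\underline{c}$, so $V(\omega_{\underline{c}})$ appears with multiplicity exactly one, and any $V(\lambda)$ appearing has a witnessing tableau, hence $\lambda \leq_{\mathrm{lex}} \omega_{\underline{c}}$, with equality only for $\lambda = \omega_{\underline{c}}$.

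So the real work is the combinatorial claim, which the excerpt already asserts is ``straightforward.'' First I would set up the comparison step by step along the sequence $\lambda^0 = \mathbf{0} \too{c_1} \lambda^1 \too{c_2} \cdots \too{c_n} \lambda^n$. At each step $\lambda^{i-1} \to \lambda^i$ we have $\lambda^i = \lambda^{i-1} + \mathbf{e}_{S_i}$ for some $S_i \in \mathcal{A}_r$ with $|S_i| = |c_i|$ and $\mathrm{sgn}(S_i) = \mathrm{sgn}(c_i)$. The extremal tableau is the one where $S_i = \{1, 2, \ldots, c_i\}$ when $c_i \geq 0$ (adding the top-justified column) and $S_i = \{-( r - c_i + 1 ), \ldots, -r\}$ — i.e. $\mathbf{e}_{S_i} = -(\mathbf{e}_{r-c_i+1} + \cdots + \mathbf{e}_r)$ — when $c_i \le 0$ (removing the bottom-justified column). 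The key elementary observation is that among the legal choices of $\mathbf{e}_S$ with $S$ of fixed size and sign such that $\lambda^{i-1}+\mathbf{e}_S$ is again a generalized partition (weakly decreasing), the extremal choice is lexicographically largest as a vector: adding boxes as high as possible and removing boxes as low as possible maximizes the partial sums from the top. I would make this precise by a direct inequality on coordinates and then argue by induction on $i$ that $\lambda^i \leq_{\mathrm{lex}} \omega_{c_1} + \cdots + \omega_{c_i}$, with equality at step $i$ forcing equality at all earlier steps and forcing the extremal choice at step $i$. A small subtlety is that lexicographic order on vectors must interact correctly with the constraint that the result stays weakly decreasing; one should phrase the induction so the hypothesis carries enough information (e.g. tracking the full vector $\lambda^i$, not just a scalar), since a non-extremal choice early on could in principle be partially compensated later — this needs the weakly-decreasing constraint to rule out.

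The main obstacle, then, is purely bookkeeping: making the ``lex-maximal choice at each step'' argument airtight in the presence of both additions and removals of skew (not full) columns, and confirming that strict loss of lex-maximality at any single step cannot be recovered at a later step. I would handle this by proving a one-step lemma: if $\mu, \mu'$ are generalized partitions with $\mu \leq_{\mathrm{lex}} \mu'$, and $\nu = \mu + \mathbf{e}_S$, $\nu' = \mu' + \mathbf{e}_{S'}$ are both generalized partitions with $S, S'$ of the same size and sign and $S'$ the extremal choice relative to $\mu'$, then $\nu \leq_{\mathrm{lex}} \nu'$, with equality only if $\mu = \mu'$ and $S = S'$. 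This lemma, applied inductively, yields $\lambda^n \leq_{\mathrm{lex}} \omega_{\underline{c}}$ with equality characterizing the extremal tableau, which combined with \Cref{thm:ft.irreps} completes the proof. I expect the one-step lemma to take a short case analysis on $\mathrm{sgn}(c_i)$ but nothing deeper.
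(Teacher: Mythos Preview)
Your proposal is correct and matches the paper's approach exactly: the paper states the combinatorial fact (that the extremal tableau is the unique one of shape $\omega_{\underline{c}}$ and all others have lex-smaller shape) as ``straightforward to see'' immediately before the proposition and gives no further proof, relying implicitly on \Cref{thm:ft.irreps} just as you do. Your one-step lemma is a reasonable way to make ``straightforward'' precise; an even cleaner invariant is the partial sums $P_j(\lambda)=\lambda_1+\cdots+\lambda_j$, since the extremal step maximizes every $P_j$ and dominance in all $P_j$ implies (and is in fact stronger than) $\leq_{\mathrm{lex}}$.
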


\subsection{Fluctuating tableaux and \texorpdfstring{$\SL_r(\mathbb{C})$}{SLrC} representation theory}

We now summarize the role of fluctuating tableaux in the representation theory of $\SL_r(\mathbb{C})$. The rational representations are the same as for $\GL_r(\mathbb{C})$, except that $\det \cong 1$, and more generally $V(\lambda) \cong V(\mu)$ if and only if $\lambda = \mu + c \cdot \mathbf{1}$ for some $c \in \mathbb{Z}$. Consequently, we say the \emph{weight} of a fluctuating tableau is the image of the shape in the quotient $\mathbb{Z}^r/\mathbf{1}\mathbb{Z}$. Note that the weight $\mathbf{0}$ fluctuating tableaux are precisely the rectangular fluctuating tableaux.

Let $\Inv_{\SL_r}(W)$ denote the subspace of $\SL_r$-invariant elements of the representation $W$. Equivalently, this is the isotypic component of $W$ with trivial $\SL_r$-action. By \Cref{thm:ft.irreps} and the preceding paragraph, we have the following.

\begin{proposition}
  The $r$-row rectangular fluctuating tableaux of type $\underline{c}$ index a basis of
    \[ \Inv_{\SL_r}(\bigwedge\nolimits^{\underline{c}} V). \]
\end{proposition}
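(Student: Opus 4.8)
The plan is to deduce this from \Cref{thm:ft.irreps} together with the two facts recalled in the surrounding text: first, that $V(\lambda) \cong V(\mu)$ as $\SL_r(\mathbb{C})$-representations if and only if $\lambda - \mu \in \mathbf{1}\mathbb{Z}$, so that the $\SL_r$-trivial summands of $\bigwedge^{\underline{c}} V$ are exactly the $\GL_r$-irreducibles $V(\lambda)$ with $\lambda \in \mathbf{1}\mathbb{Z}$; and second, that such a $\lambda = (c,\dots,c)$ is a generalized rectangle, i.e.\ corresponds to weight $\mathbf{0}$, and that the weight-$\mathbf{0}$ fluctuating tableaux of a given type are precisely the rectangular ones.

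First I would note that a rational $\SL_r(\mathbb{C})$-representation $W$ decomposes into isotypic components indexed by dominant weights of $\SL_r$, and $\Inv_{\SL_r}(W)$ is by definition the component on which $\SL_r$ acts trivially; a basis of this component is obtained by choosing, inside each $\GL_r$-irreducible constituent isomorphic (as $\SL_r$-rep) to the trivial one, a single highest-weight-style basis vector — more precisely, $\dim \Inv_{\SL_r}(W)$ equals the multiplicity in $W$, as a $\GL_r$-representation, of the various $V(\lambda)$ that restrict to the trivial $\SL_r$-representation. Since $V(\lambda)|_{\SL_r} \cong V(\mathbf{0})|_{\SL_r}$ exactly when $\lambda = c\cdot \mathbf{1}$ for some $c \in \mathbb{Z}$ (using that $\det|_{\SL_r} \cong 1$ and the stated classification), and each such restriction is genuinely the one-dimensional trivial $\SL_r$-rep, we get
\[
  \dim \Inv_{\SL_r}\!\Big(\bigwedge\nolimits^{\underline{c}} V\Big) = \sum_{c \in \mathbb{Z}} \big[\text{mult. of } V(c\cdot\mathbf{1}) \text{ in } \bigwedge\nolimits^{\underline{c}} V\big].
\]
(The sum is finite since only finitely many $\lambda$ appear.)

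Next I would apply \Cref{thm:ft.irreps} to rewrite each summand: the multiplicity of $V(c \cdot \mathbf{1})$ in $\bigwedge^{\underline{c}} V$ is the number of $r$-row fluctuating tableaux of shape $c\cdot\mathbf{1}$ and type $\underline{c}$. Summing over $c$, the right-hand side counts all $r$-row fluctuating tableaux of type $\underline{c}$ whose shape is a generalized rectangle $(c,\dots,c)$ — that is, exactly the rectangular fluctuating tableaux of type $\underline{c}$, by definition of ``rectangular'' and the remark that these coincide with the weight-$\mathbf{0}$ tableaux. Hence the rectangular fluctuating tableaux of type $\underline{c}$ are in bijection with a basis of $\Inv_{\SL_r}(\bigwedge^{\underline{c}} V)$, as claimed. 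The only point requiring a word of care — and the closest thing to an obstacle — is the passage from ``equal dimension / equal multiplicities'' to ``index a basis'': one should be slightly explicit that choosing one nonzero $\SL_r$-invariant vector in each $\GL_r$-constituent of the form $V(c\cdot\mathbf{1})$ yields a linearly independent spanning set of $\Inv_{\SL_r}(\bigwedge^{\underline{c}} V)$, because invariants of distinct $\GL_r$-isotypic pieces are automatically independent and a $\GL_r$-constituent not of this form contributes no $\SL_r$-invariants. This is routine representation theory, so I would state it briefly rather than belabor it.
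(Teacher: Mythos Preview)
Your proposal is correct and follows essentially the same approach as the paper, which simply states that the proposition is immediate from \Cref{thm:ft.irreps} together with the preceding paragraph identifying $\SL_r$-trivial constituents with $\GL_r$-constituents $V(c\cdot\mathbf{1})$ and rectangular tableaux with weight-$\mathbf{0}$ tableaux. Your write-up is more detailed than the paper's one-line justification, but the logic is identical.
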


In particular, rectangular fluctuating tableaux with $r$ rows and type $(1^n)$ are precisely $r \times (n/r)$ rectangular standard tableaux, which index a basis for $\Inv_{\SL_r}(V^{\otimes{n}})$. This algebra of invariants may also be identified with a graded piece of the homogeneous coordinate ring of a Grassmannian with respect to its Pl\"ucker embedding.

\subsection{Lattice words}\label{sec:lattice}
Recall from \Cref{subsec:GPart}, $\mathcal{A}_r$ is the alphabet 
consisting of subsets of
$\{\pm 1,\ldots,\pm r\}$ whose elements are all of the same sign.
The \emph{lattice word} associated to a skew fluctuating tableau $T \in \ft(r, n)$ is the word $L = L(T) = w_1 \dots w_n$ in $\mathcal{A}_r$, 
  where $\lambda^i = \lambda^{i-1} + \mathbf{e}_{w_i}$. 
We may recover $T$ from $L(T)$ and the initial shape $\lambda^0$, so in the non-skew case we sometimes identify $T$ and $L(T)$. In our running example of \Cref{fig:ft-example}, $L(T) = \{12\}\overline{4}\{134\}2\{\overline{3}\overline{2}\}\{34\}\overline{1}$. Here we have suppressed commas between elements and we have suppressed the curly braces for singletons.

A word $w$ is a lattice word of a (non-skew) fluctuating tableau if and only if for every prefix $w_1 \dots w_k$ and every $1 \leq a \leq b \leq r$, we have
\begin{equation}\label{eq:lattice_inequalities}
  (\mathbf{e}_{w_1} + \cdots + \mathbf{e}_{w_k})_a \geq (\mathbf{e}_{w_1} + \cdots + \mathbf{e}_{w_k})_b.
\end{equation}
More concretely, in each prefix we require the number of $a$'s minus the number of $\overline{a}$'s to be weakly greater than the number of $b$'s minus the number of $\overline{b}$'s. A fluctuating tableau is rectangular if and only if equality holds in \eqref{eq:lattice_inequalities} for all $a,b$ with $k=n$; in this case, we call $L$ \textit{balanced}.

\subsection{Three fundamental involutions}\label{sec:involutions}

We now define three fundamental involutions on fluctuating tableaux. They are responsible for a pervasive $4$-fold symmetry in many constructions. For example, the $14$ growth rules of Khovanov--Kuperberg \cite{Khovanov-Kuperberg} in fact may be grouped into six orbits under these symmetries, with five of size $2$ and one of size $4$. See \cite[\S3]{Roby.Sottile.Stroomer.West} for a closely related $4$-fold symmetry.

\begin{definition}\label{def:involutions}
  For any tuple $\alpha = (\alpha_1, \ldots, \alpha_r)$, set $\rev(\alpha) \coloneqq (\alpha_r, \ldots, \alpha_1)$. Let
  \begin{align*}
    \tau\left(\mu \too{c} \lambda\right)
      &\coloneqq \quad \lambda \too{\overline{c}} \mu \\
    \varpi\left(\mu \too{c} \lambda\right)
      &\coloneqq \quad \rev(-\mu) \too{\overline{c}} \rev(-\lambda) \\
    \varepsilon\left(\mu \too{c} \lambda\right)
      &\coloneqq \quad \rev(-\lambda) \too{c} \rev(-\mu).
  \end{align*}
\end{definition}

We call $\tau$ \textit{time reversal}. Our notation for $\varpi$ (``\texttt{varpi}'') is inspired by the natural involution $\omega$ on symmetric functions that exchanges elementary and complete homogeneous symmetric functions; $\varpi$ corresponds to dualization on the level of rational $\GL_r(\mathbb{C})$-representations. We will later relate $\varepsilon$ to evacuation on rectangular tableaux. The following properties are clear.

\begin{lemma}\label{lem:Klein4}
  The operators $\tau,\varpi, \varepsilon$ satisfy 
  \begin{itemize}
  \item $\tau^2 = \varpi^2 = \varepsilon^2 = \id$;
  \item $\tau, \varpi, \varepsilon$ pairwise commute; and
    \item each is the composite of the other two.
  \end{itemize}
  That is, they form a representation of a Klein $4$-group.
\end{lemma}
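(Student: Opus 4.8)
The plan is to verify each of the three bulleted claims by unwinding \Cref{def:involutions}, treating a single skew column step $\mu \too{c} \lambda$ as the atomic object and noting that all three operators act step-by-step (possibly reversing the order of steps, in the case of $\tau$).

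First I would check $\tau^2 = \varpi^2 = \varepsilon^2 = \id$. For $\tau$, applying it twice to $\mu \too{c} \lambda$ gives $\lambda \too{\overline{c}} \mu$ and then $\mu \too{\overline{\overline{c}}} \lambda$, and since $\overline{\overline{c}} = c$ this is the identity. For $\varpi$, I note that $\rev$ is an involution on tuples and $\alpha \mapsto -\alpha$ is an involution commuting with $\rev$, so $\rev(-\rev(-\mu)) = \mu$, and again $\overline{\overline{c}} = c$; hence $\varpi^2 = \id$. The computation for $\varepsilon$ is identical, using that $\varepsilon$ sends $\mu \too{c} \lambda$ to $\rev(-\lambda) \too{c} \rev(-\mu)$, so a second application sends this to $\rev(-\rev(-\mu)) \too{c} \rev(-\rev(-\lambda)) = \mu \too{c} \lambda$.

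Next I would check pairwise commutativity. It is cleanest to compute $\tau \varpi$ and $\varpi \tau$ directly on $\mu \too{c} \lambda$: both yield $\rev(-\lambda) \too{c} \rev(-\mu)$ (for $\varpi\tau$, first $\tau$ gives $\lambda \too{\overline{c}} \mu$, then $\varpi$ gives $\rev(-\lambda) \too{\overline{\overline{c}}} \rev(-\mu) = \rev(-\lambda) \too{c} \rev(-\mu)$; for $\tau\varpi$, first $\varpi$ gives $\rev(-\mu) \too{\overline{c}} \rev(-\lambda)$, then $\tau$ reverses it to $\rev(-\lambda) \too{c} \rev(-\mu)$). One recognizes the common value as $\varepsilon(\mu \too{c} \lambda)$, which simultaneously proves the third bullet ($\varepsilon = \tau\varpi = \varpi\tau$) and, combined with the involutivity already established, yields the remaining two identities $\varpi = \tau\varepsilon = \varepsilon\tau$ and $\tau = \varpi\varepsilon = \varepsilon\varpi$: e.g. $\tau\varepsilon = \tau(\tau\varpi) = \tau^2\varpi = \varpi$. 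Since any group generated by two commuting involutions whose product is a third involution is a quotient of the Klein four-group, and here the three operators are pairwise distinct (witnessed, say, by their action on a generic single-step tableau), they form a representation of the Klein four-group, giving the final assertion.

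None of the steps is a genuine obstacle — the entire lemma is bookkeeping with $\rev$, negation, and the bar operation on types, which is exactly why the excerpt labels the properties ``clear.'' The only point requiring any care is confirming that these operators are well-defined, i.e. that the output sequences are again valid skew fluctuating tableaux: reversing a step $\mu \too{c} \lambda$ to $\lambda \too{\overline{c}} \mu$ is legitimate because $\lambda$ and $\mu$ still differ by a skew column (with the sign of the change flipped), and applying $\rev$ composed with negation to a generalized partition again yields a generalized partition since $\lambda_1 \ge \cdots \ge \lambda_r$ becomes $-\lambda_r \ge \cdots \ge -\lambda_1$; the set $\mathcal{A}_r$ is preserved under the corresponding relabeling $i \mapsto -(r+1-i)$ of $\{\pm 1, \ldots, \pm r\}$. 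I would state this well-definedness explicitly before running the identities, then let the one-line computations above finish the proof.
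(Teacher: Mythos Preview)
Your proposal is correct and takes essentially the same approach as the paper, which simply declares the properties ``clear'' and omits any argument. Your explicit verification of $\tau\varpi = \varpi\tau = \varepsilon$ and the well-definedness check are exactly the routine computations the paper is suppressing.
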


We extend the definitions of $\tau, \varpi, \varepsilon$ in the obvious way to apply to a fluctuating tableau $T$, obtaining fluctuating tableaux $\tau(T), \varpi(T)$, and $\varepsilon(T)$. Note that the initial and final shapes of a fluctuating tableau are swapped by $\tau$. In the rectangular case, we may subtract $\mathbf{c}$ throughout to ensure the result begins with $\mathbf{0}$, in which case $\tau$ sends rectangular fluctuating tableaux with final shape $\mathbf{c}$ to rectangular fluctuating tableaux with final shape $-\mathbf{c}$. For $\varpi$, the initial and final shapes are not swapped, but they are replaced by their negative reversals, so $\varpi$ sends rectangular fluctuating tableaux of final shape $\mathbf{c}$ to rectangular fluctuating tableaux of final shape $-\mathbf{c}$. The involution $\varepsilon$ is the composite of these two and hence is an involution on rectangular fluctuating tableaux with final shape $\mathbf{c}$. Such constant shifts do not materially affect any of our arguments, so we do not mention them further.

\begin{example}\label{ex:operator_tableaux}
    Letting $T$ be the rectangular fluctuating tableau in \Cref{fig:ft-example}, we have 

\[
\ytableausetup{boxsize=0.85cm}
\tau(T) = \begin{tikzpicture}[inner sep=0in,outer sep=0in, baseline={([yshift=-.8ex]current bounding box.center)}]
\node (t) {\begin{varwidth}{5cm}{
\begin{ytableau}
 \none & *(light-gray) \overline{7} & 1 \, \overline{5} & \none \\
  \none & *(light-gray) \overline{7} & 3\, \overline{4} & \none \\
  \none & *(light-gray) \overline{2}\, 3 \, \overline{5} &  \none \\
  \overline{5} \, 6 & *(light-gray) \overline{2} & \none \\
\end{ytableau}}
\end{varwidth}};
\draw[ultra thick,black] ([xshift=1.7cm]t.south west)--([xshift=1.7cm]t.north west);
\end{tikzpicture}
\quad
\varpi(T) = \begin{tikzpicture}[inner sep=0in,outer sep=0in, baseline={([yshift=-.8ex]current bounding box.center)}]
\node (vp) {\begin{varwidth}{5cm}{
\begin{ytableau}
 \none & *(light-gray)  \overline{6} & 2 \, \overline{3} & \none \\
  \none & *(light-gray) \overline{3} \, 5 \, \overline{6} & \none & \none \\
  \overline{4} \, 5 & *(light-gray) \overline{1} &  \none \\
  \overline{3} \, 7 & *(light-gray) \overline{1} &  \none \\
\end{ytableau}}
\end{varwidth}};
\draw[ultra thick,black] ([xshift=1.7cm]vp.south west)--([xshift=1.7cm]vp.north west);
\end{tikzpicture}
\quad 
\varepsilon(T) = \begin{tikzpicture}[inner sep=0in,outer sep=0in, baseline={([yshift=-.8ex]current bounding box.center)}]
\node (ve) {\begin{varwidth}{5cm}{
\begin{ytableau}
  \none & *(light-gray) 2 & 5 \, \overline{6} \\
  \none & *(light-gray) 2 \, \overline{3} \, 5 & \none \\
  \overline{3} \, 4 & *(light-gray) 7 \\
  \overline{1} \, 5 & *(light-gray) 7 \\
\end{ytableau}}
\end{varwidth}};
\draw[ultra thick,black] ([xshift=0.85cm]ve.south west)--([xshift=0.85cm]ve.north west);
\end{tikzpicture}
\ytableausetup{boxsize=0.8cm}.
\]
\end{example}

The effects of $\tau, \varpi, \varepsilon$ on lattice words are as follows.

\begin{definition}
  Consider a word $L = w_1 \dots w_n$ in the alphabet $\mathcal{A}_r$. Define
  \begin{enumerate}[(i)]
    \item $\tau(L) = \overline{w_n} \dots \overline{w_1}$;
    \item $\varpi(L) = \varpi(w_1) \dots \varpi(w_n)$, where $\varpi(w)$ replaces each element $i$ in $w$ with $-\sgn(i)(r-|i|+1)$; and
    \item $\varepsilon(L) = \varepsilon(w_n) \dots \varepsilon(w_1)$, where $\varepsilon(w)$ replaces each element $i$ in $w$ with $\sgn(i)(r-|i|+1)$.
  \end{enumerate}
\end{definition}

It is easy to see that 
  \[ L(\tau(T)) = \tau(L(T)), \quad L(\varpi(T)) = \varpi(L(T)), \quad \text{and } L(\varepsilon(T)) = \varepsilon(L(T)).\]

\begin{example}
    Given the fluctuating tableau $T$ in \Cref{fig:ft-example}, the lattice word is
    \[ L(T)= \{12\}\overline{4}\{134\}2\{\overline{3}\overline{2}\}\{34\}\overline{1}. \]
    We then have
        \begin{align*}
        \tau(L(T)) &= 1\{\overline{4}\overline{3}\}\{23\}\overline{2}\{\overline{4}\overline{3}\overline{1}\}4\{\overline{2}\overline{1}\}, \\
        \varpi(L(T)) &= \{\overline{4}\overline{3}\}1\{\overline{421}\}\overline{3}\{23\}\{\overline{2}\overline{1}\}4, \text{ and} \\
        \varepsilon(L(T)) &= \overline{4}\{12\}\{\overline{3}\overline{2}\}3\{124\}\overline{1}\{34\}.
        \end{align*}
        Note that these lattice words correspond to the tableaux depicted in \Cref{ex:operator_tableaux}.
\end{example}

When $T$ is rectangular, we declare the initial shape of $\tau(T)$ and $\varepsilon(T)$ to be $\mathbf{0}$, so the resulting rectangular fluctuating tableaux are precisely encoded by $\tau(L(T))$ and $\varepsilon(L(T))$.

\subsection{Oscillization}\label{sec:oscillization}
It will sometimes be convenient to reduce to the oscillating case. The following operation is the natural extension of standardization to fluctuating tableaux. For our purposes, standardization additionally involves applying the ``switch'' maps which will be encountered shortly in \Cref{def:toggle}.

\begin{definition}\label{def:std}
    Consider $\mu \too{c} \lambda$. Let $\lambda = \mu + \mathbf{e}_{\{i_1 < \cdots < i_c\}}$. 
The \emph{oscillization} of $\mu \too{c} \lambda$ is the sequence
  \[ \std(\mu \too{c} \lambda) \coloneqq \mu \to \mu^1 \to \cdots \to \mu^{|c| - 1} \to \lambda, \]
where
\[
\mu^j =  \mu^{j-1} + \mathbf{e}_{i_j} 
\]
with $\mu^0 \coloneqq \mu$ and $\mu^{|c|} \coloneqq \lambda$.
\end{definition}

Given a fluctuating tableau $T = \lambda^0 \to \lambda^1 \to \cdots \to \lambda^n$, let $\std(T)$ be the concatenation of $\std(\lambda^{i-1} \to \lambda^i)$ for $1 \leq i \leq n$. 
A fluctuating tableau is in the image of oscillization if and only if it is of oscillating type.
Oscillization may be described easily in terms of lattice words. To compute $L(\std(T))$, we simply erase the curly braces from $L(T)$, where elements of each letter are written in increasing order, keeping the same initial and final shape. In our running example from \Cref{fig:ft-example}, $\std(T) = 12\overline{4}1342\overline{3}\overline{2}34\overline{1}$. Compare \Cref{fig:ft-example} to \Cref{fig:ft-std-example} for a pictorial interpretation.

\ytableausetup{boxsize=0.85cm}
\newsavebox{\runningStdT}
\sbox{\runningStdT}{%
\begin{tikzpicture}[inner sep=0in,outer sep=0in]
\node (n) {\begin{varwidth}{5cm}{
\begin{ytableau}
  \none & *(light-gray)1 & 4\,\overline{12} \\
  \none & *(light-gray)2 & 7\,\overline{9} \\
  \none & *(light-gray)5\,\overline{8}\,10 \\
  \overline{3}\,6 & *(light-gray)11 \\
\end{ytableau}}\end{varwidth}};
\draw[ultra thick,black] ([xshift=0.85cm]n.south west)--([xshift=0.85cm]n.north west);
\end{tikzpicture}
}
\ytableausetup{boxsize=0.8cm}

\begin{figure}[h]
\begin{tikzpicture}
  \node (begin) at (0,0) {$\std(T) = $};
  \node [right = 0.0cm of begin] {\usebox{\runningStdT}};
\end{tikzpicture}
\caption{The oscillization of the fluctuating tableau $T$ from \Cref{fig:ft-example}.}\label{fig:ft-std-example}
\end{figure}

\section{Local rules, promotion, and evacuation}
\label{sec:ft:local_rules}
There are many ways one may define promotion: via Bender--Knuth involutions, jeu de taquin, growth rules, or cactus group actions based on Lusztig's involution. We show all of these definitions are equivalent for fluctuating tableaux. These equivalences are known to experts for the main special cases, such as for semistandard tableaux; such experts can probably safely skip this section and the next one, as the extension to fluctuating tableaux behaves mostly as expected. 

We begin in this section with the most combinatorially simple definition of promotion and evacuation using a direct description of local rules, borrowed from van Leeuwen~\cite{vanLeeuwen}. In \Cref{sec:ft:BK_jdt}, we then define the corresponding Bender--Knuth involutions tableau-theoretically and describe jeu de taquin on fluctuating tableaux. We show there that all three combinatorial definitions of promotion are 
equivalent. 

Our emphasis on local rule diagrams has many similarities with the approaches taken in special cases by Chmutov--Glick--Pylyavskyy~\cite[\S3]{Chmutov.Glick.Pylyavskyy} and Westbury~\cite[\S5-6]{Westbury}, which in turn built on work of Lenart~\cite{Lenart} and Henriques--Kamnitzer~\cite{Henriques.Kamnitzer}; see also, \cite[\S2]{Bloom.Pechenik.Saracino} and \cite{Patrias}. 

\subsection{Local rules}
Fomin's \textit{growth diagrams} (see, e.g., \cite[Appendix~1]{ec2}) can be used to define promotion, evacuation, and jeu de taquin for (semi)standard tableaux. In \cite[Rule 4.1.1]{vanLeeuwen}, van Leeuwen introduced certain explicit \textit{local rules} generalizing those used to construct Fomin's growth diagrams. Here, we apply van Leeuwen's construction in the context of fluctuating tableaux, where we allow both addition and removal of skew columns. The approach we take here is intended to be as combinatorially direct and self-contained as possible, with no reference to Littelmann paths, dominant weights, etc.

Given a tuple $\alpha \in \mathbb{Z}^r$, let $\sort(\alpha)$ denote the generalized partition that is the weakly decreasing rearrangement of $\alpha$. 

\begin{definition}
  A \textit{local rule diagram} is a square
  \begin{equation}\label{eq:local_rule}
  \begin{tikzcd}
  \lambda \rar{d}
    & \nu \\
  \kappa \uar{c} \ar{r}[swap]{d}
    & \mu \ar{u}[swap]{c}
  \end{tikzcd}
  \end{equation}
  where
  \begin{equation}\label{eq:local_rule.sort}
    \mu = \sort(\nu + \kappa - \lambda) \qquad\text{and}\qquad
      \lambda = \sort(\nu+\kappa-\mu),
  \end{equation}
  with pointwise addition and subtraction.
  
  A \textit{local rule} fills in a missing lower right or upper left corner in \eqref{eq:local_rule} with $\mu$ or $\lambda$ as determined by \eqref{eq:local_rule.sort}. See \Cref{fig:local.rule-ex} for an application of local rules. We will shortly see that the two conditions of \eqref{eq:local_rule.sort} imply each other, so local rules result in local rule diagrams.
\end{definition}

\begin{figure}[ht]
  \begin{equation*}
  \begin{tikzcd}
  220\overline{1}\overline{1} \rar{3}
    & 32000 \\
  11\overline{1}\overline{1}\overline{2} \uar{4} \ar[dashed]{r}[swap]{3}
    & \sort(21\overline{1}0\overline{1}) = 210\overline{1}\overline{1} \ar[dashed]{u}[swap]{4}
  \end{tikzcd}
  \end{equation*}
  \caption{An example of using a local rule to fill in the lower right corner of a local rule diagram. Here, note that $21\overline{1}0\overline{1} = 32000 + 11\overline{1}\overline{1}\overline{2} - 220\overline{1}\overline{1}$.}\label{fig:local.rule-ex}
\end{figure}

In the standard case, the local rules are uniquely determined by the fact that there are at most two ways to complete \eqref{eq:local_rule}, since rank two intervals in Young's lattice have either $1$ or $2$ intermediate elements. Given $\kappa \too{1} \lambda \too{1} \nu$, we have $\mu = \lambda$ in the first case and $\mu \neq \lambda$ in the second.

We may typically reduce to the semistandard case using the following operators, which are combinatorial shadows of the $\GL_r(\mathbb{C})$-isomorphisms $\bigwedge^c V \cong \bigwedge^{r-c} V^* \otimes \det$.

\begin{definition}\label{def:toggle}
  The \emph{switch} involution is given by
  \[
    \toggle(\mu \too{c} \lambda)
      = \mu \xrightarrow{-\sgn(c) (r-|c|)} \lambda -\sgn(c) \mathbf{1}. 
  \]
\end{definition}

\begin{example}\label{ex:switch}
    Consider $2210 \too{\overline{2}} 2100$. Applying $\toggle$ gives $2210 \too{2} 3211$. Note that $\toggle$ has replaced taking away $2$ boxes with instead adding the complementary $2$ boxes.
    Similarly, we have $\toggle(1100 \too{\overline{1}} 110 \overline{1}) = 1100 \too{3} 2210$.
\end{example}

\begin{lemma}\label{lem:local.rules}
  Applying local rules results in local rule diagrams. That is, if $\kappa \too{c} \lambda \too{d} \nu$ is given and $\mu = \sort(\nu + \kappa - \lambda)$, then $\lambda = \sort(\nu+\kappa-\mu)$.
\end{lemma}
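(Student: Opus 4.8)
The plan is to show that the two conditions in \eqref{eq:local_rule.sort} are symmetric in a way that makes one imply the other, by reducing to the semistandard (nonnegative type) case via the $\toggle$ operator, and then invoking the classical statement there. First I would observe that the defining identity $\mu = \sort(\nu + \kappa - \lambda)$ is visibly symmetric under swapping the roles of $\lambda$ and $\mu$: if $\mu = \sort(\nu+\kappa-\lambda)$ \emph{and} $\mu, \lambda$ are both generalized partitions such that $\nu + \kappa - \lambda$ is a permutation of $\mu$ (equivalently $\nu+\kappa-\mu$ is a permutation of $\lambda$), then $\lambda = \sort(\nu+\kappa-\mu)$ automatically. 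So the real content is that the vector $\nu + \kappa - \mu$, which a priori is just some element of $\mathbb{Z}^r$, is in fact a rearrangement of the generalized partition $\lambda$ — in other words, that the multiset $\{\nu_i + \kappa_i\}$ minus the multiset of parts of $\mu$ equals the multiset of parts of $\lambda$. Since $\mu = \sort(\nu+\kappa-\lambda)$ already tells us the multiset of parts of $\mu$ equals the multiset of entries of $\nu+\kappa-\lambda$, this is a statement purely about multisets of entries of integer vectors, and it would follow if I knew that $\nu - \lambda$ and $\mu - \kappa$ (say) have the appropriate compatible structure — i.e. that there is a genuine local rule diagram with these corners.

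The cleaner route is reduction to the semistandard case. Using $\toggle$ (Definition~\ref{def:toggle}), which replaces adding/removing a skew column of size $c$ by adding/removing the complementary column and shifting by $\pm\mathbf{1}$, I can replace the edges $\kappa \too{c}\lambda$ and $\lambda \too{d}\nu$ by edges of nonnegative type, at the cost of globally shifting some of the four generalized partitions by multiples of $\mathbf{1}$. Concretely: if $c < 0$, apply $\toggle$ on both horizontal edges (the two edges labeled $c$ in \eqref{eq:local_rule}); this shifts $\lambda$ and $\mu$ by $+\mathbf{1}$ (or leaves the diagram with a consistent shift), and since $\sort$ commutes with adding $\mathbf{1}$ to all entries and the identity $\mu = \sort(\nu+\kappa-\lambda)$ is unaffected by simultaneously shifting $\kappa$ and $\nu$ by $\mathbf{1}$ and $\lambda,\mu$ by $\mathbf{1}$ (a net shift that cancels), the hypothesis and conclusion of the lemma are both preserved under $\toggle$. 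Doing this for both edge-labels $c$ and $d$ reduces to the case where all four corners are honest partitions (after a harmless global shift to make entries nonnegative) and both steps add boxes. In that case, $\kappa \subseteq \lambda \subseteq \nu$ is a chain in Young's lattice, and the claim that $\mu = \sort(\nu+\kappa-\lambda)$ is again a partition with $\kappa \subseteq \mu \subseteq \nu$ and $\lambda = \sort(\nu+\kappa-\mu)$ is van Leeuwen's original rule, which I may cite from \cite{vanLeeuwen} (or verify directly for column-strips, where the combinatorics is elementary: adding a vertical strip $S$ then $S'$ versus adding $S'$ then $S$ in the poset of columns).

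The step I expect to be the main obstacle is making the $\toggle$-reduction fully rigorous in the \emph{skew} and \emph{mixed-sign} setting: one must check that $\toggle$ applied to a single horizontal edge-pair in \eqref{eq:local_rule} interacts correctly with the $\sort$ identities when the two labels $c$ and $d$ have opposite signs, and that iterating $\toggle$ terminates at a diagram all of whose edges are nonnegative without breaking the square structure. I would handle this by treating $\toggle$ as acting on the whole square — toggling an edge label forces a compatible $\mathbf{1}$-shift of exactly two of the four corners — and by checking the key algebraic fact that for any $\alpha \in \mathbb{Z}^r$ and $k \in \mathbb{Z}$, $\sort(\alpha + k\mathbf{1}) = \sort(\alpha) + k\mathbf{1}$, so that the sort-identities \eqref{eq:local_rule.sort} are literally invariant under the shift. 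Once that bookkeeping is pinned down, the lemma reduces to the semistandard vertical-strip case, which is standard.
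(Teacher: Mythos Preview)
Your approach is correct and follows the same overall shape as the paper's proof: reduce via $\toggle$ to the case $c,d \geq 0$, then handle the semistandard case. Your first paragraph (the ``symmetry'' observation) is a detour that, as you yourself note, does not close the argument without further input; you can safely drop it.

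The one substantive difference is in the base case. You propose to cite van Leeuwen for the semistandard situation. The paper instead gives a short self-contained argument: writing $\lambda = \kappa + \mathbf{e}_S$ and $\nu = \lambda + \mathbf{e}_T$, it partitions the symmetric difference $\Delta = (S\cup T)\setminus(S\cap T)$ into blocks of constant $\kappa$-value, observes each block is an interval with an $S$-part above a $T$-part, and swaps them to produce explicit sets $U,V$ with $\mathbf{e}_S+\mathbf{e}_T = \mathbf{e}_U+\mathbf{e}_V$, $\mu = \kappa+\mathbf{e}_U$, and $\sort(\kappa+\mathbf{e}_V) = \kappa+\mathbf{e}_S = \lambda$. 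This buys two things your citation would leave implicit: it verifies directly that $\kappa \too{d} \mu \too{c} \nu$ are genuine skew-column steps (not just that $\mu$ is a partition), and it keeps the treatment self-contained, which matters since the paper later uses this explicit description of $U,V$ when matching local rules to the tableau-theoretic Bender--Knuth involutions.

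Your worry about the $\toggle$ bookkeeping is justified but easily resolved; the paper dispatches it in a single clause, and your own verification (that $\sort(\alpha + k\mathbf{1}) = \sort(\alpha) + k\mathbf{1}$ and that the shifts cancel in $\nu+\kappa-\lambda$) is exactly what is needed.
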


\begin{proof}
  Say $\lambda = \kappa + \mathbf{e}_S$ and $\nu = \lambda + \mathbf{e}_T$. Then $\nu + \kappa - \lambda = \kappa + \mathbf{e}_T$. Using $\toggle$ operators, we may assume without loss of generality that $c, d \geq 0$ and $S, T \subset [r]$. Let $\Delta = (S \cup T) - (S \cap T)$ be the symmetric difference. Define an equivalence relation on $\Delta$ where $i \equiv j$ if and only if $\kappa_i = \kappa_j$. Since $\lambda$ and $\nu$ are generalized partitions, the equivalence classes of $\Delta$ are of the form $\{i+1, \ldots, i+a+b\}$, for some $a,b \in \mathbb{N}$, where $\{i+1, \ldots, i+a\} \subseteq S$ and $\{i+a+1, \ldots, i+a+b\} \subseteq T$.
  
  Let $U$ be the union of $S \cap T$ and intervals $\{i+1, \ldots, i+b\}$ for each equivalence class of $\Delta$, and let $V$ be the union of $S \cap T$ and the intervals $\{i+b+1, \ldots, i+b+a\}$, so that $\mathbf{e}_S + \mathbf{e}_T = \mathbf{e}_U + \mathbf{e}_V$. We now find that $\sort(\kappa+\mathbf{e}_T) = \kappa+\mathbf{e}_U$ and $\sort(\kappa+\mathbf{e}_V) = \kappa+\mathbf{e}_S$. Hence, $\mu = \sort(\nu + \kappa - \lambda) = \sort(\kappa + \mathbf{e}_T) = \kappa + \mathbf{e}_U$, $\nu = \mu + \mathbf{e}_V$, $\kappa \too{d} \mu \too{c} \nu$, and $\sort(\nu + \kappa - \mu) = \sort(\kappa + \mathbf{e}_V) = \kappa + \mathbf{e}_S = \lambda$.
\end{proof}

The $\tau, \varpi, \varepsilon$ involutions of \Cref{def:involutions} may also be applied to local rule diagrams.

\newsavebox{\toeA}
\begin{lrbox}{\toeA}%
  \begin{tikzcd}
  \lambda \rar{d}
    & \nu \\
  \kappa \uar{c} \ar{r}[swap]{d}
    & \mu \ar{u}[swap]{c}
  \end{tikzcd}
\end{lrbox}
\newsavebox{\toeB}
\begin{lrbox}{\toeB}%
  \begin{tikzcd}
  \lambda \rar[leftarrow]{\overline{d}}
    & \nu \\
  \kappa \uar[leftarrow]{\overline{c}} \ar[leftarrow]{r}[swap]{\overline{d}}
    & \mu \ar[leftarrow]{u}[swap]{\overline{c}}
  \end{tikzcd}
\end{lrbox}
\newsavebox{\toeBrev}
\begin{lrbox}{\toeBrev}%
  \begin{tikzcd}
  \mu \rar{\overline{d}}
    & \kappa \\
  \nu \uar{\overline{c}} \ar{r}[swap]{\overline{d}}
    & \lambda \ar{u}[swap]{\overline{c}}
  \end{tikzcd}
\end{lrbox}
\newsavebox{\toeC}
\begin{lrbox}{\toeC}%
  \begin{tikzcd}
  \rev(-\lambda) \rar[leftarrow]{d}
    & \rev(-\nu) \\
  \rev(-\kappa) \uar[leftarrow]{c} \ar[leftarrow]{r}[swap]{d}
    & \rev(-\mu) \ar[leftarrow]{u}[swap]{c}
  \end{tikzcd}
\end{lrbox}
\newsavebox{\toeCrev}
\begin{lrbox}{\toeCrev}%
  \begin{tikzcd}
  \rev(-\mu) \rar{d}
    & \rev(-\kappa) \\
  \rev(-\nu) \uar{c} \ar{r}[swap]{d}
    & \rev(-\lambda) \ar{u}[swap]{c}
  \end{tikzcd}
\end{lrbox}
\newsavebox{\toeD}
\begin{lrbox}{\toeD}%
  \begin{tikzcd}
  \rev(-\lambda) \rar{\overline{d}}
    & \rev(-\nu) \\
  \rev(-\kappa) \uar{\overline{c}} \ar{r}[swap]{\overline{d}}
    & \rev(-\mu) \ar{u}[swap]{\overline{c}}
  \end{tikzcd}
\end{lrbox}

\begin{lemma}
  Applying $\tau, \varpi, \varepsilon$ to local rule diagrams results in local rule diagrams:
  \[
  \begin{tikzcd}
    \usebox{\toeA} \rar[Leftrightarrow]{\tau} \dar[Leftrightarrow,swap]{\varepsilon} \ar[Leftrightarrow]{dr}{\varpi}
      & \usebox{\toeB} \dar[Leftrightarrow]{\varepsilon} \\
    \usebox{\toeC} \rar[Leftrightarrow,swap]{\tau}
      & \usebox{\toeD}
  \end{tikzcd}.
  \]
\end{lemma}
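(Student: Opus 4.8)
The plan is to exploit that $\tau$, $\varpi$, and $\varepsilon$ act \emph{arrowwise} on the four edges of a square, and that on a single skew-column step they already satisfy the Klein $4$-group relations of \Cref{lem:Klein4}; in particular $\varepsilon = \tau \circ \varpi$. Consequently the square of transformations displayed in the statement commutes automatically (each identity between composites holds edge by edge, by \Cref{lem:Klein4}), so it is enough to check that $\tau$ and $\varpi$ individually send local rule diagrams to local rule diagrams --- then $\varepsilon$ does too, as a composite of two such maps.

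The case of $\tau$ should be essentially free. Applying $\tau$ to \eqref{eq:local_rule} reverses every arrow and bars every label; rereading the resulting picture with $\nu$ in the source (lower-left) corner and $\kappa$ in the sink (upper-right) corner, one obtains a new square whose defining conditions \eqref{eq:local_rule.sort} are verbatim the original ones, since those conditions are symmetric in $\lambda \leftrightarrow \mu$ and manifestly unchanged if one simultaneously swaps $\kappa \leftrightarrow \nu$. The only point to note is that reversing a skew-column step, with its label negated, is again a skew-column step, so the edges of the transformed square are legitimate; this is immediate from the definition of ``differ by a skew column'' in \Cref{subsec:GPart}.

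The real (but still short) work is the $\varpi$ case. Here I would first record the elementary identity $\sort(\rev(-\beta)) = \rev(-\sort(\beta))$ for all $\beta \in \mathbb{Z}^r$, valid because both sides are simply the weakly decreasing rearrangement of the multiset $\{-\beta_1, \dots, -\beta_r\}$. Applying $\varpi$ replaces each corner $\alpha$ by $\rev(-\alpha)$, bars each label, and preserves arrow directions; that the transformed edges are still skew-column steps is built into \Cref{def:involutions}. To verify \eqref{eq:local_rule.sort} for the transformed square, I would use linearity of $\alpha \mapsto \rev(-\alpha)$ to pull this map out of the combination $\nu + \kappa - \lambda$, then use the recorded identity to pull it past $\sort$, landing on $\rev(-\mu)$ exactly when $\mu = \sort(\nu+\kappa-\lambda)$ holds for the original; the conjugate condition follows by the same computation with $\lambda$ and $\mu$ interchanged.

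I do not expect a genuine obstacle --- the mathematical content is the one-line rearrangement identity above, and everything else is bookkeeping. The one place that requires care is matching \Cref{def:involutions} against the pictures in the statement: confirming that $\varpi$ keeps arrow directions while $\tau$ and $\varepsilon$ reverse them, so that the three transformed squares really are the three drawn, and that in each transformed square opposite edges still carry equal labels. This is careful reading rather than difficulty.
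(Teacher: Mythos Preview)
Your proposal is correct and follows essentially the same approach as the paper: the paper's proof says the result is clear for $\tau$, reduces the $\varpi$ case to the identity $\sort(-\rev(\alpha)) = \rev(-\sort(\alpha))$, and then handles $\varepsilon$ as the composite via \Cref{lem:Klein4}. Your write-up is just a more detailed unpacking of exactly these three steps.
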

\begin{proof}
  The result is clear for $\tau$. For $\varpi$, it reduces to the fact that $\sort(-\rev(\alpha)) = \rev(-\sort(\alpha))$. Finally, by \Cref{lem:Klein4}, $\varepsilon$ is the composite of $\tau$ and $\varpi$.
\end{proof}

For later use, it will be convenient to consistently orient growth diagrams with edges going north or east. Consequently when applying $\tau$ or $\varepsilon$, we rotate all diagrams by $180^\circ$:
 \[ 
  \tau\left(\usebox{\toeA}\right)
    = \usebox{\toeBrev} \qquad\text{and}\qquad
  \varepsilon\left(\usebox{\toeA}\right)
    = \usebox{\toeCrev}
  \]

\subsection{Promotion and evacuation diagrams}
\label{sec:prom_evac_diag}

Given a skew fluctuating tableau, we use local rules to fill certain diagrams, allowing us to compute promotion, evacuation, and dual evacuation. 

\begin{definition}
  The \textit{promotion diagram} of the skew fluctuating tableau
    \[ T = \lambda^0 \too{c_1} \lambda^1 \too{c_2} \cdots \too{c_n} \lambda^n \]
  is obtained by applying the local rules to recursively fill the bottom row of the diagram
  \begin{equation*}
  \Pdiagram(T) \coloneqq
  \begin{tikzcd}[column sep=scriptsize]
  \lambda^{00} \rar{c_1} \ar[equal]{dr}
    \arrow[rrr, start anchor=west, end anchor=east, no head, yshift=1em, decorate, decoration={brace}, "T" above=3pt]
    & \lambda^{01} \ar{r}[name=Ua]{c_2}
    & \cdots \ar{r}[name=Ub]{c_n}
    & \lambda^{0n} \ar[equal]{dr} \\
  \ 
    & \lambda^{11} \ar[dashed]{r}[name=Da]{c_2} \uar{c_1}
      \arrow[rrr, start anchor=west, end anchor=east, no head, yshift=-1em, decorate, decoration={brace, mirror}, "\promotion(T)" below=3pt]
    & \cdots \ar[dashed]{r}[name=Db]{c_n} \uar[dashed]{c_1}
    & \lambda^{1n} \rar[dashed]{c_1} \uar[dashed]{c_1}
    & \lambda^{1,n+1} \\
  \end{tikzcd}
  \end{equation*}
  from left to right, subject to the boundary conditions
    \[ \lambda^{0i} = \lambda^i, \lambda^{11} = \lambda^0, \text{ and } \lambda^{1, n+1} = \lambda^n. \]
  The \textit{promotion} of $T$ is the bottom row of $\Pdiagram(T)$:
    \[ \promotion(T) \coloneqq \lambda^{11} \too{c_2} \cdots \too{c_n} \lambda^{1n} \too{c_1} \lambda^{1, n+1}. \]
\end{definition}

\newsavebox{\ediag}
\begin{lrbox}{\ediag}%
  \begin{tikzcd}[column sep=scriptsize]
    \lambda^{00} \rar{c_1} \ar[equal]{dr} \arrow[rrr, start anchor=west, end anchor=east, no head, yshift=1em, decorate, decoration={brace}, "T" above=3pt]
      & \lambda^{01} \rar{c_2}
      & \cdots \rar{c_n}
      & \lambda^{0n} \\
    \ 
      & \lambda^{11} \rar[dashed]{c_2} \uar{c_1} \ar[equal]{dr}
      & \cdots \rar[dashed]{c_n} \uar[dashed]{c_1}
      & \lambda^{1n} \uar[dashed]{c_1} \\
    \ 
      & \ 
      & \ddots \rar[dashed]{c_n} \uar[dashed]{c_2} \ar[equal]{dr}
      & \vdots \uar[dashed]{c_2} \\
    \ 
      & \ 
      & \ 
      & \lambda^{nn} \uar[dashed]{c_n}
  \end{tikzcd}
\end{lrbox}

\newsavebox{\eddiag}
\begin{lrbox}{\eddiag}%
  \begin{tikzcd}[column sep=scriptsize]
    \lambda^{-n,0} \\
    \vdots \uar[dashed]{c_1} \rar[dashed]{c_1}
      & \ddots \ar[equal]{ul} \\
    \lambda^{-1,0} \rar[dashed]{c_1} \uar[dashed]{c_{n-1}}
      & \cdots \rar[dashed]{c_{n-1}} \uar[dashed]{c_{n-1}}
      & \lambda^{-1,n-1} \ar[equal]{ul} \\
    \lambda^{00} \uar[dashed]{c_n} \rar{c_1}
        \arrow[rrr, start anchor=west, end anchor=east, no head, yshift=-1em, decorate, decoration={brace,mirror}, "T" below=3pt]
      & \cdots \uar[dashed]{c_n} \rar{c_{n-1}}
      & \lambda^{0,n-1} \uar{c_n} \rar{c_n}
      & \lambda^{0,n} \ar[equal]{ul} \\
  \end{tikzcd}
\end{lrbox}

\begin{definition}
  The \textit{evacuation diagram} and \textit{dual evacuation diagram} of the skew fluctuating tableau
    \[ T = \lambda^0 \too{c_1} \lambda^1 \too{c_2} \cdots \too{c_n} \lambda^n \]
  are obtained by recursively applying local rules to fill the triangles
  \begin{equation*}
    \begin{tikzcd}[row sep=tiny]
      \Ediagram(T) =
        & \Eddiagram(T) = \\
      \usebox{\ediag}
        & \usebox{\eddiag}
    \end{tikzcd}
  \end{equation*}
  from upper left to lower right for $\Ediagram(T)$ and from lower right to upper left for $\Eddiagram(T)$, subject to the boundary conditions
    \[ \lambda^{0i} = \lambda^i, \lambda^{ii} = \lambda^0, \text{ and } \lambda^{i, n+i} = \lambda^n. \]
  The \textit{evacuation} of $T$ is the right column of $\Ediagram(T)$:
    \[ \evacuation(T) = \lambda^{nn} \too{c_n} \cdots \too{c_2} \lambda^{1n} \too{c_1} \lambda^{0n}.\]
  Similarly, the \textit{dual evacuation} of $T$ is the left column of $\Eddiagram(T)$:
    \[ \devacuation(T) = \lambda^{00} \too{c_n} \lambda^{-1, 0} \too{c_{n-1}} \cdots \too{c_1} \lambda^{-n,0}. \]
\end{definition}

We combine promotion, evacuation, and dual evacuation diagrams in the following 
parallelogram.

\begin{definition}
  The \textit{promotion-evacuation diagram} of the skew fluctuating tableau
    \[ T = \lambda^0 \too{c_1} \lambda^1 \too{c_2} \cdots \too{c_n} \lambda^n \]
  is obtained by recursively applying local rules to fill the parallelogram
  \begin{equation*}
  \PEdiagram(T) \coloneqq
  \begin{tikzcd}[column sep=scriptsize]
    \lambda^{00} \rar{c_1} \ar[equal]{dr} \arrow[rrr, start anchor=west, end anchor=east, no head, yshift=1em, decorate, decoration={brace}, "T" above=3pt]
      & \lambda^{01} \rar{c_2}
      & \cdots \rar{c_n}
      & \lambda^{0n} \ar[equal]{dr} \\
    \ 
      & \lambda^{11} \rar[dashed]{c_2} \uar{c_1} \ar[equal]{dr}
      & \cdots \rar[dashed]{c_n} \uar[dashed]{c_1}
      & \lambda^{1n} \rar[dashed]{c_1} \uar[dashed]{c_1}
      & \lambda^{1,n+1} \ar[equal]{dr} \\
    \ 
      & \ 
      & \ddots \rar[dashed]{c_n} \uar[dashed]{c_2} \ar[equal]{dr}
      & \vdots \rar[dashed]{c_1} \uar[dashed]{c_2}
      & \vdots \rar[dashed]{c_2} \uar[dashed]{c_2}
      & \ddots \ar[equal]{dr} \\
    \ 
      & \ 
      & \ 
      & \lambda^{nn} \rar[dashed]{c_1} \uar[dashed]{c_n}
      & \lambda^{n,n+1} \rar[dashed]{c_2} \uar[dashed]{c_n}
      & \cdots \rar[dashed]{c_n} \uar[dashed]{c_n}
      & \lambda^{n,2n}
  \end{tikzcd}
  \end{equation*}
  from upper left to lower right subject to the boundary conditions
    \[ \lambda^{0i} = \lambda^i,  \lambda^{ii} = \lambda^0, \text{ and } \lambda^{i,n+i} = \lambda^n. \]
\end{definition}

Note that the $i$th power of promotion of $T$ is the $i$th row of $\PEdiagram(T)$, indexing from $0$ at the top. In particular, the bottom row of $\PEdiagram(T)$ is $\promotion^n(T)$. The promotion-evacuation diagram is the concatenation of evacuation and dual evacuation diagrams:
\begin{equation}\label{eq:pediagram.sum}
  \PEdiagram(T) = \Ediagram(T) \mathbin\Vert \Eddiagram(\promotion^n(T)).
\end{equation}

\begin{example}\label{ex:PEdiagram_running}
  For the fluctuating tableau $T$ of \Cref{fig:ft-example}, $\PEdiagram(T)$ is
  \[
  \begin{tikzcd}[column sep=tiny, scale cd=0.7]
    0000 \rar
      & 1100 \rar
      & 110\overline{1} \rar
      & 2110 \rar
      & 2210 \rar
      & 2100 \rar
      & 2111 \rar
      & 1111 \\
    \ 
      & 0000 \rar \uar
      & 000\overline{1} \rar \uar
      & 1100 \rar \uar
      & 2100 \rar \uar
      & 200\overline{1} \rar \uar
      & 2100 \rar \uar
      & 1100 \rar \uar
      & 1111 \\
    \ 
      & \ 
      & 0000 \rar \uar
      & 1110 \rar \uar
      & 2110 \rar \uar
      & 210\overline{1} \rar \uar
      & 2200 \rar \uar
      & 2100 \rar \uar
      & 2111 \rar \uar
      & 1111 \\
    \ 
      & \ 
      & \ 
      & 0000 \rar \uar
      & 1000 \rar \uar
      & 10\overline{1}\overline{1} \rar \uar
      & 110\overline{1} \rar \uar
      & 100\overline{1} \rar \uar
      & 1100 \rar \uar
      & 1000 \rar \uar
      & 1111 \\
    \ 
      & \ 
      & \ 
      & \ 
      & 0000 \rar \uar
      & 00\overline{1}\overline{1} \rar \uar
      & 100\overline{1} \rar \uar
      & 10\overline{1}\overline{1} \rar \uar
      & 110\overline{1} \rar \uar
      & 100\overline{1} \rar \uar
      & 1110 \rar \uar
      & 1111 \\
    \ 
      & \ 
      & \ 
      & \ 
      & \ 
      & 0000 \rar \uar
      & 1100 \rar \uar
      & 110\overline{1} \rar \uar
      & 211\overline{1} \rar \uar
      & 210\overline{1} \rar \uar
      & 2210 \rar \uar
      & 2211 \rar \uar
      & 1111 \\
    \ 
      & \ 
      & \ 
      & \ 
      & \ 
      & \ 
      & 0000 \rar \uar
      & 000\overline{1} \rar \uar
      & 110\overline{1} \rar \uar
      & 11\overline{1}\overline{1} \rar \uar
      & 2100 \rar \uar
      & 2110 \rar \uar
      & 1100 \rar \uar
      & 1111 \\
    \ 
      & \ 
      & \ 
      & \ 
      & \ 
      & \ 
      & \ 
      & 0000 \rar \uar
      & 1100 \rar \uar
      & 110\overline{1} \rar \uar
      & 2110 \rar \uar
      & 2210 \rar \uar
      & 2100 \rar \uar
      & 2111 \rar \uar
      & 1111. \\
  \end{tikzcd}
  \]
  In particular,
  \begin{align*}
    T
      &= 0000 \to 1100 \to 110\overline{1} \to 2110 \to 2210 \to 2100 \to 2111 \to 1111, \\
    \promotion(T)
      &= 0000 \to 000\overline{1} \to 1100 \to 2100 \to 200\overline{1} \to 2100 \to 1100 \to 1111, \\
    \evacuation(T)
      &= 0000 \to 000\overline{1} \to 110\overline{1} \to 10\overline{1}\overline{1} \to 100\overline{1} \to 2100 \to 1100 \to 1111, \text{ and } \\
    \devacuation(T)
      &= \evacuation(T).
  \end{align*}
  In terms of lattice words,
  \begin{align*}
      L(T)
        &= \{12\}\overline{4}\{134\}2\{\overline{3}\overline{2}\}\{34\}\overline{1}, \\
      L(\promotion(T))
        &= \overline{4}\{124\}1\{\overline{4}\overline{2}\}\{24\}\overline{1}\{34\}, \text{ and } \\
      L(\evacuation(T))
        &= \overline{4}\{12\}\{\overline{3}\overline{2}\}3\{124\}\overline{1}\{34\} \\
        &= L(\devacuation(T)).
  \end{align*}
  In this particular case, $\promotion^n(T) = T$, $\evacuation(T) = \devacuation(T)$, and $L(\evacuation(T)) = \varepsilon(L(T))$. As we will see, these properties are equivalent in general and moreover always hold when $T$ is rectangular.
\end{example}

We will refer to promotion diagrams, evacuation diagrams, promotion-evacuation diagrams, etc., as \emph{growth diagrams}.

\begin{remark}
  The literature is inconsistent regarding the definition of ``promotion.'' If we replace $\Pdiagram(T)$ with the dual notion as in $\Eddiagram(T)$, the corresponding dual version of promotion is $\promotion^{-1}$. Some sources hence use ``promotion'' to refer to $\promotion^{-1}$. 
\end{remark}

The following properties are straightforward to verify from the definitions and the symmetry of local rules.

\begin{lemma}\label{lem:12_relations}
As operators on fluctuating tableaux, we have the following:

  \[
  \begin{aligned}[c]
    \operatorname{\tau} \circ \Ediagram
      &= \Eddiagram \circ \operatorname{\tau} \\
    \operatorname{\varepsilon} \circ \Ediagram
      &= \Eddiagram \circ \operatorname{\varepsilon} \\
    \operatorname{\varpi} \circ \Ediagram
      &= \Ediagram \circ \operatorname{\varpi} \\
  \end{aligned}
  \qquad
  \begin{aligned}[c]
    \operatorname{\tau} \circ \Eddiagram
      &= \Ediagram \circ \operatorname{\tau} \\
    \operatorname{\varepsilon} \circ \Eddiagram
      &= \Ediagram \circ \operatorname{\varepsilon} \\
    \operatorname{\varpi} \circ \Eddiagram
      &= \Eddiagram \circ \operatorname{\varpi} \\
  \end{aligned}
  \]

  \[
  \begin{aligned}[c]
    \operatorname{\tau} \circ \Pdiagram
      &= \Pdiagram \circ \operatorname{\tau} \circ \promotion \\
    \operatorname{\varepsilon} \circ \Pdiagram
      &= \Pdiagram \circ \operatorname{\varepsilon} \circ \promotion \\
    \operatorname{\varpi} \circ \Pdiagram
      &= \Pdiagram \circ \operatorname{\varpi} \\
  \end{aligned}
  \qquad
  \begin{aligned}[c]
    \operatorname{\tau} \circ \PEdiagram
      &= \PEdiagram \circ \evacuation \circ \devacuation \circ \operatorname{\tau} \\
    \operatorname{\varepsilon} \circ \PEdiagram
      &= \PEdiagram \circ \evacuation \circ \devacuation \circ \operatorname{\varepsilon} \\
    \operatorname{\varpi} \circ \PEdiagram
      &= \PEdiagram \circ \operatorname{\varpi}. \\
  \end{aligned}
  \]

\end{lemma}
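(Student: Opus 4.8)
The plan is to verify each of the twelve identities by chasing the corresponding growth diagram and invoking the compatibility of $\tau,\varpi,\varepsilon$ with local rules established in the lemma following \Cref{lem:local.rules}. The key observation is that each of the diagrams $\Ediagram$, $\Eddiagram$, $\Pdiagram$, $\PEdiagram$ is built by applying local rules to a triangular/parallelogram array of generalized partitions, subject to boundary conditions that pin down one or two sides; since $\tau,\varpi,\varepsilon$ send local rule diagrams to local rule diagrams, they act on the \emph{entire} filled array, and the only thing to track is (i) what happens to the shape of the array under the geometric action of the involution, and (ii) how the boundary data is permuted. The recipe for the geometric action was already fixed in the excerpt: $\varpi$ leaves diagrams in place, while $\tau$ and $\varepsilon$ rotate everything by $180^\circ$ so that edges keep pointing north/east. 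Thus $\tau$ and $\varepsilon$ turn an upper-left-to-lower-right triangle into a lower-right-to-upper-left triangle of the same shape (reading $T$ backwards along the appropriate boundary), which is exactly why $\tau\circ\Ediagram = \Eddiagram\circ\tau$ and $\varepsilon\circ\Ediagram = \Eddiagram\circ\varepsilon$, whereas $\varpi$ preserves the triangle's orientation, giving $\varpi\circ\Ediagram = \Ediagram\circ\varpi$; the $\Eddiagram$ column is identical with the roles of $\Ediagram$ and $\Eddiagram$ swapped, using $\tau^2=\varepsilon^2=\id$.

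First I would treat the $\Ediagram$/$\Eddiagram$ block, as above: apply the involution to the fully-filled $\Ediagram(T)$, observe that the result is a fully-filled local-rule array of the shape of an $\Eddiagram$ (resp.\ $\Ediagram$) diagram, and check that its boundary agrees with $\Eddiagram$ (resp.\ $\Ediagram$) applied to $\tau(T)$ (resp.\ $\varpi(T)$, $\varepsilon(T)$) — this is just bookkeeping of which corner is fixed to $\lambda^0$ and which edge carries the input tableau, combined with the fact that $\tau$ reverses the order of the steps of a tableau while $\varpi$ does not. Next I would do $\Pdiagram$: the promotion diagram is a $2\times(n+1)$ strip, and applying $\varpi$ to the filled strip gives the filled $\Pdiagram$ strip for $\varpi(T)$ directly, hence $\varpi\circ\Pdiagram = \Pdiagram\circ\varpi$; applying $\tau$ or $\varepsilon$ rotates the strip $180^\circ$, and the rotated strip is the $\Pdiagram$ strip of $\tau(T)$ (resp.\ $\varepsilon(T)$) read along its \emph{bottom} row, which is $\promotion(\tau(T))$ (resp.\ $\promotion(\varepsilon(T))$) — this is the source of the extra $\promotion$ in $\tau\circ\Pdiagram = \Pdiagram\circ\tau\circ\promotion$ and its $\varepsilon$ analogue. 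Finally, for $\PEdiagram$ I would use the decomposition \eqref{eq:pediagram.sum}, $\PEdiagram(T) = \Ediagram(T)\mathbin\Vert\Eddiagram(\promotion^n(T))$: applying $\tau$ (or $\varepsilon$) swaps the two halves and reverses each, turning $\Ediagram(T)\mathbin\Vert\Eddiagram(\promotion^n T)$ into $\Ediagram(\promotion^n\tau(?))\mathbin\Vert\Eddiagram(\tau(?))$ for the appropriate argument, and reassembling via \eqref{eq:pediagram.sum} produces the composite $\evacuation\circ\devacuation$ appearing on the right-hand side; for $\varpi$ the two halves are preserved, giving $\varpi\circ\PEdiagram = \PEdiagram\circ\varpi$.

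The main obstacle is none of the individual diagram-chases but rather getting the boundary/reversal bookkeeping exactly right in the $\PEdiagram$ identities, where the $180^\circ$ rotation interacts with the $\Ediagram\mathbin\Vert\Eddiagram$ splitting and one must correctly identify the composite $\evacuation\circ\devacuation$ (and confirm it is not, say, $\devacuation\circ\evacuation$ or $\promotion^{\pm n}$ composed with something). I would pin this down by first establishing the relation on a single representative — e.g.\ tracking the top and bottom rows and the two slanted sides of $\PEdiagram(T)$ under $\tau$ — and then citing \Cref{lem:Klein4} to deduce the $\varepsilon$ case from the $\tau$ and $\varpi$ cases, so that only two of the three rows of identities need be proved directly. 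Everything else reduces to the already-established fact that the involutions commute with local rules, together with the stated convention that we re-rotate to keep all edges oriented north and east.
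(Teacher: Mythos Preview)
Your proposal is correct and matches the paper's approach: the paper simply states that these identities are ``straightforward to verify from the definitions and the symmetry of local rules'' and gives no further argument, so your outline is exactly the kind of diagram-chase the authors have in mind. The one point worth flagging is the $\PEdiagram$ case, where you correctly identify the bookkeeping as the only nontrivial step; to close it cleanly you need $\evacuation^2=\id$ (equivalently $\devacuation\circ\evacuation=\promotion^n$), which is \Cref{lem:dihedral} rather than the present lemma, but that lemma is proved independently (by the Stanley argument) and so there is no circularity.
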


For standard tableaux, the following is \cite[Theorem~2.1]{Stanley-promotion-evacuation}. The proof for fluctuating tableaux is essentially the same and we omit it.
\begin{lemma}\label{lem:dihedral}
On length $n$ skew fluctuating tableaux, we have the following:
\begin{enumerate}[(i)]
  \item $\promotion$ is invertible;
  \item $\evacuation \circ \evacuation = \id$, $\devacuation \circ \devacuation = \id$;
  \item $\devacuation \circ \evacuation = \promotion^n$, $\evacuation \circ \devacuation = \promotion^{-n}$; and
  \item $\promotion \circ \evacuation = \evacuation \circ \promotion^{-1}$, $\promotion \circ \devacuation = \devacuation \circ \promotion^{-1}$.
\end{enumerate}
In particular, $\promotion$ and $\evacuation$ give a representation of the infinite dihedral group, as do $\promotion$ and $\devacuation$.
\end{lemma}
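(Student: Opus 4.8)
The plan is to derive all four statements of \Cref{lem:dihedral} from the combinatorial symmetries of local rules already recorded in \Cref{lem:12_relations}, exactly as in Stanley's treatment \cite{Stanley-promotion-evacuation} for standard tableaux; the key observation is that the fluctuating setting introduces no new phenomena because local rules are reversible and the promotion/evacuation/dual evacuation diagrams are built from the same local rule squares.

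\textbf{Invertibility of $\promotion$.} First I would observe that a local rule square \eqref{eq:local_rule} is symmetric under $180^\circ$ rotation: given the top-right path $\kappa \too{c} \lambda \too{d} \nu$ we obtain the bottom-left path, and conversely given the bottom-left path $\kappa \too{d} \mu \too{c} \nu$ we recover $\lambda = \sort(\nu + \kappa - \mu)$ by \Cref{lem:local.rules}. Hence the promotion diagram $\Pdiagram(T)$ can equally well be filled from the bottom-left path back to the top-right path; this gives an explicit inverse construction, so $\promotion$ is a bijection on skew fluctuating tableaux of each fixed length $n$.

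\textbf{The involutions and the relations.} Next I would handle (ii): $\evacuation^2 = \id$ follows because applying the local rules to $\Ediagram(\evacuation(T))$ reproduces $\Ediagram(T)$ read in the opposite direction (the evacuation triangle, reflected across its hypotenuse, is again an evacuation triangle, using reversibility of local rules); similarly $\devacuation^2 = \id$. For (iii), I would use the decomposition \eqref{eq:pediagram.sum}, $\PEdiagram(T) = \Ediagram(T) \mathbin\Vert \Eddiagram(\promotion^n(T))$, together with the fact already noted that the $i$th row of $\PEdiagram(T)$ is $\promotion^i(T)$: the bottom row is $\promotion^n(T)$, and reading off the right boundary of the $\Ediagram$ half and the left boundary of the $\Eddiagram$ half identifies $\devacuation(\evacuation(T))$ with $\promotion^n(T)$; inverting gives $\evacuation \circ \devacuation = \promotion^{-n}$. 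For (iv), the relation $\promotion \circ \evacuation = \evacuation \circ \promotion^{-1}$ can be read directly from the parallelogram $\PEdiagram(T)$ by comparing the shift of rows (which realizes $\promotion$) with the reflection realizing $\evacuation$; alternatively it is a formal consequence of (ii) and (iii): from $\devacuation\circ\evacuation = \promotion^n$ and $\evacuation^2 = \devacuation^2 = \id$ one computes $\evacuation \circ \promotion^n = \devacuation$ and $\promotion^n \circ \evacuation = \devacuation^{-1}\cdot(\ldots)$, and the single-step version $\promotion\circ\evacuation = \evacuation\circ\promotion^{-1}$ then follows by an induction on the length, peeling off one column at a time and invoking the relations of \Cref{lem:12_relations} relating $\Pdiagram$ and $\Ediagram$ under $\tau$. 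Finally, the infinite dihedral group statement is immediate: $\promotion$ has infinite order in general, $\evacuation$ (resp.\ $\devacuation$) is an involution by (ii), and (iv) is precisely the braid-free dihedral relation $\evacuation \promotion \evacuation = \promotion^{-1}$.

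\textbf{Main obstacle.} I expect the only real subtlety — and the reason Stanley's proof is not completely trivial — is bookkeeping the diagram orientations: because the excerpt has fixed the convention that all growth-diagram edges point north or east and that $\tau, \varepsilon$ are applied together with a $180^\circ$ rotation, one must be careful that ``reflecting an evacuation triangle across its hypotenuse'' really produces a validly-oriented evacuation (rather than dual evacuation) diagram, and that the boundary conditions $\lambda^{ii} = \lambda^0$, $\lambda^{i,n+i} = \lambda^n$ match up after reflection. Once these orientation conventions are tracked carefully, each of (i)–(iv) is a direct diagram-chase with no computation, and since the excerpt explicitly says ``the proof for fluctuating tableaux is essentially the same and we omit it,'' I would in fact present only the above outline rather than the full chase.
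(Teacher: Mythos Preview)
Your outline is correct and matches the paper's own treatment: the paper omits the proof entirely, pointing to Stanley \cite{Stanley-promotion-evacuation} and noting that the fluctuating case is essentially identical, and your growth-diagram chase (invertibility from the symmetry of local rules, the involution and $\promotion^n$ relations from reflecting triangles and reading off the parallelogram decomposition \eqref{eq:pediagram.sum}) is exactly Stanley's argument transplanted to this setting. One small caution: in (iv) you write that the single-step relation is ``a formal consequence of (ii) and (iii),'' but (ii) and (iii) alone only yield $\evacuation \promotion^n \evacuation = \promotion^{-n}$, not the refinement to a single power of $\promotion$; your first explanation (reading it directly from the shift-versus-reflection symmetry of $\PEdiagram$, or equivalently the induction-on-length argument you sketch) is the one that actually proves it.
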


We also record the following consequence of \Cref{lem:12_relations}.

\begin{lemma}\label{lem:invs.PEEd}
On skew fluctuating tableaux, we have the following:
\begin{enumerate}[(i)]
  \item $\operatorname{\varpi}$ commutes with each of $\promotion, \evacuation$, and $\devacuation$;
  \item $\operatorname{\tau} \circ \promotion = \promotion^{-1} \circ \operatorname{\tau}$ and $\operatorname{\tau} \circ \evacuation = \devacuation \circ \operatorname{\tau}$;
  \item $\operatorname{\varepsilon} \circ \promotion = \promotion^{-1} \circ \operatorname{\varepsilon}$ and $\operatorname{\varepsilon} \circ \evacuation = \devacuation \circ \operatorname{\varepsilon}$.
\end{enumerate}
\end{lemma}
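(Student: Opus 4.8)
The plan is to deduce everything from the relations recorded in \Cref{lem:12_relations} together with the dihedral relations of \Cref{lem:dihedral}, treating $\promotion$, $\evacuation$, $\devacuation$ as the ``output'' operators extracted from the corresponding growth diagrams. For part (i), I would start from $\operatorname{\varpi} \circ \Pdiagram = \Pdiagram \circ \operatorname{\varpi}$: reading off the bottom row of both sides of this equality of diagrams gives $\operatorname{\varpi} \circ \promotion = \promotion \circ \operatorname{\varpi}$ directly, since $\varpi$ acts entrywise on a diagram and hence commutes with the ``take the bottom row'' operation. Similarly $\operatorname{\varpi} \circ \Ediagram = \Ediagram \circ \operatorname{\varpi}$ and $\operatorname{\varpi} \circ \Eddiagram = \Eddiagram \circ \operatorname{\varpi}$, and reading off the right column (resp. left column) yields $\operatorname{\varpi} \circ \evacuation = \evacuation \circ \operatorname{\varpi}$ and $\operatorname{\varpi} \circ \devacuation = \devacuation \circ \operatorname{\varpi}$. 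The one subtlety to spell out is that $\varpi$ does not rotate diagrams (unlike $\tau$ and $\varepsilon$), so ``bottom row'' and ``right column'' are preserved as such, which is exactly why $\varpi$ commutes rather than conjugating into an inverse.

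For parts (ii) and (iii), the mechanism is the same but the rotation by $180^\circ$ now intervenes. Applying $\tau$ to a promotion diagram rotates it and, by the relation $\operatorname{\tau} \circ \Pdiagram = \Pdiagram \circ \operatorname{\tau} \circ \promotion$, identifies the rotated diagram with the promotion diagram of $\tau(\promotion(T))$. Reading the bottom row: the bottom row of $\tau(\Pdiagram(T))$ is, after the $180^\circ$ rotation, the $\tau$-image of the \emph{top} row of $\Pdiagram(T)$, i.e.\ $\tau(T)$; on the other side it is $\promotion(\tau(\promotion(T)))$. Hence $\tau(T) = \promotion(\tau(\promotion(T)))$, i.e. $\operatorname{\tau} = \promotion \circ \operatorname{\tau} \circ \promotion$, which rearranges to $\operatorname{\tau} \circ \promotion = \promotion^{-1} \circ \operatorname{\tau}$ using that $\promotion$ is invertible (\Cref{lem:dihedral}(i)). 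For $\operatorname{\tau}\circ\evacuation = \devacuation\circ\operatorname{\tau}$, I would instead use the relation $\operatorname{\tau} \circ \Ediagram = \Eddiagram \circ \operatorname{\tau}$: the right column of $\Ediagram(T)$ is $\evacuation(T)$, and under the $180^\circ$ rotation it becomes the left column of $\Eddiagram(\tau(T))$, which is $\devacuation(\tau(T))$; comparing, $\tau(\evacuation(T)) = \devacuation(\tau(T))$. Part (iii) follows by the identical argument applied to $\operatorname{\varepsilon}$ in place of $\operatorname{\tau}$, using $\operatorname{\varepsilon} \circ \Pdiagram = \Pdiagram \circ \operatorname{\varepsilon} \circ \promotion$ and $\operatorname{\varepsilon} \circ \Ediagram = \Eddiagram \circ \operatorname{\varepsilon}$; alternatively, since $\varepsilon = \tau \circ \varpi = \varpi \circ \tau$ (\Cref{lem:Klein4}) and $\varpi$ commutes with all three operators by part (i), part (iii) is a formal consequence of parts (i) and (ii).

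The only real bookkeeping obstacle is keeping the orientation conventions straight: one must be careful that applying $\tau$ or $\varepsilon$ both reverses the type sequence and rotates the diagram by $180^\circ$ (as fixed in the convention following the lemma on $\tau,\varpi,\varepsilon$ applied to local rule diagrams), so that a ``bottom row'' of one diagram becomes a ``top row'' and an ``evacuation column'' becomes a ``dual evacuation column.'' Once that dictionary is set up, each claimed identity is just reading off a designated row or column of the two sides of one of the diagram-level identities in \Cref{lem:12_relations} and invoking invertibility of $\promotion$. I would present (i) first, then (ii), then note (iii) is immediate from (i) and (ii) via \Cref{lem:Klein4}, to keep the write-up short.
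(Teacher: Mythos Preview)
Your proposal is correct and matches the paper's intended argument: the paper simply records \Cref{lem:invs.PEEd} as a consequence of \Cref{lem:12_relations}, and the way to extract that consequence is exactly what you describe—reading off the designated row or column from each diagram-level identity, using the $180^\circ$-rotation convention for $\tau$ and $\varepsilon$, and finishing (iii) via $\varepsilon = \varpi \circ \tau$ and part~(i). There is nothing to add.
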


Finally, the following lemma will be useful to us in studying rectangular fluctuating tableaux.

\begin{lemma}\label{lem:E.Ed.epsilon}
  Let $T$ be a length $n$ skew fluctuating tableaux. The following are equivalent:
  \begin{enumerate}[(i)]
    \item $\evacuation(T) = \varepsilon(T)$,
    \item $\devacuation(T) = \varepsilon(T)$.
  \end{enumerate}
  Moreover, they imply the following, which are also equivalent:
  \begin{enumerate}[(a)]
    \item $\evacuation(T) = \devacuation(T)$,
    \item $\promotion^n(T) = T$.
  \end{enumerate}
\end{lemma}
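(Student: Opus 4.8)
The plan is to deduce everything from the symmetry relations already recorded in \Cref{lem:12_relations} and \Cref{lem:invs.PEEd}, together with the Klein-four relations of \Cref{lem:Klein4}. First I would establish the equivalence of (i) and (ii). Starting from $\evacuation(T) = \varepsilon(T)$, apply $\varepsilon$ to both sides: since $\varepsilon^2 = \id$, the right side becomes $T$, so $\varepsilon(\evacuation(T)) = T$. By part (iii) of \Cref{lem:invs.PEEd} we have $\varepsilon \circ \evacuation = \devacuation \circ \varepsilon$, hence $\devacuation(\varepsilon(T)) = T$, and applying $\devacuation$ (an involution, by \Cref{lem:dihedral}(ii)) gives $\varepsilon(T) = \devacuation(T)$. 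The reverse implication (ii)$\Rightarrow$(i) is symmetric, using the same relation read the other way (or applying $\tau$ and $\varpi$ and invoking that $\varepsilon = \tau\varpi$).

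Next I would show (i)$\Rightarrow$(a), which is immediate: if $\evacuation(T) = \varepsilon(T)$ and $\devacuation(T) = \varepsilon(T)$ (the latter now known from the equivalence just proved), then $\evacuation(T) = \devacuation(T)$ by transitivity. So it remains only to prove (a)$\Leftrightarrow$(b). For this the key input is \Cref{lem:dihedral}(iii): $\devacuation \circ \evacuation = \promotion^n$. If $\evacuation(T) = \devacuation(T)$, then applying $\devacuation$ to both sides and using $\devacuation^2 = \id$ from \Cref{lem:dihedral}(ii) gives $\devacuation(\evacuation(T)) = \devacuation(\devacuation(T)) = T$, i.e.\ $\promotion^n(T) = T$. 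Conversely, if $\promotion^n(T) = T$, then $\devacuation(\evacuation(T)) = T$, so applying $\devacuation$ yields $\evacuation(T) = \devacuation(T)$.

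The argument is essentially a diagram chase in the group generated by $\promotion, \evacuation, \devacuation, \tau, \varpi, \varepsilon$, so there is no real analytic or combinatorial obstacle; the only point requiring care is making sure the relation $\varepsilon \circ \evacuation = \devacuation \circ \varepsilon$ from \Cref{lem:invs.PEEd}(iii) is used with the correct handedness and that one does not accidentally assume $\varepsilon$ commutes with $\evacuation$ (it does not — it intertwines $\evacuation$ with $\devacuation$). I would write the four short implications as displayed chains of equalities, citing the relevant part of \Cref{lem:dihedral}, \Cref{lem:invs.PEEd}, or \Cref{lem:Klein4} at each step, and conclude by noting that (i)$\Leftrightarrow$(ii) together with (i)$\Rightarrow$(a) and (a)$\Leftrightarrow$(b) gives exactly the asserted structure.
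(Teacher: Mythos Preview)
Your proposal is correct and follows essentially the same approach as the paper's own proof: the paper also derives (i)$\Leftrightarrow$(ii) from the intertwining relation $\varepsilon \circ \evacuation = \devacuation \circ \varepsilon$ together with the involution properties, observes that (i) and (ii) together give (a), and obtains (a)$\Leftrightarrow$(b) from $\devacuation \circ \evacuation = \promotion^n$ in \Cref{lem:dihedral}. Your write-up is just a more explicit unpacking of the same diagram chase.
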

\begin{proof}
  The equivalence of (i) and (ii) follows from $\operatorname{\varepsilon} \circ \evacuation = \devacuation \circ \operatorname{\varepsilon}$ and the fact that all three of these operations are involutions (see \Cref{lem:Klein4,lem:dihedral}). Clearly (i) and (ii) imply (a). The equivalence of (a) and (b) follows from $\devacuation \circ \evacuation = \promotion^n$ (see \Cref{lem:dihedral}).
\end{proof}

\section{Bender--Knuth involutions and jeu de taquin}
\label{sec:ft:BK_jdt}

Our next goal is to encode the local rules in combinatorial manipulations on fluctuating tableaux. In \Cref{sec:prom_stuff}, we will use this description via jeu de taquin to define the main new objects of interest in this paper, \emph{promotion matrices} and \emph{promotion permutations}.

\subsection{Bender--Knuth involutions via local rules}\label{sec:BK}

Here we introduce Bender--Knuth involutions for fluctuating tableaux in terms of local rules and give their basic properties. For the case of semistandard tableaux, these involutions were first given in \cite{Bender.Knuth}. More precisely, our Bender--Knuth involutions are the transposes of the usual ones from \cite{Bender.Knuth}, since fluctuating tableaux generalize \emph{transpose} semistandard tableaux.

\begin{definition}
 For $1 \leq i \leq n-1$, the $i$th \textit{Bender--Knuth involution} $\BK_i$ on skew fluctuating tableaux is given by
    \begin{align*}
    \BK_i(\lambda^0 \to \cdots \to \lambda^{i-1} \to &\lambda^i \to \lambda^{i+1} \to \cdots \to \lambda^n) \\
    = \lambda^0 \to \cdots \to \lambda^{i-1} \to &\mu^i \to \lambda^{i+1} \to \cdots \to \lambda^n, \qquad 
  \end{align*}
  where $\mu^i = \sort(\lambda^{i+1} + \lambda^{i-1} - \lambda^i)$.
  Pictorially, we have
  \begin{equation}\label{eq:BKi}
  \begin{tikzcd}
    \ 
      & \ 
      & \ 
      & \lambda^i \ar{dr}{c_{i+1}} \ar[Rightarrow]{dd}{\BK_i}
      & \ 
      & \ 
      & \ \\
    \lambda^0 \rar{c_1}
      & \cdots \rar{c_{i-1}}
      & \lambda^{i-1} \ar{ur}{c_i} \ar[dashed]{dr}[swap]{c_{i+1}}
      & \ 
      & \lambda^{i+1} \rar{c_{i+2}}
      & \cdots \rar{c_n}
      & \lambda^n \\
    \ 
      & \ 
      & \ 
      & \mu^i \ar[dashed]{ur}[swap]{c_i}
      & \ 
      & \ 
      & \ \\
  \end{tikzcd}.
  \end{equation}

\end{definition}

We also extend the $\toggle$ operator to skew fluctuating tableaux. Recall that these operators toggle signs in the type of the fluctuating tableaux.

\begin{definition}
  Given a skew fluctuating tableau of length $n$ and $1 \leq i \leq n$, define $\toggle_i(T)$ by replacing the $i$th step with its $\toggle$:
  \begin{align*}
    \toggle_i(\lambda^0 \xrightarrow{c_1} \cdots \xrightarrow{c_{i-1}} \lambda^{i-1} \xrightarrow{c_i} &\lambda^i \xrightarrow{c_{i+1}} \lambda^{i+1} \xrightarrow{c_{i+2}} \cdots \xrightarrow{c_n} \lambda^n) \\
    = \lambda^0 \xrightarrow{c_1} \cdots \xrightarrow{c_{i-1}} \lambda^{i-1} \xrightarrow{\overline{r-c_i}} &\lambda^i - \sgn(c_i)  \mathbf{1} \xrightarrow{c_{i+1}} \lambda^{i+1} - \sgn(c_i)  \mathbf{1} \xrightarrow{c_{i+2}} \cdots \xrightarrow{c_n} \lambda^n - \sgn(c_i)  \mathbf{1}.
  \end{align*}
\end{definition}

\begin{example}
    The fluctuating tableau $T$ of \Cref{fig:ft-example} is shown below, center. In \Cref{ex:switch}, we computed the local action of $\toggle_5$ and $\toggle_2$. Pictorially, we have
    \[
   \begin{tikzpicture}
  \node (begin) at (0,0)  {\begin{tikzpicture}[inner sep=0in,outer sep=0in]
\node (n) {\begin{varwidth}{5cm}{
\begin{ytableau}
  \none & *(light-gray)1 & 3\,\overline{7} \\
  \none & *(light-gray)1 & 4\,\overline{5} \\
  \none & *(light-gray)3\,\overline{5}\,6 \\
  \overline{2}\,3 & *(light-gray)\,6 \\
\end{ytableau}}
\end{varwidth}};
\draw[ultra thick,black] ([xshift=0.85cm]n.south west)--([xshift=0.85cm]n.north west);
\end{tikzpicture}};
  \node[left = -1em of begin] (5arrow) {\scalebox{1.2}{$\xLeftrightarrow{\toggle_5}$}};
  \node [left = -1em of 5arrow] (switch5)  {\begin{tikzpicture}[inner sep=0in,outer sep=0in]
\node (n) {\begin{varwidth}{5cm}{
\begin{ytableau}
  \none & *(light-gray)1 & *(light-gray)3 & 5\,\overline{7} \\
  \none & *(light-gray)1 & *(light-gray)4 \\
  \none & *(light-gray)3 & *(light-gray)6 \\
  \overline{2}\,3 & *(light-gray)5 & *(light-gray)6 \\
\end{ytableau}}
\end{varwidth}};
\draw[ultra thick,black] ([xshift=0.85cm]n.south west)--([xshift=0.85cm]n.north west);
\end{tikzpicture}};
 \node[right = 1em of begin] (2arrow) {\scalebox{1.2}{$\xLeftrightarrow{\toggle_2}$}};
  \node [right = -1em of 2arrow] (switch2) {\begin{tikzpicture}[inner sep=0in,outer sep=0in]
\node (n) {\begin{varwidth}{5cm}{
\begin{ytableau}
  \none & *(light-gray)1 & *(light-gray)2 & 3\,\overline{7} \\
  \none & *(light-gray)1 & *(light-gray)2 & 4\,\overline{5} \\
  \none & *(light-gray)2 & *(light-gray)3\,\overline{5}\,6 \\
 \none & *(light-gray)3 & *(light-gray)6 \\
\end{ytableau}}
\end{varwidth}};
\draw[ultra thick,black] ([xshift=0.85cm]n.south west)--([xshift=0.85cm]n.north west);
\end{tikzpicture}};
\end{tikzpicture}.
\]
See also \Cref{fig:BK-toggle-example} for further examples.
\end{example}

The following are direct consequences of the definitions and the symmetry of local rule diagrams. All of the proofs are similar and straightforward, so we mostly omit them.

\begin{lemma}
  On skew fluctuating tableaux of length $n$, $\BK_i$ and $\toggle_j$ are involutions for which
  \begin{equation}\label{eq:BK.toggle}
    \BK_i \circ \toggle_i = \toggle_{i+1} \circ \BK_i \qquad (1 \leq i \leq n-1).
  \end{equation}
  Moreover,
  \begin{enumerate}[(i)]
    \item $\BK_i \circ \BK_j = \BK_j \circ \BK_i$ if $|i-j| > 1$;
    \item $\BK_i \circ \toggle_j = \toggle_j \circ \BK_i$ for all $1 \leq i \leq n-1$, $1 \leq j \leq n$ with $j \neq i, i+1$;
    \item $\toggle_i \circ \toggle_j = \toggle_j \circ \toggle_i$ if $i \neq j$.
  \end{enumerate}
\end{lemma}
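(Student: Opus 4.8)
The plan is to exploit the fact that $\BK_i$ and $\toggle_j$ act \emph{locally} on a sequence $\lambda^0 \to \cdots \to \lambda^n$: the map $\toggle_j$ alters only the $j$th step (replacing its type by $-\sgn(c_j)(r-|c_j|)$) and then translates the suffix $\lambda^j, \ldots, \lambda^n$ by $-\sgn(c_j)\mathbf{1}$, while $\BK_i$ alters only $\lambda^i$ and depends only on the triple $\lambda^{i-1}, \lambda^i, \lambda^{i+1}$. Two elementary facts are used repeatedly: the identity $\sort(\alpha - k\mathbf{1}) = \sort(\alpha) - k\mathbf{1}$, and the observation that $\toggle$ applied to a single step $\mu \too{c}\lambda$ replaces its step vector $\mathbf{e}_S$ (where $\lambda = \mu + \mathbf{e}_S$) by $\mathbf{e}_S - \sgn(c)\mathbf{1}$, which again has the form $\mathbf{e}_{S'}$ for some admissible $S' \in \mathcal{A}_r$; applying this twice returns $\mathbf{e}_S$ and the original translation, giving $\toggle_j \circ \toggle_j = \id$. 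For $\BK_i$, the diagram \eqref{eq:BKi} is precisely the local rule diagram completing $\lambda^{i-1} \too{c_i} \lambda^i \too{c_{i+1}} \lambda^{i+1}$ with fourth vertex $\mu^i = \sort(\lambda^{i+1} + \lambda^{i-1} - \lambda^i)$, so \Cref{lem:local.rules} (together with the shape of a local rule diagram) simultaneously gives $\lambda^{i-1} \too{c_{i+1}} \mu^i \too{c_i} \lambda^{i+1}$ (so $\BK_i$ returns a legitimate fluctuating tableau and transposes the type entries $c_i$ and $c_{i+1}$) and $\lambda^i = \sort(\lambda^{i+1} + \lambda^{i-1} - \mu^i)$, which is exactly the statement $\BK_i \circ \BK_i = \id$.

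The relation \eqref{eq:BK.toggle}, $\BK_i \circ \toggle_i = \toggle_{i+1} \circ \BK_i$, is the one place requiring genuine (if short) work, and I expect it to be the main obstacle. The key sublemma is a \emph{switch symmetry} of local rule squares: in \eqref{eq:local_rule}, keeping $\kappa$ and $\lambda$, replacing the vertical type $c$ by $-\sgn(c)(r-|c|)$, and translating $\mu$ and $\nu$ by $-\sgn(c)\mathbf{1}$ again yields a local rule square; this is immediate from \eqref{eq:local_rule.sort} and the $\sort$ identity, since $\nu + \kappa - \lambda$ changes by $-\sgn(c)\mathbf{1}$ while $\nu + \kappa - \mu$ is unchanged, and the horizontal types are untouched. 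Now in \eqref{eq:BKi} the two edges of type $c_i$ are step $i$ of the input and step $i+1$ of $\BK_i(T)$; applying $\toggle_i$ to $T$ and then $\BK_i$ recomputes the bottom vertex as $\sort\!\big((\lambda^{i+1}-\sgn(c_i)\mathbf{1}) + \lambda^{i-1} - (\lambda^i - \sgn(c_i)\mathbf{1})\big) = \mu^i$, and comparing the step types and the suffix translation by $-\sgn(c_i)\mathbf{1}$ on $\lambda^{i+1}, \ldots, \lambda^n$ identifies the result with $\toggle_{i+1}(\BK_i(T))$ --- both equal the switch-symmetrized square above, glued to the unchanged part of $T$ left of $\lambda^{i-1}$ and to the suffix of $T$ right of $\lambda^{i+1}$ translated by $-\sgn(c_i)\mathbf{1}$. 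The only care needed is in tracking signs and the placement of the constant translation.

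Finally, (i)--(iii) are disjointness arguments. Since $\BK_i$ touches only $\lambda^i$ and the type entries $c_i, c_{i+1}$, when $|i-j| > 1$ the maps $\BK_i$ and $\BK_j$ read and write disjoint data, hence commute; the cases $|i-j| = 1$ are excluded precisely because $\BK_i$ rewrites $\lambda^i$, on which $\BK_{i \pm 1}$ depends. For (ii): if $j \le i-1$ then $\toggle_j$ translates each of $\lambda^{i-1}, \lambda^i, \lambda^{i+1}$, and hence the recomputed $\mu^i$, by the \emph{same} vector $-\sgn(c_j)\mathbf{1}$, so $\BK_i \circ \toggle_j = \toggle_j \circ \BK_i$ by the $\sort$ identity; if $j \ge i+2$ the two maps act on disjoint steps and shapes; the cases $j \in \{i, i+1\}$ are excluded because $\toggle_j$ then modifies a type entry or shape used by $\BK_i$, and these are instead governed by \eqref{eq:BK.toggle}. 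For (iii) with $i < j$, the maps $\toggle_i$ and $\toggle_j$ change distinct type entries, and their suffix translations by constant vectors commute under addition; in every case the output is automatically a legitimate (skew) fluctuating tableau, since adding a multiple of $\mathbf{1}$ to a suffix of the shapes preserves both the generalized-partition condition and all step vectors beyond the translated position.
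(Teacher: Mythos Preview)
Your proposal is correct and follows essentially the same approach the paper intends: the paper omits the proof entirely, stating only that the claims are ``direct consequences of the definitions and the symmetry of local rule diagrams,'' and your argument is precisely a careful unpacking of those direct consequences using the $\sort(\alpha - k\mathbf{1}) = \sort(\alpha) - k\mathbf{1}$ identity and \Cref{lem:local.rules}. Your explicit verification of \eqref{eq:BK.toggle} via the computation $\sort\!\big((\lambda^{i+1}-\sgn(c_i)\mathbf{1}) + \lambda^{i-1} - (\lambda^i - \sgn(c_i)\mathbf{1})\big) = \mu^i$ is exactly the check the paper leaves implicit.
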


\begin{lemma}
  On length $n$ fluctuating tableaux, we have:
  \begin{enumerate}[(i)]
    \item $\promotion = \BK_{n-1} \circ \cdots \circ \BK_1$; \label{item1}
    \item $\evacuation = \BK_1 \circ (\BK_2 \circ \BK_1) \circ \cdots \circ (\BK_{n-1} \circ \cdots \circ \BK_1)$; \label{item2}
    \item $\devacuation = (\BK_{n-1} \circ  \cdots \circ \BK_1) \circ \cdots \circ (\BK_{n-1} \circ \BK_{n-2}) \circ \BK_{n-1}$. \label{item3}
  \end{enumerate}
\end{lemma}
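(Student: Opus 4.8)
The plan is to verify each of the three identities directly from the corresponding growth diagram, using the structure of the Bender--Knuth involutions $\BK_i$ as ``rotating a single square'' inside a diagram together with the fact established earlier that local rules produce local rule diagrams (\Cref{lem:local.rules}). The key observation is that $\BK_i$ acts on a chain $\lambda^0 \to \cdots \to \lambda^n$ by replacing the single vertex $\lambda^i$ with $\mu^i = \sort(\lambda^{i+1}+\lambda^{i-1}-\lambda^i)$, which is exactly the local rule filling in the fourth corner of the square with corners $\lambda^{i-1},\lambda^i,\lambda^{i+1},\mu^i$. So a composite of $\BK_i$'s traces out a sequence of elementary square-flips, and each of the three diagrams in \Cref{sec:prom_evac_diag} is built by exactly such a sequence of square-flips.

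For part \eqref{item1}, I would argue that filling the bottom row of $\Pdiagram(T)$ from left to right is precisely the composite $\BK_{n-1}\circ\cdots\circ\BK_1$. Concretely: applying $\BK_1$ to $T=\lambda^0\to\cdots\to\lambda^n$ flips the first square, producing the chain passing through $\lambda^{11}=\lambda^0$ (using the boundary condition $\lambda^{11}=\lambda^0$ and the fact that the first square of $\Pdiagram$ has left edge an equality); then $\BK_2$ flips the next square along, and so on, and after $\BK_{n-1}$ the resulting chain is $\lambda^{11}\to\lambda^{12}\to\cdots\to\lambda^{1n}\to\lambda^{1,n+1}$, which is $\promotion(T)$ by definition. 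The one subtlety is matching indices and the ``wrap-around'' edge labeled $c_1$ at the right end; this follows because the last square flipped has its lower-right corner forced to be $\lambda^n=\lambda^{1,n+1}$ by the boundary condition, and uniqueness of the local rule completion (\Cref{lem:local.rules}) identifies it with the $\BK_{n-1}$ output. I would present this as an induction on $n$, peeling off $\BK_1$ as the first step and recognizing that what remains is the promotion diagram of a length-$(n-1)$ tableau sitting inside $\Pdiagram(T)$.

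For parts \eqref{item2} and \eqref{item3}, the strategy is the same but now the diagram is a triangle rather than a skew parallelogram of width-one height. Writing $P_k \coloneqq \BK_k\circ\cdots\circ\BK_1$ for the ``partial promotion'' that flips the first $k$ squares, one checks that $P_{n-1}=\promotion$ and that $\evacuation = P_1\circ P_2\circ\cdots\circ P_{n-1}$ corresponds to filling $\Ediagram(T)$ row by row from the top: applying $P_{n-1}=\promotion$ gives the second row, then applying $P_{n-2}$ (which only flips the first $n-2$ squares) gives the third row, and so on, until $P_1$ produces the final single-edge bottom. Reading off the rightmost column of the completed triangle gives $\evacuation(T)$ as defined. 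The dual evacuation identity \eqref{item3} is obtained either by the same argument run from the lower-right corner of $\Eddiagram(T)$, or — cleaner — by applying $\tau$ and using $\tau\circ\evacuation=\devacuation\circ\tau$ from \Cref{lem:invs.PEEd} together with the fact that $\tau$ reverses the chain and conjugates each $\BK_i$ to $\BK_{n-i}$; conjugating the expression in \eqref{item2} by this reversal yields \eqref{item3}.

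The main obstacle is purely bookkeeping: getting the row/column indices, the boundary conditions $\lambda^{0i}=\lambda^i$, $\lambda^{ii}=\lambda^0$, $\lambda^{i,n+i}=\lambda^n$, and the direction of arrows to line up so that ``flip the $i$th square'' in the growth-diagram picture really is $\BK_i$ applied to the relevant intermediate chain, including at the boundary edges where one side of a square is an equality (so the ``flip'' is forced to be trivial or to read off a boundary value). Everything else is an immediate consequence of the definitions of $\Pdiagram$, $\Ediagram$, $\Eddiagram$ and the uniqueness of local-rule completions; since the paper explicitly flags these proofs as ``similar and straightforward,'' I would state the induction for \eqref{item1} in a sentence or two and indicate that \eqref{item2} and \eqref{item3} follow by the analogous row-by-row (resp.\ column-by-column, or $\tau$-conjugated) argument, omitting the index-chasing details.
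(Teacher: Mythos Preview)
Your proposal is correct and follows essentially the same approach as the paper: the paper's proof of (\ref{item1}) consists of a single diagram showing how the successive chains $T$, $\BK_1(T)$, $\BK_2\circ\BK_1(T)$, $\ldots$ trace paths through $\Pdiagram(T)$ from the top row down to the bottom row, and then simply asserts (\ref{item2}) and (\ref{item3}) without further comment. Your ``square-flip'' description is exactly this picture, and your row-by-row filling of $\Ediagram(T)$ for (\ref{item2}) (and its dual for (\ref{item3})) is the natural analogue the paper leaves implicit; the $\tau$-conjugation shortcut for (\ref{item3}) is also valid, though note that the relation $\BK_i\circ\tau=\tau\circ\BK_{n-i}$ is stated slightly later (\Cref{lem:BK.toggle.PEEd}), so the direct $\Eddiagram$ argument is cleaner for ordering purposes.
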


\begin{proof}
  The following diagram shows (\ref{item1}), by successively applying the Bender--Knuth involutions to the fluctuating tableau $T$, producing the promotion diagram of $T$.
  \begin{equation*}
  \begin{tikzcd}
    \lambda^{00} \rar
    \ar[equal]{dr}
      \ar[->,blue,rounded corners,to path= {
           ([yshift=0em,xshift=0.2em]\tikzcdmatrixname-1-1.north)
        -- ([yshift=1.6em,xshift=0.2em]\tikzcdmatrixname-1-1.north)
        -- ([yshift=1.6em,xshift=0.2em]\tikzcdmatrixname-1-2.north west)node[above]{$T$}
        -- ([yshift=1.6em]\tikzcdmatrixname-1-5.north)
        }]{}
      & \lambda^{01} \rar
      & \lambda^{02} \rar
      & \cdots \vphantom{\lambda^{11}} \rar
      & \lambda^{0n} 
      \ar[equal]{dr}\\
    \ 
      & \lambda^{11} \rar \uar
        \ar[->,blue,rounded corners,to path= {
           ([yshift=0em,xshift=0.2em]\tikzcdmatrixname-2-2.west)
        -- ([yshift=1.2em,xshift=0.2em]\tikzcdmatrixname-1-2.north west)
        -- ([yshift=1.2em]\tikzcdmatrixname-1-5.north)
        }]{}
        \ar[->,orange,dashed,rounded corners,to path= {
           ([yshift=0.0em,xshift=0.0em]\tikzcdmatrixname-2-2.south)
        -- ([yshift=0.0em,xshift=0.2em]\tikzcdmatrixname-2-3.south west)
        -- ([yshift=0.8em,xshift=0.2em]\tikzcdmatrixname-1-3.north west)node[below right,xshift=2em]{$\BK_1(T)$}
        -- ([yshift=0.8em]\tikzcdmatrixname-1-5.north)
        }]{}
        \ar[->,red,dash pattern=on 5pt off 1pt on 2pt off 2pt,rounded corners,to path= {
           ([yshift=-0.4em,xshift=0.0em]\tikzcdmatrixname-2-2.south)
        -- ([yshift=-0.4em,xshift=0.2em]\tikzcdmatrixname-2-5.south west)
        -- ([yshift=0.4em,xshift=0.2em]\tikzcdmatrixname-1-5.north west)
        -- ([yshift=0.4em]\tikzcdmatrixname-1-5.north)
        }]{}
        \ar[->,red,dash pattern=on 5pt off 1pt on 2pt off 2pt,rounded corners,to path= {
           ([yshift=-0.8em,xshift=0.0em]\tikzcdmatrixname-2-2.south)node[below right, xshift=1em]{$\BK_{n-1} \circ \cdots \circ \BK_1(T)$}
        -- ([yshift=-0.8em,xshift=0.2em]\tikzcdmatrixname-2-6.south)
        -- ([yshift=0em,xshift=0.2em]\tikzcdmatrixname-2-6.south)
        }]{}
      & \lambda^{12} \rar \uar
      & \cdots \vphantom{\lambda^{11}} \rar \uar
      & \lambda^{1n} \rar \uar
      & \lambda^{1, n+1}
  \end{tikzcd}
  \end{equation*}
  For a similar perspective on these diagrams, see \cite[$\mathsection 6$]{Speyer}.
\end{proof}

\begin{lemma}\label{lem:BK.toggle.PEEd}
  On length $n$ fluctuating tableaux with $r$ rows, we have:
  \begin{enumerate}[(i)]
    \item For $1 \leq i \leq n-1$,
    \begin{align*}
      \BK_i \circ \operatorname{\varpi} &= \operatorname{\varpi} \circ \BK_i, \\
      \BK_i \circ \operatorname{\tau} &= \operatorname{\tau} \circ \BK_{n-i}, \text{ and } \\
      \BK_i \circ \operatorname{\varepsilon} &= \operatorname{\varepsilon} \circ \BK_{n-i}.
    \end{align*}
    \item For $1 \leq i \leq n$,
    \begin{align*}
      \toggle_i \circ \operatorname{\varpi} &= \operatorname{\varpi} \circ \toggle_i, \\
      \toggle_i \circ \operatorname{\tau} &= \operatorname{\tau} \circ \toggle_{n+1-i} + \sgn(c_{n+1-i}) \mathbf{1}, \text{ and } \\
      \toggle_i \circ \operatorname{\varepsilon} &= \operatorname{\varepsilon} \circ \toggle_{n+1-i} + \sgn(c_{n+1-i})  \mathbf{1}.
    \end{align*}
    \item When $i$ is taken modulo $n$,
    \begin{align*}
      \toggle_i \circ \promotion &= \promotion \circ \toggle_{i+1}, \\
      \toggle_i \circ \evacuation &= \evacuation \circ \toggle_{n+1-i}, \text{ and } \\
      \toggle_i \circ \devacuation &= \devacuation \circ \toggle_{n+1-i}.
    \end{align*}
  \end{enumerate}
\end{lemma}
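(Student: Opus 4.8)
The plan is to prove all three parts by reducing everything to the single local-rule identity \eqref{eq:local_rule.sort} together with the already-established relations among $\tau,\varpi,\varepsilon$ on local rule diagrams. The key observation is that $\BK_i$, $\toggle_i$, $\promotion$, $\evacuation$, and $\devacuation$ are all built out of local rules and single-step type toggles, and each of $\tau,\varpi,\varepsilon$ acts on a local rule diagram either trivially (for $\varpi$, which only reverses and negates) or by a $180^\circ$ rotation (for $\tau$ and $\varepsilon$, per the convention fixed just before \Cref{sec:prom_evac_diag}). So the entire lemma is a bookkeeping exercise tracking how these symmetries permute the indices $1,\ldots,n$ and introduce sign shifts via $\toggle$.

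For part (i), I would argue as follows. Recall $\BK_i$ replaces $\lambda^i$ by $\sort(\lambda^{i+1}+\lambda^{i-1}-\lambda^i)$, i.e.\ it is exactly the local rule applied at position $i$, as in the diagram \eqref{eq:BKi}. Since $\varpi$ acts on a lattice word letterwise (leaving the length and the position structure untouched) and commutes with $\sort$ up to $\rev(-\cdot)$, the identity $\BK_i\circ\varpi = \varpi\circ\BK_i$ is immediate from $\sort(-\rev(\alpha)) = \rev(-\sort(\alpha))$. For $\tau$: time reversal sends $\lambda^0\to\cdots\to\lambda^n$ to (the negatives, reversed, of) $\lambda^n\to\cdots\to\lambda^0$, so what was position $i$ becomes position $n-i$; applying the local rule there and then undoing $\tau$ gives $\BK_i\circ\tau = \tau\circ\BK_{n-i}$. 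The statement for $\varepsilon$ then follows since $\varepsilon = \tau\circ\varpi = \varpi\circ\tau$ by \Cref{lem:Klein4}, and $\varpi$ commutes with all $\BK_j$ without shifting indices. For part (ii), the same index-reversal argument applies to $\toggle_i$, except that $\toggle$ also modifies the final shape by $-\sgn(c_i)\mathbf{1}$; composing with $\tau$ (which negates) or $\varepsilon$ forces the sign-shift correction term $\sgn(c_{n+1-i})\mathbf{1}$, and the $\varpi$ relation is clean because $\varpi$ replaces $i\mapsto -\sgn(i)(r-|i|+1)$, which is compatible with the definition of $\toggle$ (both are combinatorial shadows of $\bigwedge^c V\cong\bigwedge^{r-c}V^*\otimes\det$). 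I would present the shift carefully, since the ``$+\sgn(c_{n+1-i})\mathbf{1}$'' notation means the output tableau is the $\toggle_{n+1-i}$ of the $\tau$-image shifted by that constant — worth spelling out once to avoid ambiguity.

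Part (iii) I would derive from parts (i) and (ii) plus \Cref{item1} of the preceding lemma, namely $\promotion = \BK_{n-1}\circ\cdots\circ\BK_1$, together with \eqref{eq:BK.toggle}, $\BK_i\circ\toggle_i = \toggle_{i+1}\circ\BK_i$, and the commutations $\BK_i\circ\toggle_j = \toggle_j\circ\BK_i$ for $j\neq i,i+1$. Pushing $\toggle_i$ leftward through $\BK_{n-1}\circ\cdots\circ\BK_1$: for $i\geq 2$ it commutes freely past $\BK_{i}, \BK_{i+1},\ldots$ until it meets $\BK_{i-1}$, at which point \eqref{eq:BK.toggle} turns $\BK_{i-1}\circ\toggle_{i-1}$ into $\toggle_i\circ\BK_{i-1}$ — so I want to move $\toggle_{i+1}$ through, landing on $\toggle_i$ on the far left; the index-chase gives precisely $\toggle_i\circ\promotion = \promotion\circ\toggle_{i+1}$, with $i$ read modulo $n$ to handle the wraparound at the $\BK_1$ end (where $\toggle_1$ on the left corresponds to $\toggle_{n+1}\equiv\toggle_1$, consistent with promotion's cyclic nature). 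For $\evacuation$ and $\devacuation$ I would instead combine part (ii)'s relation $\toggle_i\circ\varepsilon = \varepsilon\circ\toggle_{n+1-i} + \sgn(c_{n+1-i})\mathbf{1}$ with the factorizations in \Cref{item2} and \Cref{item3}, or more slickly use \Cref{lem:invs.PEEd}(ii)-(iii) to express $\evacuation,\devacuation$ via $\tau,\varepsilon$ and powers of $\promotion$; the constant shifts must cancel in pairs since $\evacuation$ is an involution (this is where I'd double-check the sign bookkeeping).

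The main obstacle I anticipate is \emph{not} any conceptual difficulty but the sign-shift accounting in parts (ii) and (iii): the ``$+\sgn(c_{n+1-i})\mathbf{1}$'' terms, the convention (adopted in the rectangular case) of re-anchoring $\tau$- and $\varepsilon$-images to start at $\mathbf{0}$, and the modular reading of the index $i$ in part (iii) all interact, and it is easy to be off by a constant or to misindex. I would therefore prove part (ii) in full detail for a single step first, nail down once and for all what $\toggle_{n+1-i}\circ\tau$ means as a concrete sequence of generalized partitions, and then let parts (i) and (iii) follow by the formal arguments above, omitting the routine verifications as the paper's surrounding text invites.
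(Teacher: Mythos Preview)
Your proposal is correct and matches the paper's (omitted) approach: the paper simply states that these relations are ``direct consequences of the definitions and the symmetry of local rule diagrams'' and provides no proof. You have filled in exactly the routine bookkeeping the paper invites the reader to perform---deducing (i) from how $\tau,\varpi,\varepsilon$ act on the local rule at position $i$, (ii) from the definition of $\toggle$ and index reversal under $\tau,\varepsilon$, and (iii) from the $\BK$-factorizations of $\promotion,\evacuation,\devacuation$ together with \eqref{eq:BK.toggle}---so there is nothing to compare.
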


\subsection{Bender--Knuth involutions via tableaux}\label{sec:BK.comb}

We next give a more direct, tableaux-theoretic description of $\BK_i$. We broadly follow Stembridge's account of the classical Bender--Knuth involutions on semistandard tableaux~\cite{Stembridge}, extended to fluctuating tableaux. See \Cref{fig:BK-toggle-example} and \Cref{ex:running.BK_P} for examples.

\begin{definition} \label{def:free_etc}
  Let $T \in \ft(n, \underline{c})$ and fix $1 \leq i \leq n-1$. We call certain cells \textit{free}, \textit{forced}, \textit{moving}, or \textit{open} as follows.
  \begin{itemize}
    \item If $c_i \cdot c_{i+1} \geq 0$, call a cell $\bbb$ \textit{free} if it contains exactly one of $i, i+1, \overline{i},$ or $ \overline{i+1}$ and no other cell in $\bbb$'s row contains any of $i, i+1, \overline{i},$ or $ \overline{i+1}$.
    \item If $c_i \cdot c_{i+1} \leq 0$, call cells containing exactly one of $i, i+1, \overline{i}$, or $\overline{i+1}$ \textit{forced}. Call a cell \emph{moving} if it contains both $i$ and $\overline{i+1}$ or both $\overline{i}$ and $i+1$.
    Additionally, for each row $R$ which does not contain any of $i, i+1, \overline{i},$ or $\overline{i+1}$, we identify a cell $\bbb_R$ in $R$ and call it \textit{open}. Let $j$ be the largest absolute value of an entry in $R$ less than $i$, if any exist.
    \begin{itemize}
      \item If $c_i \geq 0$, let $\bbb_R$ be the cell immediately right of the cell containing $j$, or the cell containing $\overline{j}$, or if $j$ does not exist then let $\bbb_R$ be the cell immediately right of the rightmost cell of the initial shape in $R$.
      \item If $c_i \leq 0$, let $\bbb_R$ be the cell containing $j$, or the cell immediately left of the cell containing $\overline{j}$, or otherwise the rightmost cell of the initial shape in $R$.
    \end{itemize}
  \end{itemize}
\end{definition}

See \Cref{fig:BK-toggle-example} for some examples.

\ytableausetup{boxsize=0.85cm}
\newsavebox{\BKtogAa}
\sbox{\BKtogAa}{%
\begin{tikzcd}
\begin{tikzpicture}[inner sep=0in,outer sep=0in]
\node (n) {\begin{varwidth}{5cm}{
\begin{ytableau}
  \none & \none & \  & 1 & 2 & 3 \\
  \none & \none & \  & *(light-blue)1 & 3\,\overline{4} \\
  \none & \none & \  & *(light-blue)1 \\
  \none & \none & \  & *(light-blue)2 \\
  \none & \none & 1  & 2 \\
  \none & \  \\
\end{ytableau}}
\end{varwidth}};
\draw[ultra thick,black]
    ($(n.north west) + (2*0.87cm, 0)$)
  --++(1*0.87cm, 0)
  --++(0, -4*0.87cm)
  --++(-1*0.87cm, 0)
  --++(0, -1*0.87cm)
  --++(-1*0.87cm, 0)
  --++(0, -1*0.87cm)
  --++(1*0.87cm, 0);
\end{tikzpicture}
\end{tikzcd}
}
\newsavebox{\BKtogAb}
\sbox{\BKtogAb}{%
\begin{tikzcd}
\begin{tikzpicture}[inner sep=0in,outer sep=0in]
\node (n) {\begin{varwidth}{5cm}{
\begin{ytableau}
  \none & \none & \  & 1 & 2 & 3 \\
  \none & \none & \  & *(light-blue)1 & 3\,\overline{4} \\
  \none & \none & \  & *(light-blue)2 \\
  \none & \none & \  & *(light-blue)2 \\
  \none & \none & 1  & 2 \\
  \none & \  \\
\end{ytableau}}
\end{varwidth}};
\draw[ultra thick,black]
    ($(n.north west) + (2*0.87cm, 0)$)
  --++(1*0.87cm, 0)
  --++(0, -4*0.87cm)
  --++(-1*0.87cm, 0)
  --++(0, -1*0.87cm)
  --++(-1*0.87cm, 0)
  --++(0, -1*0.87cm)
  --++(1*0.87cm, 0);
\end{tikzpicture}
\end{tikzcd}
}
\newsavebox{\BKtogBa}
\sbox{\BKtogBa}{%
\begin{tikzcd}
\begin{tikzpicture}[inner sep=0in,outer sep=0in]
\node (n) {\begin{varwidth}{5cm}{
\begin{ytableau}
  \none & \none & \none & \  & *(pink) 2 & 3 \\
  \none & \none & \none & *(green)\  & 3\,\overline{4} \\
  \none & \none & \none & *(green)\  \\
  \none & \none & \none & *(lime)\overline{1}\,2 \\
  \none & \none & \none & *(pink) 2 \\
  \none & *(pink) \overline{1} & \  \\
\end{ytableau}}
\end{varwidth}};
\draw[ultra thick,black]
    ($(n.north west) + (3*0.87cm, 0)$)
  --++(1*0.87cm, 0)
  --++(0, -4*0.87cm)
  --++(-1*0.87cm, 0)
  --++(0, -1*0.87cm)
  --++(-1*0.87cm, 0)
  --++(0, -1*0.87cm)
  --++(1*0.87cm, 0);
\end{tikzpicture}
\end{tikzcd}
}
\newsavebox{\BKtogBb}
\sbox{\BKtogBb}{%
\begin{tikzcd}
\begin{tikzpicture}[inner sep=0in,outer sep=0in]
\node (n) {\begin{varwidth}{5cm}{
\begin{ytableau}
  \none & \none & \none & \  & *(pink) 1 & 3 \\
  \none & \none & \none & \  & *(lime)1\,\overline{2}\,3\,\overline{4} \\
  \none & \none & \none & \  & *(green)\  \\
  \none & \none & \none & \  & *(green)\  \\
  \none & \none & \none & *(pink) 1 \\
  \none & *(pink) \overline{2} & \  \\
\end{ytableau}}
\end{varwidth}};
\draw[ultra thick,black]
    ($(n.north west) + (3*0.87cm, 0)$)
  --++(1*0.87cm, 0)
  --++(0, -4*0.87cm)
  --++(-1*0.87cm, 0)
  --++(0, -1*0.87cm)
  --++(-1*0.87cm, 0)
  --++(0, -1*0.87cm)
  --++(1*0.87cm, 0);
\end{tikzpicture}
\end{tikzcd}
}
\newsavebox{\BKtogCa}
\sbox{\BKtogCa}{%
\begin{tikzcd}
\begin{tikzpicture}[inner sep=0in,outer sep=0in]
\node (n) {\begin{varwidth}{5cm}{
\begin{ytableau}
  \none & \none & \  & *(pink) 1 & 3 & \none \\
  \none & \none & \  & *(lime)1\,\overline{2}\,3\,\overline{4} \\
  \none & \none & \  & *(lime)1\,\overline{2} \\
  \none & \none & \  & *(green)\  \\
  \none & \none & *(pink) 1 \\
  *(pink) \overline{2} & \  \\
\end{ytableau}}
\end{varwidth}};
\draw[ultra thick,black]
    ($(n.north west) + (2*0.87cm, 0)$)
  --++(1*0.87cm, 0)
  --++(0, -4*0.87cm)
  --++(-1*0.87cm, 0)
  --++(0, -1*0.87cm)
  --++(-1*0.87cm, 0)
  --++(0, -1*0.87cm)
  --++(1*0.87cm, 0);
\end{tikzpicture}
\end{tikzcd}
}
\newsavebox{\BKtogCb}
\sbox{\BKtogCb}{%
\begin{tikzcd}
\begin{tikzpicture}[inner sep=0in,outer sep=0in]
\node (n) {\begin{varwidth}{5cm}{
\begin{ytableau}
  \none & \none & \  & *(pink) 2 & 3 & \none \\
  \none & \none & *(green)\  & 3\,\overline{4} \\
  \none & \none & *(lime)\overline{1}\,2 \\
  \none & \none & *(lime)\overline{1}\,2 \\
  \none & \none & *(pink) 2 \\
  *(pink) \overline{1} & \  \\
\end{ytableau}}
\end{varwidth}};
\draw[ultra thick,black]
    ($(n.north west) + (2*0.87cm, 0)$)
  --++(1*0.87cm, 0)
  --++(0, -4*0.87cm)
  --++(-1*0.87cm, 0)
  --++(0, -1*0.87cm)
  --++(-1*0.87cm, 0)
  --++(0, -1*0.87cm)
  --++(1*0.87cm, 0);
\end{tikzpicture}
\end{tikzcd}
}
\newsavebox{\BKtogDa}
\sbox{\BKtogDa}{%
\begin{tikzcd}
\begin{tikzpicture}[inner sep=0in,outer sep=0in]
\node (n) {\begin{varwidth}{5cm}{
\begin{ytableau}
  \none & \none & \none & \  & 3 & \none\\
  \none & \none & \none & *(light-blue)\overline{2}\,3\,\overline{4} \\
  \none & \none & \none & *(light-blue)\overline{2} \\
  \none & \none & \none & *(light-blue)\overline{1} \\
  \none & \none & \none \\
  \overline{2} & \overline{1} & \  \\
\end{ytableau}}
\end{varwidth}};
\draw[ultra thick,black]
    ($(n.north west) + (3*0.87cm, 0)$)
  --++(1*0.87cm, 0)
  --++(0, -4*0.87cm)
  --++(-1*0.87cm, 0)
  --++(0, -1*0.87cm)
  --++(-1*0.87cm, 0)
  --++(0, -1*0.87cm)
  --++(1*0.87cm, 0);
\end{tikzpicture}
\end{tikzcd}
}
\newsavebox{\BKtogDb}
\sbox{\BKtogDb}{%
\begin{tikzcd}
\begin{tikzpicture}[inner sep=0in,outer sep=0in]
\node (n) {\begin{varwidth}{5cm}{
\begin{ytableau}
  \none & \none & \none & \  & 3 & \none \\
  \none & \none & \none & *(light-blue)\overline{2}\,3\,\overline{4} \\
  \none & \none & \none & *(light-blue)\overline{1} \\
  \none & \none & \none & *(light-blue)\overline{1} \\
  \none & \none & \none \\
  \overline{2} & \overline{1} & \  \\
\end{ytableau}}
\end{varwidth}};
\draw[ultra thick,black]
    ($(n.north west) + (3*0.87cm, 0)$)
  --++(1*0.87cm, 0)
  --++(0, -4*0.87cm)
  --++(-1*0.87cm, 0)
  --++(0, -1*0.87cm)
  --++(-1*0.87cm, 0)
  --++(0, -1*0.87cm)
  --++(1*0.87cm, 0);
\end{tikzpicture}
\end{tikzcd}
}
\ytableausetup{boxsize=0.8cm}

\begin{figure}[htb]
  \[
    \begin{tikzcd}
      \scalebox{0.5}{\usebox{\BKtogAa}} \rar[leftrightarrow]{\BK_1} \dar[Leftrightarrow,swap]{\toggle_2}
        & \scalebox{0.5}{\usebox{\BKtogAb}} \dar[Leftrightarrow]{\toggle_1}
        & \scalebox{0.5}{\usebox{\BKtogBa}} \rar[leftrightarrow]{\BK_1} \dar[Leftrightarrow,swap]{\toggle_2}
        & \scalebox{0.5}{\usebox{\BKtogBb}} \dar[Leftrightarrow]{\toggle_1} \\
      \scalebox{0.5}{\usebox{\BKtogCa}} \rar[leftrightarrow]{\BK_1}
        & \scalebox{0.5}{\usebox{\BKtogCb}}
        & \scalebox{0.5}{\usebox{\BKtogDa}} \rar[leftrightarrow]{\BK_1}
        & \scalebox{0.5}{\usebox{\BKtogDb}} \\
    \end{tikzcd}
  \]
\caption{Interactions between the $\BK_i$ involutions and the $\toggle_j$ involutions. Free cells are highlighted in light blue \textcolor{light-blue}{$\blacksquare$}. Forced cells are pink \textcolor{pink}{$\blacksquare$}, moving cells are light green \textcolor{lime}{$\blacksquare$}, and open cells are darker green \textcolor{green}{$\blacksquare$}.}\label{fig:BK-toggle-example}
\end{figure}

\begin{definition}
  For $1 \leq i \leq n-1$, the $i$th \emph{Bender--Knuth involution} $\BK_i$ on skew fluctuating tableaux of length $n$ is defined combinatorially as follows.
  \begin{itemize}
    \item If $c_i \cdot c_{i+1} \geq 0$, the free cells in each column form a connected segment, with $a$ copies of $i$ or $\overline{i+1}$ at the top and $b$ copies of $i+1$ or $\overline{i}$ at the bottom. Apply $\BK_i$ by replacing each such segment with $b$ copies of $i$ or $\overline{i+1}$ at the top and $a$ copies of $i+1$ or $\overline{i}$ at the bottom, leaving all other entries of $T$ unchanged.
    \item If $c_i \cdot c_{i+1} \leq  0$, the collection of moving and open cells in each column forms a single connected segment.
    \begin{itemize}
      \item If $c_i > 0$, such a segment has $a$ moving cells above  $b$ open cells. In this case, apply $\BK_i$ by first moving all $a$ copies of $i, \overline{i+1}$ from moving cells to the bottom $a$ cells immediately left of the segment and then replacing these labels with $\overline{i}, i+1$.
      \item If $c_i < 0$, such a segment has $a$ open cells above $b$ moving cells. In this case, apply $\BK_i$ by first moving all $b$ copies of $\overline{i}, i+1$ from moving cells to the top $b$ cells immediately right of the segment and then replacing these labels with $i, \overline{i+1}$.
    \end{itemize}
    Finally, in each forced cell, replace $i$ with $i+1$, $i+1$ with $i$, $\overline{i}$ with $\overline{i+1}$, and $\overline{i+1}$ with $\overline{i}$.
  \end{itemize}
\end{definition}

\begin{lemma}
  The combinatorial $\BK_i$ involutions are well-defined and agree with the local rule definition.
\end{lemma}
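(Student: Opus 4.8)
The plan is to prove well-definedness and agreement with the local rule simultaneously, after localizing to a single local rule square. Since $\BK_i$ alters only cells labeled $i,i+1,\overline{i}$, or $\overline{i+1}$ and changes $\lambda^{i-1}\to\lambda^i\to\lambda^{i+1}$ only in the middle shape, it suffices to treat $n=2$: a two-step skew fluctuating tableau $\kappa\too{c}\lambda\too{d}\nu$, for which we must check that the procedure of \Cref{def:free_etc} together with the combinatorial definition of $\BK_i$ is well-defined and outputs the tableau $\kappa\too{d}\mu\too{c}\nu$ with $\mu=\sort(\nu+\kappa-\lambda)$ produced by the local rule (a valid chain by \Cref{lem:local.rules}). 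If $c=0$ or $d=0$, then $\mu=\lambda$ or $\mu=\nu$, the combinatorial rule is a pure relabeling of the unique nonzero step, and the claim is immediate; so assume $c,d\neq 0$.

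Next I would reduce to the transpose semistandard case $c,d>0$. The case $c,d<0$ is handled by $\tau$ or $\varpi$: each commutes appropriately with the local-rule $\BK_i$ (\Cref{lem:BK.toggle.PEEd}), and directly from \Cref{def:free_etc} one sees that it also sends free, forced, moving, and open cells to cells of the same kind, so it commutes with the combinatorial $\BK_i$ as well. For the mixed case $c\cdot d<0$, I would use $\toggle_i$, which replaces the step-$i$ skew column by its complement and shifts $\lambda,\nu$ by $\pm\mathbf{1}$, reversing the sign of $c$; thus $\toggle_i(\kappa\too{c}\lambda\too{d}\nu)$ has its two steps of equal sign. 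The crucial observation is that $\toggle_i$ interchanges moving cells with open cells---precisely the dichotomy that \Cref{def:free_etc} was built to encode---and from this one verifies that the combinatorial $\BK_i$ satisfies $\BK_i\circ\toggle_i=\toggle_{i+1}\circ\BK_i$, which holds for the local-rule $\BK_i$ by \eqref{eq:BK.toggle}. Hence the mixed case reduces to the equal-sign case (no circularity, since the combinatorial $\toggle$-equivariance is checked directly from the definitions), and the well-definedness of the combinatorial rule in the mixed case---connectivity of the moving-plus-open block in each column, and validity of the designated open cell---is transported from the equal-sign case along the same correspondence.

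It remains to treat $c,d>0$, i.e.\ two skew-column additions, where the combinatorial $\BK_i$ is the (skew, transposed) classical Bender--Knuth involution of \cite{Bender.Knuth,Stembridge}. Writing $\lambda=\kappa+\mathbf{e}_S$ and $\nu=\lambda+\mathbf{e}_T$, one first checks that in any fixed column the cells carrying an unmatched $i$ sit directly above those carrying an unmatched $i+1$ and together form a contiguous block---this is exactly the interleaving of the symmetric-difference classes of $\kappa$ exploited in the proof of \Cref{lem:local.rules}, and it gives well-definedness. For the agreement, compute $\mu=\sort(\nu+\kappa-\lambda)=\sort(\kappa+\mathbf{e}_T)$ and check, block by block among rows of equal $\kappa$-value, that the passage from $\lambda$ to $\mu$ matches the count swap $(a,b)\mapsto(b,a)$ that the combinatorial $\BK_i$ performs on each column's free cells; tracking which cell acquires which label then shows the two output tableaux coincide.

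I expect the main obstacle to be the bookkeeping of the mixed-sign case: checking carefully that $\toggle_i$ maps the designated open cells bijectively onto the moving cells, intertwines the two slide subcases ($c_i>0$ versus $c_i<0$) consistently with the $\pm\mathbf{1}$ shift of the ambient diagram, and therefore yields the combinatorial analogue of \eqref{eq:BK.toggle}. Once that is in place, the equal-sign case is the classical Bender--Knuth computation and everything else is routine symmetry.
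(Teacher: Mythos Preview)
Your proposal is correct and follows essentially the same approach as the paper: verify the same-sign case directly against the local rule computation from \Cref{lem:local.rules}, and reduce the mixed-sign case to the same-sign case via the $\toggle$ involutions using \eqref{eq:BK.toggle}. The one unnecessary detour is your separate treatment of $c,d<0$ via $\tau$ or $\varpi$; the paper's combinatorial rule for free cells already handles both same-sign cases uniformly, so no additional symmetry reduction is needed there.
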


\begin{proof}
  When $c_i \cdot c_{i+1} \geq 0$, it is straightforward to see that the combinatorial description agrees with the local rule description in the proof of \Cref{lem:local.rules}. When $c_i \cdot c_{i+1} \leq 0$, the combinatorial description agrees with that obtained by applying  $\toggle$ operators to the first case as in \eqref{eq:BK.toggle}.
\end{proof}

\newsavebox{\BKa}
\sbox{\BKa}{%
  \begin{tikzpicture}[inner sep=0in,outer sep=0in]
    \node (n) {\begin{varwidth}{5cm}{
    \begin{ytableau}
      \none & *(pink)1 & 3\overline{7} \\
      \none & *(pink)1 & 4\overline{5} \\
      \none & *(green) 3\overline{5}6 \\
      *(pink) \overline{2}3 & 6 \\
    \end{ytableau}}\end{varwidth}};
    \draw[ultra thick,black] ([xshift=0.8cm]n.south west)--([xshift=0.8cm]n.north west);
  \end{tikzpicture}
}
\newsavebox{\BKb}
\sbox{\BKb}{%
  \begin{tikzpicture}[inner sep=0in,outer sep=0in]
    \node (n) {\begin{varwidth}{5cm}{
    \begin{ytableau}
      \none & 2 & 3\overline{7} \\
      \none & *(light-blue)2 & 4\overline{5} \\
      \none & *(light-blue)3\overline{5}6 \\
     *(light-blue) \overline{1}3 & 6 \\
    \end{ytableau}}\end{varwidth}};
    \draw[ultra thick,black] ([xshift=0.8cm]n.south west)--([xshift=0.8cm]n.north west);
  \end{tikzpicture}
}
\newsavebox{\BKc}
\sbox{\BKc}{%
  \begin{tikzpicture}[inner sep=0in,outer sep=0in]
    \node (n) {\begin{varwidth}{5cm}{
    \begin{ytableau}
      \none & 2 & *(light-blue) 3\overline{7} \\
      \none & 2 & *(light-blue) 4\overline{5} \\
      \none & *(light-blue)3\overline{5}6 \\
      \overline{1}2 & 6 \\
    \end{ytableau}}\end{varwidth}};
    \draw[ultra thick,black] ([xshift=0.8cm]n.south west)--([xshift=0.8cm]n.north west);
  \end{tikzpicture}
}
\newsavebox{\BKd}
\sbox{\BKd}{%
  \begin{tikzpicture}[inner sep=0in,outer sep=0in]
    \node (n) {\begin{varwidth}{5cm}{
    \begin{ytableau}
      \none & 2 & 3\overline{7} &  *(green)\ \\
      \none & 2 & *(lime) 4\overline{5} \\
      \none & *(lime) 4\overline{5}6 \\
      \overline{1}2 & *(green) 6 \\
    \end{ytableau}}\end{varwidth}};
    \draw[ultra thick,black] ([xshift=0.8cm]n.south west)--([xshift=0.8cm]n.north west);
  \end{tikzpicture}
}
\newsavebox{\BKe}
\sbox{\BKe}{%
  \begin{tikzpicture}[inner sep=0in,outer sep=0in]
    \node (n) {\begin{varwidth}{5cm}{
    \begin{ytableau}
      \none & 2 & 3\overline{7} \\
      \none & *(light-blue)2\overline{4}5 \\
      \none & *(light-blue)6 \\
      \overline{1}2\overline{4}5 & 6 \\
    \end{ytableau}}\end{varwidth}};
    \draw[ultra thick,black] ([xshift=0.8cm]n.south west)--([xshift=0.8cm]n.north west);
  \end{tikzpicture}
}
\newsavebox{\BKf}
\sbox{\BKf}{%
  \begin{tikzpicture}[inner sep=0in,outer sep=0in]
    \node (n) {\begin{varwidth}{5cm}{
    \begin{ytableau}
      \none & 2 & *(pink) 3\overline{7} \\
      \none & 2\overline{4}5 & *(green)\ \\
      \none & *(pink)6 \\
      \overline{1}2\overline{4}5 & *(pink)6 \\
    \end{ytableau}}\end{varwidth}};
    \draw[ultra thick,black] ([xshift=0.8cm]n.south west)--([xshift=0.8cm]n.north west);
  \end{tikzpicture}
}
\newsavebox{\BKg}
\sbox{\BKg}{%
  \begin{tikzpicture}[inner sep=0in,outer sep=0in]
    \node (n) {\begin{varwidth}{5cm}{
    \begin{ytableau}
      \none & 2 & 3\overline{6} \\
      \none & 2\overline{4}5 \\
      \none & 7 \\
      \overline{1}2\overline{4}5 & 7 \\
    \end{ytableau}}\end{varwidth}};
    \draw[ultra thick,black] ([xshift=0.8cm]n.south west)--([xshift=0.8cm]n.north west);
  \end{tikzpicture}
}

\begin{example}\label{ex:running.BK_P}
Using the fluctuating tableau $T$ from \Cref{fig:ft-example}, we compute $\promotion(T)$ with the following composite of Bender--Knuth involutions:
\[
\begin{tikzpicture}
\node (begin) at (0,0) {$T = $};
\node [right = 0.0cm of begin] (0)  {\scalebox{0.85}{\usebox{\BKa}}};
\node [right = 0.5cm of 0] (1) {\scalebox{0.85}{\usebox{\BKb}}};
\node [right = 0.5cm of 1] (2) {\scalebox{0.85}{\usebox{\BKc}}};
\node [right = 0.5cm of 2] (3) {\scalebox{0.85}{\usebox{\BKd}}};
\node [below = 0.5cm of 3] (4) {\scalebox{0.85}{\usebox{\BKe}}};
\node [left = 0.5cm of 4] (5) {\scalebox{0.85}{\usebox{\BKf}}};
\node [left = 0.5cm of 5] (6) {\scalebox{0.85}{\usebox{\BKg}}};
\node [left = 0.0cm of 6] (end) {$\promotion(T)=$};
\draw[pil] (0) -- (1) node[midway,above] {$\BK_1$};
\draw[pil] (1) -- (2) node[midway,above] {$\BK_2$};
\draw[pil] (2) -- (3) node[midway,above] {$\BK_3$};
\draw[pil] (3) -- (4) node[midway,right] {$\BK_4$};
\draw[pil] (4) -- (5) node[midway,above] {$\BK_5$};
\draw[pil] (5) -- (6) node[midway,above] {$\BK_6$};
\end{tikzpicture}.
\]
Here, we have used the same color-coding as in \Cref{fig:BK-toggle-example}.
\end{example}

\subsection{Oscillization and local rules}

It is sometimes useful to oscillize growth diagrams. We begin by oscillizing local rules as follows. Recall $\std(\mu \too{c} \lambda)$ from \Cref{def:std}.

\begin{definition}
  The \emph{oscillization} of the local rule diagram \eqref{eq:local_rule} is the diagram:
  \begin{equation}\label{eq:local_rule.std}
  \std\left(
    \begin{tikzcd}
    \lambda \rar{d}
      & \nu \\
    \kappa \uar{c} \ar{r}[swap]{d}
      & \mu \ar{u}[swap]{c}
    \end{tikzcd}\right)
    \coloneqq
    \begin{tikzcd}
      \lambda \rar
        & \lambda^1 \rar
        & \cdots \rar
        & \lambda^{|d|-1} \rar
        & \nu \\
      \kappa^{|c|-1} \rar \uar
        & \cdot \rar \uar
        & \cdots \rar \uar
        & \cdot \rar \uar
        & \nu^{|c|-1} \uar \\
      \vdots \rar \uar
        & \vdots \rar \uar
        & \iddots \rar \uar
        & \vdots \rar \uar
        & \vdots \uar \\
      \kappa^1 \rar \uar
        & \cdot \rar \uar
        & \cdots \rar \uar
        & \cdot \rar \uar
        & \nu^1 \uar \\
      \kappa \rar \uar
        & \mu^1 \rar \uar
        & \cdots \rar \uar
        & \mu^{|d|-1} \rar \uar
        & \mu \uar
    \end{tikzcd}
  \end{equation}
  where the edges on the boundary of the large rectangle are $\std(\kappa \too{d} \mu)$, $\std(\lambda \too{d} \nu)$, $\std(\kappa \too{c} \lambda)$, and $\std(\mu \too{c} \nu)$ and the cells are filled using local rules.
\end{definition}

Note that we may fill the right-hand side of \eqref{eq:local_rule.std} from the upper left or from the lower right, or some mixture of the two. Every edge in the right-hand side of \eqref{eq:local_rule.std} either adds or removes a single cell.

In \Cref{lem:standard_welldef} we show the oscillization of a local rule diagram is well-defined. The proof uses the following lemma.

\begin{lemma}\label{lem:local_rule.refinement}
  Consider the following refinement of the local rule diagram \eqref{eq:local_rule}:
  \begin{equation}\label{eq:2X3}
  \begin{tikzcd}
  \lambda \rar{d_1}
    & \beta \rar{d_2}
    & \nu \\
  \kappa \uar{c} \ar{r}[swap]{d_1}
    & \alpha \uar{c} \ar{r}[swap]{d_2}
    & \mu \ar{u}[swap]{c}
  \end{tikzcd}.
  \end{equation}
  Here, $\kappa \too{d_1} \alpha \too{d_2} \mu$ and $\lambda \too{d_1} \beta \too{d_2} \nu$ have been obtained from $\std(\kappa \too{d} \mu)$ and $\std(\lambda \too{d} \nu)$ by collapsing edges. Then both squares in \eqref{eq:2X3} are local rule diagrams.
\end{lemma}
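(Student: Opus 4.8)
\smallskip
\noindent\emph{Proof proposal.} The plan is to verify directly, for each of the two squares in \eqref{eq:2X3}, the two $\sort$-identities that define a local rule diagram. First I would reduce to the case in which every horizontal step adds cells. Applying the involution $\tau$ turns the $2\times 3$ diagram upside down, interchanges the roles of the two squares, replaces column removals by column additions, and carries local rule diagrams and oscillizations to local rule diagrams and oscillizations; so we may assume $d\ge 0$, and hence $d_1,d_2\ge 0$, with the cases $d_1=0$ or $d_2=0$ being vacuous. Write $\lambda=\kappa+\mathbf{e}_S$, $\nu=\lambda+\mathbf{e}_T$ with $S,T\subseteq[r]$ and $T=\{t_1<\cdots<t_d\}$. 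Since collapsing \eqref{eq:2X3} fully gives the local rule diagram \eqref{eq:local_rule}, we have $\mu=\sort(\nu+\kappa-\lambda)=\sort(\kappa+\mathbf{e}_T)$, and collapsing the oscillizations yields $\beta=\lambda+\mathbf{e}_{T_1}$ and $\nu=\beta+\mathbf{e}_{T_2}$, where $T_1=\{t_1,\dots,t_{d_1}\}$ and $T_2=T\setminus T_1$.

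The crux is an explicit description of the oscillization $\std(\kappa\too{d}\mu)$. Setting $T^{(k)}=\{t_1,\dots,t_k\}$ and $M^{(k)}=\sort(\kappa+\mathbf{e}_{T^{(k)}})$, I would prove by induction on $k$ that $\std(\kappa\too{d}\mu)$ is exactly the chain $\kappa=M^{(0)}\to M^{(1)}\to\cdots\to M^{(d)}=\mu$, that $M^{(k)}=M^{(k-1)}+\mathbf{e}_{p_k}$ with $p_k=\min\{i:M^{(k-1)}_i=\kappa_{t_k}\}$, and that $p_{k-1}<p_k\le t_k$. The inductive step rests on the observation that the entry in row $t_k$ of $M^{(k-1)}$ is still $\kappa_{t_k}$ (it is untouched by the earlier single-cell additions, since $p_i\le t_i<t_k$ for $i<k$), so that $M^{(k)}$ is obtained from the generalized partition $M^{(k-1)}$ by replacing one occurrence of the value $\kappa_{t_k}$ by $\kappa_{t_k}+1$ and re-sorting, which simply bumps the topmost such entry; the inequalities $p_{k-1}<p_k\le t_k$ then drop out of the same observation. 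Granting this, collapsing after $d_1$ steps gives $\alpha=M^{(d_1)}=\sort(\kappa+\mathbf{e}_{T_1})=\kappa+\mathbf{e}_{U_1}$, where $U_1=\{p_1,\dots,p_{d_1}\}$, and $\mu=\kappa+\mathbf{e}_U$ with $U=\{p_1,\dots,p_d\}$; moreover $p_{d_1}\le t_{d_1}<t_{d_1+1}$ forces $U_1\cap T_2=\varnothing$.

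With this in hand, both squares follow from \Cref{lem:local.rules} applied along their top-left paths. For the left square, \Cref{lem:local.rules} applied to $\kappa\too{c}\lambda\too{d_1}\beta$ shows that filling the missing corner with $\sort(\beta+\kappa-\lambda)=\sort(\kappa+\mathbf{e}_{T_1})=\alpha$ produces a local rule diagram, which is exactly the left square of \eqref{eq:2X3}. For the right square, \Cref{lem:local.rules} applied to $\alpha\too{c}\beta\too{d_2}\nu$ shows that filling the missing corner with $\sort(\nu+\alpha-\beta)=\sort(\alpha+\mathbf{e}_{T_2})$ produces a local rule diagram, so it only remains to identify $\sort(\alpha+\mathbf{e}_{T_2})$ with $\mu$. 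Since $U_1\cap T_2=\varnothing$, the entries of $\alpha=\kappa+\mathbf{e}_{U_1}$ in the rows of $T_2$ equal those of $\kappa$, so $\alpha+\mathbf{e}_{T_2}$ and $\kappa+\mathbf{e}_T$ have the same multiset of entries: indeed $\alpha=\sort(\kappa+\mathbf{e}_{T_1})$ forces the multisets $\{\kappa_t:t\in T_1\}$ and $\{\kappa_p:p\in U_1\}$ to coincide, hence also $\{\kappa_t:t\in T_2\}$ and $\{\kappa_p:p\in U\setminus U_1\}$, and the multiset of $\alpha+\mathbf{e}_{T_2}$ then matches that of $\kappa+\mathbf{e}_T$. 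Therefore $\sort(\alpha+\mathbf{e}_{T_2})=\sort(\kappa+\mathbf{e}_T)=\mu$, and the right square of \eqref{eq:2X3} is a local rule diagram.

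I expect the main obstacle to be the second paragraph: showing that the oscillization chain of $\kappa\too{d}\mu$ is literally the sequence of sorted partial column additions to $\kappa$, and in particular the monotonicity bound $p_k\le t_k$ that yields the disjointness $U_1\cap T_2=\varnothing$. This disjointness is precisely what makes the \emph{second} square a local rule diagram; for an arbitrary refinement of the horizontal edge the conclusion genuinely fails, so the content of the lemma is exactly the compatibility of the oscillization-induced refinement with the local rules. Everything else is an application of \Cref{lem:local.rules} together with routine multiset bookkeeping.
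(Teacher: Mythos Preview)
Your proof is correct and takes a genuinely different route from the paper's. The paper rephrases the claim as $S'=\BK_2\circ\BK_1(T')$ where $T'=\kappa\too{c}\lambda\too{d_1}\beta\too{d_2}\nu$ and $S'=\kappa\too{d_1}\alpha\too{d_2}\mu\too{c}\nu$, then appeals to the \emph{tableau-theoretic} description of the Bender--Knuth involutions from \Cref{sec:BK.comb}: after using $\toggle$ to make $c,d\geq 0$, the free cells in each column are handled independently, so one may assume a single column of $c$ copies of $1$ above $d$ copies of $2$, where the claim is a one-line check.

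By contrast, you work entirely with the $\sort$ formula and \Cref{lem:local.rules}, never invoking the combinatorial $\BK$. The key content is your explicit description of the oscillization chain $\std(\kappa\too{d}\mu)$ as the sequence $M^{(k)}=\sort(\kappa+\mathbf{e}_{T^{(k)}})$ with $M^{(k)}=M^{(k-1)}+\mathbf{e}_{p_k}$ and the monotonicity $p_{k-1}<p_k\le t_k$. This is a clean standalone observation and yields the disjointness $U_1\cap T_2=\varnothing$, from which both squares follow directly via \Cref{lem:local.rules}. Your approach is more self-contained (it does not depend on \Cref{sec:BK.comb}) and gives a reusable characterization of oscillization; the paper's approach is shorter once the combinatorial $\BK$ is available, and makes the column-by-column locality transparent. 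One small remark: in your multiset argument at the end, the detour through ``$\{\kappa_t:t\in T_1\}=\{\kappa_p:p\in U_1\}$'' is unnecessary---once you know $\alpha_t=\kappa_t$ for $t\in T_2$, the multiset of $\alpha+\mathbf{e}_{T_2}$ is obtained from that of $\alpha$ (which equals that of $\kappa+\mathbf{e}_{T_1}$) by replacing each value $\kappa_t$, $t\in T_2$, with $\kappa_t+1$, which is exactly the multiset of $\kappa+\mathbf{e}_T$.
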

\begin{proof}
  Let $T = \kappa \too{c} \lambda \too{d} \nu$ and $S = \BK_1(T) = \kappa \too{d} \mu \too{c} \nu$. Further let $T' = \kappa \too{c} \lambda \too{d_1} \beta \too{d_2} \nu$ and $S' = \kappa \too{d_1} \alpha \too{d_2} \mu \too{c} \nu$. The lemma follows from showing that $S' = \BK_2 \circ \BK_1(T')$. This may be verified directly from the combinatorial description of the Bender--Knuth involutions. Using $\toggle$ involutions as necessary, we may reduce to the case that $c, d \geq 0$ and the free cells of $T$ form a single column with $c$ copies of $1$ above $d$ copies of $2$. Then, $T'$ has $c$ copies of $1$ above $d_1$ copies of $2$ above $d_2$ copies of $3$, so that $\BK_2 \circ \BK_1(T')$ has $d_1$ copies of $1$ above $d_2$ copies of $2$ above $c$ copies of $3$. On the other hand, $S$ has $d$ copies of $1$ above $c$ copies of $2$, so $S'$ also has $d_1$ copies of $1$ above $d_2$ copies of $2$ above $c$ copies of $3$.
\end{proof}

\begin{lemma}\label{lem:standard_welldef}
  The oscillization of a local rule diagram is well-defined.
\end{lemma}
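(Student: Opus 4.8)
The plan is to prove the statement by exhibiting one particular grid $G$ filling \eqref{eq:local_rule.std} that has all the desired properties, and then showing that \emph{every} legal way of filling \eqref{eq:local_rule.std} by local rules must produce $G$. I would build $G$ by iterating the refinement of \Cref{lem:local_rule.refinement}, first horizontally and then vertically, and I would prove uniqueness by a short induction on antidiagonals, using that a local rule determines any one corner of a unit square from the other three. The step I expect to require the most care is the compatibility of the two-directional refinement used to construct $G$: one must be sure that after the horizontal refinement every vertical strip still has single-cell top and bottom edges, and that a vertical edge shared by two adjacent strips receives the same refinement from each side.

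For the construction of $G$, I would first apply \Cref{lem:local_rule.refinement} repeatedly to \eqref{eq:local_rule}, splitting the top edge $\lambda \too{d} \nu$ and the bottom edge $\kappa \too{d} \mu$ one cell at a time until both have been refined to $\std(\lambda \too{d}\nu)$ and $\std(\kappa \too{d}\mu)$. Each such step yields two local rule diagrams by that lemma, so after $|d|-1$ steps I obtain a $2 \times (|d|+1)$ array of local rule diagrams whose vertical edges still carry the label $c$. I would then pass to the transposed version of \Cref{lem:local_rule.refinement} (valid because local rule diagrams are preserved under transposition by \Cref{lem:local.rules}) and refine the two vertical edges of each of the $|d|$ vertical strips one cell at a time down to their $\std$-refinements; this is legitimate precisely because the horizontal refinement left each strip with single-cell top and bottom edges. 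The point that makes the strips glue is that \Cref{lem:local_rule.refinement} forces each refined vertical edge to be $\std$ of its two endpoints, which depends only on the edge itself; so the shared edge of two adjacent strips is refined identically on both sides, and the refined strips assemble into a single $(|c|+1)\times(|d|+1)$ grid $G$ whose every unit square is a local rule diagram and whose four sides are $\std(\lambda \too{d}\nu)$, $\std(\kappa \too{c}\lambda)$, $\std(\kappa \too{d}\mu)$, and $\std(\mu \too{c}\nu)$, as prescribed in \eqref{eq:local_rule.std}.

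For uniqueness, I would index the vertices of the $(|c|+1)\times(|d|+1)$ grid as $v_{i,j}$, with $i$ counting rows from the top and $j$ counting columns from the left (both from $0$), so that the unit square with lower-right corner $v_{i,j}$ has its other corners at $v_{i-1,j-1}$, $v_{i-1,j}$, $v_{i,j-1}$. Let $G'$ be any grid filling \eqref{eq:local_rule.std} all of whose unit squares are local rule diagrams and whose top row and left column agree with those of $G$. Then by induction on $i+j$ one gets $v'_{i,j}=v_{i,j}$ everywhere: the cases $i=0$ and $j=0$ are the hypothesis, and for $i,j\geq 1$ the local-rule condition \eqref{eq:local_rule.sort} for the square with lower-right corner $v'_{i,j}$ gives
\[
  v'_{i,j}=\sort\bigl(v'_{i-1,j}+v'_{i,j-1}-v'_{i-1,j-1}\bigr)=\sort\bigl(v_{i-1,j}+v_{i,j-1}-v_{i-1,j-1}\bigr)=v_{i,j},
\]
where the middle equality is the inductive hypothesis. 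Thus $G'=G$; in particular the bottom row and right column of any such $G'$ are forced to be $\std(\kappa\too{d}\mu)$ and $\std(\mu\too{c}\nu)$.

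To finish, I would observe that any filling of \eqref{eq:local_rule.std} by local rules — carried out from the upper left, from the lower right, or in any mixed order — terminates in a grid each of whose unit squares is a local rule diagram; here \Cref{lem:local.rules} is used to know that completing either a lower-right or an upper-left corner of a square from the other three genuinely produces a local rule diagram, so the two conditions in \eqref{eq:local_rule.sort} are interchangeable during the process. Since the top row and left column are part of the prescribed boundary, such a grid satisfies the hypotheses of the uniqueness step and hence equals $G$. Therefore the result of \eqref{eq:local_rule.std} is independent of the order in which local rules are applied, and is automatically consistent with the prescribed bottom row and right column; that is, the oscillization of a local rule diagram is well-defined. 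As noted, the only genuinely delicate point in the argument is the bookkeeping in the construction of $G$ — ensuring the horizontal and vertical refinements are mutually compatible — and this is resolved by the canonicity of $\std$.
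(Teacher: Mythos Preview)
Your argument is essentially correct and rests on the same key ingredient as the paper's proof, namely \Cref{lem:local_rule.refinement}. There is one slip in the final paragraph that you should fix: your uniqueness step is stated only from the upper left (agreement on the top row and left column forces the whole grid), but you then apply it to a filling carried out from the lower right. Such a filling uses the prescribed bottom and right boundaries to compute the interior; its top row and left column are outputs, not inputs, so saying ``the top row and left column are part of the prescribed boundary'' begs the question. The repair is immediate: your antidiagonal induction works equally well from the lower right (using the other half of \eqref{eq:local_rule.sort}, which is exactly what \Cref{lem:local.rules} provides), so any grid of local rule squares sharing $G$'s bottom and right also equals $G$. With both directions of uniqueness in hand, the mixed-order case follows by extending the upper-left region and the lower-right region separately to the full rectangle, observing each equals $G$, and hence that they agree on the interface path.

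By way of comparison, the paper's proof is a more compressed version of the same idea. Rather than building $G$ by fully refining horizontally and then vertically, it reduces directly to showing that the upper-left fill produces the prescribed bottom and right edges, and proves this by a single induction on $|c|+|d|$: peel off one row or column using \Cref{lem:local_rule.refinement} and apply the inductive hypothesis to both pieces. Your two-phase construction unpacks this induction explicitly and adds a clean determinism argument; the paper's packaging is shorter but less explicit about the intermediate grids. Both arguments hinge on the same observation you flagged as delicate---that the shared vertical edge between adjacent strips is refined canonically by $\std$---which in the paper's inductive phrasing is the statement that the inner edge of the refinement is itself a $\too{c}$ step whose oscillization is determined.
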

\begin{proof}
  Filling the right-hand side of \eqref{eq:local_rule.std} results in an upper-left-justified collection $A$ of squares and a lower-right-justified collection $B$ of squares that meet along a path traveling by north and east steps from the lower-left corner to the upper-right corner. We must show $A$ and $B$ agree on this path. By extending $A$ to the full $|c| \times |d|$ rectangle and using symmetry of local rules, it suffices to take $B = \varnothing$. That is, given the boundary edges $\std(\kappa \to \lambda)$ and $\std(\lambda \to \nu)$, we must show the remaining edges are $\std(\kappa \to \mu)$ and $\std(\mu \to \nu)$. We may now induct on $|c|+|d|$ by \Cref{lem:local_rule.refinement}. The base case $|c|=|d|=1$ is the fundamental symmetry from \Cref{lem:local.rules}.
\end{proof}

\begin{definition}
  Let
  \begin{equation}
  \std\left(
    \begin{tikzcd}
    \lambda \rar{d}
      & \nu \\
    \ 
      & \lambda \ar{u}[swap]{d}
    \end{tikzcd}\right)
    \coloneqq
    \Ediagram(\std(\lambda \too{d} \nu))
  \end{equation}
  and
  \begin{equation}
  \std\left(
    \begin{tikzcd}
    \lambda \\
    \kappa \uar{c} \ar{r}[swap]{c}
      & \lambda
    \end{tikzcd}\right)
    \coloneqq
    \Eddiagram(\std(\kappa \too{c} \lambda)).
  \end{equation}
\end{definition}

\begin{definition}
  Let $\std(\Pdiagram(T))$, $\std(\Ediagram(T))$, $\std(\Eddiagram(T))$, and $\std(\PEdiagram(T))$ be obtained by oscillizing each component.
\end{definition}

The lemma below follows by \Cref{lem:standard_welldef}. 
\begin{lemma}
  The operation $\std$ commutes with $\Pdiagram$, $\Ediagram$, $\Eddiagram$, and $\PEdiagram$.
\end{lemma}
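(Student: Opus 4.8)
The plan is to exploit the fact that each of $\Pdiagram(T)$, $\Ediagram(T)$, $\Eddiagram(T)$, and $\PEdiagram(T)$ is assembled by gluing together local rule diagrams --- the elementary unit squares, together with the degenerate triangular pieces along the diagonals of the two evacuation diagrams --- and that, by construction, $\std$ acts on such an assembly by oscillizing each piece. So I would reduce the statement to two ingredients: first, that oscillizing a single local rule diagram produces a rectangular array of unit-step local rule diagrams whose outer boundary is the oscillization of the original boundary, which is exactly the content of \Cref{lem:standard_welldef}; and second, that each growth diagram is uniquely determined by its boundary data (the input tableau, together with the equality edges appearing in the stated boundary conditions) and the requirement that every constituent square obey a local rule.

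First I would carry out the case of $\Pdiagram$ in detail. Oscillizing each square of $\Pdiagram(T)$ gives, by \Cref{lem:standard_welldef}, a grid in which every unit square is a local rule diagram and along whose top border the edges spell out $\std$ of the top row of $\Pdiagram(T)$, namely $\std(T)$; moreover the corner equalities $\lambda^{00} = \lambda^{11}$ and $\lambda^{0n} = \lambda^{1,n+1}$ survive oscillization, since a repeated shape is left untouched. Thus $\std(\Pdiagram(T))$ is a local-rule filling of a grid of the same combinatorial type as $\Pdiagram(\std(T))$ and with exactly the boundary data defining $\Pdiagram(\std(T))$. Because that diagram is filled deterministically --- proceeding left to right, each square completed in the unique way permitted by \Cref{lem:local.rules} --- the two grids must coincide, giving $\std(\Pdiagram(T)) = \Pdiagram(\std(T))$. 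Then I would observe that $\Ediagram$, $\Eddiagram$, and $\PEdiagram$ are handled identically: in each case $\std$ of the diagram is again a local-rule filling (using \Cref{lem:standard_welldef} for the generic squares and the ad hoc definitions $\std(\,\cdot\,) \coloneqq \Ediagram(\std(\,\cdot\,))$, resp.\ $\Eddiagram(\std(\,\cdot\,))$, for the degenerate triangular pieces) of the appropriate grid for $\std(T)$ with the correct boundary, and that grid is determined by its boundary. Alternatively the $\PEdiagram$ case can be bootstrapped from the other two via the concatenation identity \eqref{eq:pediagram.sum} together with the fact that the rows of $\PEdiagram(T)$ are the iterated promotions of $T$.

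The step I expect to be the main obstacle is the bookkeeping needed to verify that oscillizing $\Pdiagram(T)$ square-by-square reproduces precisely the grid shape of $\Pdiagram(\std(T))$: one must check that replacing each $c_i$-labelled edge by $|c_i|$ unit steps, performed compatibly around every square via \Cref{lem:standard_welldef}, yields exactly the staircase/parallelogram combinatorics of $\Pdiagram(\std(T))$, including the behaviour at the degenerate corner squares encoded by the equality edges. Once \Cref{lem:standard_welldef} is available this amounts to a direct comparison of shapes, so the overall argument should be short.
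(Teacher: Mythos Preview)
Your proposal is correct and takes essentially the same approach as the paper. In fact, the paper's own proof is the single sentence ``follows by \Cref{lem:standard_welldef},'' and what you have written is precisely the detailed unpacking of why that lemma, together with the determinism of local-rule fillings from boundary data, suffices.
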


\subsection{Jeu de taquin}\label{sec:jdt}

In a local rule diagram
  \[
  \begin{tikzcd}
  \lambda \rar{d}
    & \nu \\
  \kappa \uar{c} \ar{r}[swap]{d}
    & \mu \ar{u}[swap]{c}
  \end{tikzcd}
  \]
we may mark the $|c|$ added or removed cells in the fluctuating tableaux $\kappa \too{c} \lambda$ and $\mu \too{c} \nu$ with $\bullet$'s if $c \geq 0$ or $\overline{\bullet}$'s if $c \leq 0$; see \Cref{ex:bullets}. Note that we may recover the full local rule diagram from its top row $\lambda \too{d} \nu$ together with the $\bullet$ or $\overline{\bullet}$ markings. Roughly speaking, when computing $\promotion(T)$, instead of directly computing $\BK_1(T), \BK_2 \circ \BK_1(T), \ldots$, we may track the positions of the $\bullet$'s or $\overline{\bullet}$'s in the top row. To formalize this, we introduce \textit{jeu de taquin} for fluctuating tableaux. We again begin with a definition via local rules, and then give a tableau-theoretic description, generalizing the description in the generalized oscillating case from~\cite{Patrias}.

\newsavebox{\JDTLRa}
\sbox{\JDTLRa}{%
  \begin{tikzpicture}[inner sep=0in,outer sep=0in]
    \node (n) {\begin{varwidth}{5cm}{
    \begin{ytableau}
      \none & \none & \none & \bullet \\
      \none & \none & \none & \bullet \\
      \none & \none & \none & \bullet \\
      \none & \none & \none \\
      \bullet \\
    \end{ytableau}}\end{varwidth}};
    \draw[ultra thick,black]
        ($(n.north west) + (2*0.815cm, 0)$)
      --++(0.815cm, 0)
      --++(0, -4*0.815cm)
      --++(-3*0.815cm, 0)
      --++(0, -0.815cm)
      --++(2*0.815cm, 0);
  \end{tikzpicture}
}
\newsavebox{\JDTLRb}
\sbox{\JDTLRb}{%
  \begin{tikzpicture}[inner sep=0in,outer sep=0in]
    \node (n) {\begin{varwidth}{5cm}{
    \begin{ytableau}
      \none & \none & \none & \ & \bullet \\
      \none & \none & \none & \ & \bullet \\
      \none & \none & \none & \ \\
      \none & \none & \none & \bullet \\
      \bullet \\
    \end{ytableau}}\end{varwidth}};
    \draw[ultra thick,black]
        ($(n.north west) + (2*0.815cm, 0)$)
      --++(0.815cm, 0)
      --++(0, -4*0.815cm)
      --++(-3*0.815cm, 0)
      --++(0, -0.815cm)
      --++(2*0.815cm, 0);
  \end{tikzpicture}
}
\newsavebox{\JDTLRc}
\sbox{\JDTLRc}{%
  \begin{tikzpicture}[inner sep=0in,outer sep=0in]
    \node (n) {\begin{varwidth}{5cm}{
    \begin{ytableau}
      \none & \none & \none & \none \\
      \none & \none & \none \\
      \none & \none & \none \\
      \none & \none & \none \\
      \none \\
    \end{ytableau}}\end{varwidth}};
    \draw[ultra thick,black]
        ($(n.north west) + (2*0.815cm, 0)$)
      --++(0.815cm, 0)
      --++(0, -4*0.815cm)
      --++(-3*0.815cm, 0)
      --++(0, -0.815cm)
      --++(2*0.815cm, 0);
  \end{tikzpicture}
}
\newsavebox{\JDTLRd}
\sbox{\JDTLRd}{%
  \begin{tikzpicture}[inner sep=0in,outer sep=0in]
    \node (n) {\begin{varwidth}{5cm}{
    \begin{ytableau}
      \none & \none & \none & \ \\
      \none & \none & \none & \ \\
      \none & \none & \none & \ \\
      \none & \none & \none \\
      \none \\
    \end{ytableau}}\end{varwidth}};
    \draw[ultra thick,black]
        ($(n.north west) + (2*0.815cm, 0)$)
      --++(0.815cm, 0)
      --++(0, -4*0.815cm)
      --++(-3*0.815cm, 0)
      --++(0, -0.815cm)
      --++(2*0.815cm, 0);
  \end{tikzpicture}.
}

\begin{example}\label{ex:bullets}
  In the following local rule associated to $1111\overline{2} \too{4} 2221\overline{1} \too{3} 3322\overline{1}$, we mark the $4$ added cells in the vertical arrows with $\bullet$'s:
  \[
  \begin{tikzcd}
    {\scalebox{0.5}{\usebox{\JDTLRa}}} \rar{3}
      & {\scalebox{0.5}{\usebox{\JDTLRb}}} \\
    {\scalebox{0.5}{\usebox{\JDTLRc}}} \ar{r}[swap]{3} \uar{4}
      & {\scalebox{0.5}{\usebox{\JDTLRd}}} \ar{u}[swap]{4} \\
  \end{tikzcd}
  \]
\end{example}

\begin{definition}
  Given a fluctuating tableau $T = \lambda^0 \to \cdots \to \lambda^n$, let $\JDT_i(T)$ be the diagram of $\BK_i \circ \cdots \circ \BK_1(T)$, where the first $i$ steps are labeled $\pm 2, \ldots, \pm (i+1)$'s, the $(i+1)$st step is labeled with $\bullet$'s or $\overline{\bullet}$'s, and the remaining steps are labeled with $\pm (i+2), \ldots, \pm n$'s.
\end{definition}

From this definition, we see immediately that $\promotion(T)$ is obtained from $\JDT_{n-1}(T)$ by replacing $\bullet$ or $\overline{\bullet}$ with $n+1$ or $\overline{n+1}$, respectively, and then decreasing the absolute value of all entries by $1$.

\newsavebox{\JDTa}
\sbox{\JDTa}{%
  \begin{tikzpicture}[inner sep=0in,outer sep=0in]
    \node (n) {\begin{varwidth}{5cm}{
    \begin{ytableau}
      \none & *(light-gray)\bullet & 3\overline{7} \\
      \none & *(light-gray)\bullet & 4\overline{5} \\
      \none & *(light-gray)3\overline{5}6 \\
      \overline{2}3 & *(light-gray)6 \\
    \end{ytableau}}\end{varwidth}};
    \draw[ultra thick,black] ([xshift=0.8cm]n.south west)--([xshift=0.8cm]n.north west);
  \end{tikzpicture}
}
\newsavebox{\JDTb}
\sbox{\JDTb}{%
  \begin{tikzpicture}[inner sep=0in,outer sep=0in]
    \node (n) {\begin{varwidth}{5cm}{
    \begin{ytableau}
      \none & *(light-gray)3 & \bullet\overline{7} \\
      \none & *(light-gray)3 & 4\overline{5} \\
      \none & *(light-gray)\bullet\overline{5}6 \\
      \overline{2}3 & *(light-gray)6 \\
    \end{ytableau}}\end{varwidth}};
    \draw[ultra thick,black] ([xshift=0.8cm]n.south west)--([xshift=0.8cm]n.north west);
  \end{tikzpicture}
}
\newsavebox{\JDTc}
\sbox{\JDTc}{%
  \begin{tikzpicture}[inner sep=0in,outer sep=0in]
    \node (n) {\begin{varwidth}{5cm}{
    \begin{ytableau}
      \none & *(light-gray)3 & 4\overline{7} \\
      \none & *(light-gray)3 & \bullet\overline{5} \\
      \none & *(light-gray)\bullet\overline{5}6 \\
      \overline{2}3 & *(light-gray)6 \\
    \end{ytableau}}\end{varwidth}};
    \draw[ultra thick,black] ([xshift=0.8cm]n.south west)--([xshift=0.8cm]n.north west);
  \end{tikzpicture}
}
\newsavebox{\JDTd}
\sbox{\JDTd}{%
  \begin{tikzpicture}[inner sep=0in,outer sep=0in]
    \node (n) {\begin{varwidth}{5cm}{
    \begin{ytableau}
      \none & *(light-gray)3 & 4\overline{7} \\
      \none & *(light-gray)3\overline{5}\bullet \\
      \none & *(light-gray)6 \\
      \overline{2}3\overline{5}\bullet & *(light-gray)6 \\
    \end{ytableau}}\end{varwidth}};
    \draw[ultra thick,black] ([xshift=0.8cm]n.south west)--([xshift=0.8cm]n.north west);
  \end{tikzpicture}
}
\newsavebox{\JDTe}
\sbox{\JDTe}{%
  \begin{tikzpicture}[inner sep=0in,outer sep=0in]
    \node (n) {\begin{varwidth}{5cm}{
    \begin{ytableau}
      \none & *(light-gray)3 & 4\overline{7} \\
      \none & *(light-gray)3\overline{5}6 \\
      \none & *(light-gray)\bullet \\
      \overline{2}3\overline{5}6 & *(light-gray)\bullet \\
    \end{ytableau}}\end{varwidth}};
    \draw[ultra thick,black] ([xshift=0.8cm]n.south west)--([xshift=0.8cm]n.north west);
  \end{tikzpicture}
}
\newsavebox{\JDTf}
\sbox{\JDTf}{%
  \begin{tikzpicture}[inner sep=0in,outer sep=0in]
    \node (n) {\begin{varwidth}{5cm}{
    \begin{ytableau}
      \none & *(light-gray)3 & 4\overline{7} \\
      \none & *(light-gray)3\overline{5}6 \\
      \none & *(light-gray)8 \\
      \overline{2}3\overline{5}6 & *(light-gray)8 \\
    \end{ytableau}}\end{varwidth}};
    \draw[ultra thick,black] ([xshift=0.8cm]n.south west)--([xshift=0.8cm]n.north west);
  \end{tikzpicture}
}
\newsavebox{\JDTg}
\sbox{\JDTg}{%
  \begin{tikzpicture}[inner sep=0in,outer sep=0in]
    \node (n) {\begin{varwidth}{5cm}{
    \begin{ytableau}
      \none & *(light-gray)2 & 3\overline{6} \\
      \none & *(light-gray)2\overline{4}5 \\
      \none & *(light-gray)7 \\
      \overline{1}2\overline{4}5 & *(light-gray)7 \\
    \end{ytableau}}\end{varwidth}};
    \draw[ultra thick,black] ([xshift=0.8cm]n.south west)--([xshift=0.8cm]n.north west);
  \end{tikzpicture}
}

\begin{example}\label{ex:JDT_prom}
For our fluctuating tableau $T$ from \Cref{fig:ft-example}, first draw $\PEdiagram(T)$ and record the location of the added cells in the top row using two $\bullet$'s as in \Cref{ex:bullets}.
\[
\begin{tikzcd}[column sep=small]
  \varnothing\vphantom{\scalebox{0.4}{\ytableaushort{\none\none\none,\none,\none,\none}}} \rar
    & {\scalebox{0.4}{\ytableaushort{\none\bullet\none,\none\bullet,\none,\none}}} \rar
    & {\scalebox{0.4}{\ytableaushort{\none\bullet\none,\none\bullet,\none,\ }}} \rar
    & {\scalebox{0.4}{\ytableaushort{\none\ \bullet,\none\ \bullet,\none,\none}}} \rar
    & {\scalebox{0.4}{\ytableaushort{\none\ \ ,\none\ \bullet,\none\bullet,\none}}} \rar
    & {\scalebox{0.4}{\ytableaushort{\none\ \ ,\none\bullet,\none,\bullet}}} \rar
    & {\scalebox{0.4}{\ytableaushort{\none\ \ ,\none\ ,\none\bullet,\none\bullet}}} \rar
    & {\scalebox{0.4}{\ytableaushort{\none\ \none,\none\ ,\none\bullet,\none\bullet}}} \\
  \ 
    & \varnothing\vphantom{\scalebox{0.4}{\ytableaushort{\none\none\none,\none,\none,\none}}} \rar \uar
      \ar[->,dashed,rounded corners,to path= {
             ([yshift=0.0em,xshift=0.0em]\tikzcdmatrixname-2-2.south)node[above,xshift=2.3em]{$\overline{2}$}
          -- ([yshift=0.0em,xshift=0.0em]\tikzcdmatrixname-2-3.south)node[above,xshift=2.3em]{$3$}
          -- ([yshift=0.0em,xshift=0.0em]\tikzcdmatrixname-2-4.south)node[above,xshift=2.3em]{$4$}
          -- ([yshift=0.0em,xshift=0.8em]\tikzcdmatrixname-2-5.south west)node[left,xshift=1em,yshift=5.2em]{$\bullet$}
          -- ([yshift=0.2em,xshift=0.8em]\tikzcdmatrixname-1-5.north west)node[above right,xshift=2em]{$\BK_3 \circ \BK_2 \circ \BK_1(T)$}node[below right,xshift=3em]{$\overline{5}$}
          -- ([yshift=0.2em]\tikzcdmatrixname-1-6.north)node[below right,xshift=2em]{$6$}
          -- ([yshift=0.2em]\tikzcdmatrixname-1-7.north)node[below right,xshift=2em]{$\overline{7}$}
          -- ([yshift=0.2em]\tikzcdmatrixname-1-8.north)
          }]{}
    & {\scalebox{0.4}{\ytableaushort{\none\none\none,\none,\none,\ }}} \rar \uar
    & {\scalebox{0.4}{\ytableaushort{\none\ \none,\none\ ,\none,\none}}} \rar \uar
    & {\scalebox{0.4}{\ytableaushort{\none\ \ ,\none\ ,\none,\none}}} \rar \uar
    & {\scalebox{0.4}{\ytableaushort{\none\ \ ,\none,\none,\ }}} \rar \uar
    & {\scalebox{0.4}{\ytableaushort{\none\ \ ,\none\ ,\none,\none}}} \rar \uar
    & {\scalebox{0.4}{\ytableaushort{\none\ \none,\none\ ,\none,\none}}} \rar \uar
    & {\scalebox{0.4}{\ytableaushort{\none\ \none,\none\ ,\none\ ,\none\ }}} \\
\end{tikzcd}.
\]
The dashed path represents the fluctuating tableau $\BK_3 \circ \BK_2 \circ \BK_1(T)$. We encode this path in the following tableau $\JDT_3(T)$ using the labels $\overline{2}, 3, 4, \bullet, \overline{5}, 6, \overline{7}$:
  \[ \usebox{\JDTc}. \]
Putting these calculations together, we compute $\promotion(T)$ as follows:
\[
\begin{tikzpicture}
\node (begin) at (0,0) {$T = $};
\node [right = 0.0cm of begin] (0)  {\scalebox{0.8}{\usebox{\runningT}}};
\node [right = 0.5cm of 0] (1) {\scalebox{0.8}{\usebox{\JDTa}}};
\node [right = 0.5cm of 1] (2) {\scalebox{0.8}{\usebox{\JDTa}}};
\node [right = 0.5cm of 2] (3) {\scalebox{0.8}{\usebox{\JDTb}}};
\node [right = 0.5cm of 3] (4) {\scalebox{0.8}{\usebox{\JDTc}}};
\node [below = 0.5cm of 4] (5) {\scalebox{0.8}{\usebox{\JDTd}}};
\node [left = 0.5cm of 5] (6) {\scalebox{0.8}{\usebox{\JDTe}}};
\node [left = 0.5cm of 6] (7) {\scalebox{0.8}{\usebox{\JDTf}}};
\node [left = 0.5cm of 7] (8) {\scalebox{0.8}{\usebox{\JDTg}}};
\node [left = 0.0cm of 8] (end) {$\promotion(T)=$};
\draw[pil] (0) -- (1) node[midway,above] {$\JDT_0$};
\draw[pil] (1) -- (2) node[midway,above] {$\JDT_1$};
\draw[pil] (2) -- (3) node[midway,above] {$\JDT_2$};
\draw[pil] (3) -- (4) node[midway,above] {$\JDT_3$};
\draw[pil] (4) -- (5) node[midway,right] {$\JDT_4$};
\draw[pil] (5) -- (6) node[midway,above] {$\JDT_5$};
\draw[pil] (6) -- (7) node[midway,above] {$\JDT_6$};
\draw[pil] (7) -- (8) node[midway,above] {$-1$};
\end{tikzpicture}.
\]
\end{example}

We may encode the changes in the positions of the $\bullet$'s or $\overline{\bullet}$'s using the following combinatorial notion, which computes $\JDT_{i-1}(T)$ from $\JDT_{i-2}(T)$ roughly by swapping $\bullet < i$ for $i < \bullet$. Recall the notion of open cells from \Cref{def:free_etc}.

\begin{definition}
  Suppose $T$ is a fluctuating tableau labeled by $2 < \cdots < i-1 < \bullet < i < \cdots < n$ and their negatives. The \textit{jeu de taquin slides} for $T$ are given by the following rules. See \Cref{fig:JDT-rules} for schematic diagrams.
  \begin{enumerate}[(a)]
  \item $\bullet$'s first move right to swap places with $i$'s, then move down as far as possible by swapping places with $i$'s.
  \item $\overline{\bullet}$'s first move left to swap places with $\overline{i}$'s, then move up as far as possible by swapping places with $\overline{i}$'s.
  \item $\bullet\overline{i}$ pairs move down as far as possible into open cells, and then they move left one column before becoming $\overline{i}\bullet$ pairs.
  \item $\overline{\bullet}i$ pairs move up as far as possible into open cells, and then move right one column before becoming $i\overline{\bullet}$ pairs.
  \end{enumerate}
  All other entries are left unchanged.
\end{definition}

\newsavebox{\JDTppaa}
\sbox{\JDTppaa}{%
  \begin{tikzpicture}[inner sep=0in,outer sep=0in]
    \node (n) {\begin{varwidth}{5cm}{
    \begin{ytableau}
      \bullet & i \\
    \end{ytableau}}\end{varwidth}};
    \draw[ultra thick,black] ([xshift=0*0.8cm]n.south west)--([xshift=0*0.8cm]n.north west);
  \end{tikzpicture}
}
\newsavebox{\JDTppab}
\sbox{\JDTppab}{%
  \begin{tikzpicture}[inner sep=0in,outer sep=0in]
    \node (n) {\begin{varwidth}{5cm}{
    \begin{ytableau}
       i & \bullet \\
    \end{ytableau}}\end{varwidth}};
    \draw[ultra thick,black] ([xshift=0*0.8cm]n.south west)--([xshift=0*0.8cm]n.north west);
  \end{tikzpicture}
}
\newsavebox{\JDTppba}
\sbox{\JDTppba}{%
  \begin{tikzpicture}[inner sep=0in,outer sep=0in]
    \node (n) {\begin{varwidth}{5cm}{
    \begin{ytableau}
       \none & \none \\
    \end{ytableau}}\end{varwidth}};
    \draw[ultra thick,black] ([xshift=0*0.8cm]n.south west)--([xshift=0*0.8cm]n.north west);
  \end{tikzpicture}
}
\newsavebox{\JDTppbb}
\sbox{\JDTppbb}{%
  \begin{tikzpicture}[inner sep=0in,outer sep=0in]
    \node (n) {\begin{varwidth}{5cm}{
    \begin{ytableau}
       \none & \none \\
    \end{ytableau}}\end{varwidth}};
    \draw[ultra thick,black] ([xshift=0*0.8cm]n.south west)--([xshift=0*0.8cm]n.north west);
  \end{tikzpicture}
}
\newsavebox{\JDTppca}
\sbox{\JDTppca}{%
  \begin{tikzpicture}[inner sep=0in,outer sep=0in]
    \node (n) {\begin{varwidth}{5cm}{
    \begin{ytableau}
       \bullet & \none \\
       \bullet & \none \\
       i & \none \\
       i & \none \\
       i & \none \\
    \end{ytableau}}\end{varwidth}};
    \draw[ultra thick,black] ([xshift=0*0.8cm]n.south west)--([xshift=0*0.8cm]n.north west)--([xshift=1*0.8cm]n.north west);
  \end{tikzpicture}
}
\newsavebox{\JDTppcb}
\sbox{\JDTppcb}{%
  \begin{tikzpicture}[inner sep=0in,outer sep=0in]
    \node (n) {\begin{varwidth}{5cm}{
    \begin{ytableau}
       i & \none \\
       i & \none \\
       i & \none \\
       \bullet & \none \\
       \bullet & \none \\
    \end{ytableau}}\end{varwidth}};
    \draw[ultra thick,black] ([xshift=0*0.8cm]n.south west)--([xshift=0*0.8cm]n.north west)--([xshift=1*0.8cm]n.north west);
  \end{tikzpicture}
}

\newsavebox{\JDTnnaa}
\sbox{\JDTnnaa}{%
  \begin{tikzpicture}[inner sep=0in,outer sep=0in]
    \node (n) {\begin{varwidth}{5cm}{
    \begin{ytableau}
      \overline{i} & \overline{\bullet} \\
    \end{ytableau}}\end{varwidth}};
    \draw[ultra thick,black] ([xshift=2*0.8cm]n.south west)--([xshift=2*0.8cm]n.north west);
  \end{tikzpicture}
}
\newsavebox{\JDTnnab}
\sbox{\JDTnnab}{%
  \begin{tikzpicture}[inner sep=0in,outer sep=0in]
    \node (n) {\begin{varwidth}{5cm}{
    \begin{ytableau}
       \overline{\bullet} & \overline{i} \\
    \end{ytableau}}\end{varwidth}};
    \draw[ultra thick,black] ([xshift=2*0.8cm]n.south west)--([xshift=2*0.8cm]n.north west);
  \end{tikzpicture}
}
\newsavebox{\JDTnnba}
\sbox{\JDTnnba}{%
  \begin{tikzpicture}[inner sep=0in,outer sep=0in]
    \node (n) {\begin{varwidth}{5cm}{
    \begin{ytableau}
       \none & \none \\
    \end{ytableau}}\end{varwidth}};
    \draw[ultra thick,black] ([xshift=0*0.8cm]n.south west)--([xshift=0*0.8cm]n.north west);
  \end{tikzpicture}
}
\newsavebox{\JDTnnbb}
\sbox{\JDTnnbb}{%
  \begin{tikzpicture}[inner sep=0in,outer sep=0in]
    \node (n) {\begin{varwidth}{5cm}{
    \begin{ytableau}
       \none & \none \\
    \end{ytableau}}\end{varwidth}};
    \draw[ultra thick,black] ([xshift=0*0.8cm]n.south west)--([xshift=0*0.8cm]n.north west);
  \end{tikzpicture}
}
\newsavebox{\JDTnnca}
\sbox{\JDTnnca}{%
  \begin{tikzpicture}[inner sep=0in,outer sep=0in]
    \node (n) {\begin{varwidth}{5cm}{
    \begin{ytableau}
       \overline{i} & \none \\
       \overline{i} & \none \\
       \overline{i} & \none \\
       \overline{\bullet} & \none \\
       \overline{\bullet} & \none \\
    \end{ytableau}}\end{varwidth}};
    \draw[ultra thick,black] ([xshift=0*0.8cm]n.south west)--([xshift=1*0.8cm]n.south west)--([xshift=1*0.8cm]n.north west);
  \end{tikzpicture}
}
\newsavebox{\JDTnncb}
\sbox{\JDTnncb}{%
  \begin{tikzpicture}[inner sep=0in,outer sep=0in]
    \node (n) {\begin{varwidth}{5cm}{
    \begin{ytableau}
       \overline{\bullet} & \none \\
       \overline{\bullet} & \none \\
       \overline{i} & \none \\
       \overline{i} & \none \\
       \overline{i} & \none \\
    \end{ytableau}}\end{varwidth}};
    \draw[ultra thick,black] ([xshift=0*0.8cm]n.south west)--([xshift=1*0.8cm]n.south west)--([xshift=1*0.8cm]n.north west);
  \end{tikzpicture}
}

\newsavebox{\JDTpnaa}
\sbox{\JDTpnaa}{%
  \begin{tikzpicture}[inner sep=0in,outer sep=0in]
    \node (n) {\begin{varwidth}{5cm}{
    \begin{ytableau}
      \bullet & \none \\
    \end{ytableau}}\end{varwidth}};
    \draw[ultra thick,black] ([xshift=0*0.8cm]n.south west)--([xshift=0*0.8cm]n.north west);
  \end{tikzpicture}
}
\newsavebox{\JDTpnab}
\sbox{\JDTpnab}{%
  \begin{tikzpicture}[inner sep=0in,outer sep=0in]
    \node (n) {\begin{varwidth}{5cm}{
    \begin{ytableau}
       \bullet & \none \\
    \end{ytableau}}\end{varwidth}};
    \draw[ultra thick,black] ([xshift=0*0.8cm]n.south west)--([xshift=0*0.8cm]n.north west);
  \end{tikzpicture}
}
\newsavebox{\JDTpnba}
\sbox{\JDTpnba}{%
  \begin{tikzpicture}[inner sep=0in,outer sep=0in]
    \node (n) {\begin{varwidth}{5cm}{
    \begin{ytableau}
       \overline{i} & \none \\
    \end{ytableau}}\end{varwidth}};
    \draw[ultra thick,black] ([xshift=1*0.8cm]n.south west)--([xshift=1*0.8cm]n.north west);
  \end{tikzpicture}
}
\newsavebox{\JDTpnbb}
\sbox{\JDTpnbb}{%
  \begin{tikzpicture}[inner sep=0in,outer sep=0in]
    \node (n) {\begin{varwidth}{5cm}{
    \begin{ytableau}
       \overline{i} & \none \\
    \end{ytableau}}\end{varwidth}};
    \draw[ultra thick,black] ([xshift=1*0.8cm]n.south west)--([xshift=1*0.8cm]n.north west);
  \end{tikzpicture}
}
\newsavebox{\JDTpnca}
\sbox{\JDTpnca}{%
  \begin{tikzpicture}[inner sep=0in,outer sep=0in]
    \node (n) {\begin{varwidth}{5cm}{
    \begin{ytableau}
       \  & \bullet\overline{i} \\
       \  & \bullet\overline{i} \\
       \  & \  \\
       \  & \  \\
       \  & \  \\
    \end{ytableau}}\end{varwidth}};
    \draw[ultra thick,black] ([xshift=1*0.8cm]n.south west)--([xshift=1*0.8cm]n.north west)--([xshift=2*0.8cm]n.north west);
  \end{tikzpicture}
}
\newsavebox{\JDTpncb}
\sbox{\JDTpncb}{%
  \begin{tikzpicture}[inner sep=0in,outer sep=0in]
    \node (n) {\begin{varwidth}{5cm}{
    \begin{ytableau}
       \  & \  \\
       \  & \  \\
       \  & \  \\
       \overline{i}\bullet & \  \\
       \overline{i}\bullet & \  \\
    \end{ytableau}}\end{varwidth}};
    \draw[ultra thick,black] ([xshift=1*0.8cm]n.south west)--([xshift=1*0.8cm]n.north west)--([xshift=2*0.8cm]n.north west);
  \end{tikzpicture}
}

\newsavebox{\JDTnpaa}
\sbox{\JDTnpaa}{%
  \begin{tikzpicture}[inner sep=0in,outer sep=0in]
    \node (n) {\begin{varwidth}{5cm}{
    \begin{ytableau}
      \overline{\bullet} & \none \\
    \end{ytableau}}\end{varwidth}};
    \draw[ultra thick,black] ([xshift=1*0.8cm]n.south west)--([xshift=1*0.8cm]n.north west);
  \end{tikzpicture}
}
\newsavebox{\JDTnpab}
\sbox{\JDTnpab}{%
  \begin{tikzpicture}[inner sep=0in,outer sep=0in]
    \node (n) {\begin{varwidth}{5cm}{
    \begin{ytableau}
       \overline{\bullet} & \none \\
    \end{ytableau}}\end{varwidth}};
    \draw[ultra thick,black] ([xshift=1*0.8cm]n.south west)--([xshift=1*0.8cm]n.north west);
  \end{tikzpicture}
}
\newsavebox{\JDTnpba}
\sbox{\JDTnpba}{%
  \begin{tikzpicture}[inner sep=0in,outer sep=0in]
    \node (n) {\begin{varwidth}{5cm}{
    \begin{ytableau}
       i & \none \\
    \end{ytableau}}\end{varwidth}};
    \draw[ultra thick,black] ([xshift=0*0.8cm]n.south west)--([xshift=0*0.8cm]n.north west);
  \end{tikzpicture}
}
\newsavebox{\JDTnpbb}
\sbox{\JDTnpbb}{%
  \begin{tikzpicture}[inner sep=0in,outer sep=0in]
    \node (n) {\begin{varwidth}{5cm}{
    \begin{ytableau}
       i & \none \\
    \end{ytableau}}\end{varwidth}};
    \draw[ultra thick,black] ([xshift=0*0.8cm]n.south west)--([xshift=0*0.8cm]n.north west);
  \end{tikzpicture}
}
\newsavebox{\JDTnpca}
\sbox{\JDTnpca}{%
  \begin{tikzpicture}[inner sep=0in,outer sep=0in]
    \node (n) {\begin{varwidth}{5cm}{
    \begin{ytableau}
       \  & \  \\
       \  & \  \\
       \  & \  \\
       \overline{\bullet}i & \  \\
       \overline{\bullet}i & \  \\
    \end{ytableau}}\end{varwidth}};
    \draw[ultra thick,black] ([xshift=0*0.8cm]n.south west)--([xshift=1*0.8cm]n.south west)--([xshift=1*0.8cm]n.north west);
  \end{tikzpicture}
}
\newsavebox{\JDTnpcb}
\sbox{\JDTnpcb}{%
  \begin{tikzpicture}[inner sep=0in,outer sep=0in]
    \node (n) {\begin{varwidth}{5cm}{
    \begin{ytableau}
       \  & i\overline{\bullet} \\
       \  & i\overline{\bullet} \\
       \  & \  \\
       \  & \  \\
       \  & \  \\
    \end{ytableau}}\end{varwidth}};
    \draw[ultra thick,black] ([xshift=0*0.8cm]n.south west)--([xshift=1*0.8cm]n.south west)--([xshift=1*0.8cm]n.north west);
  \end{tikzpicture}
}

\begin{figure}[ht]
\[
\begin{tikzpicture}
\node (begin) at (0,0) {$\bullet i: $};
\node [right = 0.0cm of begin] (aa)  {\scalebox{0.7}{\usebox{\JDTppaa}}};
\node [right = 0.5cm of aa] (ab)  {\scalebox{0.7}{\usebox{\JDTppab}}};
\draw[pil] (aa) -- (ab);
\node [below=of aa.west,anchor=north west] (ba)  {\scalebox{0.7}{\usebox{\JDTppba}}};
\node [below=of ab.west,anchor=north west] (bb)  {\scalebox{0.7}{\usebox{\JDTppbb}}};
\draw[pil] (ba) -- (bb);
\node [below=of ba.west,anchor=north west] (ca)  {\scalebox{0.7}{\usebox{\JDTppca}}};
\node [below=of bb.west,anchor=north west] (cb)  {\scalebox{0.7}{\usebox{\JDTppcb}}};
\draw[pil] (ca) -- (cb);
\end{tikzpicture}
\qquad
\begin{tikzpicture}
\node (begin) at (0,0) {$\overline{\bullet}\overline{i}: $};
\node [right = 0.0cm of begin] (aa)  {\scalebox{0.7}{\usebox{\JDTnnaa}}};
\node [right = 0.5cm of aa] (ab)  {\scalebox{0.7}{\usebox{\JDTnnab}}};
\draw[pil] (aa) -- (ab);
\node [below=of aa.west,anchor=north west] (ba)  {\scalebox{0.7}{\usebox{\JDTnnba}}};
\node [below=of ab.west,anchor=north west] (bb)  {\scalebox{0.7}{\usebox{\JDTnnbb}}};
\draw[pil] (ba) -- (bb);
\node [below=of ba.west,anchor=north west] (ca)  {\scalebox{0.7}{\usebox{\JDTnnca}}};
\node [below=of bb.west,anchor=north west] (cb)  {\scalebox{0.7}{\usebox{\JDTnncb}}};
\draw[pil] (ca) -- (cb);
\end{tikzpicture}
\]
\[
\begin{tikzpicture}
\node (begin) at (0,0) {$\bullet\overline{i}: $};
\node [right = 0.0cm of begin] (aa)  {\scalebox{0.7}{\usebox{\JDTpnaa}}};
\node [right = 0.5cm of aa] (ab)  {\scalebox{0.7}{\usebox{\JDTpnab}}};
\draw[pil] (aa) -- (ab);
\node [below=of aa.west,anchor=north west] (ba)  {\scalebox{0.7}{\usebox{\JDTpnba}}};
\node [below=of ab.west,anchor=north west] (bb)  {\scalebox{0.7}{\usebox{\JDTpnbb}}};
\draw[pil] (ba) -- (bb);
\node [below=of ba.west,anchor=north west] (ca)  {\scalebox{0.7}{\usebox{\JDTpnca}}};
\node [below=of bb.west,anchor=north west] (cb)  {\scalebox{0.7}{\usebox{\JDTpncb}}};
\draw[pil] (ca) -- (cb);
\end{tikzpicture}
\qquad
\begin{tikzpicture}
\node (begin) at (0,0) {$\overline{\bullet}i: $};
\node [right = 0.0cm of begin] (aa)  {\scalebox{0.7}{\usebox{\JDTnpaa}}};
\node [right = 0.5cm of aa] (ab)  {\scalebox{0.7}{\usebox{\JDTnpab}}};
\draw[pil] (aa) -- (ab);
\node [below=of aa.west,anchor=north west] (ba)  {\scalebox{0.7}{\usebox{\JDTnpba}}};
\node [below=of ab.west,anchor=north west] (bb)  {\scalebox{0.7}{\usebox{\JDTnpbb}}};
\draw[pil] (ba) -- (bb);
\node [below=of ba.west,anchor=north west] (ca)  {\scalebox{0.7}{\usebox{\JDTnpca}}};
\node [below=of bb.west,anchor=north west] (cb)  {\scalebox{0.7}{\usebox{\JDTnpcb}}};
\draw[pil] (ca) -- (cb);
\end{tikzpicture}
\]
\caption{Jeu de taquin slides for $\JDT_{i-1}$ on fluctuating tableaux. In each case, the thick line indicates the boundary of the shape immediately before the $\bullet$. One may interpret the $\bullet i$ rules as first sliding $\bullet$'s right one cell past $i$'s, and then sliding $\bullet$'s down past $i$'s. The $\bullet\overline{i}$ rules instead ``slide past'' open cells and move left. The open cells in this case are simply those directly below $\bullet\overline{i}$'s, immediately right of the thick line, and without $\bullet$ or $\overline{i}$.}\label{fig:JDT-rules}
\end{figure}

The following is straightforward to prove by comparing jeu de taquin slides, the tableau-theoretic description of Bender--Knuth involutions, and the properties of local rule diagrams.

\begin{lemma}
  For $i-1 \geq 1$, applying jeu de taquin slides to $\JDT_{i-2}(T)$ results in $\JDT_{i-1}(T)$.
\end{lemma}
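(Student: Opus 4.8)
The plan is to recognize that passing from $\JDT_{i-2}(T)$ to $\JDT_{i-1}(T)$ is nothing but a single Bender--Knuth involution, and then to verify that the four jeu de taquin slide rules reproduce the combinatorial description of that involution.

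First I would make the reduction precise. By definition, $\JDT_{i-2}(T)$ and $\JDT_{i-1}(T)$ are the diagrams of $\BK_{i-2}\circ\cdots\circ\BK_1(T)$ and $\BK_{i-1}\circ\cdots\circ\BK_1(T)$ with their respective labelings. Tracking the permutation of steps induced by $\BK_{i-2}\circ\cdots\circ\BK_1$, one checks that in both tableaux the cells carrying a mark $\bullet$ or $\overline{\bullet}$ are exactly those added or removed at the step of $T$ that began as step $1$, while the cells carrying a label $i$ or $\overline{i}$ are exactly those of the step that began as step $i$; moreover in $\BK_{i-2}\circ\cdots\circ\BK_1(T)$ these two steps flank the common vertex $\lambda^{i-1}$, with the $\bullet$-step (of type $c_1$) in the role of the smaller value and the $\pm i$-step (of type $c_i$) in the role of the larger value of the involution $\BK_{i-1}$. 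Thus $\JDT_{i-1}(T)$ is obtained from $\JDT_{i-2}(T)$ simply by applying $\BK_{i-1}$ to the underlying fluctuating tableau, and it remains to match the combinatorial action of $\BK_{i-1}$ on this configuration with the slides of \Cref{fig:JDT-rules}.

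Next I would run a case analysis on the signs of the two steps. Steps of type $0$ carry no marks and are fixed by both sides, so we may assume both steps are nonzero. If their signs agree we are in the regime $c_{i-1}c_i\ge 0$ of \Cref{def:free_etc}, and $\BK_{i-1}$ acts by the free-cell rule: in each column the $\bullet$'s and $i$'s (resp.\ $\overline{\bullet}$'s and $\overline{i}$'s) occupying free cells form one segment whose two block-sizes are exchanged. Re-reading the output under the conventions of $\JDT_{i-1}$ --- where the mark $\bullet$ now plays the role of the larger value, hence sits below the $i$'s in a column and to their right in a row --- this is exactly rule (a) for $\bullet i$ and rule (b) for $\overline{\bullet}\,\overline{i}$. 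If the signs disagree we are in the regime $c_{i-1}c_i\le 0$, and $\BK_{i-1}$ acts by the moving/open-cell rule: the moving cells are exactly those containing a $\bullet\overline{i}$ pair (resp.\ $\overline{\bullet}\,i$ pair), and the rule pushes their labels into the open cells of their column and then one column over before re-marking them. Here I would verify that the open-cell recipe of \Cref{def:free_etc} --- which branches on the sign of the $\bullet$-step and on the largest entry of absolute value below $i$ in each row --- selects exactly the cells used by the slides, namely those directly below a $\bullet\overline{i}$ column segment, immediately right of the outline of the shape just before the $\bullet$-step, and free of $\bullet$ and $\overline{i}$; granting this, the action matches rules (c) and (d). The barred cases (b), (d) can also be deduced from (a), (c) via $\BK_{i-1}\circ\toggle_{i-1}=\toggle_i\circ\BK_{i-1}$ (see \eqref{eq:BK.toggle}), or via the involutions $\tau,\varpi$ of \Cref{lem:Klein4}, which cuts the casework in half.

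The hard part will be the disagreeing-sign case: confirming that the somewhat intricate, row-by-row selection of a single open cell in \Cref{def:free_etc} really coincides with the uniform ``cell directly below the $\bullet\overline{i}$ segment'' description accompanying \Cref{fig:JDT-rules}, and that the two-phase motion of $\BK_{i-1}$ (first into the open cells, then one column over with a relabeling) deposits the marks precisely where rules (c) and (d) do, including the one-column shift. Once this dictionary is pinned down, each case is a finite local check: the same-sign cases follow immediately from the proof of \Cref{lem:local.rules} together with the combinatorial $\BK$ description, and the remaining cases follow by applying $\toggle$ operators exactly as in the proof that the combinatorial $\BK_i$ are well-defined.
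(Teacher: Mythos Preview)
Your proposal is correct and follows exactly the approach the paper indicates: the paper's proof is the single sentence that this is ``straightforward to prove by comparing jeu de taquin slides, the tableau-theoretic description of Bender--Knuth involutions, and the properties of local rule diagrams,'' and your write-up is simply a careful unpacking of that comparison. One small notational slip: when you invoke the ``$c_{i-1}c_i\ge 0$'' dichotomy of \Cref{def:free_etc}, the relevant types in the tableau $\BK_{i-2}\circ\cdots\circ\BK_1(T)$ at positions $i-1$ and $i$ are $c_1$ and $c_i$ (as you note elsewhere), so the case split is on $\mathrm{sgn}(c_1)\cdot\mathrm{sgn}(c_i)$ rather than on the original $c_{i-1}$.
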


\begin{lemma}
  Let $T$ be a length $n$ skew fluctuating tableau.
  \begin{enumerate}[(i)]
    \item $\promotion(T)$ is the result of replacing $\pm 1$'s with $\pm \bullet$'s, using jeu de taquin slides $\JDT_1, \dots, \JDT_{n-1}$,  replacing $\pm \bullet$'s with $\pm (n+1)$'s, and subtracting $1$ from each entry's absolute value.
    \item $\evacuation(T)$ is the result of first replacing $\pm 1$'s with $\pm \bullet_n$'s, sliding them past $\pm n$'s, replacing $\pm 2$'s with $\pm \bullet_{n-1}$'s, sliding them past $\pm n$'s, etc., and finally replacing $\pm \bullet_i$'s with $\pm i$'s.
    \item $\devacuation(T)$ is the result of first replacing $\pm n$'s with $\pm \bullet_1$'s, sliding backwards past $\pm 1$'s, replacing $\pm (n-1)$'s with $\pm \bullet_2$'s, sliding backwards past $\pm 1$'s, etc., and finally replacing $\pm \bullet_i$'s with $\pm i$'s.
  \end{enumerate}
\end{lemma}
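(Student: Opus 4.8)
The plan is to derive all three parts from the Bender--Knuth product formulas $\promotion = \BK_{n-1}\circ\cdots\circ\BK_1$, $\evacuation = \BK_1\circ(\BK_2\circ\BK_1)\circ\cdots\circ(\BK_{n-1}\circ\cdots\circ\BK_1)$, and the dual formula for $\devacuation$, together with the jeu de taquin bookkeeping provided by the $\JDT_i$. Part (i) is then essentially immediate. Since $\BK_0\circ\cdots\circ\BK_1$ is the empty composite, $\JDT_0(T)$ is just $T$ with its $1$-step relabeled by $\pm\bullet$; applying the preceding lemma---that jeu de taquin slides carry $\JDT_{i-2}(T)$ to $\JDT_{i-1}(T)$---for $i=2,\dots,n$ produces $\JDT_{n-1}(T)$ from $\JDT_0(T)$ by performing the slides $\JDT_1,\dots,\JDT_{n-1}$; and the remark following the definition of $\JDT_i$ identifies $\promotion(T)$ with $\JDT_{n-1}(T)$ after replacing $\pm\bullet$ with $\pm(n+1)$ and decreasing all absolute values by $1$, which is exactly the stated recipe.

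For part (ii), write $Q_k \coloneqq \BK_k\circ\cdots\circ\BK_1$, so that the product formula reads $\evacuation = Q_1\circ Q_2\circ\cdots\circ Q_{n-1}$, to be applied from right to left, and each $Q_k$ is the promotion operator of the length-$(k+1)$ truncation $\lambda^0\to\cdots\to\lambda^{k+1}$ (in particular $Q_k$ fixes the shapes $\lambda^{k+1},\dots,\lambda^n$). Invoking part (i) for these truncations, $Q_k$ converts the current value-$1$ letters into markers and slides them past all larger active letters; we record each such marker with a dedicated flavor $\pm\bullet_\ast$, the flavor being the value this content will ultimately carry in $\evacuation(T)$, precisely so that subsequent factors cannot re-slide it. The key point is that a marker frozen by $Q_k$ carries a flavor larger than every letter moved by any later $Q_{k'}$ with $k'<k$, so by inspection of the slide rules of \Cref{fig:JDT-rules} it is genuinely inert from then on. Running $Q_{n-1},Q_{n-2},\dots,Q_1$ in this order introduces the flavors $\pm\bullet_n,\pm\bullet_{n-1},\dots,\pm\bullet_2$ in succession, each slid past the remaining larger letters; afterwards the remaining value-$n$ letters are labeled $\pm\bullet_1$, and the concluding relabeling $\pm\bullet_i\mapsto\pm i$ returns $\evacuation(T)$. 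What still has to be checked is that this flavored bookkeeping faithfully reproduces the cumulative down-shifting of values built into the composition $Q_1\circ\cdots\circ Q_{n-1}$, which is routine once the freezing claim is in hand.

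Part (iii) follows from part (ii) and the relation $\operatorname{\tau}\circ\evacuation = \devacuation\circ\operatorname{\tau}$ of \Cref{lem:invs.PEEd}: since $\operatorname{\tau}$ is an involution, $\devacuation(T) = \operatorname{\tau}\bigl(\evacuation(\operatorname{\tau}(T))\bigr)$, and $\operatorname{\tau}$ interchanges small and large letters, left and right, and $\bullet$ with $\overline\bullet$, so the forward slides computing $\evacuation(\operatorname{\tau}(T))$ translate precisely into the backward slides appearing in (iii). Alternatively, one repeats the argument of (ii) verbatim on the dual factorization $\devacuation = (\BK_{n-1}\circ\cdots\circ\BK_1)\circ\cdots\circ(\BK_{n-1}\circ\BK_{n-2})\circ\BK_{n-1}$. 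I expect the main obstacle to be the bookkeeping inside part (ii): one must track several $\bullet$-flavors simultaneously, confirm via the case analysis of \Cref{fig:JDT-rules} that the earlier (larger-flavored) markers stay frozen under all subsequent jeu de taquin slides, and verify that the $\pm\bullet_i\mapsto\pm i$ convention exactly compensates for the absence, in the raw Bender--Knuth product for $\evacuation$, of the relabel-and-shift step present in the promotion recipe of part (i).
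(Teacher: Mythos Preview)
Your approach is correct and is precisely the argument the paper leaves implicit: the lemma is stated there without proof, as a routine repackaging of the preceding Bender--Knuth factorizations $\promotion=\BK_{n-1}\circ\cdots\circ\BK_1$, $\evacuation=Q_1\circ\cdots\circ Q_{n-1}$ (with $Q_k=\BK_k\circ\cdots\circ\BK_1$), the definition of $\JDT_i$, and the slide lemma $\JDT_{i-2}\to\JDT_{i-1}$. Your treatment of part~(i) is exactly the intended one-line observation; your treatment of~(ii) via truncated promotions with flavored markers, together with the freezing claim (which is just the statement that $Q_{k'}$ for $k'<k$ only touches $\lambda^0,\dots,\lambda^{k'+1}$, hence leaves the already-placed rightmost column entries of the $\Ediagram$ alone), is the natural unwinding of the evacuation-diagram structure; and either of your two routes for~(iii) works, the direct repetition on the dual factorization being the cleaner of the two since the $\tau$-translation of the slide rules requires its own small verification.
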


\section{Promotion grids and promotion matrices}
\label{sec:prom_stuff}

Hopkins--Rubey \cite[\S4]{Hopkins-Rubey} attached certain decorations to promotion-evacuation diagrams of rectangular $3$-row standard tableaux, which may be encoded as a permutation. We now extend this approach to arbitrary fluctuating tableaux with any number of rows.

\subsection{Local rule grids}

Given a local rule diagram, we may encode it as in \Cref{sec:jdt} as an application of jeu de taquin involving $\bullet$'s or $\overline{\bullet}$'s and $i$'s or $\overline{i}$'s. We decorate the diagram as follows.

\begin{definition}
  The \emph{local rule grid} associated to an $r$-row fluctuating tableau local rule
  \begin{equation}\label{eq:prom_grid}
  \begin{tikzcd}
  \lambda \ar{r}[name=U]{d}
    & \nu \\
  \kappa \uar{c} \ar{r}[swap,name=D]{d}
    & \mu \ar{u}[swap]{c}
  \ar[to path={(U)   (D)}]{}
  \end{tikzcd}
  \end{equation}
  is a $|c| \times |d|$ grid $M$ of intervals in $\{1, \ldots, r-1\}$ defined as follows. First, encode the diagram as a jeu de taquin slide $S \to T$.
  \begin{itemize}
  \item If $c \geq 0$, number the $\bullet$'s in $S$ and $T$ from bottom to top as $\bullet_1, \ldots, \bullet_{c}$.
  \item If $c \leq 0$, number the $\overline{\bullet}$'s in $S$ and $T$ from top to bottom as $\overline{\bullet}_1, \ldots, \overline{\bullet}_{-c}$.
  \item If $d \geq 0$, number the $i$'s in $S$ and $T$ from top to bottom as $i_1, \ldots, i_d$.
  \item If $d \leq 0$, number the $\overline{i}$'s in $S$ and $T$ from bottom to top as $\overline{i}_1, \ldots, \overline{i}_{-d}$.
  \end{itemize}
  Index rows of $S$ and $T$ from $0$ to $r-1$ from the top down. Consider cases based on the signs of $c$ and $d$:
  \begin{itemize}
  \item ($0 \leq c,d$): If $\bullet_a$ swaps vertically with $i_b$, let $M_{ab} = \{j\}$ where $j$ is the row of $\bullet_a$ after the slide.
  \item ($c,d \leq 0$): If $\overline{\bullet}_a$ swaps vertically with $\overline{i}_b$, let $M_{ab} = \{j\}$ where $j$ is the row of $\overline{\bullet}_a$ before the slide.
  \item ($d \leq 0 \leq c$): If $\bullet_a \overline{i}_b$ slides from row $j$ in $S$ to row $k$ in $T$, let $M_{ab} = (j, k]$.
  \item ($c \leq 0 \leq d$): If $\overline{\bullet}_a i_b$ slides from row $k$ in $S$ to row $j$ in $T$, let $M_{ab} = (j, k]$.
  \end{itemize}
  We draw a local rule grid by writing the matrix $M$ in the center of the local rule diagram.
\end{definition}

\begin{example}
    Consider the following local rule diagram, its corresponding jeu de taquin diagram, and the resulting local rule grid:
\newsavebox{\PGaa}
\sbox{\PGaa}{%
  \begin{tikzpicture}[inner sep=0in,outer sep=0in]
    \node (n) {\begin{varwidth}{5cm}{
    \begin{ytableau}
       \none[0] & \none & *(lime)\bullet_2\overline{i}_3 \\
       \none[1] & \none & *(green)\  \\
       \none[2] & \none & *(green)\  \\
       \none[3] & \overline{i}_2 \\
       \none[4] & *(lime)\bullet_1\overline{i}_1 \\
       \none[5] & *(green)\  \\
    \end{ytableau}}\end{varwidth}};
    \draw[ultra thick,black]
      ([xshift=2*0.815cm]n.south west)
      --++(-1*0.815cm, 0)
      --++(0, 2*0.815cm)
      --++(1*0.815cm, 0)
      --++(0, 4*0.815cm);
  \end{tikzpicture}
}
\newsavebox{\PGab}
\sbox{\PGab}{%
  \begin{tikzpicture}[inner sep=0in,outer sep=0in]
    \node (n) {\begin{varwidth}{5cm}{
    \begin{ytableau}
       \none & *(green)\  \\
       \none & *(green)\  \\
       \none & *(lime)\overline{i}_3\bullet_2 \\
       \none & \overline{i}_2 \\
       *(green)\  \\
       *(lime)\overline{i}_1\bullet_1 \\
    \end{ytableau}}\end{varwidth}};
    \draw[ultra thick,black]
      ([xshift=2*0.815cm]n.south west)
      --++(-1*0.815cm, 0)
      --++(0, 2*0.815cm)
      --++(1*0.815cm, 0)
      --++(0, 4*0.815cm);
  \end{tikzpicture}
}
\[
\begin{tikzpicture}
\node (begin) at (0,0)
  {
  \begin{tikzcd}
  10000\overline{1} \ar{r}[name=U]{\overline{3}}
    & 000\overline{1}\overline{1}\overline{1} \\
  0000\overline{1}\overline{1} \uar{2} \ar{r}[swap,name=D]{\overline{3}}
    & 00\overline{1}\overline{1}\overline{1}\overline{2} \ar{u}[swap]{2}
  \ar[to path={(U)  (D)}]{}
  \end{tikzcd}
  };
\node [right = 0.5cm of begin] (a)  {\scalebox{0.7}{\usebox{\PGaa}}};
\node [right = 0.5cm of a] (b)  {\scalebox{0.7}{\usebox{\PGab}}};
\draw[pil] (a) -- (b);
\node [right = 0.5cm of b] (c)
  {$M =
  \begin{pmatrix}
    \{5\} & \varnothing & \varnothing \\
    \varnothing & \varnothing & \{1, 2\} \\
  \end{pmatrix}
  $};
\end{tikzpicture}.
\]
Here, the moving cells have been shaded in light green \textcolor{lime}{$\blacksquare$} and the open cells have been shaded in darker green \textcolor{green}{$\blacksquare$}. The pair $\bullet_2 \overline{i}_3$ moved from row $0$ to row $2$, so $M_{23} = (0, 2] = \{1, 2\}$.

We will generally draw this local rule grid as:
\[
\begin{tikzcd}
  10000\overline{1} \ar{r}[name=U]{\overline{3}}
    & 000\overline{1}\overline{1}\overline{1} \\
  0000\overline{1}\overline{1} \uar{2} \ar{r}[swap,name=D]{\overline{3}}
    & 00\overline{1}\overline{1}\overline{1}\overline{2} \ar{u}[swap]{2}
  \ar[to path={(U) node[midway] {\scalebox{0.7}{$\begin{matrix}
      5 \amsamp \cdot \amsamp \cdot  \\
     \cdot \amsamp \cdot \amsamp \{1,2\} 
  \end{matrix}$}}  (D)}]{}
  \end{tikzcd}
\]
drawing $M$ in the center of its local rule diagram, dropping brackets on singleton sets, and shrinking empty sets to dots.
\end{example}

\begin{definition}\label{def:triangle}
  The \emph{triangular grids} $U$ and $L$ associated to the $r$-row fluctuating tableau diagrams
  \begin{equation}\label{eq:prom_grid_MN}
  \begin{tikzcd}
  \varnothing \ar{r}[name=U]{c}
    & \nu \\
  \ \ar[phantom]{r} \ar[phantom]{r}[swap,name=D]{}
    & \varnothing \ar{u}[swap]{c}
  \ar[to path={(U)   (D)}]{}
  \end{tikzcd}
  \qquad\text{and}\qquad
  \begin{tikzcd}
  \varnothing \ar[phantom]{r}[name=U]{}
    & \  \\
  \kappa \uar{c} \ar{r}[swap,name=D]{c}
    & \varnothing
  \ar[to path={(U)   (D)}]{}
  \end{tikzcd}
  \end{equation}
  are defined as follows. The triangular grid $U$ is an upper triangular grid with $c$ rows and $c$ columns whose entries are elements of $\{0, \ldots, r\}$. Conversely, $L$ is a lower triangular grid that also has $c$ rows and $c$ columns and entries that are elements of $\{0, \ldots, r\}$. 

  If $c =4$, we have 
  \[
U = \begin{matrix}
0 & 1 & 2 & 3  \\
& 0 & 1 & 2  \\
& & 0 & 1  \\
& & & 0  \\
\end{matrix} \quad \text{and} \quad L = \begin{matrix}
 r& & &  \\
 r-1 & r & & \\
 r-2 & r-1 & r & \\
 r-3 & r-2 & r-1 & r \\
\end{matrix}
  \]
  while in general, for $c \geq 0$, we have $U_{ij} = j-i$ whenever $1 \leq i \leq j \leq c$, and $L_{ij} = r -i +j$ whenever $1 \leq j \leq i \leq c$. 
    If $c =-4$, we have 
  \[
U = \begin{matrix}
r & r-1 & r-2 & r-3  \\
& r & r-1 & r-2  \\
& & r & r-1  \\
& & & r  \\
\end{matrix}
\quad \text{and} \quad
L = \begin{matrix}
    0 & & & \\
    1 & 0 &  & \\
    2 & 1 & 0 &  \\
    3 & 2 & 1 & 0
\end{matrix}
  \]
  while in general, for $c \leq 0$, we have $U_{ij} = r-j+i$ whenever $1\leq i \leq j\leq -c$, and $L_{ij} = i -j$ whenever $1 \leq j \leq i \leq -c$.
\end{definition}

\begin{lemma}\label{lem:M.infer}
  Given $\kappa \too{c} \lambda$ and $M$ in \eqref{eq:prom_grid}, we can infer the rest of the local rule diagram. Similarly, the upper triangular grid $U$ in \eqref{eq:prom_grid_MN} uniquely determines $c$ and $\nu$, while the lower triangular grid $L$ in \eqref{eq:prom_grid_MN} uniquely determines $c$ and $\kappa$.
\end{lemma}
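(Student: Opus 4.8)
The plan is to exploit the equivalence recorded in \Cref{sec:jdt} between a local rule diagram and the jeu de taquin slide $S \to T$ encoding it: the diagram is recovered from its top edge $\lambda \too{d} \nu$ together with the $\bullet$ (or $\overline\bullet$) markings on $S$, and those markings occupy exactly the cells of $\lambda/\kappa$. Thus the local rule diagram is equivalent to the pair consisting of the fluctuating tableau $\kappa \too{c} \lambda$ and the shape $\nu$, and it suffices to recover $\nu$ from $\kappa \too{c} \lambda$ and $M$; then $d$ is determined (its sign being part of the orientation data of \eqref{eq:prom_grid}) and $\mu = \sort(\nu + \kappa - \lambda)$ completes the diagram by the local rule. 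First I would reduce to the case $0 \le c,d$: the sign of $c$ is visible from $\kappa \too{c} \lambda$ and the sign of $d$ from \eqref{eq:prom_grid}, so, exactly as in the proof of \Cref{lem:local.rules}, applying $\toggle$ operators to the $c$-step and/or $d$-step replaces the diagram by one with $c,d \ge 0$ (shifting all shapes by multiples of $\mathbf{1}$); this is invertible, and one checks from \Cref{def:toggle} that it transforms $M$ reversibly, with the relevant $\bullet$-numbering reversed and row indices complemented in $\{0,\dots,r\}$.

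In the case $0 \le c,d$, the positions (rows and columns) of $\bullet_1,\dots,\bullet_c$ in $S$ are read off directly from $\lambda/\kappa$. By the $\bullet i$ slide rules of \Cref{fig:JDT-rules}, each $\bullet$ moves rightward past $i$'s in its row and then downward past $i$'s in its column, and $M$ records, for each vertical swap of $\bullet_a$ with $i_b$, the final row of $\bullet_a$. Running the slide backwards, this data together with the known starting positions of the $\bullet$'s should let one reconstruct the rows and columns of all the $i$-cells: the entries of $M$ determine, column by column, how many $i$-cells sit below each $\bullet$ and in which rows the $\bullet$'s land, which pins down the skew column $\nu/\lambda$ and hence $\nu$. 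The remaining three sign cases then follow from the $\toggle$ reduction above.

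For the triangular grids, the diagrams in \eqref{eq:prom_grid_MN} are the degenerate local rule diagrams in which one of the two edges is trivial, so their grids are the $U$ and $L$ of \Cref{def:triangle}; the number of rows equals $|c|$ and the prescribed entry patterns differ between $c \ge 0$ and $c \le 0$, so $c$ is recovered, and then running the reconstruction above (or reading off directly from \Cref{def:triangle}) recovers $\nu$ from $U$ and $\kappa$ from $L$. The step I expect to be the crux is the reconstruction of the $i$-positions in the $0 \le c,d$ case: one must check that the row data recorded in $M$ plus the known $\bullet$-positions really do determine the $i$-positions---including the columns they occupy and any $i$-cells not passed vertically by a $\bullet$---which will require a careful case analysis of the slide rules of \Cref{fig:JDT-rules} and of how the ``move right'' and ``move down'' phases interact.
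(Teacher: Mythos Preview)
Your approach parallels the paper's: read off the $\bullet$-rows in $S$ from $\kappa \too{c} \lambda$, read off the $\bullet$-rows in $T$ from $M$, and then claim to recover the $i$-positions and hence $\nu$. The paper handles all four sign cases uniformly without reducing via $\toggle$, so that step of yours is unnecessary but harmless. However, the crux you flag is a genuine obstruction that neither your sketch nor the paper's terse argument actually resolves. Take $r=2$, $c=d=1$, $\kappa=(1,0)$, $\lambda=(2,0)$, so the lone $\bullet$ sits in the top row at column $2$. If $\nu=(3,0)$, the $i$ sits immediately to its right and they swap horizontally, giving $M=(\varnothing)$; if instead $\nu=(2,1)$, the $i$ sits in the second row at column $1$, non-adjacent to the $\bullet$, so nothing moves and again $M=(\varnothing)$. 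Thus $\kappa \too{c} \lambda$ together with $M$ do not determine $\nu$: an $i$ that is neither directly below nor immediately right of some $\bullet$ leaves no trace in $M$, and no ``careful case analysis'' can manufacture one.

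What rescues the application to \Cref{thm:injective} is that in the diagonal-by-diagonal reconstruction of $\Ediagram(T)$ one also already knows $\mu$, since $\mu$ lies on the same diagonal of the growth diagram as $\lambda$ and was filled in at the previous step. With $\mu$ in hand, $\nu = \mu + \mathbf{e}_B$, where $B$ is the set of $\bullet$-rows in $T$; and $B$ is precisely what $M$ together with the starting rows determines (for each $a$, the final row of $\bullet_a$ is its starting row if row $a$ of $M$ is entirely empty, and otherwise the largest element appearing in that row). So the statement that actually works takes $\kappa \too{d} \mu$ as an additional input; with that extra datum both the paper's argument and yours go through immediately, without the case analysis you feared.
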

\begin{proof}
  For the first claim, we know the location of $\bullet_a$'s or $\overline{\bullet}_a$'s in $S$. If $\bullet_a$ or $\overline{\bullet}_a$ does not move, then $M_{ab} = \varnothing$ for all $b$. If $\bullet_a$ or $\overline{\bullet}_a$ does move, we may infer where it moves by examining $M_{ab}$ for all $b$. Hence we know the locations of the $\pm \bullet$'s in $S$ and $T$, and thereby the locations of the $\pm i$'s in $S$ and $T$. Thus we may infer $\nu$, which lets us infer $\mu$. The other claims are similar but easier.
\end{proof}

\subsection{Promotion grids}

We now glue together some local rule grids and triangular grids to form a single block grid.

\begin{definition}
  Let $\diagram$ be a growth diagram for an $r$-row non-skew fluctuating tableau $T$, for example $\Pdiagram(T)$, $\Ediagram(T)$, $\Eddiagram(T)$, or $\PEdiagram(T)$. The \emph{$\diagram$-grid} of $\diagram$ is the block grid $\PM_{\diagram}$ whose blocks are the local rule grids of $\diagram$ and appropriately-sized triangular grids.
\end{definition}

This definition can be extended to handle skew fluctuating tableaux; however, the definitions of the triangular grids needed become significantly more complicated and we do not currently have an application of the extra generality such a definition would give, so we omit the details here.

\begin{notation}
       We use the abbreviations
    $\PMP(T) = \PM_{\Pdiagram(T)}$,
    $\PME(T) = \PM_{\Ediagram(T)}$,
    $\PMEd(T) = \PM_{\Eddiagram(T)}$, and
    $\PMPE(T) = \PM_{\PEdiagram(T)}$. We will also use the names \emph{promotion grid}, \emph{evacuation grid}, etc., with the obvious meanings.
\end{notation}

\begin{example} \label{ex:prom_grid_PE}
  For the fluctuating tableau $T$ from \Cref{fig:ft-example}, we write the blocks of $\PMPE(T)$ in $\PEdiagram(T)$:
  \[
  \setlength\arraycolsep{2pt}
  \begin{tikzcd}[ampersand replacement=\&,column sep=tiny, scale cd=0.7]
    0000 \rar[""{name=L00r}]
      \& 1100 \rar[""{name=L01r}]
      \& 110\overline{1} \rar[""{name=L02r}]
      \& 2110 \rar[""{name=L03r}]
      \& 2210 \rar[""{name=L04r}]
      \& 2100 \rar[""{name=L05r}]
      \& 2111 \rar[""{name=L06r}]
      \& 1111 \rar[phantom,""{name=L07r}]
      \& \ \\
    \ \rar[phantom,""{name=L10r}]
      \& 0000 \rar[""{name=L11r}] \uar
      \& 000\overline{1} \rar[""{name=L12r}] \uar
      \& 1100 \rar[""{name=L13r}] \uar
      \& 2100 \rar[""{name=L14r}] \uar
      \& 200\overline{1} \rar[""{name=L15r}] \uar
      \& 2100 \rar[""{name=L16r}] \uar
      \& 1100 \rar[""{name=L17r}] \uar
      \& 1111 \rar[phantom,""{name=L18r}]
      \& \ \\
    \ 
      \& \ \rar[phantom,""{name=L21r}]
      \& 0000 \rar[""{name=L22r}] \uar
      \& 1110 \rar[""{name=L23r}] \uar
      \& 2110 \rar[""{name=L24r}] \uar
      \& 210\overline{1} \rar[""{name=L25r}] \uar
      \& 2200 \rar[""{name=L26r}] \uar
      \& 2100 \rar[""{name=L27r}] \uar
      \& 2111 \rar[""{name=L28r}] \uar
      \& 1111 \rar[phantom,""{name=L29r}]
      \& \ \\
    \ 
      \& \ 
      \& \ \rar[phantom,""{name=L32r}]
      \& 0000 \rar[""{name=L33r}] \uar
      \& 1000 \rar[""{name=L34r}] \uar
      \& 10\overline{1}\overline{1} \rar[""{name=L35r}] \uar
      \& 110\overline{1} \rar[""{name=L36r}] \uar
      \& 100\overline{1} \rar[""{name=L37r}] \uar
      \& 1100 \rar[""{name=L38r}] \uar
      \& 1000 \rar[""{name=L39r}] \uar
      \& 1111 \rar[phantom,""{name=L3Ar}]
      \& \ \\
    \ 
      \& \ 
      \& \ 
      \& \ \rar[phantom,""{name=L43r}]
      \& 0000 \rar[""{name=L44r}] \uar
      \& 00\overline{1}\overline{1} \rar[""{name=L45r}] \uar
      \& 100\overline{1} \rar[""{name=L46r}] \uar
      \& 10\overline{1}\overline{1} \rar[""{name=L47r}] \uar
      \& 110\overline{1} \rar[""{name=L48r}] \uar
      \& 100\overline{1} \rar[""{name=L49r}] \uar
      \& 1110 \rar[""{name=L4Ar}] \uar
      \& 1111 \rar[phantom,""{name=L4Br}]
      \& \ \\
    \ 
      \& \ 
      \& \ 
      \& \ 
      \& \ \rar[phantom,""{name=L54r}]
      \& 0000 \rar[""{name=L55r}] \uar
      \& 1100 \rar[""{name=L56r}] \uar
      \& 110\overline{1} \rar[""{name=L57r}] \uar
      \& 211\overline{1} \rar[""{name=L58r}] \uar
      \& 210\overline{1} \rar[""{name=L59r}] \uar
      \& 2210 \rar[""{name=L5Ar}] \uar
      \& 2211 \rar[""{name=L5Br}] \uar
      \& 1111 \rar[phantom,""{name=L5Cr}]
      \& \ \\
    \ 
      \& \ 
      \& \ 
      \& \ 
      \& \ 
      \& \ \rar[phantom,""{name=L65r}]
      \& 0000 \rar[""{name=L66r}] \uar
      \& 000\overline{1} \rar[""{name=L67r}] \uar
      \& 110\overline{1} \rar[""{name=L68r}] \uar
      \& 11\overline{1}\overline{1} \rar[""{name=L69r}] \uar
      \& 2100 \rar[""{name=L6Ar}] \uar
      \& 2110 \rar[""{name=L6Br}] \uar
      \& 1100 \rar[""{name=L6Cr}] \uar
      \& 1111 \rar[phantom,""{name=L6Dr}]
      \& \ \\
    \ 
      \& \ 
      \& \ 
      \& \ 
      \& \ 
      \& \ 
      \& \ \rar[phantom,""{name=L76r}]
      \& 0000 \rar[""{name=L77r}] \uar
      \& 1100 \rar[""{name=L78r}] \uar
      \& 110\overline{1} \rar[""{name=L79r}] \uar
      \& 2110 \rar[""{name=L7Ar}] \uar
      \& 2210 \rar[""{name=L7Br}] \uar
      \& 2100 \rar[""{name=L7Cr}] \uar
      \& 2111 \rar[""{name=L7Dr}] \uar
      \& 1111.
      \arrow[phantom,from=L00r,to=L10r,"{\begin{matrix}0 & 1 \\ \ & 0\end{matrix}}"]
      \arrow[phantom,from=L01r,to=L11r,"{\begin{matrix}\cdot \\ \cdot\end{matrix}}"]
      \arrow[phantom,from=L02r,to=L12r,"{\begin{matrix}\cdot & 2 & \cdot \\ \cdot & \cdot & \cdot\end{matrix}}"]
      \arrow[phantom,from=L03r,to=L13r,"{\begin{matrix}\cdot \\ 1\end{matrix}}"]
      \arrow[phantom,from=L04r,to=L14r,"{\begin{matrix}3 & \cdot \\ \cdot & \cdot\end{matrix}}"]
      \arrow[phantom,from=L05r,to=L15r,"{\begin{matrix}\cdot & \cdot \\ 2 & \cdot\end{matrix}}"]
      \arrow[phantom,from=L06r,to=L16r,"{\begin{matrix}\cdot \\ \cdot\end{matrix}}"]
      \arrow[phantom,from=L07r,to=L17r,"{\begin{matrix}4 & \ \\ 3 & 4\end{matrix}}"]
      \arrow[phantom,from=L11r,to=L21r,"{\begin{matrix}4\end{matrix}}"]
      \arrow[phantom,from=L12r,to=L22r,"{\begin{matrix}\cdot & \cdot & 3\end{matrix}}"]
      \arrow[phantom,from=L13r,to=L23r,"{\begin{matrix}\cdot\end{matrix}}"]
      \arrow[phantom,from=L14r,to=L24r,"{\begin{matrix}\cdot & 2\end{matrix}}"]
      \arrow[phantom,from=L15r,to=L25r,"{\begin{matrix}\cdot & \cdot\end{matrix}}"]
      \arrow[phantom,from=L16r,to=L26r,"{\begin{matrix}1\end{matrix}}"]
      \arrow[phantom,from=L17r,to=L27r,"{\begin{matrix}\cdot & \cdot\end{matrix}}"]
      \arrow[phantom,from=L18r,to=L28r,"{\begin{matrix}0\end{matrix}}"]
      \arrow[phantom,from=L22r,to=L32r,"{\begin{matrix}0 & 1 & 2 \\ \ & 0 & 1 \\ \ & \ & 0\end{matrix}}"]
      \arrow[phantom,from=L23r,to=L33r,"{\begin{matrix}\cdot \\ \cdot \\ \cdot\end{matrix}}"]
      \arrow[phantom,from=L24r,to=L34r,"{\begin{matrix}\cdot & \cdot \\ \cdot & \cdot \\ \cdot & \cdot\end{matrix}}"]
      \arrow[phantom,from=L25r,to=L35r,"{\begin{matrix}\cdot & 3 \\ \cdot & \cdot \\ \cdot & \cdot\end{matrix}}"]
      \arrow[phantom,from=L26r,to=L36r,"{\begin{matrix}\cdot \\ \cdot \\ \cdot\end{matrix}}"]
      \arrow[phantom,from=L27r,to=L37r,"{\begin{matrix}\cdot & \cdot \\ 2 & \cdot \\ \cdot & \cdot\end{matrix}}"]
      \arrow[phantom,from=L28r,to=L38r,"{\begin{matrix}\cdot \\ \cdot \\ 1\end{matrix}}"]
      \arrow[phantom,from=L29r,to=L39r,"{\begin{matrix}4 \\ 3 & 4 \\ 2 & 3 & 4\end{matrix}}"]
      \arrow[phantom,from=L33r,to=L43r,"{\begin{matrix}0\end{matrix}}"]
      \arrow[phantom,from=L34r,to=L44r,"{\begin{matrix}\cdot & \cdot\end{matrix}}"]
      \arrow[phantom,from=L35r,to=L45r,"{\begin{matrix}1 & \cdot\end{matrix}}"]
      \arrow[phantom,from=L36r,to=L46r,"{\begin{matrix}2\end{matrix}}"]
      \arrow[phantom,from=L37r,to=L47r,"{\begin{matrix}\cdot & 3\end{matrix}}"]
      \arrow[phantom,from=L38r,to=L48r,"{\begin{matrix}\cdot\end{matrix}}"]
      \arrow[phantom,from=L39r,to=L49r,"{\begin{matrix}\cdot & \cdot & \cdot\end{matrix}}"]
      \arrow[phantom,from=L3Ar,to=L4Ar,"{\begin{matrix}4\end{matrix}}"]
      \arrow[phantom,from=L44r,to=L54r,"{\begin{matrix}4 & 3 \\ \ & 4\end{matrix}}"]
      \arrow[phantom,from=L45r,to=L55r,"{\begin{matrix}\cdot & 2 \\ \cdot & \cdot\end{matrix}}"]
      \arrow[phantom,from=L46r,to=L56r,"{\begin{matrix}\cdot \\ 3\end{matrix}}"]
      \arrow[phantom,from=L47r,to=L57r,"{\begin{matrix}1 & \cdot \\ \cdot & \cdot\end{matrix}}"]
      \arrow[phantom,from=L48r,to=L58r,"{\begin{matrix}\cdot \\ 2\end{matrix}}"]
      \arrow[phantom,from=L49r,to=L59r,"{\begin{matrix}\cdot & \cdot & \cdot \\ \cdot & \cdot & \cdot\end{matrix}}"]
      \arrow[phantom,from=L4Ar,to=L5Ar,"{\begin{matrix}\cdot \\ \cdot\end{matrix}}"]
      \arrow[phantom,from=L4Br,to=L5Br,"{\begin{matrix}0 \\ 1 & 0\end{matrix}}"]
      \arrow[phantom,from=L55r,to=L65r,"{\begin{matrix}0 & 1 \\ \ & 0\end{matrix}}"]
      \arrow[phantom,from=L56r,to=L66r,"{\begin{matrix}\cdot \\ \cdot\end{matrix}}"]
      \arrow[phantom,from=L57r,to=L67r,"{\begin{matrix}\cdot & 2 \\ \cdot & \cdot\end{matrix}}"]
      \arrow[phantom,from=L58r,to=L68r,"{\begin{matrix}\cdot \\ \cdot\end{matrix}}"]
      \arrow[phantom,from=L59r,to=L69r,"{\begin{matrix}\cdot & \cdot & \cdot \\ 1 & \cdot & \cdot\end{matrix}}"]
      \arrow[phantom,from=L5Ar,to=L6Ar,"{\begin{matrix}3 \\ \cdot\end{matrix}}"]
      \arrow[phantom,from=L5Br,to=L6Br,"{\begin{matrix}\cdot & \cdot \\ 2 & \cdot\end{matrix}}"]
      \arrow[phantom,from=L5Cr,to=L6Cr,"{\begin{matrix}4 \\ 3 & 4\end{matrix}}"]
      \arrow[phantom,from=L66r,to=L76r,"{\begin{matrix}4\end{matrix}}"]
      \arrow[phantom,from=L67r,to=L77r,"{\begin{matrix}\cdot & \cdot\end{matrix}}"]
      \arrow[phantom,from=L68r,to=L78r,"{\begin{matrix}3\end{matrix}}"]
      \arrow[phantom,from=L69r,to=L79r,"{\begin{matrix}\cdot & \cdot & \cdot\end{matrix}}"]
      \arrow[phantom,from=L6Ar,to=L7Ar,"{\begin{matrix}2\end{matrix}}"]
      \arrow[phantom,from=L6Br,to=L7Br,"{\begin{matrix}\cdot & 1\end{matrix}}"]
      \arrow[phantom,from=L6Cr,to=L7Cr,"{\begin{matrix}\cdot & \cdot\end{matrix}}"]
      \arrow[phantom,from=L6Dr,to=L7Dr,"{\begin{matrix}0\end{matrix}}"]
  \end{tikzcd}
  \]
\end{example}

The lemma below follows directly from the definitions.

\begin{lemma}\label{lem:M_prom}
  Consider reading a row of $\PMPE(T)$ either from left to right if the label on the first arrow is positive  or from right to left if the label on the first arrow is negative, skipping empty entries. The result is a sequence of intervals $[0, i_1], (i_1, i_2], \ldots, (i_k, r]$ where $0 \leq i_1 < i_2 < \cdots < i_k \leq r-1$.
\end{lemma}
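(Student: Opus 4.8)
The plan is to peel the statement down to a single promotion grid and then run a jeu-de-taquin argument. By the definitions of $\PEdiagram(T)$ and of the block grid $\PM_{\diagram}$, each row of $\PMPE(T)$ — the horizontal strip of blocks lying between two consecutive shape-rows — is exactly $\PMP(S) = \PM_{\Pdiagram(S)}$ for $S = \promotion^i(T)$ with $0 \le i < n$. Since $\promotion^i(T)$ is again a non-skew $r$-row fluctuating tableau of length $n$, it suffices to prove the claim for $\PMP(S)$ with $S$ an arbitrary such tableau. Because $S$ has initial shape $\varnothing$, its first step is forced to be $\varnothing \too{c} \omega_c$, where $\omega_c$ is the top-justified column of $c$ boxes if $c \ge 0$ and the bottom-justified anti-column of $|c|$ boxes if $c \le 0$; this $c$ is ``the label on the first arrow'' in the statement.

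Next I would reduce to the case $c \ge 0$. If $c < 0$, applying the switch involution $\toggle_1$ (\Cref{def:toggle}, extended to fluctuating tableaux) replaces the first step by $\varnothing \too{r-|c|} \omega_{r-|c|}$, now with a nonnegative label, and by \Cref{lem:BK.toggle.PEEd} it interacts with $\promotion$ in the obvious way; on the level of jeu de taquin it swaps the $|c|$ ``$\overline{\bullet}$''-cells for the $r-|c|$ complementary ``$\bullet$''-cells and mirrors every slide, so that $\PMP(\toggle_1 S)$ read left-to-right matches $\PMP(S)$ read right-to-left (under the row relabeling $j \mapsto r-j$, which also interchanges the $U$- and $L$-triangular grids). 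This is precisely why the statement reverses the reading direction when the first label is negative; equivalently one may invoke time-reversal $\tau$ via $\tau \circ \promotion = \promotion^{-1} \circ \tau$ from \Cref{lem:invs.PEEd}. So from now on $c \ge 0$ and we read left-to-right.

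With $c \ge 0$: by the jeu-de-taquin description of promotion (\Cref{sec:jdt}), $\Pdiagram(S)$, and hence every block of $\PMP(S)$, is produced by labeling the $c$ boxes of the first step $\varnothing \to \omega_c$ as $\bullet_1, \dots, \bullet_c$ from bottom to top — so, indexing rows $0, \dots, r-1$ from the top, $\bullet_a$ starts in row $c-a$ and the $\bullet$'s occupy rows $0, \dots, c-1$ — and then running the slides $\JDT_1, \dots, \JDT_{n-1}$, which (since $c \ge 0$) move the $\bullet$'s only rightward and downward. Consequently each interior block entry is either the destination row $\{j\}$ of a $\bullet$ after a vertical swap, or the interval $(j,k]$ of rows that a $\bullet\overline{i}$ pair sweeps through. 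When the slides terminate the $\bullet$'s form the final step of $\promotion(S)$, which by \Cref{lem:M.infer} (equivalently, by the $\tau$-symmetry $\tau \circ \Pdiagram = \Pdiagram \circ \tau \circ \promotion$ of \Cref{lem:12_relations}) is forced to be the bottom-justified column: $\bullet_a$ ends in row $r-a$ and the $\bullet$'s occupy rows $r-c, \dots, r-1$. Meanwhile the $U$-triangle ($U_{ij} = j-i$, diagonal $0$) records the $\bullet$'s materializing into rows $0, \dots, c-1$, and the $L$-triangle ($L_{ij} = r-i+j$, diagonal $r$) records their departure across the bottom boundary indexed by $r$.

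The crux is then a continuity/chaining claim: reading the strip left-to-right and, inside each block, its entries in matrix order (skipping $\varnothing$), every interval begins exactly where the previous one ended, the first equals $[0,\cdot]$, and the last equals $(\cdot, r]$. The endpoint half is immediate from the $U$- and $L$-triangle formulas (equivalently, from the $\bullet$'s starting in the top row and the final step reaching the boundary $r$). For the chaining one combines: (i) in a single slide a $\bullet$, resp.\ a $\bullet\overline{i}$ pair, traverses a \emph{contiguous} block of rows, so each block entry genuinely is an interval; (ii) within one block the connectedness of the free cells, resp.\ of the moving-and-open segments, established in \Cref{sec:BK.comb}, makes successive $\bullet$'s in a column hand off contiguously; (iii) the left-to-right order of blocks together with the bottom-to-top numbering of the $\bullet$'s and the top-to-bottom numbering of the $i$'s is arranged precisely so that the reading follows the $\bullet$'s monotonically downward from row $0$ to row $r$. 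Granting this, the intervals are pairwise disjoint, consecutive, and exhaust $\{0, 1, \dots, r\}$, hence have the asserted form, and the inequalities $0 \le i_1 < \dots < i_k \le r-1$ follow since no recorded interval is empty and $0$, resp.\ $r$, lies in exactly one of them. I expect step (iii) of the chaining claim to be the main obstacle: it requires a patient verification, through all four sign-combinations $0 \le c,d$; $\ c,d \le 0$; $\ d \le 0 \le c$; $\ c \le 0 \le d$ in the definition of the local rule grid (and for the two triangular grids), that the reading order is exactly the order in which the glued $\bullet$-trajectories sweep $\{0, \dots, r\}$ once. The reductions above and the endpoint half of the crux are routine given the cited results.
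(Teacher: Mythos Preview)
Your overall plan—reduce to a single promotion strip $\PMP(S)$ with $S=\promotion^i(T)$ and then track each $\bullet_a$ through the jeu-de-taquin slides—is exactly what the paper's ``follows directly from the definitions'' is gesturing at, and your endpoint/chaining decomposition is the right picture. Two genuine gaps, however.

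First, the $\toggle_1$ reduction for $c<0$ does not do what you claim. Applying $\toggle_1$ replaces the first step of size $|c|$ by one of size $r-|c|$, so the block strip $\PMP(\toggle_1 S)$ has $r-|c|$ rows rather than $|c|$; there is no row-by-row correspondence with $\PMP(S)$, and the slides are not ``mirrored.'' The clean symmetry to invoke is $\varpi$: by \Cref{lem:other_lemma}(iii) one has $\PM_{\varpi(\diagram)}=r-\PM_\diagram$, and $\varpi$ negates every $c_i$ while preserving non-skewness and all block sizes, so the left-to-right statement for $\varpi(T)$ becomes the right-to-left statement for $T$ after the substitution $j\mapsto r-j$ on entries. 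Alternatively, just run the $c<0$ argument directly: $\overline{\bullet}_a$ starts in row $r-|c|+a-1$, moves monotonically \emph{up}, and in the rectangular case ends in row $a-1$; the $U$- and $L$-formulas for $c<0$ are tailored precisely to this.

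Second, the assertion that the final step of $\promotion(S)$ is bottom-justified is the crux, and it is \emph{false} for general non-skew $S$: take $r=3$ and $S=\varnothing\to(1,0,0)\to(1,1,0)$, where the single $\bullet$ ends in row $1$ and the strip reads $0,1,3$, missing $2$. Neither \Cref{lem:M.infer} nor the $\tau$-symmetry forces bottom-justification here—your $\tau$-argument implicitly needs $\tau(\promotion(S))$ to be non-skew, which requires the final shape of $S$ to be a generalized rectangle. The correct (and easy) justification in the rectangular case: if the final shape is $d\cdot\mathbf{1}$, the only generalized partition $\mu$ with $d\cdot\mathbf{1}=\mu+\mathbf{e}_B$ and $|B|=c$ is $B=\{r-c+1,\dots,r\}$. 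The lemma as stated omits this hypothesis, but it is needed for the $L$-triangle half of the chaining, and the lemma is only ever invoked in rectangular contexts.
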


Since in \Cref{ex:prom_grid_PE}, all nonempty intervals are single elements, the above lemma says all nonempty entries in a row of the promotion-evacuation grid are sequentially $0,1,2,3,4$ or $4,3,2,1,0$, according to whether boxes are added or removed in the first step.

Evacuation grids may be used as alternate encodings of fluctuating tableaux as follows. We will shortly use a similar encoding in the rectangular case, with particularly desirable properties.

\begin{theorem}\label{thm:injective}
  The map $T \mapsto (\PME(T), \type(T))$ is injective.
\end{theorem}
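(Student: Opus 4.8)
The plan is to show how $T$ can be reconstructed from the pair $(\PME(T),\underline{c})$, where $\underline{c}\coloneqq\type(T)=(c_1,\dots,c_n)$. Since $T$ is non-skew, its initial shape is $\lambda^0=\mathbf{0}$, and recall that $T$ is literally the top row $\lambda^{00}\too{c_1}\lambda^{01}\too{c_2}\cdots\too{c_n}\lambda^{0n}$ of $\Ediagram(T)$, whose arrow labels $\underline{c}$ are given. Hence it suffices to recover every node $\lambda^{ij}$ of $\Ediagram(T)$ for $0\le i\le j\le n$, and I will do this by induction on $d\coloneqq j-i$.

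For $d=0$ we have $\lambda^{ii}=\lambda^0=\mathbf{0}$. For $d=1$: in $\Ediagram(T)$ the degenerate square along the main diagonal with top-left corner $\lambda^{ii}$, top-right corner $\lambda^{i,i+1}$, and bottom-right corner $\lambda^{i+1,i+1}$ — its bottom-left corner being identified with $\lambda^{ii}=\lambda^{i+1,i+1}=\mathbf{0}$ — is exactly the configuration of \eqref{eq:prom_grid_MN} defining an upper triangular grid $U$, with $c=c_{i+1}$ and $\nu=\lambda^{i,i+1}$. The block of $\PME(T)$ at this position is precisely this $U$, so by \Cref{lem:M.infer} we recover $\nu=\lambda^{i,i+1}$. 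For $d\ge 2$: let $\lambda^{ij}$ be the top-right corner $\nu$ of the genuine local rule square of $\Ediagram(T)$ with top-left corner $\lambda^{i,j-1}$, bottom-left corner $\lambda^{i+1,j-1}$, bottom-right corner $\lambda^{i+1,j}$, and left vertical edge labeled $c_{i+1}$. By the inductive hypothesis both $\lambda^{i,j-1}$ (differing by $d-1$) and $\lambda^{i+1,j-1}$ (differing by $d-2$) are already known, so the left edge $\lambda^{i+1,j-1}\too{c_{i+1}}\lambda^{i,j-1}$ is known; the local rule grid $M$ of this square is one of the blocks of $\PME(T)$; hence \Cref{lem:M.infer} recovers the rest of the square, in particular $\nu=\lambda^{ij}$. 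This closes the induction, and specializing to $i=0$ we have recovered $\lambda^{0j}=\lambda^j$ for all $j$, i.e.\ all of $T$.

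The content of the argument is essentially bookkeeping: the only step that requires care is the identification in the $d=1$ case, namely confirming that the degenerate diagonal squares of $\Ediagram(T)$ really are the triangular-grid configurations of \Cref{def:triangle} (so that the corresponding blocks of $\PME(T)$ are the $U$'s rather than ordinary local rule grids), and checking that for every $d\ge 2$ the two corners and the edge feeding into \Cref{lem:M.infer} have been produced at strictly earlier stages of the induction. Both are immediate from the shapes of the diagrams in \Cref{sec:prom_evac_diag}. (The number of rows $r$ — already part of the data of a fluctuating tableau — can also be read directly off $\PME(T)$ whenever some $c_i\neq 0$, e.g.\ from $L_{11}=r$ on a step that adds cells, though we do not need this.) I expect this identification-of-blocks step, rather than any real difficulty, to be the main thing to get right.
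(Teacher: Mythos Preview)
Your proof is correct and follows essentially the same approach as the paper: the paper's proof simply says to infer the block sizes from the type, fill $\Ediagram(T)$ ``diagonal by diagonal'' using \Cref{lem:M.infer}, and read $T$ off the top row --- your induction on $d=j-i$ is exactly this diagonal-by-diagonal reconstruction spelled out in detail. The one place you slightly overcomplicate matters is the $d=1$ case: since $\lambda^{i,i+1}$ is obtained from $\mathbf{0}$ by a single step of type $c_{i+1}$, it is already uniquely determined by $c_{i+1}$ alone (it equals $\omega_{c_{i+1}}$), so you do not actually need to invoke the triangular-grid part of \Cref{lem:M.infer} there.
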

\begin{proof}
  We may infer the block sizes of $\PME(T)$ from the type of $T$.  We may fill $\Ediagram(T)$ diagonal by diagonal by \Cref{lem:M.infer}. Finally, we may read off $T$ from the top row of $\Ediagram(T)$.
\end{proof}

Note that we may infer $\type(T)$ from $\PME(T)$ if we track the blocks of $\PME(T)$. In this sense, we may entirely encode fluctuating tableaux of a fixed shape in terms of the block grid $\PME(T)$.

We now consider the effect on a $\diagram$-grid of oscillizing the growth diagram $\diagram$.  
\begin{lemma}\label{lem:osc_prom}
  If $\diagram$ is a skew fluctuating growth diagram, then
    \[ \PM_{\diagram} = \PM_{\std(\diagram)}. \]
\end{lemma}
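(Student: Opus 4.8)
The plan is to verify the identity block-by-block. Both sides are block grids: $\PM_\diagram$ is assembled from the local rule grids of the squares of $\diagram$ together with the triangular grids ($U$ or $L$) that occur at its corners, and $\std(\diagram)$ has the same coarse structure, with each local rule square $s$ of step sizes $|c|\times|d|$ refined into the $|c|\times|d|$ array of unit local rule squares of its oscillization \eqref{eq:local_rule.std}, and each triangular piece replaced by its own oscillization. Since this refinement respects the block decomposition, it suffices to prove two things: (A) for a single local rule square $s$ with grid $M$, the $|c|\times|d|$ array of the $1\times1$ grids of the unit squares of $\std(s)$ equals $M$; and (B) for a single triangular piece, the analogous assembly of unit grids reproduces the triangular grid $U$ (respectively $L$).

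For (A) I would encode $s$ as a jeu de taquin slide $S\to T$, as in the definition of the local rule grid, and note that $\std(s)$ resolves this single slide into a sequence of elementary slides, each passing one unit step in the $c$-direction past one unit step in the $d$-direction. The substance of the claim is that the single slide of $s$ deposits the numbered $\bullet$'s --- or, in the other three sign cases, the $\overline{\bullet}$'s, the $\bullet\overline{i}$-pairs, or the $\overline{\bullet}i$-pairs --- in exactly the rows, or having traversed exactly the intervals, recorded cumulatively by the elementary slides of $\std(s)$. This is precisely the compatibility of the Bender--Knuth involutions with refinement proved in \Cref{lem:local_rule.refinement} and used in \Cref{lem:standard_welldef}: splitting the $d$-step as $d=d_1+d_2$ and, dually, the $c$-step, and then inducting on $|c|+|d|$ with base case $|c|=|d|=1$, one reduces to that already-established fact. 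The four sign cases run in the same way. The genuinely delicate part is the index bookkeeping: one must check that the bottom-to-top (respectively top-to-bottom) numbering of the $\bullet$'s and of the $i$'s, the order in which $\std(\kappa\too{c}\lambda)$ and $\std(\lambda\too{d}\nu)$ add or remove their cells, and the orientation of \eqref{eq:local_rule.std} are all mutually consistent, so that the $(a,b)$-entry of $M$ is in fact read off from the unit square of $\std(s)$ asserted.

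For (B) I would appeal to the explicit formulas of \Cref{def:triangle} --- for instance $U_{ij}=j-i$ and $L_{ij}=r-i+j$ when $c>0$, with the reversed formulas when $c<0$. Oscillizing a triangular wedge $\varnothing\too{c}\nu$ and filling by local rules produces a triangle of unit local rule squares together with the size-one triangular grids ($U_{11}=0$ or $L_{11}=r$) at its corner; computing the grid of each unit square by the jeu de taquin rules amounts to forming the evacuation, or dual-evacuation, grid of a monotone sequence of $|c|$ unit ``add a cell'' (respectively ``remove a cell'') steps, which has completely transparent structure. One then checks directly that the resulting entries reproduce the stated formulas, the only subtlety being, once again, the indexing.

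The step I expect to be the main obstacle is the bookkeeping in (A): organizing the induction so that, after the elementary slides corresponding to the first several unit steps, the already-moved $\bullet$'s are renumbered in a way consistent with the rows and columns of $M$. This introduces no new idea beyond the compatibility underlying \Cref{lem:standard_welldef}, but making it precise in all four sign cases requires care.
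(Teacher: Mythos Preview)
Your proposal is correct and takes essentially the same approach as the paper: verify the identity block-by-block, treating local rule squares and triangular pieces separately. The paper's own proof is extremely terse---it simply asserts that the local rule grids in \eqref{eq:prom_grid} were \emph{defined} so as to agree with their oscillizations, and that the same holds for the triangular grids in \eqref{eq:prom_grid_MN}---so your outline is in fact a fleshed-out version of that assertion rather than a different argument. Your use of \Cref{lem:local_rule.refinement} and the induction from \Cref{lem:standard_welldef} for part (A) is a reasonable way to organize the check, though a direct comparison of the jeu de taquin description of $M_{ab}$ with the $(a,b)$ unit square of $\std(s)$ also works and is closer in spirit to what the paper intends; either route requires exactly the index bookkeeping you flag as the delicate point.
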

\begin{proof}
    One may check that the $\diagram$-grids in \eqref{eq:prom_grid} have been defined so that the $\diagram$-grid agrees with the $\diagram$-grid of its oscillization. It is easy to see that the same is true for \eqref{eq:prom_grid_MN}.
\end{proof}

We now describe the effect of various diagram involutions on $\diagram$-grids. We will need these observations to determine the order of promotion on rectangular fluctuating tableaux.

\begin{definition}
  Given any grid $M$, let $M^\top$ be the \emph{transpose} of $M$ and let $M^\bot$ be the \emph{anti-diagonal transpose} of $M$. 
\end{definition}

Note that the composite of the transpose and anti-diagonal transpose is rotation by $180^\circ$.  Recall that by convention $\tau$ and $\varepsilon$ act on growth diagrams by applying $\tau$ and $\varepsilon$ to each edge and rotating the result by $180^\circ$.

\begin{lemma}\label{lem:other_lemma}
  Suppose $\diagram$ is an $r$-row  fluctuating growth diagram. Then:
  \begin{enumerate}[(i)]
    \item $\PM_{\diagram^\bot} = \PM_{\diagram}^\bot$,
    \item $\PM_{\tau(\diagram)} = \PM_{\diagram}^{\top\bot}$,
    \item $\PM_{\varpi(\diagram)} = r - \PM_{\diagram}$,
    \item $\PM_{\varepsilon(\diagram)} = r - \PM_{\diagram}^{\top\bot}$.
  \end{enumerate}
  Here for a matrix $\mathbf{N}$ we set $(r-\mathbf{N})_{ab} \coloneqq \{r-i : i \in \mathbf{N}_{ab}\}$.
\end{lemma}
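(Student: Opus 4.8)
The plan is to verify each of the four identities by reducing to the level of individual local rule grids and triangular grids, since the $\diagram$-grid $\PM_{\diagram}$ is simply the block assembly of these pieces. Because the involutions $\diagram \mapsto \diagram^\bot$, $\tau(\diagram)$, $\varpi(\diagram)$, $\varepsilon(\diagram)$ act on the growth diagram in a combinatorially transparent way (reflecting/rotating the array of local rule diagrams and reindexing rows of generalized partitions), the geometric part of each statement---that the block in position $(a,b)$ of the transformed grid is the appropriately transposed/reflected block of the original---is automatic once one checks how each block transforms. So the real content is four local computations. For (i), note that $\diagram^\bot$ is the anti-diagonal reflection of the whole diagram, which fixes the $c\times d$ shape of each local rule grid up to anti-diagonal transpose and, crucially, does not change $r$ or the rows; so it suffices to observe that the local rule grid of the anti-transposed local rule diagram is the anti-transpose of the original local rule grid, and similarly that $U^\bot, L^\bot$ are again triangular grids of the correct form (indeed $U^\bot = L$ with the roles of $c$ and $-c$ swapped, which matches how $\bot$ acts on the boundary arrows).

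For (ii) and (iii), I would use the characterizations of $\tau$ and $\varpi$ on lattice words and on growth diagrams from \Cref{lem:12_relations} together with the explicit boundary-edge descriptions in \Cref{sec:prom_evac_diag}. Applying $\tau$ reverses the order of the steps and negates every type entry, which is precisely a $180^\circ$ rotation of the growth diagram combined with swapping ``add'' and ``remove''; this sends a local rule grid to the $180^\circ$-rotated (i.e.\ $\top\bot$) version of the corresponding grid, and the entries---which record \emph{row indices} of moving $\bullet$'s---are unchanged by $\tau$ because $\tau$ does not permute rows. That gives (ii) for the local rule blocks; for the triangular blocks one checks from the explicit formulas in \Cref{def:triangle} that $\tau$ exchanges the $U$ and $L$ blocks (up to the $\top\bot$), consistent with $\tau$ swapping the two ``legs'' of $\PEdiagram$. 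For (iii), $\varpi$ replaces each entry $i$ of a letter with $-\sgn(i)(r-|i|+1)$, i.e.\ it reverses the rows $1,\ldots,r$ while keeping the arrow directions; the shape of the diagram and the block structure are untouched, but a $\bullet$ that was in row $j$ is now in row $r-j$, so every entry of every local rule grid gets replaced by $r$ minus itself. The triangular grids transform the same way: from the formulas $U_{ij}=j-i$, $L_{ij}=r-i+j$ for $c\ge 0$ (resp.\ the $c\le 0$ formulas) one sees directly that $\varpi$ sends $U$ to $r - U$ and $L$ to $r - L$, using that $\varpi$ also flips the sign of $c$.

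Finally, (iv) follows formally from (ii) and (iii) together with \Cref{lem:Klein4}, which says $\varepsilon = \tau \circ \varpi = \varpi \circ \tau$: we get $\PM_{\varepsilon(\diagram)} = \PM_{\tau(\varpi(\diagram))} = \PM_{\varpi(\diagram)}^{\top\bot} = (r - \PM_{\diagram})^{\top\bot} = r - \PM_{\diagram}^{\top\bot}$, where the last equality is just the fact that $\top\bot$ commutes with the entrywise operation $\mathbf{N} \mapsto r - \mathbf{N}$. The main obstacle I anticipate is (ii): one must be careful that the entries of the local rule grids (which depend on \emph{before}- vs.\ \emph{after}-slide row positions, and on the case $d\le 0\le c$ vs.\ $c\le 0\le d$, where intervals $(j,k]$ appear) transform correctly under the simultaneous reversal of step order and swap of add/remove---in particular that the ``before''/``after'' convention in the four bulleted cases of the local rule grid definition is exactly symmetric under $\tau$, and that the interval $(j,k]$ recorded for a $\bullet\overline{i}$ pair sliding from row $j$ to row $k$ becomes the interval recorded for the $\overline{\bullet}i$ pair in the rotated picture. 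Checking this case-by-case against the jeu de taquin slide pictures in \Cref{fig:JDT-rules} is the only step requiring genuine care; everything else is bookkeeping on the block layout and on the explicit triangular-grid formulas.
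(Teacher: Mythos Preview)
Your proposal is correct and takes essentially the same approach as the paper: both reduce to checking how each involution transforms an individual local rule grid via its jeu de taquin encoding, then obtain (iv) by composing (ii) and (iii). The paper's argument is terser---it simply notes for (i) that the roles of $\bullet$/$i$ and $S$/$T$ are swapped, for (ii) that $\tau$ gives $S' = T$, $T' = S$ with $\overline{\bullet}_a \leftrightarrow \bullet_{c+1-a}$, $\overline{i}_b \leftrightarrow i_{d+1-b}$ and the recorded indices unchanged, and for (iii) that $\varpi$ rotates each partition $180^\circ$ sending row $j$ to $r-j$---but your identification of (ii) as the case requiring the most care (the before/after and $(j,k]$ conventions) matches exactly where the paper spends its effort.
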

\begin{proof}
  For (i), the roles of $\bullet$, $i$ and $S, T$ are interchanged between the two diagrams. The result then follows by examining the symmetry in the definition of the promotion grid.
  
  For (ii), after applying $\tau$ to the local rule, we rotate the diagram $180^\circ$. If we encode the original diagram in jeu de taquin with $S \to T$, the new diagram encoded via jeu de taquin as $S' \to T'$ where $S'$ is the same as $T$ and $T'$ is the same as $S$ after the replacements $\overline{\bullet}_a \leftrightarrow \bullet_{c+1-a}$ and $\overline{i}_b \leftrightarrow i_{d+1-b}$. The indices $\{j\}$ and $(j, k]$ are preserved. Hence $\tau(M)_{ab} = M_{c+1-a, d+1-b} = (M^{\top\bot})_{ab}$.
  
  For (iii), applying $\varpi$ to a local rule diagram has the effect of rotating each individual generalized partition $180^\circ$. Correspondingly, $S''$ is the same as $S$ and $T''$ is the same as $T$ but rotated $180^\circ$ and with the replacements $\bullet_a \leftrightarrow \overline{\bullet}_a$, $i_b \leftrightarrow \overline{i}_b$. Moreover, row indices are reversed according to $j \leftrightarrow r-j$.
  
  For (iv), we compose $\tau$ and $\varpi$.
\end{proof}

The lemma below follows straightforwardly from \Cref{lem:other_lemma}.
\begin{lemma}\label{lem:PM.invs}
  On $r$-row skew fluctuating tableaux, we have the following:
  \begin{enumerate}[(i)]
    \item $\PME(\evacuation(T)) = \PME(T)^\bot$, $\PMEd(\devacuation(T)) = \PMEd(T)^\bot$
    \item 
    \[\begin{aligned}[c]
    \PME(\tau(T))
      &= \PMEd(T)^{\top \bot} \\
    \PME(\varpi(T))
      &= r - \PME(T) \\
    \PME(\varepsilon(T))
      &= r -  \PMEd(T)^{\top \bot} \\
    \end{aligned}
    \qquad
    \begin{aligned}[c]
    \PMEd(\tau(T))
      &= \PME(T)^{\top \bot} \\
    \PMEd(\varpi(T))
      &= r - \PMEd(T) \\
    \PMEd(\varepsilon(T))
      &= r - \PME(T)^{\top \bot} \\
    \end{aligned}\]
    \item $\PMPE(\varpi(T)) = r - \PMPE(T)$
  \end{enumerate}
\end{lemma}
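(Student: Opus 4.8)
The plan is to reduce everything to \Cref{lem:12_relations} and \Cref{lem:other_lemma}. The general principle: given an involution $\mathcal{O} \in \{\tau, \varpi, \varepsilon\}$ and a diagram constructor $\mathscr{D} \in \{\Ediagram, \Eddiagram, \PEdiagram\}$, \Cref{lem:12_relations} lets us rewrite $\mathscr{D}(\mathcal{O}(T))$ as $\mathcal{O}(\mathscr{D}'(T))$ for the appropriate $\mathscr{D}'$ (either $\mathscr{D}$ or its dual), and then \Cref{lem:other_lemma} converts $\PM_{\mathcal{O}(\mathscr{D}'(T))}$ into $\PM_{\mathscr{D}'(T)}$ with the appropriate decoration ($M \mapsto M^{\top\bot}$ for $\tau$, $M \mapsto r - M$ for $\varpi$, $M \mapsto r - M^{\top\bot}$ for $\varepsilon$). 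Unwinding the definitions $\PME(T) = \PM_{\Ediagram(T)}$, $\PMEd(T) = \PM_{\Eddiagram(T)}$, $\PMPE(T) = \PM_{\PEdiagram(T)}$ then yields each identity. Throughout we use that $^\top$ and $^\bot$ commute, that $^{\top\bot}$ is rotation by $180^\circ$, and that $\tau, \varpi, \varepsilon$ are involutions (\Cref{lem:Klein4}).

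Parts (ii) and (iii) are then pure bookkeeping. For instance, $\tau \circ \Eddiagram = \Ediagram \circ \tau$ from \Cref{lem:12_relations} gives $\Ediagram(\tau(T)) = \tau(\Eddiagram(T))$, so by \Cref{lem:other_lemma}(ii),
\[ \PME(\tau(T)) = \PM_{\tau(\Eddiagram(T))} = \PM_{\Eddiagram(T)}^{\top\bot} = \PMEd(T)^{\top\bot}. \]
The identity for $\PMEd(\tau(T))$ uses $\tau \circ \Ediagram = \Eddiagram \circ \tau$ in the same way; those for $\varpi$ use $\varpi \circ \Ediagram = \Ediagram \circ \varpi$ and $\varpi \circ \Eddiagram = \Eddiagram \circ \varpi$ together with \Cref{lem:other_lemma}(iii); and those for $\varepsilon$ use $\varepsilon \circ \Eddiagram = \Ediagram \circ \varepsilon$ and $\varepsilon \circ \Ediagram = \Eddiagram \circ \varepsilon$ with \Cref{lem:other_lemma}(iv). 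Part (iii) is identical: $\varpi \circ \PEdiagram = \PEdiagram \circ \varpi$ gives $\PEdiagram(\varpi(T)) = \varpi(\PEdiagram(T))$, whence $\PMPE(\varpi(T)) = \PM_{\varpi(\PEdiagram(T))} = r - \PM_{\PEdiagram(T)} = r - \PMPE(T)$ by \Cref{lem:other_lemma}(iii).

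Part (i) is the one step not immediate from the two cited lemmas, since \Cref{lem:12_relations} says nothing about $\evacuation$ or $\devacuation$ interacting with $\Ediagram$ and $\Eddiagram$. The missing input is the diagram identity $\Ediagram(\evacuation(T)) = \Ediagram(T)^\bot$ (and dually $\Eddiagram(\devacuation(T)) = \Eddiagram(T)^\bot$). To see it, note that the anti-diagonal transpose of a local rule diagram \eqref{eq:local_rule} swaps $\lambda \leftrightarrow \mu$ while fixing $\kappa$ and $\nu$, so it is again a local rule diagram precisely because the two conditions of \eqref{eq:local_rule.sort} are equivalent (\Cref{lem:local.rules}); hence $\Ediagram(T)^\bot$ is a valid local-rule filling of the evacuation triangle. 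Its top row is the $^\bot$-image of the right column $\evacuation(T)$ of $\Ediagram(T)$, and its hypotenuse is still the constant sequence at the initial shape, so by uniqueness of the evacuation-diagram fill it equals $\Ediagram(\evacuation(T))$; the dual statement is identical. Combining with \Cref{lem:other_lemma}(i),
\[ \PME(\evacuation(T)) = \PM_{\Ediagram(T)^\bot} = \PM_{\Ediagram(T)}^\bot = \PME(T)^\bot, \]
and likewise $\PMEd(\devacuation(T)) = \PMEd(T)^\bot$.

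The only genuine subtlety is thus the diagram identity in part (i); once that is in hand, and once one keeps track of the orientation conventions — recall that $\tau$ and $\varepsilon$ rotate growth diagrams by $180^\circ$ (which is what contributes the $^{\top\bot}$) whereas $\varpi$ and $^\bot$ do not — all remaining cases are mechanical substitution of \Cref{lem:12_relations} into \Cref{lem:other_lemma}.
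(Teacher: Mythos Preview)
Your proposal is correct and follows essentially the same approach as the paper, which simply states that the lemma ``follows straightforwardly from \Cref{lem:other_lemma}.'' You have spelled out more of the bookkeeping than the paper does, correctly invoking \Cref{lem:12_relations} for parts (ii) and (iii), and you rightly isolate the one extra ingredient needed for part (i): the diagram identity $\Ediagram(\evacuation(T)) = \Ediagram(T)^\bot$ (and its dual), which is implicit in the paper's framework (it underlies $\evacuation^2=\id$ in \Cref{lem:dihedral}) but not stated as a separate lemma.
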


\subsection{Promotion matrices}
Next we encode $\PMPE(T)$ in a square block matrix.

\begin{definition}\label{def:PM}
  The \textit{promotion matrix} of an $r$-row fluctuating tableau $T$ of length $n$ is the $n \times n$ block matrix $\PM(T)$ whose upper triangle is $\PME(T)$ and whose lower triangle is $\PMEd(\devacuation\evacuation(T))$. Entries on the main diagonal are the union of the entries from the main diagonals of the two triangles, reduced modulo $r$ (and hence are always $0$).
\end{definition}

We visualize $\PM(T)$ by ``wrapping around'' $\PEdiagram(T)$ to form a square matrix:

\[
  \begin{tikzcd}[column sep=small, row sep=small]
    \ 
      & \ 
      & \lambda^{00} \ar[""{name=L00r}]{rr}
      & \ 
      & \lambda^{01} \ar[""{name=L01r}]{rr}
      & \ 
      & \cdots \ar[""{name=L0nl}]{rr}
      & \ 
      & \lambda^{0n} \rar[equals,-]
      & \  \\
    \ \rar[equals,-]
      & \lambda^{00} \ar[phantom,""{name=L0nr}]{rr} \ar[dashed,-]{ur}
      & \ 
      & \ 
      & \ 
      & \ 
      & \ 
      & \ 
      & \ 
      & \  \\
    \ 
      & \ 
      & \  \ar[phantom,""{name=L11l}]{rr}
      & \ 
      & \lambda^{11} \ar[""{name=L11r}]{rr} \ar{uu}
      & \ 
      & \cdots \ar[""{name=L1nl}]{rr} \ar{uu}
      & \ 
      & \lambda^{1n} \ar{uu} \rar[equals,-]
      & \ \\
    \ \rar[equals,-]
      & \lambda^{10} \ar[""{name=L1nr}]{rr} \ar{uu}
      & \ 
      & \lambda^{11} \ar[phantom,""{name=L1n1r}]{rr} \ar[dashed,-]{ur}
      & \ 
      & \ 
      & \ 
      & \ 
      & \ 
      & \ \\
    \ 
      & \ 
      & \ 
      & \ 
      & \  \ar[phantom,""{name=Lddotsl}]{rr}
      & \ 
      & \ddots \ar[""{name=Lddotsr}]{rr} \ar{uu}
      & \ 
      & \vdots \ar{uu} \rar[equals,-]
      & \ \\
    \ \rar[equals,-]
      & \vdots \ar{uu} \ar[""{name=Lvdots1r}]{rr}
      & \ 
      & \vdots \ar{uu} \ar[""{name=Lvdots2r}]{rr}
      & \ 
      & \ddots \ar[phantom,""{name=Lddotsr2}]{rr} \ar[dashed,-]{ur}
      & \ 
      & \ 
      & \ 
      & \ \\
    \ 
      & \ 
      & \ 
      & \ 
      & \ 
      & \ 
      & \ \ar[phantom,""{name=Lnnl}]{rr}
      & \ 
      & \lambda^{nn} \ar{uu} \rar[equals,-]
      & \ \\
    \ \rar[equals,-]
      & \lambda^{n0} \ar[""{name=Lnnr}]{rr} \ar{uu}
      & \ 
      & \lambda^{n1} \ar[""{name=Lnn1r}]{rr} \ar{uu}
      & \ 
      & \cdots \ar[""{name=Lcdotsr}]{rr} \ar{uu}
      & \ 
      & \lambda^{nn} \ar[dashed,-]{ur}
      & \ 
      & \ \\
    \arrow[phantom,from=L00r,to=L11l,"U^{1}"]
    \arrow[phantom,from=L01r,to=L11r,"M^{12}"]
    \arrow[phantom,from=L0nl,to=L1nl,"M^{1n}"]
    \arrow[phantom,from=Lddotsl,to=L11r,"U^{2}"]
    \arrow[phantom,from=L1nl,to=Lddotsr,"M^{2n}"]
    \arrow[phantom,from=Lddotsr,to=Lnnl,"U^{n}"]
    \arrow[phantom,from=L0nr,to=L1nr,"L^{1}"]
    \arrow[phantom,from=L1nr,to=Lvdots1r,"M^{21}"]
    \arrow[phantom,from=L1n1r,to=Lvdots2r,"L^{2}"]
    \arrow[phantom,from=Lvdots1r,to=Lnnr,"M^{n1}"]
    \arrow[phantom,from=Lvdots2r,to=Lnn1r,"M^{n2}"]
    \arrow[phantom,from=Lddotsr2,to=Lcdotsr,"L^{n}"]
  \end{tikzcd}.
\]
Here, $U^{i}$ is upper triangular and $L^{i}$ is lower triangular, as in \Cref{def:triangle}, and they are combined in $\PM(T)$. We will refer to the square grid constructed from $U^i$ and $L^i$ as $M^{ii}$ for consistency with the indexing of the other blocks of $\PM(T)$. Note that the entries in $\PMPE(T)$ cyclically increment or decrement when reading across any given row in the sense of \Cref{lem:M_prom}.

\begin{example} \label{ex:prom_mx}
  For the fluctuating tableau $T$ in \Cref{fig:ft-example}, $\type(T) = (2, -1, 3, 1, -2, 2, -1)$ and the promotion matrix is:
  \[
   \arraycolsep=3.2pt
  \PM(T) = 
  \left(
  \begin{array}{cc|c|ccc|c|cc|cc|c}
    0 & 1 & \cdot & \cdot & 2 & \cdot & \cdot & 3 & \cdot & \cdot & \cdot & \cdot \\
    3 & 0 & \cdot & \cdot & \cdot & \cdot & 1 & \cdot & \cdot & 2 & \cdot & \cdot \\
    \hline
    \cdot & \cdot & 0 & \cdot & \cdot & 3 & \cdot & \cdot & 2 & \cdot &\cdot & 1 \\
    \hline
    \cdot & \cdot & \cdot & 0 & 1 & 2 & \cdot & \cdot & \cdot & \cdot & 3 & \cdot \\
    2 & \cdot & \cdot & 3 & 0 & 1 & \cdot & \cdot & \cdot & \cdot & \cdot & \cdot \\
    \cdot & \cdot & 1 & 2 & 3 & 0 & \cdot & \cdot & \cdot & \cdot & \cdot & \cdot \\
    \hline
    \cdot & 3 & \cdot & \cdot & \cdot & \cdot & 0 & \cdot & \cdot & 1 & \cdot & 2 \\
    \hline
    1 & \cdot & \cdot & \cdot & \cdot & \cdot & \cdot & 0 & 3 & \cdot & 2 & \cdot \\
    \cdot & \cdot & 2 & \cdot & \cdot & \cdot & \cdot & 1 & 0 & \cdot & \cdot & 3 \\
    \hline
    \cdot & 2 & \cdot&  \cdot & \cdot & \cdot & 3 & \cdot & \cdot & 0 & 1 & \cdot \\
    \cdot & \cdot & \cdot & 1 & \cdot & \cdot & \cdot & 2 & \cdot & 3 & 0 & \cdot \\
    \hline
    \cdot & \cdot & 3 & \cdot & \cdot & \cdot & 2 & \cdot & 1 & \cdot & \cdot & 0
  \end{array}
  \right).
  \]
\end{example}

By \Cref{lem:PM.invs}(iii), we have
\begin{equation}\label{eq:PM.pi}
  \PM(\varpi(T)) = r - \PM(T).
\end{equation}
The effects of the other fundamental involutions in general are more complex. They can however be easily described in the case that evacuation and dual evacuation coincide.

\begin{lemma}\label{lem:evac.devac}
  The following are equivalent on $r$-row  fluctuating tableaux:
  \begin{enumerate}[(i)]
    \item $\evacuation(T) = \devacuation(T)$,
    \item $\PM(\evacuation(T)) = \PM(T)^\bot$,
    \item $\PM(\devacuation(T)) = \PM(T)^\bot$,
    \item $\PM(\tau(T)) = \PM(T)^{\top \bot}$,
    \item $\PM(\varepsilon(T)) = r - \PM(T)^{\top \bot}$.
  \end{enumerate}
\end{lemma}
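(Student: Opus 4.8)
The plan is to show that each of (i)--(v) is equivalent to the single condition $\promotion^n(T) = T$. For (i) this is immediate from \Cref{lem:E.Ed.epsilon}, so the work is in (ii)--(v), which I would handle by expanding both sides into their upper and lower triangular halves via \Cref{lem:PM.invs} and checking when the two halves match.

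Two auxiliary observations are used throughout. First, $\promotion$ shifts the type cyclically, so $\promotion^{\pm n}$ preserves $\type$; combined with \Cref{thm:injective} this shows $\PME(\promotion^{\pm n}(S)) = \PME(S)$ forces $\promotion^{\pm n}(S) = S$, and applying $\tau$ together with \Cref{lem:PM.invs}(ii) (which identifies $\PMEd(S)$ with $\PME(\tau(S))^{\top\bot}$, while $\type(\tau(S))$ is determined by $\type(S)$) gives the same statement with $\PMEd$ in place of $\PME$. Second, at the level of block matrices the anti-diagonal transpose $\bot$ sends the strictly-upper half of $\PM(T)$ to the strictly-upper half (rearranging it to $\PME(\cdot)^\bot$) and likewise for the strictly-lower half, while $\top\bot$ (rotation by $180^\circ$) interchanges the two halves; on the diagonal triangular blocks $U^i, L^i$, which depend only on the type, $\bot$ acts as the identity while $\top\bot$ sends the parameter $c$ to $-c$. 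These last facts are routine checks against \Cref{def:triangle}.

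With this in hand the four remaining equivalences become bookkeeping. For (ii): by \Cref{lem:PM.invs}(i), $\PM(\evacuation(T))$ has upper half $\PME(\evacuation(T)) = \PME(T)^\bot$, and since $\promotion^n\circ\evacuation = \devacuation$ by \Cref{lem:dihedral}, its lower half is $\PMEd(\devacuation(T)) = \PMEd(T)^\bot$; meanwhile $\PM(T)^\bot$ has upper half $\PME(T)^\bot$ and lower half $\PMEd(\promotion^n(T))^\bot$. So (ii) holds iff $\PMEd(\promotion^n(T)) = \PMEd(T)$, iff $\promotion^n(T) = T$. Condition (iii) is the same computation after writing $\devacuation = \evacuation\circ\promotion^{-n}$ and $\promotion^n\circ\devacuation = \devacuation\circ\promotion^{-n}$, so that $\PM(\devacuation(T))$ has upper half $\PME(\promotion^{-n}(T))^\bot$ and lower half $\PMEd(\promotion^{-n}(T))^\bot$: matching against $\PM(T)^\bot$ forces $\promotion^{-n}(T) = T$. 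For (iv): using \Cref{lem:PM.invs}(ii) and $\promotion^n\circ\tau = \tau\circ\promotion^{-n}$ (from \Cref{lem:invs.PEEd}), $\PM(\tau(T))$ has upper half $\PMEd(T)^{\top\bot}$ and lower half $\PME(\promotion^{-n}(T))^{\top\bot}$, whereas $\PM(T)^{\top\bot}$ has upper half $\PMEd(\promotion^n(T))^{\top\bot}$ and lower half $\PME(T)^{\top\bot}$; equality of either pair of halves forces $\promotion^n(T) = T$, and conversely that condition makes all four halves (and the diagonal blocks) match. Finally (iv)$\iff$(v): since $\varepsilon = \varpi\circ\tau$ (\Cref{lem:Klein4}), equation \eqref{eq:PM.pi} gives $\PM(\varepsilon(T)) = r - \PM(\tau(T))$, and $\mathbf{N}\mapsto r-\mathbf{N}$ is an involution, so $\PM(\varepsilon(T)) = r - \PM(T)^{\top\bot}$ is equivalent to $\PM(\tau(T)) = \PM(T)^{\top\bot}$.

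The only real obstacle is getting the signs straight: one must be careful, in each triangle of each $\PM(\cdot)$, about which of $\promotion^{n}$ or $\promotion^{-n}$ appears (the several identities of \Cref{lem:dihedral} and \Cref{lem:invs.PEEd} are all needed), and one must pin down precisely how $\bot$ and $\top\bot$ permute the block structure --- in particular that $\top\bot$ swaps the two triangular halves and negates the diagonal parameters. Once those conventions are fixed, nothing beyond \Cref{lem:PM.invs}, \Cref{thm:injective}, and \Cref{lem:E.Ed.epsilon} is required.
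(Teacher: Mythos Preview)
Your argument is correct and uses the same essential ingredients as the paper: the decomposition of $\PM$ into its $\PME$ upper half and $\PMEd(\promotion^n(\cdot))$ lower half, \Cref{lem:PM.invs}, injectivity via \Cref{thm:injective} (extended to $\PMEd$ through $\tau$, as you note), and the dihedral relations of \Cref{lem:dihedral} and \Cref{lem:invs.PEEd}. The only difference is organizational. The paper packages (i) as the condition $\PMPE(T) = \PME(T) \mathbin\Vert \PMEd(T)$, observes that (i) is stable under $\tau$, $\varepsilon$, $\evacuation$, $\devacuation$ (so the same condition holds for those transforms of $T$), and then applies \Cref{lem:PM.invs} to the resulting concatenations to obtain (ii)--(v) directly. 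You instead reduce each of (i)--(v) separately to $\promotion^n(T) = T$ by expanding both sides into their two triangular halves and comparing. Your route makes the dependence on each $\promotion^{\pm n}$ explicit and handles the diagonal blocks carefully, at the cost of more bookkeeping; the paper's is more compact but leaves more to the reader (in particular the converse direction and the diagonal blocks are implicit).
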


\begin{proof} 
    By \eqref{eq:pediagram.sum} and \Cref{thm:injective}, we have (i) if and only if
  \begin{equation}\label{eq:evac.devac.1}
    \PMPE(T) = \PME(T) \mathbin\Vert \PMEd(T),
  \end{equation}
  where $\mathbin\Vert$ denotes concatenating the grids and identifying the column where equality holds.
 By \Cref{lem:invs.PEEd}, we may replace $T$ in \eqref{eq:evac.devac.1} with $\tau(T)$ or $\varepsilon(T)$ while remaining equivalent to (i). Using \Cref{lem:PM.invs}, we obtain
  \begin{align*}
    \PME(\tau(T)) \mathbin\Vert \PMEd(\tau(T))
      &= \PMEd(T)^{\top \bot} \mathbin\Vert \PME(T)^{\top \bot} \\
      &= (\PME(T) \mathbin\Vert \PMEd(T))^{\top \bot} \\
      &= \PM(T)^{\top \bot}.
  \end{align*}
  In this way, (i), (iv), and (v) are equivalent. The argument for (ii) and (iii) is essentially identical.
\end{proof}

\begin{lemma}\label{lem:E.eps}
  The following are equivalent on $r$-row skew fluctuating tableaux:
  \begin{enumerate}[(i)]
    \item $\evacuation(T) = \varepsilon(T)$,
    \item $\devacuation(T) = \varepsilon(T)$,
    \item $\PM(T) = r - \PM(T)^\top$.
  \end{enumerate}
\end{lemma}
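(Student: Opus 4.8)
The plan is to prove the chain (i)$\iff$(ii)$\iff$(iii). The equivalence of (i) and (ii) is exactly \Cref{lem:E.Ed.epsilon}, so all the work is in connecting these to (iii). My strategy is to first translate (iii) into a single identity between fluctuating tableaux, and then verify that that identity is equivalent to (ii) by formal manipulations with the dihedral-type relations of \Cref{lem:dihedral,lem:invs.PEEd}.

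\emph{Step 1: reformulating (iii).} By \Cref{def:PM} together with \eqref{eq:pediagram.sum} and $\devacuation\circ\evacuation=\promotion^n$ (\Cref{lem:dihedral}), the upper triangle of $\PM(T)$ is $\PME(T)$ and its lower triangle is $\PMEd(\promotion^n(T))$. The diagonal blocks $M^{ii}$, built from the triangular grids $U^i,L^i$ of \Cref{def:triangle}, satisfy $M^{ii}=r-(M^{ii})^\top$ identically (a direct check using $U^i_{ab}=b-a$, $L^i_{ab}=r-a+b$ and their analogues when $c_i\le 0$). Hence (iii) is equivalent to the single blockwise identity $\PME(T)=r-\PMEd(\promotion^n(T))^\top$. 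I then rewrite the right-hand side: \Cref{lem:PM.invs}(i)--(ii) give $\PME(T)^\top=\PMEd(\devacuation\tau(T))$, and then $\PMEd(\varpi(\cdot))=r-\PMEd(\cdot)$ together with the fact that $\varpi$ commutes with $\devacuation$ (\Cref{lem:invs.PEEd}) and $\varpi\tau=\varepsilon$ yield
\[
  r-\PME(T)^\top=\PMEd\bigl(\varpi\devacuation\tau(T)\bigr)=\PMEd\bigl(\devacuation\varepsilon(T)\bigr).
\]
Since $\PMEd$ is injective on fluctuating tableaux of a fixed type (from \Cref{thm:injective} applied to $\tau$-images, via $\PMEd(S)=\PME(\tau S)^{\top\bot}$), and $\promotion^n(T)$ and $\devacuation\varepsilon(T)$ both have type $\type(T)$, I conclude that (iii) is equivalent to $\promotion^n(T)=\devacuation(\varepsilon(T))$.

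\emph{Step 2: the two implications.} For (iii)$\Rightarrow$(ii): assuming $\promotion^n(T)=\devacuation\varepsilon(T)$ and using $\devacuation\varepsilon=\varepsilon\evacuation$ (\Cref{lem:invs.PEEd}), we have $\promotion^n(T)=\varepsilon\evacuation(T)$; applying $\varepsilon$ and $\varepsilon\promotion=\promotion^{-1}\varepsilon$ gives $\promotion^{-n}\varepsilon(T)=\evacuation(T)$, and then applying $\promotion^n$ and simplifying $\promotion^n\evacuation=\evacuation\promotion^{-n}=\devacuation$ (from \Cref{lem:dihedral}) yields $\varepsilon(T)=\devacuation(T)$, which is (ii). Conversely, if (ii) holds then $\devacuation\varepsilon(T)=\devacuation\devacuation(T)=T$, and \Cref{lem:E.Ed.epsilon} gives $\promotion^n(T)=T$, so $\promotion^n(T)=T=\devacuation\varepsilon(T)$, i.e., (iii).

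I expect the main obstacle to be Step 1: one must check carefully that (iii)—a priori a condition on the whole $n\times n$ block matrix $\PM(T)$—imposes no constraint on the diagonal blocks and that the block sizes of the upper and lower triangles of $\PM(T)$ agree, so that the blockwise transpose is meaningful, and then marshal precisely the right instances of \Cref{lem:PM.invs} to identify $r-\PME(T)^\top$ with $\PMEd(\devacuation\varepsilon(T))$. Once (iii) is recast as $\promotion^n(T)=\devacuation(\varepsilon(T))$, Step 2 is purely formal.
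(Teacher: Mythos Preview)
Your proof is correct and follows the same overall strategy as the paper: unpack condition (iii) via the block decomposition of $\PM(T)$ into $\PME(T)$ and $\PMEd(\promotion^n(T))$, rewrite using the identities of \Cref{lem:PM.invs}, and then invoke injectivity (\Cref{thm:injective}) to pass back to an equality of tableaux.

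The execution differs slightly. The paper applies the anti-diagonal transpose $(\,\cdot\,)^\bot$ to the identity $\PMEd(\promotion^n(T))=r-\PME(T)^\top$; the left side becomes $\PMEd(\evacuation(T))$ via \Cref{lem:PM.invs}(i), and the right side becomes $\PMEd(\varepsilon(T))$ directly via \Cref{lem:PM.invs}(ii), so injectivity yields (i) immediately and Step~2 is unnecessary. Your route instead rewrites $r-\PME(T)^\top$ as $\PMEd(\devacuation\varepsilon(T))$, arriving at the intermediate statement $\promotion^n(T)=\devacuation\varepsilon(T)$, which you then reduce to (ii) by a short chain of dihedral relations. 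Both are fine; the paper's anti-transpose shortcut saves the formal manipulations of your Step~2. (Incidentally, your intermediate statement is also \emph{directly} equivalent to (i): since $\promotion^n=\devacuation\circ\evacuation$, the equality $\promotion^n(T)=\devacuation\varepsilon(T)$ reads $\devacuation\evacuation(T)=\devacuation\varepsilon(T)$, hence $\evacuation(T)=\varepsilon(T)$.) One small expository slip: you announce that you will ``rewrite the right-hand side'' of $\PME(T)=r-\PMEd(\promotion^n(T))^\top$ but then compute $r-\PME(T)^\top$; implicitly you have transposed the equation first, which is harmless but worth stating.
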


\begin{proof}
  We have seen the equivalence of (i) and (ii) in \Cref{lem:E.Ed.epsilon}. For the rest,   by \eqref{eq:pediagram.sum} we have (iii) if and only if
    \[ \PMEd(\devacuation \circ \evacuation(T)) = r - \PME(T)^\top. \]
 Take the anti-diagonal transpose of both sides. Now
    \[ \PMEd(\devacuation \circ \evacuation(T))^\bot = \PMEd(\evacuation(T)) \]
  by \Cref{lem:PM.invs}, and likewise
    \[ (r - \PME(T)^\top)^\bot = r - \PME(T)^{\top \bot} = \PMEd(\varepsilon(T)). \]
  Note that,  by \Cref{thm:injective}, $\PMEd(\evacuation(T)) = \PMEd(\varepsilon(T))$ is equivalent to (i), completing the proof.
\end{proof}

\subsection{Reduced promotion matrices}\label{sec:reduced_M}

We now describe an alternate encoding of the promotion matrices $\PM(T)$. This encoding directly corresponds to a crystal-theoretic interpretation given in \Cref{sec:crystals_promotion}. 

\begin{definition}\label{def:PMr}
  The \textit{reduced promotion matrices} of an $r$-row  fluctuating tableau $T$ of length $n$ are the $n \times n$ matrices $\PMr^i(T)$ (for $0 \leq i \leq r-1$) with entries in $\mathbb{Z}_{\geq 0}$ defined by
    \[ \PMr^i(T)_{uv} = \#\{\text{entries in the block $M^{uv}$ of $\PM(T)$ containing $i$}\}. \]
\end{definition}

We have the following direct characterization of $\PMr(T)^i_{uv}$ off the main diagonal.

\begin{proposition}\label{prop:PMr_sum}
  Suppose we have a local rule diagram:
  \begin{center}
  \begin{tikzcd}
  \lambda \ar{r}[name=U]{}
    & \nu \\
  \kappa \uar{} \ar{r}[swap,name=D]{}
    & \mu \ar{u}[swap]{}
  \ar[to path={(U) node[midway] {$\PMr^i$}  (D)}]{}
  \end{tikzcd}
  \end{center}
  where $\lambda = \kappa + \mathbf{e}_A$, $\nu = \mu + \mathbf{e}_B$,  and $\PMr^i$ are the reduced promotion matrix entries. Then the $\PMr^i$ are uniquely characterized by
  \begin{equation}\label{eq:PMr_sum}
    \mathbf{e}_B = \mathbf{e}_A + \sum_{i=1}^{r-1} \PMr^i (\mathbf{e}_{i+1} - \mathbf{e}_i).
  \end{equation}
\end{proposition}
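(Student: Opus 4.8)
\emph{Plan.} I would prove the two assertions of the proposition — uniqueness and existence of the $\PMr^i$ — separately, reducing existence to the oscillating case via \Cref{lem:osc_prom}.

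\emph{Uniqueness.} Since $A$ and $B$ have the same size $|c|$, the vector $\mathbf{e}_B-\mathbf{e}_A$ has coordinate sum $0$, so it lies in the hyperplane $H=\{x\in\mathbb{Z}^r:\sum_k x_k=0\}$. The vectors $\mathbf{e}_{i+1}-\mathbf{e}_i$ for $1\le i\le r-1$ form a $\mathbb{Z}$-basis of $H\cap\mathbb{Z}^r$: solving $\sum_i a_i(\mathbf{e}_{i+1}-\mathbf{e}_i)=x$ forces $a_k=-\sum_{j\le k}x_j$, the unique solution (and $a_r=0$ is exactly $\sum_j x_j=0$). Hence whatever the $\PMr^i$ are, they are the unique coefficients expressing $\mathbf{e}_B-\mathbf{e}_A$ in this basis, and the whole proposition is equivalent to the single identity
\[
  \PMr^k \;=\; \bigl|A\cap\{1,\dots,k\}\bigr|-\bigl|B\cap\{1,\dots,k\}\bigr|\qquad(1\le k\le r-1),
\]
read with the evident sign convention when $c<0$ (so that the right side is the $k$-th partial coordinate sum of $\mathbf{e}_A-\mathbf{e}_B$). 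The right side is precisely the net number of ``$\bullet$-type'' cells (the cells of $\lambda\setminus\kappa$, resp.\ $\overline\bullet$-type cells) that cross the horizontal line separating rows $k$ and $k+1$, in their direction of motion, during the jeu de taquin slide $S\to T$ encoding the local rule. Since $\bullet$'s move downward and $\overline\bullet$'s move upward — monotonically — this count is a nonnegative integer, consistently with $\PMr^k\ge 0$.

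\emph{Existence.} It remains to identify $\PMr^k$, the number of entries of the local rule grid $M$ containing $k$, with this crossing number. Both sides are additive under oscillization: by \Cref{lem:osc_prom} the grid $M$ is unchanged when the local rule diagram is replaced by its oscillization, now a $|c|\times|d|$ array of unit local rule diagrams, so $\PMr^k$ equals the sum of the corresponding quantities over the unit sub-blocks; meanwhile telescoping $\mathbf{e}_B-\mathbf{e}_A$ along the rows and columns of the array (the right vertical edge of each unit block is the left vertical edge of the next, and the edges at the two ends of a row assemble into $\std(\kappa\too{c}\lambda)$ and $\std(\mu\too{c}\nu)$) reduces \eqref{eq:PMr_sum} for the big block to \eqref{eq:PMr_sum} for each unit block. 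Thus I may assume $c,d\in\{\pm1\}$, so that there is a single $\bullet$ (or $\overline\bullet$), a single $i$ (or $\overline i$), and $M$ is a $1\times1$ grid. When $c\cdot d\ge0$: using $\mu=\sort(\nu+\kappa-\lambda)$ together with the generalized-partition constraints on $\kappa,\lambda,\nu$, either the $\bullet$ does not move vertically (then $A=B$, $M$'s single entry is empty, and \eqref{eq:PMr_sum} reads $0=0$), or it swaps vertically with the $i$ and moves by exactly one row, from row $k$ to row $k+1$; then $M$'s single entry is the singleton recording that move, $\mathbf{e}_B-\mathbf{e}_A=\pm(\mathbf{e}_{k+1}-\mathbf{e}_k)$, and \eqref{eq:PMr_sum} holds. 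When $c\cdot d\le0$: the $\bullet$ and $i$ form a moving pair sliding from some row $j$ to some row $k$, and by definition $M$'s single entry is the interval $(j,k]$ (rows indexed from $0$), which is exactly the set of indices $\ell$ such that the pair crosses the line between rows $\ell$ and $\ell+1$; then \eqref{eq:PMr_sum} follows from $\mathbf{e}_{k+1}-\mathbf{e}_{j+1}=\sum_{\ell\in(j,k]}(\mathbf{e}_{\ell+1}-\mathbf{e}_\ell)$ (again with the sign flip when $c<0$).

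\emph{Main obstacle.} The delicate point is the unit $c\cdot d\ge0$ case: one must invoke the fact that $\kappa,\lambda,\nu$ are generalized partitions to argue that a $\bullet$ which swaps vertically with an $i$ is necessarily adjacent to it, so that it moves exactly one row and the singleton grid entry faithfully records the unique line crossed — otherwise the singleton-valued entries would not add up correctly. The rest is bookkeeping: matching the $0$-indexed rows of the grid to the $1$-indexed rows of the partitions, and tracking the sign of $\mathbf{e}_A,\mathbf{e}_B$ when $c<0$; and it is precisely the reduction to unit diagrams via \Cref{lem:osc_prom} that makes the multi-$i$ complications of the general $c,d>0$ case evaporate.
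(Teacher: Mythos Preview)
Your proof is correct and rests on the same key identification as the paper: $\PMr^k$ counts the number of $\bullet$'s (resp.\ $\overline\bullet$'s) that cross the horizontal line between rows $k$ and $k+1$ during the jeu de taquin slide, and each such crossing contributes exactly $\mathbf{e}_{k+1}-\mathbf{e}_k$ to $\mathbf{e}_B-\mathbf{e}_A$; uniqueness is by linear independence of the simple roots.

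The organizational difference is that the paper argues this crossing interpretation directly from the definition of the local rule grid in each of the four sign cases, without oscillizing: in the same-sign cases each vertical swap of a $\bullet$ with an $i$ moves the $\bullet$ by exactly one row and records that row as a singleton entry of $M$; in the mixed-sign cases the interval $(j,k]$ already records exactly the rows crossed. Summing over all grid entries immediately gives \eqref{eq:PMr_sum}. Your route instead invokes \Cref{lem:osc_prom} to reduce to unit $1\times 1$ blocks and then checks each of the four unit cases by hand, telescoping back up. This is perfectly valid and perhaps more modular, but the oscillization step is not needed: the full $|c|\times|d|$ grid was constructed precisely so that its entries record individual one-row moves, which is why the paper can sum directly. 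Your ``main obstacle'' (that a vertical swap moves a $\bullet$ by exactly one row) is handled in the paper's argument implicitly by the jeu de taquin rules themselves, which only ever swap adjacent cells.
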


\begin{proof}
  Consider the jeu de taquin slides associated with this local rule diagram. Recall that the rows of fluctuating tableaux are indexed top-down from $0$ to $r-1$. Note that all elements of $A$ and $B$ have the same sign. If $A, B$ consist of positive numbers, the $\bullet$'s begin in the rows indexed by $\{a -1 : a \in A \}$ and end in the rows indexed by $\{b - 1 : b \in B\}$. If $A, B$ consist of negative numbers, the $\overline{\bullet}$'s begin in the rows indexed by $\{-a -1 : a \in A \}$ and end in the rows indexed by $\{-b -1 : b \in B\}$. In the former case, $\PMr^i$ counts the number of times a $\bullet$ slides from row $i-1$ to row $i$, which effectively removes $\mathbf{e}_i$ from $\mathbf{e}_A$ and adds $\mathbf{e}_{i+1}$ to $\mathbf{e}_B$. In the latter case, $\PMr^i$ counts the number of times a $\overline{\bullet}$ slides from row $i$ to row $i-1$, which effectively removes $\mathbf{e}_{\overline{i+1}} = -\mathbf{e}_{i+1}$ from $\mathbf{e}_A$ and adds $\mathbf{e}_{\overline{i}} = -\mathbf{e}_i$ to $\mathbf{e}_B$. The net effect is the same in either case, and \eqref{eq:PMr_sum} follows by summing. Uniqueness follows from the fact that the $\mathbf{e}_{i+1} - \mathbf{e}_i$ are linearly independent.
\end{proof}

The reduced promotion matrix analogue of \Cref{thm:injective} holds.

\begin{theorem}\label{thm:injective.reduced}
  The map $T \mapsto (\PMr^1(T), \ldots, \PMr^{r-1}(T), \type(T))$ is injective.
\end{theorem}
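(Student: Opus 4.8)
The plan is to reconstruct $T$ from $(\PMr^1(T),\ldots,\PMr^{r-1}(T),\type(T))$, running essentially the recovery procedure from the proof of \Cref{thm:injective}. From $\type(T)$ we recover the block sizes of the $\PMr^i(T)$. We then build $\Ediagram(T)$ diagonal by diagonal: its $0$th diagonal consists of the boundary entries $\lambda^{ii}=\varnothing$; its first diagonal comes from the triangular (diagonal) blocks of $\PM(T)$, which are determined by $\type(T)$ alone via \Cref{def:triangle} (equivalently, $\lambda^{i,i+1}=\omega_{c_{i+1}}$, since it is obtained from $\varnothing$ by adding or removing a skew column of $c_{i+1}$ cells, which is forced). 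Once these are in hand, it suffices, as in \Cref{thm:injective}, to recover the later diagonals, after which $T$ is read off as the top row.

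For a corner $\nu$ on diagonal $m\ge 2$, consider the local rule square having $\nu$ as its upper-right corner; its other three corners $\kappa$ (lower-left), $\lambda$ (upper-left), $\mu$ (lower-right) lie on diagonals $m-2$ and $m-1$ and so are already known, and the block of $\PM(T)$ attached to this square is off-diagonal, so \Cref{prop:PMr_sum} applies. Writing $\lambda=\kappa+\mathbf{e}_A$ and $\nu=\mu+\mathbf{e}_B$, the proposition gives
\[ \mathbf{e}_B \;=\; \mathbf{e}_A + \sum_{i=1}^{r-1}\PMr^i(T)_{uv}\,(\mathbf{e}_{i+1}-\mathbf{e}_i), \]
where $(u,v)$ is the block index (read off from $\type(T)$); since the $\mathbf{e}_{i+1}-\mathbf{e}_i$ are linearly independent, $\mathbf{e}_B$ is determined, and hence so is $\nu=\mu+\mathbf{e}_B$. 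Thus every new corner is pinned down, $\Ediagram(T)$ is reconstructed, and $T$ is recovered; this proves injectivity.

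The step I expect to require the most care is the bookkeeping: verifying that the diagonal-by-diagonal order really does reveal $\kappa$, $\lambda$, and $\mu$ before $\nu$ at every stage — this is the order already used in the proof of \Cref{thm:injective}, so it should transfer — and correctly matching up which block $(u,v)$ of the reduced matrices governs which local rule square of $\Ediagram(T)$. One should also note explicitly that \Cref{prop:PMr_sum} is stated only off the main diagonal, so the diagonal blocks must be handled by the $\type(T)$-determinacy above rather than via \eqref{eq:PMr_sum}. Beyond these verifications I anticipate no essentially new obstacle, since \eqref{eq:PMr_sum} is precisely engineered to play, for the reduced matrices, the role that \Cref{lem:M.infer} plays in the proof of \Cref{thm:injective}.
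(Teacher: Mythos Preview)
Your proof is correct and follows the same overall reconstruction strategy as the paper (diagonal-by-diagonal recovery of $\Ediagram(T)$, mirroring the proof of \Cref{thm:injective}), but the key inductive step is justified differently. The paper argues the analog of \Cref{lem:M.infer} directly in terms of jeu de taquin: knowing where the $\pm\bullet$'s start and how many times some $\pm\bullet$ crosses each row boundary, one iteratively pins down the final row of each $\pm\bullet$ (bottommost first), thereby reconstructing the full local rule grid and hence $\nu$. You instead invoke the linear identity \eqref{eq:PMr_sum} from \Cref{prop:PMr_sum} to compute $\mathbf{e}_B$ (and so $\nu=\mu+\mathbf{e}_B$) algebraically, without ever rebuilding the grid. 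Your route is slightly slicker since it reuses an already-proved proposition and needs only $\nu$, not the grid; the paper's route yields a bit more (the grid itself) and is closer to the combinatorics. Either way, the bookkeeping concerns you flag (order of recovery of $\kappa,\lambda,\mu$; handling of the diagonal blocks via $\type(T)$) are exactly as in \Cref{thm:injective} and pose no difficulty.
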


\begin{proof}
  It suffices to show that \Cref{lem:M.infer} remains true given only the number of entries of each type in the promotion grids rather than the full promotion grids. We know the positions of the $\pm \bullet$'s and $\pm i$'s in $S$ as well as the number of times some $\pm \bullet$ slides past each particular row. We may hence infer the final row of the bottommost $\bullet$ or topmost $\overline{\bullet}$, and iteratively we may infer all final locations of $\pm \bullet$'s. The arguments for the other diagrams are similar.
\end{proof}

\begin{example}\label{ex:PMr}
  For the fluctuating tableau $T$ from \Cref{fig:ft-example}, we have:
  \[
  \begin{array}{cc}
    \PMr^0(T) 
  =
  \begin{pmatrix}
    2 & 0 & 0 & 0 & 0 & 0 & 0 \\
    0 & 1 & 0 & 0 & 0 & 0 & 0 \\
    0 & 0 & 3 & 0 & 0 & 0 & 0 \\
    0 & 0 & 0 & 1 & 0 & 0 & 0 \\
    0 & 0 & 0 & 0 & 2 & 0 & 0 \\
    0 & 0 & 0 & 0 & 0 & 2 & 0 \\
    0 & 0 & 0 & 0 & 0 & 0 & 1 \\
  \end{pmatrix}
  &
  \PMr^1(T) =
  \begin{pmatrix}
    1 & 0 & 0 & 1 & 0 & 0 & 0 \\
    0 & 0 & 0 & 0 & 0 & 0 & 1 \\
    0 & 1 & 2 & 0 & 0 & 0 & 0 \\
    0 & 0 & 0 & 0 & 0 & 1 & 0 \\
    1 & 0 & 0 & 0 & 1 & 0 & 0 \\
    0 & 0 & 1 & 0 & 0 & 1 & 0 \\
    0 & 0 & 0 & 0 & 1 & 0 & 0 \\
  \end{pmatrix} \\[0.8in]
  \PMr^2(T) =
  \begin{pmatrix}
    0 & 0 & 1 & 0 & 0 & 1 & 0 \\
    0 & 0 & 0 & 0 & 1 & 0 & 0 \\
    1 & 0 & 2 & 0 & 0 & 0 & 0 \\
    0 & 0 & 0 & 0 & 0 & 0 & 1 \\
    0 & 1 & 0 & 0 & 0 & 1 & 0 \\
    1 & 0 & 0 & 0 & 1 & 0 & 0 \\
    0 & 0 & 0 & 1 & 0 & 0 & 0 \\
  \end{pmatrix}
  &
  \PMr^3(T) =
  \begin{pmatrix}
    1 & 0 & 0 & 0 & 1 & 0 & 0 \\
    0 & 0 & 1 & 0 & 0 & 0 & 0 \\
    0 & 0 & 2 & 0 & 0 & 1 & 0 \\
    1 & 0 & 0 & 0 & 0 & 0 & 0 \\
    0 & 0 & 0 & 0 & 1 & 0 & 1 \\
    0 & 0 & 0 & 1 & 0 & 1 & 0 \\
    0 & 1 & 0 & 0 & 0 & 0 & 0 \\
  \end{pmatrix} \\
  \end{array}.
  \]
  These matrices should be compared to the matrix $\PM(T)$ from \Cref{ex:prom_mx}.
\end{example}

\section{Promotion permutations}\label{sec:promperm}

We begin by transforming promotion matrices into promotion functions. We then show they have particularly nice properties in the rectangular case and prove our main result, \Cref{thm:prom_perms}.

\begin{definition}\label{def:prom_fcn}
  The \emph{promotion functions} of an $r$-row fluctuating tableau $T$ of type $\underline{c}$ are partial functions defined as follows. Let $t = \sum_i |c_i|$, so that $\PM(T)$ is $t \times t$. For all $0 \leq i \leq r - 1$, each row of $\PM(T)$ has at most one entry containing $i$. Set
  \begin{align*}
    \prom_i(T) \colon [t] &\rightharpoonup [t], \\
    \prom_i(T)(a) &= b \qquad\text{whenever }i \in \PM(T)_{ab}.
  \end{align*}
  For symmetry, we define $\prom_r(T) \coloneqq \prom_0(T)$.
\end{definition}

\begin{example}\label{ex:prom_fncts}
  For the fluctuating tableau $T$ from \Cref{fig:ft-example} with promotion matrix found in \Cref{ex:prom_mx}, all the promotion functions are permutations. Specifically, we have
  \begin{align*}
    \prom_0(T) &= \prom_4(T) = \id \\
    \prom_1(T) &= (1\ 2\ 7\ 10\ 11\ 4\ 5\ 6\ 3\ 12\ 9\ 8) \\
    \prom_2(T) &= (1\ 5)(2\ 10)(3\ 9)(4\ 6)(7\ 12)(8\ 11) \\
    \prom_3(T) &= (1\ 8\ 9\ 12\ 3\ 6\ 5\ 4\ 11\ 10\ 7\ 2). \\
  \end{align*}
\end{example}

\begin{remark}\label{rem:prom.conditions}
  Note that $\prom_0(T) = \id = \prom_r(T)$ are always trivial; they are included for overall consistency. Since the diagonal of $\PM(T)$ is all zeros, all other $\prom_i(T)$ are fixed-point free.  Note that $\PM(T)$ can be recovered from the collection $(\prom_i(T))_{i=1}^{r-1}$ of all promotion functions. We will shortly see that, in the rectangular case, each $\prom_i(T)$ is a permutation. In the rectangular case, we will therefore refer to promotion functions as \emph{promotion permutations}.
\end{remark}

We have the following relation between the promotion functions of $T$ and the promotion functions of $\promotion(T)$.

\begin{lemma}\label{lem:prom.P}
  We have
    \[ \prom_i(T)(u+|c_1|) = v+|c_1| \qquad\Longleftrightarrow\qquad \prom_i(\promotion(T))(u) = v \]
  for all $1 \leq u, v \leq t-|c_1|$ and all $0 \leq i \leq r$.
\end{lemma}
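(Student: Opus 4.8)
The plan is to prove the following sharper statement, from which the lemma is immediate: writing $p = |c_1|$, one has
\[ \PM(\promotion(T))_{u,v} = \PM(T)_{u+p,\,v+p} \qquad\text{for all } 1 \le u,v \le t-p. \]
Granting this, for $1 \le i \le r-1$ the characterization ``$\prom_i(S)(a) = b$ whenever $i \in \PM(S)_{ab}$'' of \Cref{def:prom_fcn} gives the claimed equivalence directly, while for $i \in \{0, r\}$ both sides merely assert $u = v$.

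To establish the displayed identity I would use the fact (recorded just after the definition of $\PEdiagram$) that the $j$-th row of $\PEdiagram(T)$ is $\promotion^j(T)$. Since a promotion diagram is determined by its top row, the horizontal band of $\PEdiagram(T)$ between rows $j-1$ and $j$ is, grid for grid, the filled diagram $\Pdiagram(\promotion^{j-1}(T))$, with its bottom row sheared one column to the right of its top row. Hence block-row $j$ of $\PM(T)$ is exactly $\PMP(\promotion^{j-1}(T))$ --- the wrapping of the parallelogram into a square amounting only to a cyclic relabeling of its block-columns, and the diagonal block $M^{jj}$ being the combination of the triangular grids $U,L$ attached to the edge $c_j$, which by \Cref{def:triangle} depend only on $c_j$ and $r$.

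Now apply this to both $T$ and $\promotion(T)$. Since $\promotion(T)$ has type $(c_2, \dots, c_n, c_1)$ we have $\promotion^{j-1}(\promotion(T)) = \promotion^j(T)$, so block-row $j$ of $\PM(\promotion(T))$ equals $\PMP(\promotion^j(T))$. Thus for every $1 \le j \le n-1$, block-row $j$ of $\PM(\promotion(T))$ and block-row $j+1$ of $\PM(T)$ are built from one and the same family of grids $\PMP(\promotion^j(T))$, differing only in which unit indices are assigned to them. Because the block sizes of $\promotion(T)$ are $|c_2|, \dots, |c_n|, |c_1|$, block-row $j$ (resp.\ block-column $j$) of $\PM(\promotion(T))$, for $j \le n-1$, occupies precisely the unit indices that block-row $j+1$ (resp.\ block-column $j+1$) of $\PM(T)$ occupies after shifting by $p$, with no internal reshuffling of the blocks. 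Tracking these identifications --- the shift being $p$ on both the row and the column side --- yields the displayed identity; the restriction $u,v \le t-p$ is exactly what keeps us among the block-rows and block-columns $1, \dots, n-1$ of $\PM(\promotion(T))$, equivalently $2, \dots, n$ of $\PM(T)$, which is the range covered by the common factors $\PMP(\promotion^j(T))$.

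The part I expect to require the most care is this last index bookkeeping: one must check that the cyclic reindexing of block-columns induced by ``wrapping'' $\PEdiagram$ into a square shifts the $\Ediagram$ (upper) part and the $\Eddiagram$ (lower) part consistently by $p$, and one must be comfortable identifying a block in the upper triangle of one promotion matrix with a block in the lower triangle of another (their values are just subsets of $\{1, \dots, r-1\}$, and ``triangle'' records only position). Everything else is a routine unwinding of \Cref{def:PM} together with the band decomposition of $\PEdiagram(T)$ into the diagrams $\Pdiagram(\promotion^{j-1}(T))$; in particular, restricting to $u,v \le t-p$ sidesteps the ``new'' band of $\PEdiagram(\promotion(T))$ involving $\promotion^n(T)$, which has no counterpart inside $\PEdiagram(T)$.
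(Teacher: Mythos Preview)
Your proposal is correct and follows essentially the same approach as the paper's own proof. The paper's argument is a two-sentence version of yours: it simply asserts that $\PEdiagram(\promotion(T))$ is obtained from $\PEdiagram(T)$ by cutting off the top row, so the promotion matrix of $\promotion(T)$ is obtained from that of $T$ by deleting the top $|c_1|$ rows and leftmost $|c_1|$ columns. Your sharper statement $\PM(\promotion(T))_{u,v} = \PM(T)_{u+p,v+p}$ is exactly what that assertion means, and your explanation via the decomposition of $\PEdiagram(T)$ into the bands $\Pdiagram(\promotion^{j-1}(T))$, together with the observation that the restriction $u,v \le t-p$ avoids the one new band in $\PEdiagram(\promotion(T))$, makes precise what the paper leaves implicit.
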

\begin{proof}
By construction, the promotion-evacuation diagram of $\promotion(T)$ is obtained from that of $T$ by cutting off the top row. Hence, the promotion matrix is obtained by cutting away the top $c_1$ rows and leftmost $c_1$ columns. The lemma follows.    
\end{proof}

It is well-known that the $n$th power of promotion on rectangular standard tableaux with $n$ cells is the identity. Moreover, the effect of evacuation on such rectangular standard tableaux is the \textit{reverse-complement} (see, e.g., \cite{Haiman} for both of these facts), which in our terminology using lattice words is phrased as $L(\evacuation(T)) = \varepsilon(L(T))$. These properties extend to rectangular fluctuating tableaux; for an example, see \Cref{fig:full_orbit}.

\begin{theorem}\label{thm:ft.prom_evac}
  Let $T$ be a rectangular fluctuating tableau of length $n$. Then
  \begin{enumerate}[(a)]
    \item $\promotion^n(T) = T$,
    \item $L(\evacuation(T)) = \varepsilon(L(T))$.
  \end{enumerate}
\end{theorem}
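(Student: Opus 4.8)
The plan is to reduce both statements to the single claim that $\evacuation(T) = \varepsilon(T)$ for every rectangular fluctuating tableau $T$. Granting this, part (b) is immediate: by the identity $L(\varepsilon(T)) = \varepsilon(L(T))$ recorded just after the definition of $\varepsilon$ on lattice words, $\evacuation(T) = \varepsilon(T)$ says exactly $L(\evacuation(T)) = \varepsilon(L(T))$. Part (a) then follows from \Cref{lem:E.Ed.epsilon}: the claim is precisely its hypothesis (i), so we also get (ii) $\devacuation(T) = \varepsilon(T)$, hence $\evacuation(T) = \devacuation(T)$, hence $\promotion^n(T) = T$. So it suffices to establish $\evacuation(T) = \varepsilon(T)$ for rectangular $T$.

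I would prove this by peeling off two layers of generality. \textbf{First}, reduce to $T$ transpose semistandard (all $c_i \ge 0$): for every index $i$ with $c_i < 0$ apply $\toggle_i$. The resulting tableau $T'$ has all steps nonnegative, and since each $\toggle_i$ shifts the final shape by a multiple of $\mathbf{1}$, $T'$ is again rectangular. By \Cref{lem:BK.toggle.PEEd}(iii) we have $\toggle_i \circ \evacuation = \evacuation \circ \toggle_{n+1-i}$, and by part (ii) of the same lemma $\toggle_i \circ \varepsilon = \varepsilon \circ \toggle_{n+1-i}$ up to a global shift by $\pm\mathbf{1}$; since rectangular tableaux are considered only up to adding a constant, $\evacuation(T) = \varepsilon(T)$ holds iff $\evacuation(T') = \varepsilon(T')$. \textbf{Second}, apply oscillization: $\std(T')$ consists entirely of single-cell additions, so it is a standard Young tableau, and it is rectangular because $\std$ preserves the final shape. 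Since $\std$ commutes with $\Ediagram$ (hence with $\evacuation$) and with $\varepsilon$ at the level of lattice words, and since $\std$ is injective on fluctuating tableaux of a fixed type while $\evacuation(T')$ and $\varepsilon(T')$ both have type $\rev(\type(T'))$, we get $\evacuation(T') = \varepsilon(T')$ iff $\evacuation(\std(T')) = \varepsilon(\std(T'))$.

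This last equality is the classical fact that the evacuation of a rectangular standard Young tableau is its reverse--complement (equivalently, that promotion on it has order dividing the number of cells); see, e.g., \cite{Haiman}. Unwinding the two reductions then yields $\evacuation(T) = \varepsilon(T)$ for all rectangular $T$, and with it the theorem. I do not expect any single step to be genuinely hard; the work is bookkeeping, and the one point needing care is that $\std$ commutes with $\varepsilon$ --- the map $\varepsilon$ reverses the order of the entries within a letter while $\std$ re-sorts them, but because $\varepsilon$ also complements entries (sending $i$ to $\sgn(i)(r-|i|+1)$) these two effects cancel, so $\std\circ\varepsilon = \varepsilon\circ\std$ on lattice words. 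One must also check that rectangularity survives each $\toggle_i$ and the oscillization, and that the constant-shift correction terms in \Cref{lem:BK.toggle.PEEd}(ii) are genuinely absorbed by the standing convention on rectangular tableaux.
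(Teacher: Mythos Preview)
Your proposal is correct and follows essentially the same route as the paper: reduce (a) and (b) to the single claim $\evacuation(T)=\varepsilon(T)$, then use the $\toggle_i$ involutions and \Cref{lem:BK.toggle.PEEd} to reduce to the transpose semistandard case, where the claim is classical. The only difference is that the paper stops at transpose semistandard and invokes an RSK-based argument (citing \cite{Patrias-Pechenik}), while you add one more layer---oscillizing to reach an honest rectangular standard Young tableau---so that the final citation is the most classical one \cite{Haiman}; your observation that $\std$ intertwines both $\evacuation$ (via the commutation of $\std$ with $\Ediagram$) and $\varepsilon$ (by the complement-and-reverse computation you outline) is exactly what makes that extra reduction go through.
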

\begin{proof}
  In terms of tableaux, (b) is equivalent to $\evacuation(T) = \varepsilon(T)$.
  By \Cref{lem:E.Ed.epsilon}, (b) implies (a).
  
  As for (b), we may use the $\toggle_i$ involutions and \Cref{lem:BK.toggle.PEEd} to reduce to the case when $T$ is (transpose) semi-standard. In this case, (b) is a well-known consequence of the RSK algorithm; see, e.g., \cite[Lemma~3.1]{Patrias-Pechenik} for details. An alternate crystal-theoretic proof is given in \Cref{sec:crystals-involutions}.
\end{proof}

We obtain the following as a corollary.
\begin{corollary}\label{cor:516_517}
  All of the equivalent conditions in \Cref{lem:evac.devac} and \Cref{lem:E.eps} hold for rectangular fluctuating tableaux.
\end{corollary}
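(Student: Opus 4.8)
The plan is to deduce everything from \Cref{thm:ft.prom_evac}, which has already been established and asserts that $\promotion^n(T) = T$ and $L(\evacuation(T)) = \varepsilon(L(T))$ for every rectangular fluctuating tableau $T$. The second statement is equivalent to the tableau-level identity $\evacuation(T) = \varepsilon(T)$, since in the rectangular normalization a fluctuating tableau is determined by its lattice word (with initial shape $\mathbf{0}$); this equivalence is exactly the one invoked at the start of the proof of \Cref{thm:ft.prom_evac}.

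First I would note that $\evacuation(T) = \varepsilon(T)$ is literally condition (i) of \Cref{lem:E.eps}. Because that lemma asserts the equivalence of its three conditions, all of them hold for rectangular $T$; in particular, $\PM(T) = r - \PM(T)^\top$.

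Next I would feed $\evacuation(T) = \varepsilon(T)$ into \Cref{lem:E.Ed.epsilon}, whose conclusion (a) gives $\evacuation(T) = \devacuation(T)$. This is precisely condition (i) of \Cref{lem:evac.devac}, so by the equivalence of the five conditions there, all of them hold for rectangular $T$; in particular $\PM(\tau(T)) = \PM(T)^{\top\bot}$ and $\PM(\varepsilon(T)) = r - \PM(T)^{\top\bot}$.

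There is essentially no obstacle here: the corollary is pure bookkeeping, chaining the equivalences of \Cref{lem:evac.devac}, \Cref{lem:E.eps}, and \Cref{lem:E.Ed.epsilon} off the single input \Cref{thm:ft.prom_evac}. The only point worth stating explicitly — and the only place one could go astray — is the passage between lattice words and tableaux, i.e. confirming that $L(\evacuation(T)) = \varepsilon(L(T))$ is indeed the same as $\evacuation(T) = \varepsilon(T)$ in the rectangular case; all of that has already been arranged by the conventions of \Cref{sec:lattice} and \Cref{sec:involutions}.
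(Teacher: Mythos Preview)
Your proof is correct and follows exactly the intended route: the paper states the corollary immediately after \Cref{thm:ft.prom_evac} with no further argument, and your chain—\Cref{thm:ft.prom_evac}(b) gives condition~(i) of \Cref{lem:E.eps}, and via \Cref{lem:E.Ed.epsilon} also condition~(i) of \Cref{lem:evac.devac}—is precisely the bookkeeping the reader is expected to supply.
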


The following is the main result of the present work. Let $\sigma = (1\,2\,\cdots\,t)$ be the long cycle, and let $w_0 = (1, t)(2, t-1)\cdots$ be the longest element in the symmetric group $\mathfrak{S}_t$.

\begin{theorem}\label{thm:prom_perms}
  Let $T$ be an $r$-row rectangular fluctuating tableau of type $(c_1, \ldots, c_n)$ where $|c_1| + \cdots + |c_n| = t$. Then for all $0 \leq i \leq r$:
  \begin{enumerate}[(i)]
    \item $\prom_i(T)$ is a permutation,
    \item $\prom_i(T) = \prom_{r-i}(T)^{-1}$,
    \item $\prom_i(\promotion(T)) = \sigma^{-|c_1|} \prom_i(T) \sigma^{|c_1|}$,
    \item $\prom_i(\evacuation(T)) = w_0 \prom_{r-i}(T) w_0$.
  \end{enumerate}
\end{theorem}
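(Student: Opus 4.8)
The plan is to prove all four parts together, using the $\toggle_i$ reduction to the transpose semistandard case, the structural lemmas about promotion grids (especially \Cref{lem:other_lemma}, \Cref{lem:PM.invs}, \Cref{lem:evac.devac}, \Cref{lem:E.eps}), and \Cref{cor:516_517}, which tells us that for rectangular fluctuating tableaux $\evacuation(T) = \devacuation(T) = \varepsilon(T)$. First I would dispose of part (iii): \Cref{lem:prom.P} already says that $\prom_i(T)$ and $\prom_i(\promotion(T))$ agree after deleting the first $|c_1|$ rows and columns of $\PM(T)$, and since $\PEdiagram(\promotion(T))$ is obtained from $\PEdiagram(T)$ by cyclically shifting (cutting the top row and re-appending at the bottom, using $\promotion^n(T)=T$ from \Cref{thm:ft.prom_evac}), the promotion matrix of $\promotion(T)$ is the cyclic shift of $\PM(T)$ by $|c_1|$ in both index directions; conjugation by $\sigma^{|c_1|}$ records exactly this relabeling. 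The one subtlety to check carefully is that the ``wrap-around'' identification in \Cref{def:PM} interacts correctly with the shift, i.e. that cutting the top row of $\PEdiagram$ and re-gluing at the bottom matches cyclically permuting both row and column block indices of $\PM(T)$ — this uses $\evacuation(T)=\devacuation(T)$ so that the upper and lower triangles are compatible, which is \Cref{cor:516_517}.

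Next I would prove part (iv). By \Cref{cor:516_517}, condition (iv) of \Cref{lem:evac.devac} holds, so $\PM(\tau(T)) = \PM(T)^{\top\bot}$, and condition (iii) of \Cref{lem:E.eps} gives $\PM(T) = r - \PM(T)^\top$, hence $\PM(T)^\bot = r - \PM(T)^{\top\bot}$; combining with \Cref{lem:evac.devac}(ii) yields $\PM(\evacuation(T)) = \PM(T)^\bot$. Now unwind the definition of the promotion functions: anti-diagonal transpose of the block matrix sends block $M^{uv}$ to block $M^{n+1-u,\,n+1-v}$ at the block level and, because each individual local rule grid is itself anti-diagonally transposed (\Cref{lem:other_lemma}(i) is about the $\bot$ on each grid), the entry-level bookkeeping shows that $i \in \PM(T)_{ab}$ iff $i \in \PM(\evacuation(T))_{\,t+1-a,\,t+1-b}$. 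Translating $a\mapsto t+1-a$ is conjugation by $w_0$, so $\prom_i(\evacuation(T)) = w_0\,\prom_i(T)\,w_0$; but I must also track that the $\bot$ operation does \emph{not} flip $i$ (only $\top\bot$ combined with the $\varpi$-style $r-(\cdot)$ would send $i\mapsto r-i$), so the index $i$ is preserved and we get $\prom_i(\evacuation(T)) = w_0\,\prom_{r-i}(T)\,w_0$ only after also invoking part (ii) — I would actually prove (ii) first (see below) and then assemble (iv) from $\PM(\evacuation(T))=\PM(T)^\bot$ together with (ii).

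For part (ii), the key identity is \Cref{lem:E.eps}(iii): $\PM(T) = r - \PM(T)^\top$. At the level of a single block, this says $i \in \PM(T)_{ab}$ iff $r-i \in \PM(T)_{ba}$. Reading off promotion functions: $\prom_i(T)(a) = b$ iff $i \in \PM(T)_{ab}$ iff $r-i \in \PM(T)_{ba}$ iff $\prom_{r-i}(T)(b) = a$, which is exactly $\prom_i(T) = \prom_{r-i}(T)^{-1}$. (Here I need that the transpose relation, stated for the block matrix $\PM(T)^\top$ meaning both block-level transpose and transpose of each grid, is compatible with \Cref{lem:M_prom}, so that the single entry of each row containing $i$ really does get sent to the single entry of the corresponding column containing $r-i$; the intervals in \Cref{lem:M_prom} are $[0,i_1],(i_1,i_2],\dots,(i_k,r]$, and the $r-(\cdot)$ reversal turns this increasing nested chain into the reversed one, matching the transposed grid.) Finally, part (i), that $\prom_i(T)$ is a permutation, follows once (ii) is known: each $\prom_i(T)$ is a partial injection (at most one entry per row contains $i$, and by the transpose symmetry at most one entry per column contains $i$), with domain equal to the set of rows $a$ for which some block $M^{a\bullet}$ contains $i$ and codomain the analogous set of columns; the symmetry $\PM(T) = r-\PM(T)^\top$ forces these two index sets to coincide, and a partial injection from a finite set to itself whose domain equals its image is a bijection. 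Alternatively, one can argue that in each row of $\PMPE(T)$ the nonempty entries realize the full chain $[0,i_1],\dots,(i_k,r]$ by \Cref{lem:M_prom}, so every value $i\in\{1,\dots,r-1\}$ with $i$ strictly between consecutive breakpoints appears — but rectangularity (balanced lattice word) forces the chain to be a complete partition $[0,1],(1,2],\dots,(r-1,r]$ in the appropriate normalization, so $i$ appears in exactly one entry of every row, giving a total function, and dually a total function in every column.

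\textbf{Main obstacle.} The genuine technical work is the careful entry-level (as opposed to block-level) bookkeeping in part (iv) and the ``wrap-around'' compatibility in part (iii): one must verify that the $\bot$, $\top\bot$, and $r-(\cdot)$ operations on the block grid $\PM(T)$ translate into the claimed conjugations of $\prom_i$ with the correct index $i$ versus $r-i$, and that the identification of the two diagonal triangles $U^i, L^i$ in $\PM(T)$ (\Cref{def:PM}) behaves well under cyclic shift — this is where \Cref{cor:516_517} ($\evacuation = \devacuation$ in the rectangular case) is doing the real work. Everything else is a direct unwinding of definitions once the reduction to semistandard tableaux (via \Cref{lem:BK.toggle.PEEd}) and \Cref{thm:ft.prom_evac} are in hand.
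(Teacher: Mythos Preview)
Your approach is essentially the paper's own: you correctly identify that the whole theorem reduces to \Cref{cor:516_517} (so that all items of \Cref{lem:evac.devac} and \Cref{lem:E.eps} hold), then read off (i), (ii) from $\PM(T)=r-\PM(T)^\top$, (iii) from \Cref{lem:prom.P}, and (iv) from a transpose/anti-transpose identity on $\PM$.

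One small correction and one simplification. For (iv), your intermediate claim ``$i\in\PM(T)_{ab}$ iff $i\in\PM(\evacuation(T))_{t+1-a,\,t+1-b}$, hence $\prom_i(\evacuation(T)) = w_0\,\prom_i(T)\,w_0$'' is not quite right: the anti-diagonal transpose swaps the two indices, so $(\PM(T)^\bot)_{ab} = \PM(T)_{t+1-b,\,t+1-a}$, which yields $\prom_i(\evacuation(T)) = w_0\,\prom_i(T)^{-1}\,w_0$, and only after invoking (ii) do you get $w_0\,\prom_{r-i}(T)\,w_0$. You seem to sense this (``I would actually prove (ii) first''), and the fix is exactly what you say. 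The paper's route is slightly cleaner: rather than \Cref{lem:evac.devac}(ii), use \Cref{lem:evac.devac}(v), $\PM(\varepsilon(T)) = r-\PM(T)^{\top\bot}$, together with $\evacuation(T)=\varepsilon(T)$; since $M^{\top\bot}$ is the $180^\circ$ rotation, i.e.\ conjugation of indices by $w_0$, and the $r-(\cdot)$ sends $i\mapsto r-i$, you read off $\prom_i(\evacuation(T)) = w_0\,\prom_{r-i}(T)\,w_0$ in one step without appealing to (ii). Your extra care about the wrap-around in (iii) is well placed; the paper just cites \Cref{lem:prom.P}, but the cyclic-shift compatibility does use $\promotion^n(T)=T$ (equivalently $\evacuation=\devacuation$) exactly as you note.
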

\begin{proof}
By \Cref{cor:516_517}, we have all the equivalent conditions of \Cref{lem:evac.devac} and \Cref{lem:E.eps}.
  \Cref{lem:E.eps}(iii) implies (i) and (ii). \Cref{lem:prom.P} implies (iii). Finally, (iv) follows from \Cref{lem:evac.devac}(v) and the fact that, for any matrix $M$, we have $M^{\top\bot} = w_0 M w_0$, where $w_0$ is viewed as a permutation matrix.
\end{proof}

\begin{corollary}\label{cor:FPF}
 If $T$ is a rectangular fluctuating tableau with $r$ rows and $1 \leq i \leq r-1$, then $\prom_i(T)$ is a fixed-point free permutation.
  Moreover if $r$ is even, then $\prom_{r/2}(T)$ is a fixed-point free involution.
\end{corollary}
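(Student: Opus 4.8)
The plan is to deduce the statement directly from \Cref{thm:prom_perms} together with the observation, already recorded in \Cref{rem:prom.conditions}, that the main diagonal of $\PM(T)$ consists entirely of zeros. First I would invoke \Cref{thm:prom_perms}(i), which guarantees that for rectangular $T$ the partial function $\prom_i(T)$ is in fact a genuine permutation of $[t]$; this is the only place the rectangularity hypothesis is used, and it also covers the case $i = r/2$.

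Next, for fixed-point freeness with $1 \le i \le r-1$: by \Cref{def:PM}, the entries of $\PM(T)$ on the main diagonal are reduced modulo $r$, hence are recorded as $0$. Since $1 \le i \le r-1$, we have $i \notin \PM(T)_{aa}$ for every $a \in [t]$, so by \Cref{def:prom_fcn} there is no $a$ with $\prom_i(T)(a) = a$. Thus $\prom_i(T)$ is a fixed-point free permutation, which is the first assertion.

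Finally, for the involution claim when $r$ is even, apply \Cref{thm:prom_perms}(ii) with $i = r/2$: since $r - r/2 = r/2$, we obtain $\prom_{r/2}(T) = \prom_{r/2}(T)^{-1}$, i.e.\ $\prom_{r/2}(T)^2 = \id$, so $\prom_{r/2}(T)$ is an involution. Since $r \ge 2$, the index $i = r/2$ lies in the range $1 \le i \le r-1$, so the fixed-point freeness established above applies, and $\prom_{r/2}(T)$ is a fixed-point free involution. The only real bookkeeping to be careful about is the convention governing the diagonal of $\PM(T)$ in \Cref{def:PM} — that reducing those entries modulo $r$ makes them avoid the values $1, \dots, r-1$ — after which everything is a formal consequence of \Cref{thm:prom_perms}; I do not anticipate any genuine obstacle here.
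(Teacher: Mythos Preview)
Your proof is correct and follows essentially the same approach as the paper, which simply cites \Cref{thm:prom_perms}(i)--(ii) together with \Cref{rem:prom.conditions}. You unpack the content of \Cref{rem:prom.conditions} by going back to \Cref{def:PM} and \Cref{def:prom_fcn}, but the argument is the same.
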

\begin{proof}
    This is an immediate consequence of (i) and (ii) in the above theorem and \Cref{rem:prom.conditions}.
\end{proof}

We now describe how to obtain promotion permutations directly from a tableau without reference to promotion matrices. 

\begin{proposition}\label{prop:promi_alt_def}
    Let $T \in \ft(r,n)$ be a  fluctuating tableau and $1\leq i\leq r-1$. Then $\prom_i(T)(b) \equiv |a|+b-1 \pmod n$ if and only if $a$ is the unique value that crosses the boundary between rows $i$ and $i+1$ in the application of jeu de taquin promotion to  $\promotion^{b-1}(\std(T))$.
\end{proposition}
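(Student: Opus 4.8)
The plan is to reduce the statement to a direct comparison between the two combinatorial encodings of the $\PEdiagram$: on one hand, the promotion matrix $\PM(T)$ and its associated promotion functions $\prom_i(T)$, and on the other hand, the iterated jeu de taquin promotion slides applied to $\std(T)$. First I would invoke \Cref{lem:osc_prom} (that $\PM_{\diagram} = \PM_{\std(\diagram)}$) together with the fact that the definitions of $\prom_i$ depend only on $\PM(T)$, to reduce without loss of generality to the case that $T$ is of oscillating type, i.e., $T = \std(T)$. This is the right reduction because the jeu de taquin description of promotion in \Cref{sec:jdt} and the local rule grids of \Cref{sec:prom_stuff} are both defined so as to refine naturally along oscillization, so no information is lost. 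After this reduction every step $c_j \in \{\pm 1\}$, each local rule grid is a single $1 \times 1$ block (an interval, hence a set $\{j\}$ or an interval $(j,k]$), and the $\bullet$/$\overline{\bullet}$ tracking from $\Cref{ex:bullets}$ literally follows one value at a time across the diagram.

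The core of the argument is then unwinding the definitions. By construction of $\PMPE(T)$ and \Cref{def:PM}, the block $M^{uv}$ of $\PM(T)$ for $1 \le u < v \le t$ is exactly the local rule grid recording how the $\bullet$ (or $\overline\bullet$) inserted at step $u$ of the $(u-1)$st row of $\PEdiagram(T)$ slides past the value at step $v$; equivalently, by \Cref{lem:prom.P}, it records the jeu de taquin promotion slide of $\promotion^{u-1}(T)$ at the position corresponding to the $v$th letter. The promotion function $\prom_i(T)$ sends $a \mapsto b$ precisely when $i \in \PM(T)_{ab}$, i.e., when a $\bullet$ slides from row $i-1$ to row $i$ (or a $\overline\bullet$ from row $i$ to row $i-1$) during the slide that pairs the step-$a$ value with the step-$b$ value. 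Now I would carefully match indices: in the jeu de taquin promotion applied to $\promotion^{b-1}(\std(T))$, the value $a$ (an entry of absolute value $|a|$, up to the relabeling by $\pm 2, \ldots, \pm n$ and the final subtraction of $1$ described just before \Cref{ex:JDT_prom}) is the one whose interaction with the $\bullet$/$\overline\bullet$ causes the unique crossing of the row-$i$/row-$(i+1)$ boundary; and the column index in $\PEdiagram(\promotion^{b-1}(T))$ at which this happens, translated back via the $\sigma^{-|c_1|}$-conjugation bookkeeping of \Cref{lem:prom.P} and the cyclic wraparound built into $\PM(T)$, is $|a| + b - 1 \pmod n$. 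The congruence mod $n$ appears exactly because the matrix $\PM(T)$ is obtained by wrapping $\PEdiagram(T)$ around into a square of side $t = n$ (in the oscillating case), so column positions are only well-defined modulo $n$.

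The main obstacle I anticipate is the indexing/bookkeeping: keeping straight (a) the relabeling of steps by $\pm 2, \dots, \pm n$ in $\JDT_i$ versus the absolute values $|a|$ in the final promotion, (b) the offset by $|c_1|$ (here $1$) coming from \Cref{lem:prom.P} when passing between $\PM(T)$ and $\PM(\promotion^{b-1}(T))$, and (c) the cyclic wraparound identifying column $v$ with $v \bmod n$ in the square matrix $\PM(T)$. Each of these is individually routine, but getting all three to line up so that the final answer reads cleanly as $|a| + b - 1 \pmod n$ requires care. I would handle this by first proving the $b = 1$ case directly — there $\prom_i(T)(1)$ is read off the first row of $\PM(T)$, which by definition is $U^1 \mathbin\Vert M^{12} \mathbin\Vert \cdots$, i.e., the first jeu de taquin promotion slide of $\std(T)$ itself — and then deducing the general $b$ by applying $\promotion^{b-1}$ and using \Cref{lem:prom.P} to shift indices, noting that $\prom_i$ is a permutation by \Cref{thm:prom_perms}(i) (or, in the general fluctuating case, a partial function, which suffices since the statement only asserts a biconditional about the value when it is defined). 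The verification that the slide described — "the unique value crossing the row-$i$/row-$(i+1)$ boundary" — is indeed unique follows from \Cref{lem:M_prom}: each row of $\PMPE(T)$ contains each element of $\{0, \ldots, r-1\}$ at most once, which is exactly the statement that at most one value crosses each inter-row boundary during a single jeu de taquin promotion.
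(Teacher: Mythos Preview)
Your proposal is correct and follows essentially the same line as the paper: reduce to the oscillating case via \Cref{lem:osc_prom}, then observe that the entries of $\PMPE(T)$ literally record which values cross the row $i$/$i{+}1$ boundary during successive jeu de taquin promotions.

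The one substantive difference is in how you handle general $b$. The paper uses the convention of repeatedly applying promotion \emph{without relabeling}, so that row $b$ of $\PMPE(T)$ directly records the $b$th promotion with the original letter names; then the shift by $b-1$ in $|a|+b-1$ is exactly the relabeling, and the $\bmod\ n$ is the wraparound of $\PMPE$ into $\PM(T)$. Your route instead does the $b=1$ case and then invokes \Cref{lem:prom.P} to shift. This works, but be aware that \Cref{lem:prom.P} has the range restriction $1 \le u,v \le t - |c_1|$, so iterating it only covers the upper-triangular part of row $b$ (i.e., $|a| \le n-b+1$); for the wraparound entries $|a|+b-1 > n$ you must appeal directly to the definition of $\PM(T)$ as $\PME(T)$ glued to $\PMEd(\devacuation\evacuation(T))$, which is exactly what the ``no relabeling'' convention packages automatically. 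You acknowledge the wraparound, but your write-up should make explicit how that case is discharged. Otherwise the proposal is sound; the uniqueness via \Cref{lem:M_prom} and the careful bookkeeping of the $-1$ relabeling are both on target.
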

\begin{proof}
    First, suppose $T$ is of oscillating type.
    Consider repeatedly performing promotion on $T$ via jeu de taquin with the convention that we do not relabel entries after promotion. Then $\PMPE(T)$ records an $i$ in row $a$, column $b$, when letter $a$ or $\overline{a}$ crosses the boundary between rows $i$ and $i+1$ during the $b$th application of promotion to $T$. Since promotion actually decreases the absolute value of all entries by $1$, the result follows in this case.

    If $T$ is not of oscillating type, we may instead consider $\osc(T)$. By \Cref{lem:osc_prom}, $\PMPE(T) = \PMPE(\osc(T))$, so the theorem follows from the previous case.
\end{proof}

We end this section by describing properties of promotion permutations that determine the entries of the corresponding tableau. We will use the following lemma.
\begin{lemma}\label{lem:promi_osc}
Given a fluctuating tableau $T$ of $r$ rows and $0\leq i\leq r$,
$\prom_i(T)=\prom_i(\std(T))$.
\end{lemma}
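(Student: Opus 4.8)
The statement to prove is \Cref{lem:promi_osc}: for a fluctuating tableau $T$ with $r$ rows and $0 \leq i \leq r$, we have $\prom_i(T) = \prom_i(\std(T))$.

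My plan is to reduce everything to the already-established fact that oscillization commutes with the formation of the promotion-evacuation grid. By \Cref{def:prom_fcn}, the promotion function $\prom_i(T)$ is read off entirely from the promotion matrix $\PM(T)$: we have $\prom_i(T)(a) = b$ exactly when $i \in \PM(T)_{ab}$, and $\PM(T)$ in turn is assembled from $\PME(T)$ and $\PMEd(\devacuation\evacuation(T))$, i.e.\ from $\diagram$-grids of the various growth diagrams attached to $T$. So it suffices to show that the relevant $\diagram$-grids are unchanged under oscillization.

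Here are the steps in order. First, recall from \Cref{lem:osc_prom} that $\PM_{\diagram} = \PM_{\std(\diagram)}$ for any skew fluctuating growth diagram $\diagram$; in particular $\PME(T) = \PME(\std(T))$ and $\PMEd(S) = \PMEd(\std(S))$ for any $S$, and $\PMPE(T) = \PMPE(\std(T))$. Second, observe that oscillization commutes with the growth-diagram operators: by the lemma stating ``$\std$ commutes with $\Pdiagram, \Ediagram, \Eddiagram, \PEdiagram$'' (proved via \Cref{lem:standard_welldef}), and since $\evacuation, \devacuation$ are the boundary edges of $\Ediagram, \Eddiagram$, we get $\std(\evacuation(T)) = \evacuation(\std(T))$ and similarly for $\devacuation$. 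Consequently $\std(\devacuation\evacuation(T)) = \devacuation\evacuation(\std(T))$. Third, combine these: $\PM(T)$ has upper triangle $\PME(T) = \PME(\std(T))$ and lower triangle $\PMEd(\devacuation\evacuation(T)) = \PMEd(\std(\devacuation\evacuation(T))) = \PMEd(\devacuation\evacuation(\std(T)))$, which is exactly the lower triangle of $\PM(\std(T))$; the main diagonal entries are all $0$ in both cases. Hence $\PM(T) = \PM(\std(T))$ as block matrices (the individual block sizes differ, but the underlying matrix of intervals and, crucially, for each row the unique block-entry containing a given $i$, are the same). Fourth, since $\prom_i$ depends only on this data, $\prom_i(T) = \prom_i(\std(T))$; the cases $i = 0$ and $i = r$ are both $\id$ on the nose.

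The one subtlety to handle carefully — and the only real obstacle — is bookkeeping the indexing: $\PM(T)$ is a $t \times t$ matrix with $t = \sum_i |c_i|$ both for $T$ and for $\std(T)$ (oscillization does not change $t$), so the ambient index sets $[t]$ agree, and one must check that the block decomposition is irrelevant to reading off $\prom_i$ — indeed \Cref{def:prom_fcn} only uses the entry $\PM(T)_{ab}$ at position $(a,b)$, not which block it lies in. Given \Cref{lem:osc_prom} and the commutation of $\std$ with the growth-diagram constructions, this is essentially immediate, so the proof is short. I would write it as follows.

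\begin{proof}
  When $i = 0$ or $i = r$ both sides equal $\id$, so assume $1 \le i \le r-1$. By \Cref{def:prom_fcn}, the partial function $\prom_i(T)$ is determined by the promotion matrix $\PM(T)$ via the rule $\prom_i(T)(a) = b \iff i \in \PM(T)_{ab}$, and $\PM(T)$ is a $t \times t$ matrix with $t = \sum_j |c_j|$; this value of $t$ is unchanged by oscillization. By \Cref{def:PM}, the upper triangle of $\PM(T)$ is $\PME(T)$, the lower triangle is $\PMEd(\devacuation\evacuation(T))$, and the main diagonal entries are all $0$. By \Cref{lem:osc_prom}, $\PME(T) = \PME(\std(T))$. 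Since $\std$ commutes with $\Ediagram$ and $\Eddiagram$, it commutes with $\evacuation$ and $\devacuation$ (these being boundary edges of the respective diagrams), so $\std(\devacuation\evacuation(T)) = \devacuation\evacuation(\std(T))$; applying \Cref{lem:osc_prom} again gives
  \[
    \PMEd(\devacuation\evacuation(T)) = \PMEd(\std(\devacuation\evacuation(T))) = \PMEd(\devacuation\evacuation(\std(T))).
  \]
  Therefore $\PM(T)$ and $\PM(\std(T))$ have the same entry at every position $(a,b) \in [t] \times [t]$. Since $\prom_i$ depends only on these entries, $\prom_i(T) = \prom_i(\std(T))$.
\end{proof}
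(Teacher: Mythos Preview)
Your proof is correct and follows essentially the same approach as the paper's: show $\PM(T)=\PM(\std(T))$ using \Cref{lem:osc_prom}, then read off $\prom_i$. The paper's proof is even terser, simply stating that \Cref{lem:osc_prom} directly implies $\PM(T)=\PM(\std(T))$; this works because $\PM(T)$ is obtained by wrapping $\PMPE(T)$, and $\PMPE(T)=\PM_{\PEdiagram(T)}=\PM_{\std(\PEdiagram(T))}=\PM_{\PEdiagram(\std(T))}=\PMPE(\std(T))$ in one stroke, avoiding your separate treatment of the two triangles and the intermediate observation that $\std$ commutes with $\evacuation$ and $\devacuation$.
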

\begin{proof}
\Cref{lem:osc_prom} directly implies that $\PM(T)=\PM(\std(T))$, so the conclusion follows.
\end{proof}

\begin{definition}
    Let $\pi \in \mathfrak{S}_n$ be a permutation. A number $i \in [n]$ is an \emph{antiexcedance} of $\pi$ if $\pi^{-1}(i) > i$. We write $\Aexc(\pi)$ for the set of antiexcedances of $\pi$. We say $i \in [n]$ is an \emph{excedance} of $\pi$ if it is neither an antiexcedance nor a fixed point.
\end{definition} 

\begin{theorem}
\label{thm:antiexcedance}
Let $T \in \ft(r)$ be a rectangular fluctuating tableau. Suppose $1 \leq i \leq r-1$.

Then $a$ is an antiexcedance of $\prom_i(T)$ if and only if either $a$ appears in the top $i$ rows of $\osc(T)$ or $\overline{a}$ appears in the bottom $r-i$ rows of $\osc(T)$. In particular, if $T$ is a standard tableau, 
then the antiexcedances of $\prom_i(T)$ are exactly the numbers in the first $i$ rows of $T$.
\end{theorem}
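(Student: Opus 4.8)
The plan is to reduce to oscillating type, run the jeu de taquin description of promotion permutations from \Cref{prop:promi_alt_def}, and track a single cell around one full promotion orbit. By \Cref{lem:promi_osc} we may replace $T$ by $\osc(T)$, so assume $T$ is of oscillating type; then $t$ equals the length, every $c_a = \pm 1$, and $\prom_i(T)$ is an honest permutation of $[t]$ by \Cref{thm:prom_perms}(i). Writing $w_a$ for the $a$th letter of $L(T)$, the condition in the statement on a value $a$ collapses to the single alternative ``$c_a > 0$ and $|w_a| \le i$'' or ``$c_a < 0$ and $|w_a| > i$'', i.e.\ the $a$th step of $T$ operates on a row on the correct side of the boundary between rows $i$ and $i+1$. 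The goal is to show this is equivalent to $\prom_i(T)^{-1}(a) > a$.

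I would first set up the ``rounds'' picture: the $b$th round is the jeu de taquin promotion $\promotion^{b-1}(T) \to \promotion^b(T)$, in which the first step of $\promotion^{b-1}(T)$ — which is the cyclically shifted step $b$ of $T$, of content $c_b$ — becomes a $\bullet$ in the top row when $c_b > 0$ or a $\overline{\bullet}$ in the bottom row when $c_b < 0$ and is slid out. By \Cref{prop:promi_alt_def}, $\prom_i(T)(b)$ is exactly the $T$-index of the cell that this $\bullet$ or $\overline{\bullet}$ swaps across the boundary between rows $i$ and $i+1$. Now fix a step $a$ of $T$ and watch it over one period of $t$ rounds: it occupies a cell that changes rows only via jeu de taquin swaps, and at round $a$ it itself becomes the active $\bullet$ or $\overline{\bullet}$.

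The crux — and the step I expect to be the main obstacle — is the claim that over one period step $a$ crosses the row $i$/row $i+1$ boundary \emph{exactly twice}: once during round $a$ (as the active $\bullet$/$\overline{\bullet}$) and once during round $b_0 := \prom_i(T)^{-1}(a)$ (as a passive cell swapped across by that round's $\bullet$/$\overline{\bullet}$), and the two crossings run in opposite vertical directions. This combines three ingredients: (i) $\promotion^t(T) = T$ from \Cref{thm:ft.prom_evac}(a), which forces the number of crossings per period to be even so that step $a$ returns to its original side; (ii) $\prom_i(T)$ being a permutation (\Cref{thm:prom_perms}(i)), so that row $a$ of $\PM(T)$ has exactly one entry containing $i$ — meaning step $a$ is the recorded passive boundary-crosser in exactly one round, namely $b_0$; and (iii) the monotonicity of jeu de taquin slides — a $\bullet$ never moves up and a $\overline{\bullet}$ never moves down, including in the paired rules (c),(d) of \Cref{fig:JDT-rules} — so that during round $a$ step $a$ crosses the boundary at most once. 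Ingredients (ii) and (iii) cap the passive and active crossings at one each, and (i) forces each to occur. The delicate part is the direction bookkeeping: checking, in each of the four sign patterns of $(c_a, c_b)$ and in particular when the slide uses the paired rules where a $\bullet$ drags a removed cell with it, that the round-$a$ crossing is top-to-bottom precisely when $c_a > 0$, and hence (oppositely) that the round-$b_0$ crossing is bottom-to-top precisely when $c_a > 0$.

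Granting the claim, the proof finishes with a cyclic-ordering argument. Immediately after round $a$, step $a$ lies below the boundary iff $c_a > 0$; immediately after round $b_0$, it lies above iff $c_a > 0$. The tableau $T$ is ``time $0$'', lying between round $t$ and round $1$, so step $a$'s side in $T$ is its side right after whichever of round $a$ and round $b_0$ occurred most recently going backwards: round $b_0$ if $a < b_0$, round $a$ if $a > b_0$. In the first case step $a$ lies in the top $i$ rows of $T$ iff $c_a > 0$ — which is exactly the stated condition; in the second case one obtains its negation. Since step $a$ lies in the top $i$ rows of $T$ iff $|w_a| \le i$, the stated condition is equivalent to $a < b_0 = \prom_i(T)^{-1}(a)$, i.e.\ to $a$ being an antiexcedance of $\prom_i(T)$. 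The ``in particular'' assertion for standard tableaux is the specialization to all $c_a = 1$, where $\osc(T) = T$.
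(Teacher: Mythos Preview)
Your argument is correct and reaches the same conclusion, but it takes a noticeably different route from the paper. The paper's proof stays entirely in the evacuation half: after oscillizing, it observes that the passive entry $\pm a$ moves \emph{monotonically} (unbarred up, barred down) during rounds $1,\ldots,a-1$, and that just before round $a$ it must sit in the top (resp.\ bottom) row because the tableau at that moment starts from $\mathbf{0}$. Hence the passive entry crosses the $i/(i+1)$ boundary during those rounds if and only if it began on the far side in $T$, and since ``column $a$ of $\PME(T)$ contains an $i$'' is exactly the excedance condition, the result follows immediately. No full-orbit periodicity, no parity count, no cyclic-ordering bookkeeping is needed.

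Your approach instead tracks step $a$ around the entire promotion orbit, proves there are exactly two boundary crossings (one active at round $a$, one passive at round $b_0=\prom_i(T)^{-1}(a)$), and reads off the side in $T$ from the cyclic order of $a$ and $b_0$. This is a pleasant ``trajectory'' picture and is genuinely correct, but it is more machinery than the theorem requires; the paper's half-orbit monotonicity argument is shorter and avoids the delicate direction bookkeeping you flagged as the main obstacle. One small slip worth fixing: in your ingredient~(ii), ``row $a$ of $\PM(T)$'' records the \emph{active} crossing of step $a$ (round $a$, bullet $=$ step $a$); the passive crossings of step $a$ are recorded in \emph{column} $a$. Your conclusion is unaffected because $\prom_i(T)$ being a permutation gives exactly one $i$ in each row \emph{and} each column, but the intermediate sentence should invoke column $a$.
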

\begin{proof}
By \Cref{lem:promi_osc}, we may assume $T = \osc(T)$.
We start by characterizing the excedances of $\prom_i(T)$ instead of the antiexcedances. Since $\prom_i(T)$ is fixed-point free by \Cref{cor:FPF}, the antiexcedances are the complementary set to the excedances.

If we write a permutation $\pi \in \mathfrak{S}_n$ as a permutation matrix by placing $1$ in each position $(a,\pi(a))$, then the excedances of $\pi$ are exactly the values $\pi(a)$ such that $(a,\pi(a))$ appears strictly above the main diagonal.

The upper triangle of $\PM(T)$ is $\PME(T)$. Consider repeatedly performing promotion on $T$ via jeu de taquin with the convention that we do not relabel entries after promotion and we do not replace $\bullet$ or $\overline{\bullet}$ by a new number at the end of promotion. Then $\PME(T)$ records an $i$ in row $a$, column $b$, when letter $a$ or $\overline{a}$ crosses the boundary between rows $i$ and $i+1$ during the $b$th application of promotion to $T$. Note that during jeu de taquin, unbarred entries only move weakly up, while barred entries only move weakly down. But the entry $a$ must eventually exit out the top if it exists. Otherwise, $\overline{a}$ exists and must exit out the bottom. Hence, $\PME(T)$ will have an $i$ in row $a$ if any only if $a$ appears strictly below row $i$ in $T$ or if $\overline{a}$ appears weakly above row $i$ in $T$. Thus, $a$ is an excedance of $\prom_i(T)$ if and only if $a$ fails the hypotheses of the theorem. Thus, $a$ is an antiexcedance of $\prom_i(T)$ if and only if either $a$ appears in the top $i$ rows of $T$ or $\overline{a}$ appears in the bottom $r-i$ rows of $T$.
\end{proof}

\begin{corollary}\label{cor:uniquely_determined}
    Suppose $T\in\ft(r,\underline{c})$ is rectangular. Then $T$ is uniquely determined by its type together with the promotion permutations $(\prom_i(T))_{i=1}^{\lfloor r/2 \rfloor}$.
\end{corollary}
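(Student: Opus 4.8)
The plan is to reconstruct $T$ from its type $\underline{c}$ together with the first $\lfloor r/2\rfloor$ promotion permutations by recovering the full collection $(\prom_i(T))_{i=1}^{r-1}$ and then invoking \Cref{thm:injective.reduced} (or equivalently \Cref{rem:prom.conditions}, which says $\PM(T)$ is recoverable from all the $\prom_i(T)$). First I would note that by \Cref{thm:prom_perms}(ii), for $\lfloor r/2\rfloor < i \leq r-1$ we have $\prom_i(T) = \prom_{r-i}(T)^{-1}$, and $r-i$ lies in $\{1,\ldots,\lfloor r/2\rfloor\}$, so these permutations are determined by the given data. Together with $\prom_0(T) = \prom_r(T) = \id$, this yields all of $\prom_0(T),\ldots,\prom_r(T)$. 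Then $\PM(T)$ is determined: its $(a,b)$ block is the set of those $i \in \{0,\ldots,r-1\}$ with $\prom_i(T)(a) = b$, using that (by \Cref{lem:M_prom} and the structure recorded in \Cref{rem:prom.conditions}) each nonempty block is an interval and the intervals across a block row partition $\{0,\ldots,r-1\}$ consecutively, so knowing which $i$'s land in which $(a,b)$ recovers the blocks once the block sizes (hence $\type(T)$) are known.

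Next I would apply \Cref{thm:injective}: the map $T \mapsto (\PME(T), \type(T))$ is injective, and $\PME(T)$ is the upper triangle of $\PM(T)$, which we have just reconstructed. Since $\type(T) = \underline{c}$ is given, this pins down $T$ uniquely. Alternatively, and perhaps more cleanly, I would cite \Cref{thm:injective.reduced}, which states $T \mapsto (\PMr^1(T),\ldots,\PMr^{r-1}(T),\type(T))$ is injective; the reduced matrices $\PMr^i(T)$ are immediately read off from $\prom_i(T)$ (the $(u,v)$ entry of $\PMr^i(T)$ counts the entries of block $M^{uv}$ equal to $i$, which in the promotion-permutation language is just the indicator of $\prom_i(T)(u) = v$, since by \Cref{rem:prom.conditions} each block row has at most one entry containing $i$).

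There is essentially no obstacle here beyond bookkeeping: the real content is already contained in \Cref{thm:prom_perms} (giving the $\prom_i = \prom_{r-i}^{-1}$ symmetry so that the first half suffices) and in \Cref{thm:injective}/\Cref{thm:injective.reduced} (giving that the full promotion data determines $T$). The one point requiring a sentence of care is the reduction from "all $\prom_i(T)$" to "the block structure of $\PM(T)$ plus $\type(T)$": one must observe that \Cref{thm:injective} explicitly takes $\type(T)$ as part of the input, so the given type supplies exactly the block-size information needed to assemble the blocks of $\PM(T)$ from the partial functions $\prom_i(T)$, and $\prom_i(T)$ being a genuine permutation in the rectangular case (\Cref{thm:prom_perms}(i)) guarantees no ambiguity. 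Putting these together gives the corollary.

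\begin{proof}
  By \Cref{thm:prom_perms}(i), each $\prom_i(T)$ is a permutation, and by \Cref{thm:prom_perms}(ii) we have $\prom_i(T) = \prom_{r-i}(T)^{-1}$ for all $i$. Hence the permutations $\prom_i(T)$ for $\lfloor r/2 \rfloor < i \leq r-1$ are determined by those for $1 \leq r-i \leq \lfloor r/2 \rfloor$, and together with $\prom_0(T) = \prom_r(T) = \id$ (see \Cref{rem:prom.conditions}), the entire collection $(\prom_i(T))_{i=0}^{r}$ is determined by $(\prom_i(T))_{i=1}^{\lfloor r/2 \rfloor}$. By \Cref{def:PMr} and \Cref{rem:prom.conditions}, for $1 \leq i \leq r-1$ each block row of $\PM(T)$ contains at most one entry equal to $i$, so
  \[
    \PMr^i(T)_{uv} = \begin{cases} 1 & \text{if } \prom_i(T)(u) = v, \\ 0 & \text{otherwise.}\end{cases}
  \]
  Thus the reduced promotion matrices $\PMr^1(T), \ldots, \PMr^{r-1}(T)$ are determined by $(\prom_i(T))_{i=1}^{\lfloor r/2 \rfloor}$. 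Since $\type(T) = \underline{c}$ is also given, \Cref{thm:injective.reduced} shows that $T$ is uniquely determined.
\end{proof}
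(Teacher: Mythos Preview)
Your overall strategy is sound and differs from the paper's route: you reconstruct $\PM(T)$ from the promotion permutations and appeal to the injectivity of \Cref{thm:injective} or \Cref{thm:injective.reduced}, whereas the paper oscillizes and invokes \Cref{thm:antiexcedance} to read off directly, for each entry of $\osc(T)$, which row it occupies. Both are legitimate; yours is more structural, the paper's more explicit.

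There is, however, a genuine error in your formal proof. You write that ``each block row of $\PM(T)$ contains at most one entry equal to $i$'' and then display
\[
  \PMr^i(T)_{uv} = \begin{cases} 1 & \text{if } \prom_i(T)(u) = v, \\ 0 & \text{otherwise.}\end{cases}
\]
This conflates two different index sets. The reduced matrix $\PMr^i(T)$ is $n \times n$, indexed by \emph{blocks} $u,v \in [n]$, while $\prom_i(T)$ is a permutation of $[t]$ with $t = \sum_j |c_j|$; writing $\prom_i(T)(u)=v$ for block indices $u,v$ is meaningless unless all $|c_j|=1$. What \Cref{def:prom_fcn} actually says is that each \emph{row} (not block row) of $\PM(T)$ has at most one entry containing $i$; a block row has $|c_u|$ rows and can contribute up to $|c_u|$ such entries. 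Indeed in \Cref{ex:PMr} one sees $\PMr^1(T)_{3,3}=2$.

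The fix is immediate: simply cite \Cref{rem:prom.conditions}, which already states that $\PM(T)$ is recoverable from $(\prom_i(T))_{i=1}^{r-1}$; then $\PME(T)$ is its upper triangle, and \Cref{thm:injective} applies. If you prefer \Cref{thm:injective.reduced}, note that $\PMr^i(T)_{uv}$ is the number of indices $a$ in block row $u$ with $\prom_i(T)(a)$ in block column $v$, which is determined by $\prom_i(T)$ together with the block sizes from $\type(T)$.
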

\begin{proof}
    This follows by combining \Cref{thm:prom_perms} with \Cref{thm:antiexcedance} after oscillizing the tableau.
\end{proof}

\def\x{1cm}
\ytableausetup{boxsize=0.98cm}
\newsavebox{\proz}
\sbox{\proz}{%
  \begin{tikzpicture}[inner sep=0in,outer sep=0in]
    \node (n) {\begin{varwidth}{5cm}{
    \begin{ytableau}
      \none & *(light-gray)1 & 4\,\overline{12} \\
      \none & *(light-gray)2 & 7\,\overline{9} \\
      \none & *(light-gray)5\,\overline{8}\,10 \\
      {\overline{3} \, 6}   & *(light-gray)11 \\
    \end{ytableau}}\end{varwidth}};
    \draw[ultra thick,black] ([xshift=\x]n.south west)--([xshift=\x]n.north west);
  \end{tikzpicture}
}

\newsavebox{\proa}
\sbox{\proa}{%
  \begin{tikzpicture}[inner sep=0in,outer sep=0in]
    \node (n) {\begin{varwidth}{5cm}{
    \begin{ytableau}
      \none & *(light-gray)1 & 3\,\overline{11} \\
      \none & *(light-gray)4 & 6\,\overline{8} \\
      \none & *(light-gray)9 \\
      {\overline{2} \, 5 \, \overline{7} \atop  10}   & *(light-gray)12 \\
    \end{ytableau}}\end{varwidth}};
    \draw[ultra thick,black] ([xshift=\x]n.south west)--([xshift=\x]n.north west);
  \end{tikzpicture}
}

\newsavebox{\prob}
\sbox{\prob}{%
  \begin{tikzpicture}[inner sep=0in,outer sep=0in]
    \node (n) {\begin{varwidth}{5cm}{
    \begin{ytableau}
      \none & *(light-gray)2 & 5\,\overline{10} \\
      \none & *(light-gray)3 \, \overline{7}\,8  \\
      \none & *(light-gray)11 \\
     { \overline{1}\,4\,\overline{6} \atop 9} & *(light-gray)12 \\
    \end{ytableau}}\end{varwidth}};
    \draw[ultra thick,black] ([xshift=\x]n.south west)--([xshift=\x]n.north west);
  \end{tikzpicture}
}

\newsavebox{\proc}
\sbox{\proc}{%
  \begin{tikzpicture}[inner sep=0in,outer sep=0in]
    \node (n) {\begin{varwidth}{5cm}{
    \begin{ytableau}
      \none & *(light-gray)1 & 4\,\overline{12} \\
      \none & *(light-gray)2 & 7\,\overline{9} \\
      \none & *(light-gray)3\,\overline{6}\,10 \\
      \overline{5}\,8 & *(light-gray)11 \\
    \end{ytableau}}\end{varwidth}};
    \draw[ultra thick,black] ([xshift=\x]n.south west)--([xshift=\x]n.north west);
  \end{tikzpicture}
}

\newsavebox{\prodd}
\sbox{\prodd}{%
  \begin{tikzpicture}[inner sep=0in,outer sep=0in]
    \node (n) {\begin{varwidth}{5cm}{
    \begin{ytableau}
      \none & *(light-gray)1 & 3\,\overline{11} \\
      \none & *(light-gray)2 & 6\,\overline{8} \\
      \overline{5}\,7 & *(light-gray)11 \\
      \overline{4}\,10 & *(light-gray)12 \\
    \end{ytableau}}\end{varwidth}};
    \draw[ultra thick,black] ([xshift=\x]n.south west)--([xshift=\x]n.north west);
  \end{tikzpicture}
}

\newsavebox{\proe}
\sbox{\proe}{%
  \begin{tikzpicture}[inner sep=0in,outer sep=0in]
    \node (n) {\begin{varwidth}{5cm}{
    \begin{ytableau}
      \none & *(light-gray)1 & 2\,\overline{10} \\
      \none & *(light-gray)5\,\overline{7}\,8 \\
      \overline{4}\,6 & *(light-gray)11 \\
      \overline{3}\,9 & *(light-gray)12 \\
    \end{ytableau}}\end{varwidth}};
    \draw[ultra thick,black] ([xshift=\x]n.south west)--([xshift=\x]n.north west);
  \end{tikzpicture}
}

\newsavebox{\prof}
\sbox{\prof}{%
  \begin{tikzpicture}[inner sep=0in,outer sep=0in]
    \node (n) {\begin{varwidth}{5cm}{
    \begin{ytableau}
      \none & *(light-gray)1 & \none \\
      \none & *(light-gray){4\,\overline{6}\,7 \atop \overline{9}\,10}  \\
      \overline{3}\,5 & *(light-gray)11 \\
      \overline{2}\,8 & *(light-gray)12 \\
    \end{ytableau}}\end{varwidth}};
    \draw[ultra thick,black] ([xshift=\x]n.south west)--([xshift=\x]n.north west);
  \end{tikzpicture}
}

\newsavebox{\prog}
\sbox{\prog}{%
  \begin{tikzpicture}[inner sep=0in,outer sep=0in]
    \node (n) {\begin{varwidth}{5cm}{
    \begin{ytableau}
      \none & *(light-gray)3 & \none \\
      \none & *(light-gray)6 \, \overline{8} \, 9 \\
      {\overline{2}\,4 \atop \overline{5}\,7} & *(light-gray)10 \\
      \overline{1}\,11 & *(light-gray)12 \\
    \end{ytableau}}\end{varwidth}};
    \draw[ultra thick,black] ([xshift=\x]n.south west)--([xshift=\x]n.north west);
  \end{tikzpicture}
}

\newsavebox{\proh}
\sbox{\proh}{%
  \begin{tikzpicture}[inner sep=0in,outer sep=0in]
    \node (n) {\begin{varwidth}{5cm}{
    \begin{ytableau}
      \none & *(light-gray)2 & 5\,\overline{12} \\
      \none & *(light-gray)3\,\overline{7}\,8 \\
      \overline{4}\,6 & *(light-gray)9 \\
      \overline{1}\,10 & *(light-gray)11 \\
    \end{ytableau}}\end{varwidth}};
    \draw[ultra thick,black] ([xshift=\x]n.south west)--([xshift=\x]n.north west);
  \end{tikzpicture}
}

\newsavebox{\proi}
\sbox{\proi}{%
  \begin{tikzpicture}[inner sep=0in,outer sep=0in]
    \node (n) {\begin{varwidth}{5cm}{
    \begin{ytableau}
      \none & *(light-gray)1 & 4\,\overline{12} \\
      \none & *(light-gray)2 & 7\,\overline{11} \\
      \none & *(light-gray)5\,\overline{6}\,8 \\
      \overline{3}\,9 & *(light-gray)10 \\
    \end{ytableau}}\end{varwidth}};
    \draw[ultra thick,black] ([xshift=\x]n.south west)--([xshift=\x]n.north west);
  \end{tikzpicture}
}

\newsavebox{\proj}
\sbox{\proj}{%
  \begin{tikzpicture}[inner sep=0in,outer sep=0in]
    \node (n) {\begin{varwidth}{5cm}{
    \begin{ytableau}
      \none & *(light-gray)1 & 3\,\overline{11} \\
      \none & *(light-gray)4 & 6\,\overline{8} \\
      \none & *(light-gray)9 \\
      {\overline{2}5\overline{7} \atop 10} & *(light-gray)12 \\
    \end{ytableau}}\end{varwidth}};
    \draw[ultra thick,black] ([xshift=\x]n.south west)--([xshift=\x]n.north west);
  \end{tikzpicture}
}

\newsavebox{\prok}
\sbox{\prok}{%
  \begin{tikzpicture}[inner sep=0in,outer sep=0in]
    \node (n) {\begin{varwidth}{5cm}{
    \begin{ytableau}
      \none & *(light-gray)2 & 5\,\overline{10} \\
      \none & *(light-gray)3  \\
      \overline{4}\,6 & *(light-gray)8\,\overline{9}\,11 \\
      \overline{1}\,7 & *(light-gray)12 \\
    \end{ytableau}}\end{varwidth}};
    \draw[ultra thick,black] ([xshift=\x]n.south west)--([xshift=\x]n.north west);
  \end{tikzpicture}
}

\newsavebox{\prol}
\sbox{\prol}{%
  \begin{tikzpicture}[inner sep=0in,outer sep=0in]
    \node (n) {\begin{varwidth}{5cm}{
    \begin{ytableau}
      \none & *(light-gray)1 & 4\,\overline{12} \\
      \none & *(light-gray)2 & 7\,\overline{9} \\
      \none & *(light-gray)5\,\overline{8}\,10 \\
      \overline{3}\,6 & *(light-gray)11 \\
    \end{ytableau}}\end{varwidth}};
    \draw[ultra thick,black] ([xshift=\x]n.south west)--([xshift=\x]n.north west);
  \end{tikzpicture}
}

\begin{figure}[htbp]
\[
\begin{tikzpicture}
\node (begin) at (0,0) {$S = $};
\node [right = -0.65cm of begin] (0)  {\scalebox{0.8}{\usebox{\proz}}};
\node [right = 0.5cm of 0] (1) {\scalebox{0.8}{\usebox{\proa}}};
\node [right = 0.5cm of 1] (2) {\scalebox{0.8}{\usebox{\prob}}};
\node [right = 0.5cm of 2] (3) {\scalebox{0.8}{\usebox{\proc}}};
\node [below = 0.5cm of 3] (4) {\scalebox{0.8}{\usebox{\prodd}}};
\node [below = 0.5cm of 2] (5) {\scalebox{0.8}{\usebox{\proe}}};
\node [below = 0.5cm of 1] (6) {\scalebox{0.8}{\usebox{\prof}}};
\node [below = 0.5cm of 0] (7) {\scalebox{0.8}{\usebox{\prog}}};
\node [below = 0.5cm of 7] (8) {\scalebox{0.8}{\usebox{\proh}}};
\node [below = 0.5cm of 6] (9) {\scalebox{0.8}{\usebox{\proi}}};
\node [below = 0.5cm of 5] (10) {\scalebox{0.8}{\usebox{\proj}}};
\node [below = 0.5cm of 4] (11) {\scalebox{0.8}{\usebox{\prok}}};
\node [right = -0.65cm of 11] (end) {$=\promotion^{-1}(S)$};
\draw[pil] (0) -- (1) node[midway,above] {$\promotion$};
\draw[pil] (1) -- (2) node[midway,above] {$\promotion$};
\draw[pil] (2) -- (3) node[midway,above] {$\promotion$};
\draw[pil] (3) -- (4) node[midway,right] {$\promotion$};
\draw[pil] (4) -- (5) node[midway,above] {$\promotion$};
\draw[pil] (5) -- (6) node[midway,above] {$\promotion$};
\draw[pil] (6) -- (7) node[midway,above] {$\promotion$};
\draw[pil] (7) -- (8) node[midway,left] {$\promotion$};
\draw[pil] (8) -- (9) node[midway,above] {$\promotion$};
\draw[pil] (9) -- (10) node[midway,above] {$\promotion$};
\draw[pil] (10) -- (11) node[midway,above] {$\promotion$};
\end{tikzpicture}
\]
\caption{The full promotion orbit of $S \coloneqq \std(T)$, where $T$ is the fluctuating tableau from \Cref{fig:ft-example}.}\label{fig:full_orbit}
\end{figure}
\ytableausetup{boxsize=.85cm}

\begin{example}\label{ex:prom_orbit}
For $T$ the fluctuating tableau from \Cref{fig:ft-example},
\Cref{fig:full_orbit} gives the full promotion orbit of $S \coloneqq \std(T)$, which we use to illustrate \Cref{prop:promi_alt_def} and \Cref{thm:antiexcedance}. 
\Cref{prop:promi_alt_def} allows us to write the promotion permutations of $S$ by tracking entries that move between rows in applications of promotion. For example, $\prom_1(T)(1) = 2$ because in the first promotion of \Cref{fig:full_orbit} the label $2$ moves from row $2$ to row $1$ (and then becomes a $1$). Similarly, $\prom_1(T)(2) = 7$ because in the second promotion, the label $6$ moves from row $2$ to row $1$ (and then becomes $5$). More interestingly, $\prom_1(T)(3) = 12$ because in the third promotion, the label $\overline{10}$ moves from row $1$ to row $2$ (and then becomes $\overline{9}$). Finally, we observe that $\prom_2(T)(1) = 5$ because in the first promotion, the label $5$ moves from row $3$ to row $2$ (and then becomes $4$). The reader may enjoy recomputing the rest of \Cref{ex:prom_fncts} by analyzing \Cref{fig:full_orbit} and applying \Cref{prop:promi_alt_def}.

The antiexcedances of $\prom_1(T)$ are $1,3,4,8,9$. Note that $1$ and $4$ are the positive entries in row $1$ of $S$, while $\overline{3}$, $\overline{8}$, and $\overline{9}$ are the negative entries in rows $2$, $3$, and $4$ of $S$. The antiexcedances of $\prom_2(T)$ are $1,2,3,4,7,8$. Note that $1,2,4$ and $7$ are the positive entries in rows $1$ and $2$, while $\overline{3}$ and $\overline{8}$ are the negative entries in rows  $3$, and $4$. Finally, the antiexcedances of $\prom_3(T)$ are $1,2,3,4,5,7,10$. Note that $1,2,4,5,7,10$ are the positive entries in rows $1$, $2$, and $3$, while $\overline{3}$ is the negative entry in row $4$.

\end{example}

\begin{remark}\label{remark:wedonotknow} 
  For $r> 2$, we do not know an intrinsic characterization of the tuples $(\prom_i(T))_{i=0}^r$ that can arise for $T$ an $r$-row rectangular fluctuating tableau. Necessary conditions include the fixed point conditions in \Cref{cor:FPF}, the row increasing conditions from \Cref{lem:M_prom}, the symmetry \Cref{thm:prom_perms}(ii), and the nesting of the antiexcedance sets 
  \begin{align*}
        \{ a \in \Aexc(\prom_i(T)) : a > 0\} &\subseteq \{ a \in \Aexc(\prom_{i+1}(T)) : a > 0\}, \\
        \{ a \in \Aexc(\prom_{i+1}(T)) : a < 0\} &\subseteq \{ a \in \Aexc(\prom_{i}(T)) : a < 0\}
  \end{align*}
  from \Cref{thm:antiexcedance}. See \Cref{remark:see} for further related discussion.
\end{remark}

\section{Dihedral models of promotion and evacuation}\label{sec:dihedral}

Recall that $\promotion$ and $\evacuation$ generate an infinite dihedral action on skew fluctuating tableaux (see \Cref{lem:dihedral}). By \Cref{thm:ft.prom_evac}, $\promotion^n = \id$ on rectangular fluctuating tableaux of length $n$, so there is a non-obvious action of the dihedral group of order $2n$ on such tableaux by promotion and evacuation. In this section, we use the preceding constructions to give a pictorial model of this dihedral action where $\promotion$ corresponds to rotation and $\evacuation$ corresponds to reflection.

Let
  \[ \langle \rot, \refl \mid \rot^n = \refl^2 = 1, \rot\circ\refl = \refl\circ\rot^{-1}\rangle \]
be the dihedral group of symmetries of the regular $n$-gon, where $\rot$ acts as rotation by $2\pi/n$ and $\refl$ acts as reflection through a fixed axis which passes through the midpoint of an edge. For a given type $\underline{c} = (c_1, \ldots, c_n)$ with $|c_1| + \cdots + |c_n| = t$, this group acts on the set of tuples of maps
  \[ (f_i \colon [t] \to [t])_{i=0}^r \]
by
\begin{align*}
  \rot^k \cdot (f_i)
    &\coloneqq (\sigma^{-|c_1| - \cdots - |c_k|} \circ f_i \circ \sigma^{|c_1| + \cdots + |c_k|}) \qquad\text{if }1 \leq k \leq n \\
  \refl \cdot (f_i)
    &\coloneqq (w_0 \circ f_i \circ w_0)_{i=0}^{r-1}.
\end{align*}

Combining \Cref{thm:injective} and \Cref{thm:prom_perms} gives the following.

\begin{corollary}\label{cor:dihedral.1}
  The inclusion
    \[ \Phi(T) \coloneqq (\prom_i(T))_{i=0}^r \]
  from $r$-row rectangular fluctuating tableaux of length $n$ and type $\underline{c}$ to tuples of permutations intertwines promotion and evacuation with the action of the dihedral group of order $2n$:
  \begin{align*}
    \Phi(\promotion(T))  &= \rot \cdot \Phi(T) \\
    \Phi(\evacuation(T)) &= \refl \cdot \Phi(T). \\
  \end{align*}
\end{corollary}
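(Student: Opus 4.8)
The plan is to deduce both intertwining identities directly from \Cref{thm:prom_perms}, using \Cref{thm:injective} only to get injectivity of $\Phi$; there is no new combinatorics here, just a careful matching of conventions. First I would record that $\Phi$ really does land in tuples of permutations of $[t]$, where $t = |c_1| + \cdots + |c_n|$: this is exactly \Cref{thm:prom_perms}(i), which says each $\prom_i(T)$ with $0 \le i \le r$ is a genuine permutation. For injectivity, note that on the domain the type $\underline c$ is fixed, that $\PME(T)$ is (by \Cref{def:PM}) the upper triangle of $\PM(T)$, and that $\PM(T)$ can be reconstructed from the tuple $(\prom_i(T))_{i=1}^{r-1}$ (see \Cref{rem:prom.conditions}); feeding the resulting $(\PME(T), \type(T))$ into \Cref{thm:injective} shows $T$ is recovered from $\Phi(T)$.

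Next I would verify that the displayed formulas for $\rot^k$ and $\refl$ genuinely define an action of the order-$2n$ dihedral group on tuples $(f_i \colon [t] \to [t])_{i=0}^r$, which reduces to the three identities $\sigma^t = \id$ (so $\rot^n$ acts trivially), $w_0^2 = \id$ (so $\refl^2$ acts trivially), and $w_0 \sigma w_0 = \sigma^{-1}$ (so $\rot\circ\refl = \refl\circ\rot^{-1}$). With that in hand, the rotation identity is immediate: $\rot\cdot\Phi(T)$ is by definition the tuple $(\sigma^{-|c_1|}\,\prom_i(T)\,\sigma^{|c_1|})_{i=0}^{r}$, which equals $(\prom_i(\promotion(T)))_{i=0}^{r} = \Phi(\promotion(T))$ by \Cref{thm:prom_perms}(iii). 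Here one should flag the minor point that $\promotion(T)$ has the cyclically rotated type $(c_2,\dots,c_n,c_1)$, so $\prom_i(\promotion(T))$ is computed with respect to that type, but the ambient set $[t]$ is unchanged, so the equation typechecks; iterating it gives $\Phi(\promotion^k(T)) = \rot^k\cdot\Phi(T)$ for all $k$, consistent with $\promotion^n = \id$ from \Cref{thm:ft.prom_evac}(a) precisely because $\sigma^t = \id$. The reflection identity $\Phi(\evacuation(T)) = \refl\cdot\Phi(T)$ is then just \Cref{thm:prom_perms}(iv) read through the reflection action, which conjugates each coordinate by $w_0$ and reverses the coordinate index via $i \mapsto r-i$ (mirroring the way the involution $\varepsilon$, to which $\evacuation$ specializes on rectangular tableaux by \Cref{cor:516_517}, reverses the row direction). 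Combining the two single-step intertwiners with the dihedral relations checked above yields the full equivariance.

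The \emph{main obstacle} is entirely bookkeeping rather than mathematics, since all the substance is already in \Cref{thm:prom_perms}: (a) confirming that the ambient index set $[t]$ and the type conventions remain compatible under a single step of promotion, so that part~(iii) transcribes verbatim as $\rot$-equivariance of $\Phi$; and (b) lining up the coordinate reindexing $i \leftrightarrow r-i$ in the reflection action with the $\prom_{r-i}$ that appears in part~(iv). Once these conventions are pinned down, the corollary is the advertised one-line combination of \Cref{thm:injective} and \Cref{thm:prom_perms}.
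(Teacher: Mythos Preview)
Your proposal is correct and follows exactly the paper's approach: the paper's entire proof is the sentence ``Combining \Cref{thm:injective} and \Cref{thm:prom_perms} gives the following,'' and you have simply unpacked that line, invoking part~(i) for well-definedness, part~(iii) for the rotation identity, part~(iv) for the reflection identity, and \Cref{thm:injective} (via \Cref{rem:prom.conditions}) for injectivity. Your observation that the reflection must reindex coordinates via $i \mapsto r-i$ is exactly what \Cref{thm:prom_perms}(iv) forces; the paper's displayed definition of $\refl$ is terse on this point, but your reading is the one that makes the corollary match the theorem.
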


We may encode  $\Phi(T)$ in terms of a diagram on a disk as in the following example. Place $t$ vertices on the circumference in $n$ equally-spaced groups of size $|c_1|, \ldots, |c_n|$ in clockwise order. Place a directed edge from $a$ to $b$ with color $i$ if $\prom_i(T)(a) = b$. Promotion corresponds to counterclockwise rotation by $2 \pi / n$ and evacuation corresponds to reflection across the diameter passing midway between the $1$st and $n$th groups.

\begin{example}\label{ex:dihedral}
  For the fluctuating tableau $T$ from \Cref{fig:ft-example}, the diagrams of the promotion permutations of $T$ are as follows:
  \begin{center}
    \includegraphics[scale=0.35, trim={0.4cm 2.2cm 10.5cm 0.45cm},clip]{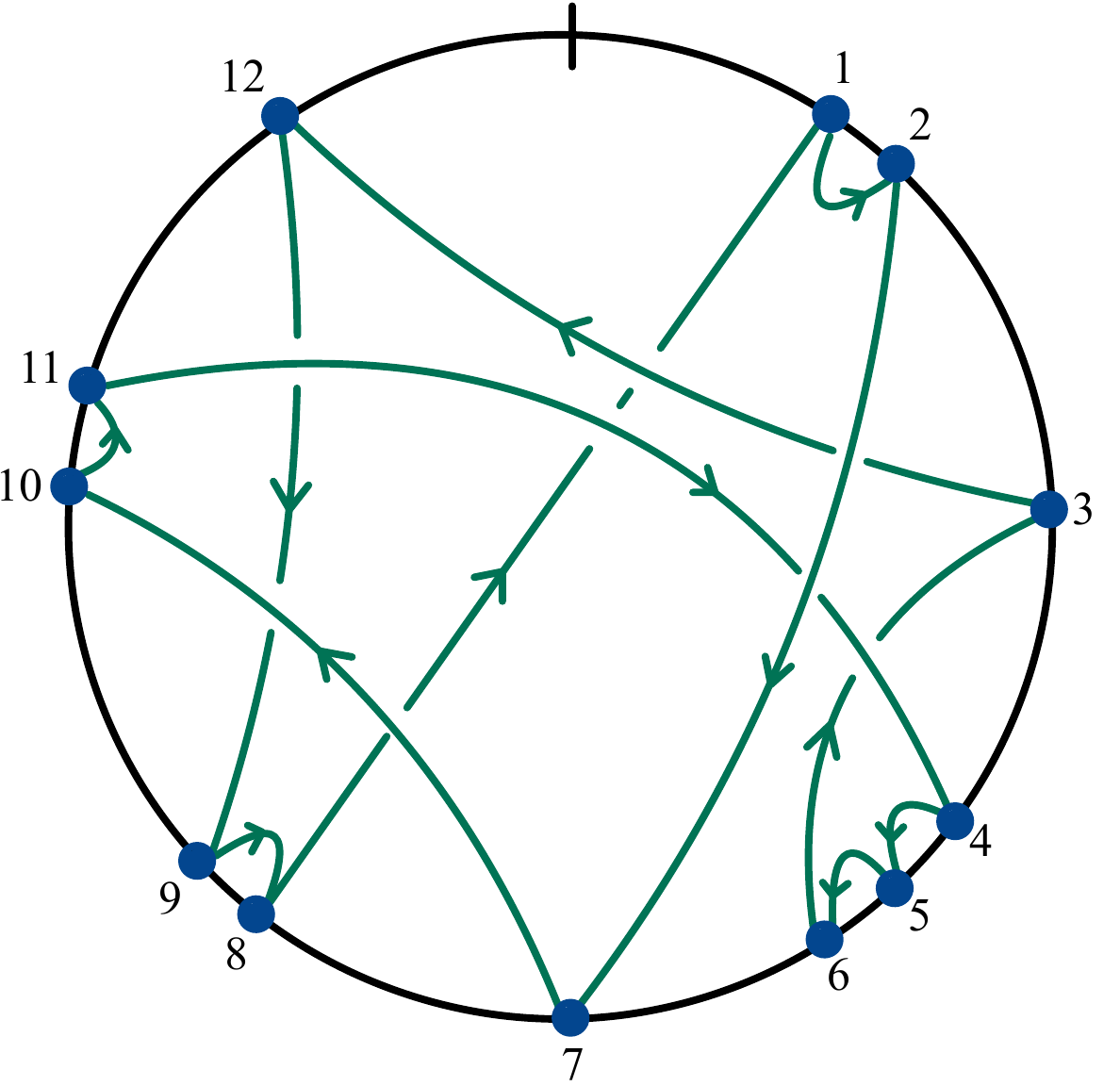}
    \includegraphics[scale=0.35, trim={0.3cm 2.2cm 10.5cm 0.45cm},clip]{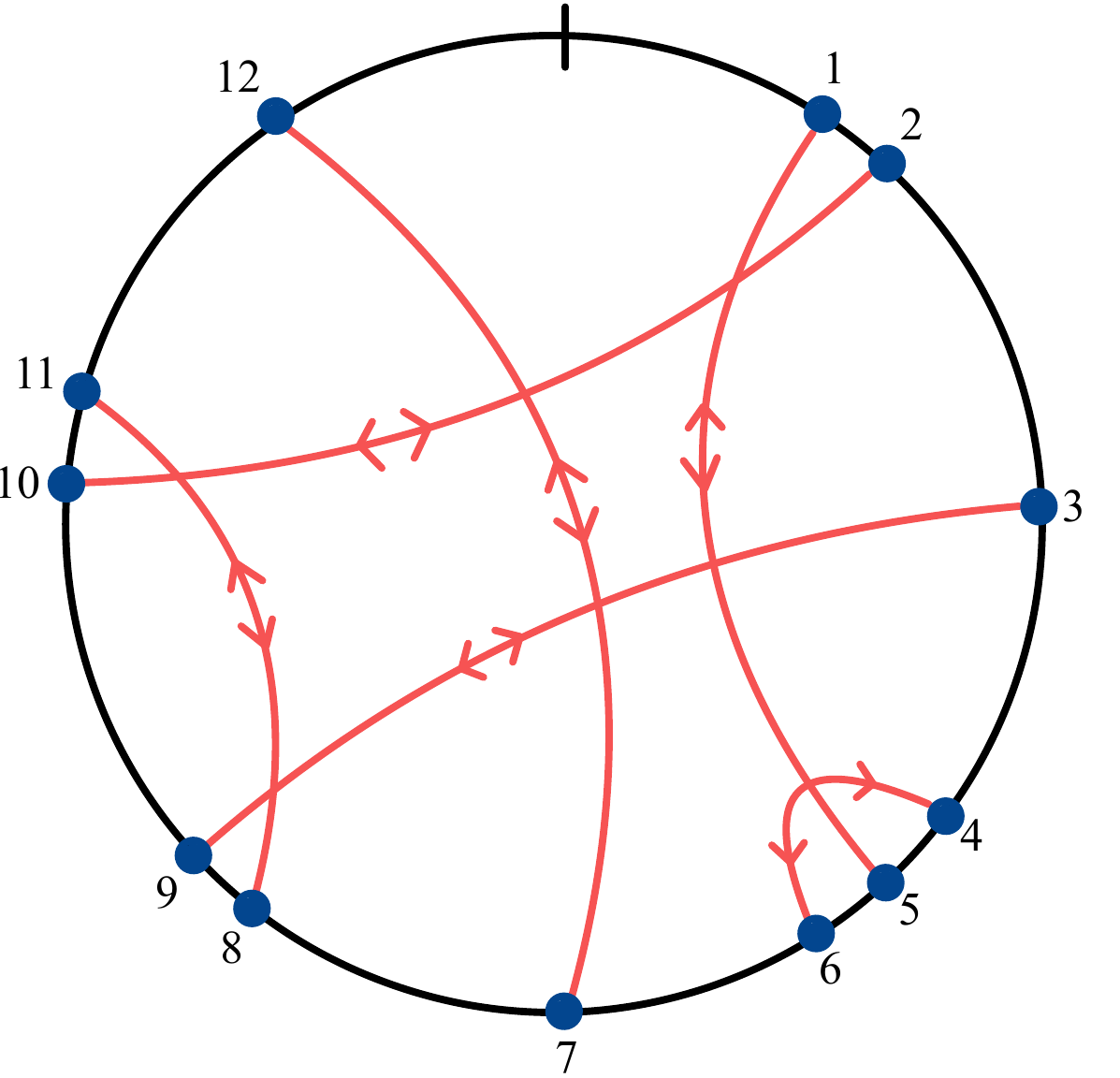}
    \includegraphics[scale=0.35, trim={0.4cm 2.2cm 10.5cm 0.45cm},clip]{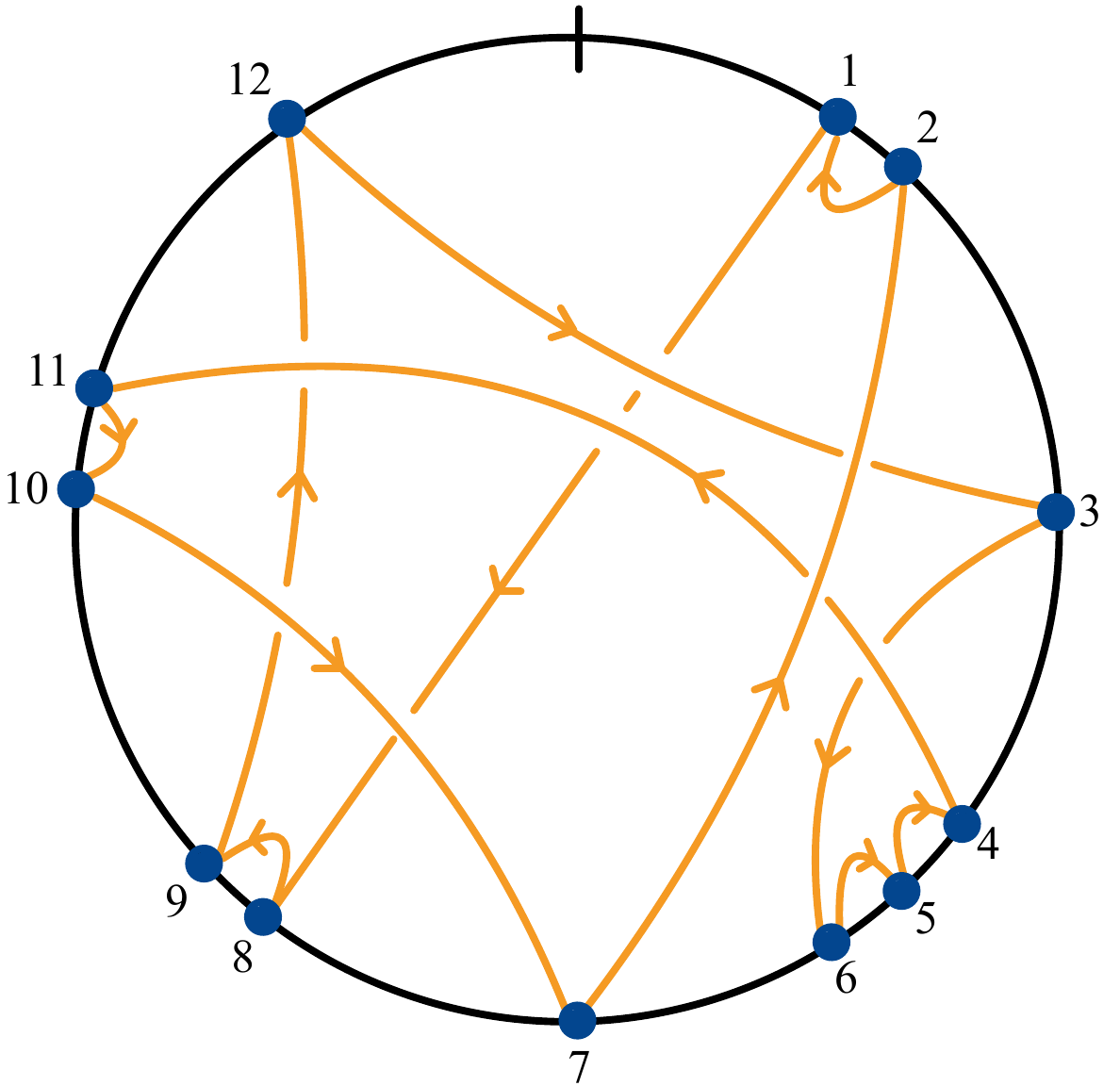}.
  \end{center}
  Here we have drawn the $\prom_i$ diagrams separately for clarity and included only $i=1, 2, 3$ from left to right, though they rotate and reflect simultaneously. The action of $\promotion$ corresponds to counterclockwise rotation by an angle of $2\pi/7$, while that of $\evacuation$ corresponds to reflection across the vertical axis. The fundamental involution $\varpi$ corresponds to reversing the direction of the arrows, $\varepsilon$ agrees with $\evacuation$, and the time reversal involution $\tau$ corresponds to both reflecting the diagrams and reversing the direction of the arrows.
\end{example}

To reduce the number of vertices and edges, one could choose to combine the vertices in each group into a single vertex, and then additionally delete any loops formed by this process. This simplification is related to the reduced promotion matrices of \Cref{sec:reduced_M}.

\begin{remark}\label{remark:see}
This graphical model allows one to directly ``see'' the dihedral action of promotion and evacuation on rectangular fluctuating tableaux, something that has long been desired by combinatorialists even in the standard case. 
  When $r=2$, the map from rectangular fluctuating tableaux to fixed-point free involutions given by $T \mapsto \prom_1(T)$ is injective. Indeed, after combining vertices as described above the image is precisely the \emph{non-crossing matchings}. For $r=3$, a very similar model for semistandard tableaux was considered by Hopkins and Rubey \cite{Hopkins-Rubey}. 

For representation-theoretic purposes, it would be nice to have such a diagrammatic model that naturally extends the dihedral action to a full $\mathfrak{S}_n$-action.
Recall from \Cref{remark:wedonotknow} that we do not have a characterization of promotion permutations in general.
Therefore, for general $r$, we do not know how to use this model 
to define a natural $\mathfrak{S}_n$-action that extends promotion and evacuation on the span of tableaux. For $r=3$, the model can be enriched to a web basis for $\SL_3$ \cite{Kuperberg}, which carries a full $\mathfrak{S}_n$-action (see also, \cite{Petersen-Pylyavskyy-Rhoades, Patrias-Pechenik}). In \cite{Four-row-paper}, we similarly use this model with $r=4$ as a starting point to construct a web basis for $\SL_4$, which again has a full $\mathfrak{S}_n$-action with $\sigma$ acting by $\rot$ and $w_0$ acting by $\refl$. See \Cref{fig:sl4web} for an example.
\end{remark}

\begin{figure}[hbtp]
    \centering
    \includegraphics{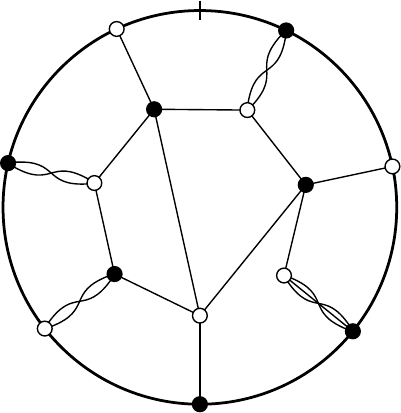}
    \caption{The $\SL_4$-web corresponding to the fluctuating tableau $T$ from \Cref{fig:ft-example}, as constructed in our companion paper \cite{Four-row-paper}.}
    \label{fig:sl4web}
\end{figure}

\section{Connections to crystals}\label{sec:crystals}

We now give a representation-theoretic interpretation of the promotion functions from \Cref{sec:promperm} and reduced promotion matrices from \Cref{sec:prom_stuff} by relating fluctuating tableaux to vertices in crystal graphs. Crystal bases were first introduced by Kashiwara \cite{Kashiwara-Crystals} (see also, Lusztig \cite{Lusztig-Crystals}) and are a combinatorial tool to study representations of quantum groups associated to Lie algebras. See \cite{Hong-Kang,Bump-Schilling} for textbook accounts.

\subsection{Crystal basics}

We largely follow \cite{Bump-Schilling}. Let $\Phi$ be a \emph{root system} for a Euclidean space with positive-definite inner product $\langle -, -\rangle$, $\Phi^+$ a choice of \emph{positive roots}, $\Lambda \supset \Phi$ a \emph{weight lattice}, and $\Sigma = \{\alpha_i : i \in I\} \subset \Phi^+$ a set of \emph{simple roots} with index set $I$ as in \cite[\S2.1]{Bump-Schilling}. We require only finite seminormal Kashiwara crystals, which simplifies the exposition.

\begin{definition}
  A (finite, seminormal, Kashiwara) \emph{crystal} is a nonempty finite set $\mathcal{B}$ together with maps
  \begin{align*}
    e_i, f_i \colon &\mathcal{B} \to \mathcal{B} \sqcup \{0\} \qquad \forall i \in I, \\
    \wt \colon &\mathcal{B} \to \Lambda
  \end{align*}
  where $0 \not\in \mathcal{B}$ is an auxiliary element and which satisfy the following axioms.
  \begin{enumerate}
    \item[(A1)] If $x, y \in \mathcal{B}$, then $e_i(x) = y \Leftrightarrow f_i(y) = x$ and
    \[ \wt(y) = \wt(x) + \alpha_i. \]
    \item[(A2)] For $x \in \mathcal{B}$, the \emph{$i$-root string} through $x$ is the collection $\{e_i^k(x), f_i^k(x) : k \geq 0\} \cap \mathcal{B}$. If $x$ is the maximal element in its $i$-root string, i.e.~$e_i(x) = 0$, we require the $i$-string length, i.e.~the maximal $k$ such that $f_i^k(x) \neq 0$, to be $\langle \wt(x), \alpha_i^\vee\rangle$ where $\alpha^\vee \coloneqq 2\alpha/\langle \alpha, \alpha\rangle$.
  \end{enumerate}
\end{definition}

In type $A$, axiom (A2) says that a basis element $x \in \mathcal{B}$ of weight $\wt(x) = \mu$ which is maximal in its $i$-root string has $i$-string length $\mu_i - \mu_{i+1} \geq 0$. In particular, if $x$ is of \textit{highest weight}, meaning $e_i(x) = 0$ for all $i \in I$, then $\mu_1 \geq \cdots \geq \mu_r$ and $\mu$ is \textit{dominant}.

\begin{definition}
  If $\mathcal{B}$ is a crystal, the \textit{crystal graph} of $\mathcal{B}$ is the labeled, directed graph with vertex set $\mathcal{B}$ and edges $x \too{i} y$ whenever $y = f_i(x) \in \mathcal{B}$.
\end{definition}

The maps $e_i$ may be recovered from the crystal graph and (A1), so the crystal graph together with the weight function $\wt$ entirely encodes the crystal. The edges of $\mathcal{B}$ are oriented ``downwards'' with respect to the partial order on $\Lambda$. One may infer $\wt$ from the weights of the set of highest weight elements,
  \[ \hw(\mathcal{B}) \coloneqq \{x \in \mathcal{B} : \forall i \in I, e_i(x) = 0\}. \]

A \textit{strict morphism} of crystals is a partially defined map $\mathcal{B} \to \mathcal{C}$ which preserves $\wt$ and sends $i$-root strings bijectively onto $i$-root strings. See \cite[\S4.5]{Bump-Schilling} for the formal, general definition. We will only encounter strict morphisms.

\begin{remark}
 For the sake of completeness we briefly describe the correspondence between crystals and representation theory. We will not discuss the technical assumptions necessary to make this correspondence precise.
  \begin{itemize}
    \item Connected components of $\mathcal{B}$ $\longleftrightarrow$ irreducible decomposition of $W$,
    \item Highest weight elements of $\mathcal{B}$ $\longleftrightarrow$ dominant weights of the irreducible components of $W$,
    \item Weight generating function of $\mathcal{B}$ $\longleftrightarrow$ character of $W$,
    \item Tensor product of crystals $\longleftrightarrow$ tensor product of representations,
    \item Isomorphic crystals $\longleftrightarrow$ isomorphic representations.
  \end{itemize}
  In particular, connected crystals coming from representation theory are entirely determined by their unique highest weight element, and $\mathcal{B}$ is uniquely determined by $\hw(\mathcal{B})$.
\end{remark}

The following crystals correspond to the $\GL_r(\mathbb{C})$-representations $\bigwedge^k V$ and $\bigwedge^k V^*$; see \cite[\S2.2]{Bump-Schilling}. These are the representations that are important for fluctuating tableaux (see \Cref{sec:GL_r_rep_theory}). 

\ytableausetup{boxsize=0.6cm}
\begin{example}
  In type $A_{r-1}$, the crystal graph $\mathcal{B}(V)$ of the $r$-dimensional standard representation $V$ of $\GL_r(\mathbb{C})$ is:
    \[ \boxed{1} \too{1} \boxed{2} \too{2} \cdots \too{r-1} \boxed{r}, \qquad\text{where }\wt\left(\boxed{i}\right) = \mathbf{e}_i. \]
  The unique highest weight element is $\boxed{1}$ of weight $\mathbf{e}_1$.
\end{example}

\begin{example}
  The crystal graph $\mathcal{B}(V^*)$ of the dual $V^*$ is:
    \[ \boxed{\overline{r}} \too{r-1} \boxed{\overline{r-1}} \too{r-2} \cdots \too{1} \boxed{\overline{1}}, \qquad\text{where }\wt\left(\boxed{\overline{i}}\right) = -\mathbf{e}_i. \]
  The unique highest weight element is $\boxed{\overline{r}}$ of weight $-\mathbf{e}_r$.
\end{example}

\begin{example}
  The crystal graph of the exterior power $\bigwedge^k V$ of the standard representation of $\GL_r(\mathbb{C})$ has vertex set:
    \[ \mathcal{B}\left(\bigwedge\nolimits^k V\right) = \left\{\ytableaushort{{j_1},{j_2},{\vdots},{j_k}}: 1 \leq j_1 < j_2 < \dots < j_k \leq r \right\}. \]
  Given a column $C \in \mathcal{B}(\bigwedge^k V)$, we can apply $f_i$ to $C$ by changing $i$ into $i+1$ as long as $C$ does not already contain $i+1$. The weight of $C$ is $\sum_{j \in C} \mathbf{e}_j$. The unique highest weight element is the column consisting of $1, 2, \ldots, k$ with weight $\omega_k \coloneqq \mathbf{e}_1 + \cdots + \mathbf{e}_k$.
\end{example}

\begin{example}
  The crystal graph of the dual exterior power $\bigwedge^k V^*$ of the standard representation of $\GL_r(\mathbb{C})$ has vertex set:
    \[ \mathcal{B}\left(\bigwedge\nolimits^k V^*\right) = \left\{\ytableaushort{{\overline{j_k}},{\vdots},{\overline{j_2}},{\overline{j_1}}}: 1 \leq j_1 < j_2 < \dots < j_k \leq r \right\}. \]
  Given a column $\overline{C} \in \mathcal{B}(\bigwedge^k V^*)$, we can apply $f_i$ to $\overline{C}$ by changing $\overline{i+1}$ into $\overline{i}$ as long as $\overline{C}$ does not already contain $\overline{i}$. The weight of $\overline{C}$ is $\sum_{\overline{j} \in \overline{C}} -\mathbf{e}_j$. The unique highest weight element is the column consisting of $\overline{r}, \overline{r-1}, \ldots, \overline{r-k+1}$ with weight $\overline{\omega}_k = -\mathbf{e}_r - \cdots - \mathbf{e}_{r-k+1}$.
\end{example}

\begin{example}\label{ex:crystal_wedge2}
  When $r=4, k=2$, the crystal graph $\mathcal{B}(\bigwedge^2 V)$ is:
  \begin{center}
  \begin{tikzcd}[row sep=tiny]
    \ 
      & \ 
      & \ytableaushort{{1}, {4}} \ar{dr}{1} \\
    \ytableaushort{{1}, {2}} \rar{2}
      & \ytableaushort{{1}, {3}} \ar{ur}{3} \ar{dr}[swap]{1}
      & \ 
      & \ytableaushort{{2}, {4}} \rar{2}
      & \ytableaushort{{3}, {4}} \\
    \ 
      & \ 
      & \ytableaushort{{2}, {3}} \ar{ur}[swap]{3}
  \end{tikzcd}.
  \end{center}
\end{example}

In general, the \textit{dual} of a crystal graph is obtained by reversing the arrows and negating the weights; see \cite[Def.~2.20]{Bump-Schilling}.

\subsection{Tensor products of crystals}

Crystals have the following combinatorial tensor product due to Kashiwara \cite[Prop.~6]{Kashiwara-Crystals}. For a crystal $\mathcal{B}$ and $x \in \mathcal{B}$, set
  \[ \varphi_i(x) \coloneqq \max\{k \in \mathbb{Z}_{\geq 0} \mid f_i^k(x) \neq 0\}\qquad\text{and}\qquad \varepsilon_i(x) \coloneqq \max\{k \in \mathbb{Z}_{\geq 0} \mid e_i^k(x) \neq 0\}. \]
Hence, $\varphi_i(x) + \varepsilon_i(x)$ is the length of the $i$-string through $x$.

\begin{definition}
  Suppose $\mathcal{B}, \mathcal{C}$ are crystals for $\Phi$. The \textit{tensor product} $\mathcal{B} \otimes \mathcal{C}$ is the crystal with vertex set $\{x \otimes y : x \in \mathcal{B}, y \in \mathcal{C}\}$, weight function $\wt(x \otimes y) \coloneqq \wt(x) + \wt(y)$, and maps
  \begin{equation}\label{eq:tensor_rule}
  \begin{split}
    f_i(x \otimes y) &\coloneqq
    \begin{cases}
        f_i(x) \otimes y & \text{if $\varphi_i(x) > \varepsilon_i(y)$},\\
        x \otimes f_i(y) & \text{if $\varphi_i(x) \le \varepsilon_i(y)$},
    \end{cases} \\
    e_i(x \otimes y) &\coloneqq
    \begin{cases}
        e_i(x) \otimes y & \text{if $\varphi_i(x) \ge \varepsilon_i(y)$},\\
        x \otimes e_i(y) & \text{if $\varphi_i(x) < \varepsilon_i(y)$}.
    \end{cases}
  \end{split}
  \end{equation}
\end{definition}

\begin{proposition}[See {\cite[Prop.~2.29]{Bump-Schilling}}]
  If $\mathcal{B}$ and $\mathcal{C}$ are crystals, then  $\mathcal{B} \otimes \mathcal{C}$ is a crystal.
\end{proposition}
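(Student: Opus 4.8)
The plan is to verify the two crystal axioms (A1) and (A2) for $\mathcal{B} \otimes \mathcal{C}$ equipped with the maps of \eqref{eq:tensor_rule}, reducing the harder axiom (A2) to the classical rank-one ($\mathfrak{sl}_2$) case. Throughout I would use two routine consequences of the axioms for a single seminormal crystal $\mathcal{D}$: first, if $f_i(x) \neq 0$ then $\varphi_i(f_i(x)) = \varphi_i(x) - 1$ and $\varepsilon_i(f_i(x)) = \varepsilon_i(x) + 1$, with the dual statements for $e_i$ (both because $f_i(x)$ lies one step lower in the same $i$-root string); and second, the balance identity $\varphi_i(x) - \varepsilon_i(x) = \langle \wt(x), \alpha_i^\vee \rangle$, obtained by applying (A2) at the top $z = e_i^{\varepsilon_i(x)}(x)$ of the $i$-root string through $x$, noting that the string length is $\varphi_i(x)+\varepsilon_i(x) = \varphi_i(z) = \langle \wt(z), \alpha_i^\vee\rangle$ and that $\langle \wt(z), \alpha_i^\vee\rangle = \langle \wt(x), \alpha_i^\vee\rangle + 2\varepsilon_i(x)$.

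First I would verify (A1). Well-definedness and the weight shift are immediate: each branch of $f_i(x\otimes y)$ applies $f_i$ to exactly one tensor factor, lowering weight by $\alpha_i$; in the branch $\varphi_i(x) > \varepsilon_i(y) \ge 0$ we have $\varphi_i(x) \ge 1$, so $f_i(x) \ne 0$ and the output genuinely lies in $\mathcal{B}\otimes\mathcal{C}$, while in the branch $\varphi_i(x) \le \varepsilon_i(y)$ the output $x \otimes f_i(y)$ is $0$ exactly when $\varphi_i(y)=0$, which is consistent with $x\otimes y$ being $i$-minimal. The relation $e_i(z)=w \Leftrightarrow f_i(w)=z$ is then a short case check on the sign of $\varphi_i(x)-\varepsilon_i(y)$: if $\varphi_i(x)>\varepsilon_i(y)$, then $f_i(x\otimes y)=f_i(x)\otimes y$, and since $\varphi_i(f_i(x))=\varphi_i(x)-1\ge\varepsilon_i(y)$ the operator $e_i$ applied to $f_i(x)\otimes y$ takes its first branch and returns $x\otimes y$; the cases $\varphi_i(x)\le\varepsilon_i(y)$, including the boundary equality (handled by the $\le$ in the $f_i$-rule against the $<$ in the $e_i$-rule, since $\varepsilon_i(f_i(y))=\varepsilon_i(y)+1>\varphi_i(x)$), are symmetric. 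This establishes (A1).

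For (A2) I would pass to rank one. Fix $i\in I$ and discard all $e_j,f_j$ with $j\ne i$; since \eqref{eq:tensor_rule} only refers to $e_i,f_i$, the $i$-action on $\mathcal{B}\otimes\mathcal{C}$ is literally the $\mathfrak{sl}_2$-tensor product of the $i$-actions on $\mathcal{B}$ and $\mathcal{C}$, each of which, by (A2), is a disjoint union of finite strings. So it suffices to analyze $S\otimes S'$ for two strings, where one proves the standard bracketing description of \eqref{eq:tensor_rule}: encode $x$ as a run of $\varepsilon_i(x)$ closing brackets followed by $\varphi_i(x)$ opening brackets, concatenate with the encoding of $y$, cancel adjacent opening--closing pairs, and check directly from the branch conditions that $f_i$ turns the leftmost surviving opening bracket into a closing bracket while $e_i$ does the reverse. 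This yields $\varepsilon_i(x\otimes y)=\varepsilon_i(x)+\max(0,\varepsilon_i(y)-\varphi_i(x))$ and $\varphi_i(x\otimes y)=\varphi_i(y)+\max(0,\varphi_i(x)-\varepsilon_i(y))$, so $\varphi_i(x\otimes y)+\varepsilon_i(x\otimes y)$ is the length of the $i$-string through $x\otimes y$. Using $\max(0,a)-\max(0,-a)=a$ with $a=\varphi_i(x)-\varepsilon_i(y)$ together with the balance identity on $\mathcal{B}$ and $\mathcal{C}$ gives
\[
  \varphi_i(x\otimes y)-\varepsilon_i(x\otimes y) = \big(\varphi_i(x)-\varepsilon_i(x)\big)+\big(\varphi_i(y)-\varepsilon_i(y)\big) = \langle \wt(x\otimes y),\alpha_i^\vee\rangle ,
\]
and evaluating at an $i$-maximal element ($\varepsilon_i(x\otimes y)=0$) shows its $i$-string length equals $\langle \wt(x\otimes y),\alpha_i^\vee\rangle$, which is precisely (A2).

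The main obstacle is the bracketing lemma underlying the third paragraph: one must check carefully that the inequalities in \eqref{eq:tensor_rule} genuinely implement ``act on the tensor factor carrying the leftmost uncancelled opening bracket (respectively the rightmost uncancelled closing bracket)'' after all cancellations, with particular attention to the boundary case $\varphi_i(x)=\varepsilon_i(y)$ where a would-be cancellation is exactly exhausted. Once that combinatorial fact is in hand, the weight condition and the $e_i$--$f_i$ compatibility are mechanical, and the balance identity collapses (A2) to the one-line computation displayed above.
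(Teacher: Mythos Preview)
The paper does not give a proof of this proposition at all; it simply records the statement with a citation to \cite[Prop.~2.29]{Bump-Schilling}. Your sketch is essentially the standard argument one finds in that reference (verify (A1) by a short case analysis on $\varphi_i(x)$ versus $\varepsilon_i(y)$, then reduce (A2) to rank one and use the bracketing description to obtain the explicit formulas for $\varepsilon_i(x\otimes y)$ and $\varphi_i(x\otimes y)$, from which the balance identity drops out), and it is correct as written. The only genuinely nontrivial step, as you identify, is checking that iterating $f_i$ on $x\otimes y$ terminates after exactly $\varphi_i(y)+\max(0,\varphi_i(x)-\varepsilon_i(y))$ steps; your bracket-cancellation argument handles this, though in a fully written proof you would want to make the induction on the number of remaining unmatched brackets explicit.
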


\begin{remark}
  We use the original tensor product convention due to Kashiwara, which is compatible with the lattice words from \Cref{sec:lattice}. In combinatorics (e.g.~\cite[Def.~2.3]{Bump-Schilling}), it is common to write the tensor factors in the opposite order.
\end{remark}

See \Cref{ex:crystal_tensor.1} and \Cref{ex:crystal_tensor.2} for examples of tensor products.

\subsection{Crystals and fluctuating tableaux}\label{sec:crystals.tableaux}

Recall the notation of \Cref{sec:GL_r_rep_theory}, in particular the tensor products of exterior powers $\bigwedge^{\underline{c}} V$ from \eqref{eq:V_c_wedge}. The bracketing rule gives a crystal structure on the tensor product:
  \[ \mathcal{B}\left(\bigwedge\nolimits^{\underline{c}} V\right) \coloneqq \mathcal{B}\left(\bigwedge\nolimits^{c_1} V\right) \otimes \cdots \otimes \mathcal{B}\left(\bigwedge\nolimits^{c_n} V\right). \]

To apply $e_i$ or $f_i$ to $w_1 \otimes \cdots \otimes w_n \in \mathcal{B}(\bigwedge^{\underline{c}} V)$ where each $w_j \in \mathcal{A}_r$ (where $1 \leq i \leq r-1$), we apply the following combinatorial algorithm (\emph{bracketing rule}).
\begin{enumerate}[(1)]
  \item Place $[$ below each letter containing $i$ but not $i+1$, or $\overline{i+1}$ but not $\overline{i}$.
  \item Place $]$ below each letter containing $i+1$ but not $i$, or $\overline{i}$ but not $\overline{i+1}$.
  \item Match brackets from the inside out.
  \item Now $f_i$ acts on the letter with the left-most unmatched $[$ by replacing the $i$ with $i+1$ or the $\overline{i+1}$ with $\overline{i}$.
  \item Similarly $e_i$ acts on the letter with the right-most unmatched $]$ by replacing the $i+1$ with $i$ or the $\overline{i}$ with $\overline{i+1}$.
\end{enumerate}

\begin{example}\label{ex:crystal_tensor.1}
  Over $\GL_3(\mathbb{C})$, consider $\mathcal{B}(V)^{\otimes 8} = \mathcal{B}(\bigwedge^{\underline{c}} V)$ where $\underline{c} = (1, \ldots, 1) = (1^8)$. We have:
  \begin{center}
  \begin{tikzcd}[row sep=0em, column sep=0.2em]
    f_2( 
      & 3 \ar[otimes]{r}
      & 2 \ar[otimes]{r}
      & 1 \ar[otimes]{r}
      & 2 \ar[otimes]{r}
      & 2 \ar[otimes]{r}
      & 3 \ar[otimes]{r}
      & 3 \ar[otimes]{r}
      & 1
      & )
      & = 3 \ar[otimes]{r}
      & f_2(2) \ar[otimes]{r}
      & 1 \ar[otimes]{r}
      & 2 \ar[otimes]{r}
      & 2 \ar[otimes]{r}
      & 3 \ar[otimes]{r}
      & 3 \ar[otimes]{r}
      & 1 \\
    \ 
      & {]}
      & {[}
      & \ 
      & {[} \arrow[rrr, start anchor=base, end anchor=base, no head, yshift=-1.5em, decorate, decoration={brace, mirror}]
      & {[} \arrow[r, start anchor=base, end anchor=base, no head, yshift=-1em, decorate, decoration={brace, mirror}]
      & {]}
      & {]} 
      & \ 
      & \
      & = 3 \ar[otimes]{r}
      & 3 \ar[otimes]{r}
      & 1 \ar[otimes]{r}
      & 2 \ar[otimes]{r}
      & 2 \ar[otimes]{r}
      & 3 \ar[otimes]{r}
      & 3 \ar[otimes]{r}
      & 1 \\
  \end{tikzcd}.
  \end{center}
\end{example}

\begin{example}\label{ex:crystal_tensor.2}
  Over $\GL_4(\mathbb{C})$, consider
    \[ \mathcal{B}\left(\bigwedge\nolimits^2 V \otimes V^* \otimes \bigwedge\nolimits^3 V \otimes V \otimes \bigwedge\nolimits^2 V^* \otimes \bigwedge\nolimits^2 V \otimes V^*\right), \]
  which is $\mathcal{B}(\bigwedge^{\underline{c}} V)$ for $\underline{c} = (2, \overline{1}, 3, 1, \overline{2}, 2, \overline{1})$. We have:
  \begin{center}
  \begin{tikzcd}[row sep=0em, column sep=0.2em]
    f_1( 
      & \{12\} \ar[otimes]{r}
      & \overline{4} \ar[otimes]{r}
      & \{134\} \ar[otimes]{r}
      & 2 \ar[otimes]{r}
      & \{\overline{3}\overline{2}\} \ar[otimes]{r}
      & \{34\} \ar[otimes]{r}
      & \overline{1}
      & )
      & = 0 \\
    \ 
      & \ 
      & \ 
      & {[} \arrow[r, start anchor=base, end anchor=base, no head, yshift=-1em, decorate, decoration={brace, mirror}]
      & {]}
      & {[} \arrow[rr, start anchor=base, end anchor=base, no head, yshift=-1em, decorate, decoration={brace, mirror}]
      & \ 
      & {]} \\
  \end{tikzcd}.
  \end{center}
  Here, we have written columns $C$ or $\overline{C}$ as sets for brevity. The root strings in $\mathcal{B}(\bigwedge^k V)$ are each of length $1$, so we only need to consider single brackets in each tensor factor here. This basis element is simultaneously highest and lowest weight, i.e.~all $f_i, e_i$ send it to $0$. This example is based on the fluctuating tableau from \Cref{fig:ft-example}.
\end{example}

We may drop the tensor product symbols and identify elements of $\mathcal{B}(\bigwedge^{\underline{c}} V)$ with words whose letters are elements of $\mathcal{A}_r$, which appeared in \Cref{sec:lattice} and \Cref{sec:involutions}. Under this identification, as we now show, highest-weight elements of $\mathcal{B}(\bigwedge^{\underline{c}} V)$ are precisely the lattice words of fluctuating tableaux of type $\underline{c}$ whose weight and final shape coincide. This perspective gives a crystal-theoretic proof of \Cref{thm:ft.irreps}.

\begin{proposition}
  Let $\underline{c} = (c_1, \ldots, c_n)$ with $c_i \in \{0, \pm 1, \ldots, \pm r\}$. Then:
    \[ \hw\left(\mathcal{B}\left(\bigwedge\nolimits^{\underline{c}} V\right)\right) = L(\ft(r, n, \underline{c})). \]
\end{proposition}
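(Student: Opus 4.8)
The plan is to show the claimed set equality $\hw(\mathcal{B}(\bigwedge^{\underline c}V)) = L(\ft(r,n,\underline c))$ by matching up the combinatorial conditions on both sides. On the crystal side, a word $w = w_1\cdots w_n$ with each $w_j \in \mathcal{A}_r$ is a highest-weight element precisely when $e_i(w) = 0$ for all $1 \leq i \leq r-1$, which by the bracketing rule means that in the $i$-bracketing of $w$ there is no unmatched $]$. On the tableau side, $w$ lies in $L(\ft(r,n,\underline c))$ exactly when each prefix satisfies the lattice inequalities \eqref{eq:lattice_inequalities}, i.e.\ for every prefix $w_1\cdots w_k$ and every $1 \leq i \leq r-1$, the number of $i$'s minus the number of $\overline{i}$'s is at least the number of $(i+1)$'s minus the number of $\overline{i+1}$'s. (Using $b = i+1$, $a = i$ in \eqref{eq:lattice_inequalities} suffices since the general inequality follows by telescoping.) So the proof reduces to the following statement, for each fixed $i$: the $i$-bracketing of $w$ has no unmatched $]$ if and only if every prefix of $w$ has (\#$i$ $-$ \#$\overline i$) $\geq$ (\#$(i+1)$ $-$ \#$\overline{i+1}$).

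First I would set up the translation carefully. For a fixed $i$, each tensor factor $w_j$ contributes a bracket symbol under the bracketing rule: a $[$ if $w_j$ contains $i$ but not $i+1$, or $\overline{i+1}$ but not $\overline i$; a $]$ if $w_j$ contains $i+1$ but not $i$, or $\overline i$ but not $\overline{i+1}$; and nothing otherwise. The key numerical observation is that, reading left to right, a $[$ increments the quantity $\delta_k := (\text{\#}i - \text{\#}\overline i) - (\text{\#}(i+1) - \text{\#}\overline{i+1})$ taken over the prefix $w_1\cdots w_k$, a $]$ decrements it, and an unbracketed factor leaves it unchanged — here one must check the case where $w_j$ contains both $i$ and $i+1$ (net change $0$, no bracket) and the case where $w_j$ contains both $\overline i$ and $\overline{i+1}$ (net change $0$, no bracket), which is exactly why the bracketing rule omits those factors. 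Thus $\delta_k$ equals the running ``bracket balance'' (number of $[$'s minus number of $]$'s among the first $k$ bracket symbols) of the $i$-bracketing word, read as a sequence of parentheses.

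With that translation in hand, the statement becomes the elementary fact about balanced parenthesizations: a sequence of $[$'s and $]$'s has a matched-from-the-inside-out pairing with \emph{no unmatched} $]$ if and only if every prefix has at least as many $[$'s as $]$'s, i.e.\ all running balances are $\geq 0$. (The ``no unmatched $]$'' condition is equivalent to $\varepsilon_i(w) = 0$, which is what $e_i(w) = 0$ means; note there may still be unmatched $[$'s, corresponding to $\varphi_i(w) > 0$, which is allowed for a highest-weight element.) Applying this for every $i$ simultaneously gives: $w$ is highest weight $\iff$ every $\delta_k \geq 0$ for every $i$ $\iff$ every prefix satisfies \eqref{eq:lattice_inequalities} $\iff$ $w = L(T)$ for some $T \in \ft(r,n,\underline c)$. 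Since the weight of $w$ as a crystal element is $\sum_j \mathbf{e}_{w_j} = \lambda^n$, the final shape of $T$, the weight of a highest-weight element automatically coincides with its final shape, consistent with the surrounding discussion.

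The main obstacle — really the only place where care is needed rather than routine — is the bookkeeping in the translation step: verifying that the bracketing rule's treatment of tensor factors containing \emph{two} of the relevant symbols ($\{i, i+1\}$ or $\{\overline i, \overline{i+1}\}$) correctly yields ``contributes nothing to the bracket word and changes $\delta_k$ by $0$,'' and that factors containing exactly one contribute exactly one bracket of the correct orientation with the correct $\pm 1$ change. Once one is confident the bracket-balance of the $i$-bracketing equals $\delta_k$, the rest is the standard Dyck-path / parenthesis-matching argument and the already-recorded equivalence of the lattice-word definition with \eqref{eq:lattice_inequalities} from \Cref{sec:lattice}. I would also remark that, combined with \Cref{thm:ft.irreps}'s statement that $\hw$ elements biject with irreducible summands, this proposition immediately re-proves \Cref{thm:ft.irreps}, as the surrounding text promises.
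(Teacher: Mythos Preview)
Your proposal is correct and follows essentially the same approach as the paper: reduce the lattice inequalities to the adjacent case $a=i,\,b=i+1$, translate the $i$-bracketing rule into the running balance $\delta_k$, and invoke the standard parenthesis-matching equivalence (no unmatched $]$ iff every prefix balance is nonnegative). Your version is somewhat more explicit in checking the two-symbol cases in the translation step, but the argument is otherwise identical to the paper's.
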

\begin{proof}
  Recall that lattice words $L$ of fluctuating tableaux are characterized by the inequalities \eqref{eq:lattice_inequalities}. Concretely, in each prefix of $L$, we require the number of $a$'s minus the number of $\overline{a}$'s to be weakly greater than the number of $b$'s minus the number of $\overline{b}$'s for all $1 \leq a \leq b \leq r$. We may equivalently restrict these conditions to the case where $a=i, b=i+1$ for $1 \leq i \leq r-1$.

  Now, consider a highest-weight word $L$. In order for $e_i$ to result in $0$, the bracketing rule must result in no unmatched $]$'s. Equivalently, in each prefix of the sequence of brackets, there must be at least as many $[$'s as $]$'s. The $[$'s arise precisely from letters containing ($i$ but not $i+1$) or $(\overline{i+1}$ but not $\overline{i}$). The $]$'s arise precisely from letters containing $i+1$ but not $i$ or $\overline{i}$ but not $\overline{i+1}$. It is straightforward that these conditions are equivalent to the conditions on lattice words.
\end{proof}

The same argument shows that the \textit{lowest} weight elements of $\mathcal{B}(\bigwedge^{\underline{c}} V)$ are precisely the \textit{reverse} lattice words, namely those where in every \textit{suffix} the number of $i$'s minus the number of $\overline{i}$'s is at \textit{most} the number of $i+1$'s minus the number of $\overline{i+1}$'s. In particular, rectangular fluctuating tableaux are special in the following sense.

\begin{corollary}\label{cor:isolatedVertices}
  The isolated vertices of $\mathcal{B}(\bigwedge^{\underline{c}} V)$, i.e.~the simultaneous highest and lowest weight elements, are precisely the rectangular fluctuating tableaux of type $\underline{c}$.
\end{corollary}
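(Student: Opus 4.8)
The plan is to combine the description of highest-weight elements just obtained with the analogous statement for lowest-weight elements, and then to identify the simultaneous condition with the definition of rectangular. First I would invoke the preceding proposition: a word $L$ in the alphabet $\mathcal{A}_r$ is highest weight in $\mathcal{B}(\bigwedge^{\underline c}V)$ if and only if it is the lattice word of a fluctuating tableau of type $\underline c$, that is, every prefix of $L$ satisfies the inequalities \eqref{eq:lattice_inequalities}. Next I would note, as the excerpt already observes, that running the identical argument with the roles of $e_i$ and $f_i$ interchanged shows that $L$ is lowest weight if and only if every \emph{suffix} of $L$ satisfies the reversed inequalities, i.e.\ in each suffix the number of $i$'s minus the number of $\overline{i}$'s is at most the number of $(i+1)$'s minus the number of $\overline{i+1}$'s, for all $1\le i\le r-1$.

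The core step is then purely combinatorial: show that $L$ is simultaneously highest and lowest weight precisely when $L$ is balanced in the sense of \Cref{sec:lattice}, i.e.\ when equality holds in \eqref{eq:lattice_inequalities} with $k=n$ for all $1\le i\le r-1$. For the forward direction, apply the prefix inequality to the full word ($k=n$) to get $(\mathbf e_{w_1}+\cdots+\mathbf e_{w_n})_i \ge (\mathbf e_{w_1}+\cdots+\mathbf e_{w_n})_{i+1}$; the suffix condition applied to the full word gives the reverse inequality, forcing equality for all $i$, hence the final shape is a generalized rectangle. For the converse, suppose the total content vector is $\mathbf c=(c,\dots,c)$. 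Given a prefix $w_1\cdots w_k$ with partial sum $p$ and complementary suffix with partial sum $s$, we have $p+s=\mathbf c$, so $p_i-p_{i+1} = -(s_i-s_{i+1})$; thus the prefix inequalities $p_i\ge p_{i+1}$ for all $k$ are equivalent to the suffix inequalities $s_i\le s_{i+1}$ for all complementary suffixes, meaning the two conditions (highest weight and lowest weight) coincide. Therefore a balanced lattice word is automatically both highest and lowest weight, and conversely.

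Finally I would translate this back to tableaux: by the discussion in \Cref{sec:lattice}, a fluctuating tableau of type $\underline c$ is rectangular exactly when its lattice word is balanced, so the isolated vertices of $\mathcal{B}(\bigwedge^{\underline c}V)$ — the elements killed by every $e_i$ and every $f_i$ — are exactly the lattice words $L(T)$ for $T$ a rectangular fluctuating tableau of type $\underline c$, which is the claim. I expect no serious obstacle here; the only point requiring a little care is keeping the prefix/suffix bookkeeping straight and confirming that one may restrict the defining inequalities to consecutive indices $a=i,\ b=i+1$ (already noted in the proof of the preceding proposition), since the crystal operators $e_i,f_i$ only see consecutive rows. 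The argument is short enough that it can be given in two or three sentences citing the preceding proposition.
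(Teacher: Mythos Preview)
Your proposal is correct and follows exactly the approach the paper intends: the paper states the characterization of lowest weight elements via suffix inequalities (``the same argument shows\ldots'') and then asserts the corollary as immediate, leaving the prefix/suffix bookkeeping implicit. You have simply spelled out that bookkeeping, including the key observation that when the total weight is a constant vector $\mathbf{c}$, the prefix inequality $p_i\ge p_{i+1}$ and the complementary suffix inequality $s_i\le s_{i+1}$ are equivalent via $p+s=\mathbf{c}$.
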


\subsection{Crystals, oscillization, and switch}

We may interpret oscillization from \Cref{sec:oscillization} as arising from a crystal morphism. First, we have a map
  \[ \mathcal{B}\left(\bigwedge\nolimits^k V\right) \to \mathcal{B}(V)^{\otimes k} \]
given by sending $\{i_1 < \cdots < i_k\}$ to $i_1 \otimes \cdots \otimes i_k$. There is an analogous dual notion sending $\{\overline{i_1} < \cdots < \overline{i_k}\}$ to $\overline{i_1} \otimes \cdots \otimes \overline{i_k}$ with highest weight $\{\overline{r}, \cdots, \overline{r-k+1}\} \mapsto \overline{r} \otimes \cdots \otimes \overline{r-k+1}$. We leave the proof of the following to the reader.

\begin{lemma}
  Let $\underline{c} = (c_1, \ldots, c_n)$ with $c_i \in \{0, \pm 1, \ldots, \pm r\}$ and let $\std(\underline{c})$ be the sequence where $c_i$ is replaced by $|c_i|$ copies of $1$ if $c_i \geq 0$ and $\overline{1}$ if $c_i \leq 0$. The oscillization map
    \[ \std \colon \ft(r, \lambda, \underline{c})) \to \ft(r, \lambda, \std(\underline{c})) \]
  is the inclusion on highest-weight elements induced by the crystal inclusion
    \[ \mathcal{B}\left(\bigwedge\nolimits^{\underline{c}} V\right) \to \mathcal{B}\left(\bigwedge\nolimits^{\std(\underline{c})} V\right). \]
\end{lemma}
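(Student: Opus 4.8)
The statement to prove is the final \textbf{Lemma}: that the oscillization map $\std\colon \ft(r,\lambda,\underline{c}) \to \ft(r,\lambda,\std(\underline{c}))$ is the restriction to highest-weight elements of the crystal inclusion $\mathcal{B}(\bigwedge^{\underline{c}} V) \to \mathcal{B}(\bigwedge^{\std(\underline{c})} V)$, where the latter is induced by tensoring the maps $\mathcal{B}(\bigwedge^k V) \to \mathcal{B}(V)^{\otimes k}$ sending $\{i_1<\cdots<i_k\}$ to $i_1\otimes\cdots\otimes i_k$ and the dual maps.

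\textbf{Plan of proof.} The argument has two parts: (1) the componentwise map $\mathcal{B}(\bigwedge^k V) \to \mathcal{B}(V)^{\otimes k}$ is a strict crystal embedding (and similarly for the dual), so its tensor product $\iota\colon \mathcal{B}(\bigwedge^{\underline{c}} V) \to \mathcal{B}(\bigwedge^{\std(\underline{c})} V)$ is a strict crystal embedding; (2) on the level of words/lattice words, $\iota$ acts by erasing curly braces with entries listed in increasing order, which is exactly the definition of $\std$ on lattice words given just after \Cref{def:std}; since a strict crystal morphism sends highest-weight elements to highest-weight elements, and $\hw(\mathcal{B}(\bigwedge^{\underline{c}} V)) = L(\ft(r,n,\underline{c}))$ by the Proposition in \Cref{sec:crystals.tableaux}, restricting $\iota$ to highest weight elements gives precisely the map identifying $L(T)$ with $L(\std(T))$, which is $\std$.

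\textbf{Key steps in order.} First I would verify that $\phi_k\colon \mathcal{B}(\bigwedge^k V) \to \mathcal{B}(V)^{\otimes k}$, $\{i_1<\cdots<i_k\} \mapsto i_1\otimes\cdots\otimes i_k$, is a strict morphism: it clearly preserves weight ($\sum_j \mathbf{e}_{i_j}$ in both cases), and one checks it commutes with $f_i, e_i$ using the bracketing rule. Concretely, in $\bigwedge^k V$ the operator $f_i$ changes $i$ to $i+1$ when $i \in C$ and $i+1 \notin C$; on the tensor side, among the letters $i_1,\dots,i_k$ there is at most one $i$ and at most one $i+1$, and if $i+1$ is present it sits immediately after $i$ (as the entries are increasing with no repeats), so in the bracketing there is either one $[$ (if $i\in C$, $i+1\notin C$), one $]$ (if $i+1 \in C$, $i\notin C$), or an adjacent $[\,]$ pair (if both present) which matches, or nothing. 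Hence $f_i$ on the tensor either changes the unique $i$-letter to $i+1$ or returns $0$, matching $\phi_k \circ f_i = f_i \circ \phi_k$; similarly for $e_i$. This shows $\phi_k$ intertwines the Kashiwara operators, and since it is injective with image a union of $i$-strings, it is a strict embedding. The dual version is identical with $\overline{i_1}<\cdots<\overline{i_k}$ and the rule changing $\overline{i+1}$ to $\overline{i}$. Second, strictness is preserved under tensor product (this is standard; alternatively it is immediate since tensoring strict embeddings componentwise respects the bracketing rule, as the bracket sequence of $\iota(w_1)\otimes\cdots\otimes\iota(w_n)$ is obtained from that of $w_1\otimes\cdots\otimes w_n$ by replacing each letter's bracket contribution by the same single bracket — the point being that each tensor factor $\bigwedge^{c_j}V$ contributes at most one unmatched bracket in each $i$-string, exactly as a single letter does). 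Third, I would invoke the Proposition identifying $\hw(\mathcal{B}(\bigwedge^{\underline{c}} V))$ with lattice words of $\ft(r,n,\underline{c})$, and observe that $\iota$ restricted to highest weight elements lands in $\hw(\mathcal{B}(\bigwedge^{\std(\underline{c})} V)) = L(\ft(r,n,\std(\underline{c})))$ because strict morphisms preserve highest weight. Finally, since $\iota$ on words is precisely ``erase curly braces, entries in increasing order, keep initial and final shape,'' and that is exactly the word-level description of $\std$, we conclude $\iota|_{\hw} = \std$ under the identification $T \leftrightarrow L(T)$ (valid in the non-skew case). The fact that the shape $\lambda$ is preserved is automatic since weight is preserved and the initial shape is $\mathbf{0}$.

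\textbf{Main obstacle.} The only genuinely nontrivial point is the verification that $\phi_k$ (and its tensor powers) intertwines $e_i, f_i$ — i.e. that erasing braces is compatible with the bracketing rule. The subtlety is that within a single exterior-power letter $C$, the contribution to the $i$-bracketing should collapse to the single bracket that $C$ itself produces (recall from \Cref{ex:crystal_tensor.2} that root strings in $\mathcal{B}(\bigwedge^k V)$ have length $1$, so only single brackets arise per tensor factor); one must check that unpacking $C$ into $i_1 \otimes \cdots \otimes i_k$ and then bracketing gives the same matched/unmatched bracket data. Because the entries are strictly increasing, the letters $i$ and $i+1$ (if both present) are adjacent and their brackets $[\,]$ cancel internally, so this collapse is clean; but this is the place where the proof must be spelled out carefully rather than asserted. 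Everything else — weight preservation, injectivity, preservation of highest weight under strict morphisms, and matching the word-level formula with \Cref{def:std} — is routine.
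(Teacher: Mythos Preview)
The paper does not actually give a proof of this lemma; it writes ``We leave the proof of the following to the reader.'' Your outline is correct and is precisely the argument the paper expects the reader to supply: verify that $\{i_1<\cdots<i_k\}\mapsto i_1\otimes\cdots\otimes i_k$ (and its dual) is a strict crystal embedding by the bracketing-rule check you describe, tensor these embeddings, and then observe that on words this is exactly the ``erase curly braces'' description of $\std$ given after \Cref{def:std}, so that the restriction to highest-weight elements recovers the oscillization map on fluctuating tableaux via the identification $\hw(\mathcal{B}(\bigwedge^{\underline{c}} V))=L(\ft(r,n,\underline{c}))$.
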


The $\toggle$ maps from \Cref{def:toggle} similarly arise from the crystal isomorphisms
\begin{align*}
  \mathcal{B}\left(\bigwedge\nolimits^c V\right) &\too{\sim} \mathcal{B}\left(\det(V) \otimes \bigwedge\nolimits^{r-c} V^*\right) \\
  \{i_1 < \cdots < i_c\} &\mapsto \{1, \ldots, r\} \otimes \{\overline{j_1} < \cdots < \overline{j_{r-c}}\}  \\
  \mathcal{B}\left(\bigwedge\nolimits^c V^*\right) &\too{\sim} \mathcal{B}\left(\det(V^*) \otimes \bigwedge\nolimits^{r-c} V\right) \\
  \{\overline{i_1} < \cdots < \overline{i_c}\} &\mapsto \{\overline{r}, \ldots,  \overline{1}\} \otimes \{j_1 < \cdots < j_{r-c}\} \\
\end{align*}
where $\{i_1, \ldots, i_c\} \sqcup \{j_1, \ldots, j_{r-c}\} = [r]$. In terms of lattice words, we may freely add or remove the letters $\{1, \dots, r\}$ or $\{\overline{r}, \dots, \overline{1}\}$ without materially altering the combinatorics of the preceding sections.

\subsection{Crystals and promotion}\label{sec:crystals_promotion}

We now describe crystal-theoretic interpretations of the Bender--Knuth involutions (following Lenart \cite{Lenart}) and promotion (following Pfannerer--Rubey--Westbury \cite{Pfannerer-Rubey-Westbury}). We then give a different crystal-theoretic ``balance point'' description of promotion (\Cref{prop:first_balance}), as well as a crystal-theoretic interpretation of reduced promotion matrices (\Cref{thm:Mbar_crystal}).

Although $\mathcal{B} \otimes \mathcal{C} \cong \mathcal{C} \otimes \mathcal{B}$, the naive map $x \otimes y \mapsto y \otimes x$ does not respect the bracketing rule and is not generally a morphism of crystals. Henriques--Kamnitzer \cite{Henriques.Kamnitzer} introduced \textit{crystal commutors}
  \[ \sigma_{\mathcal{B}, \mathcal{C}} \colon \mathcal{B} \otimes \mathcal{C} \to \mathcal{C} \otimes \mathcal{B}. \]
These are natural involutive crystal isomorphisms that make the category of crystals into a \textit{coboundary category}. There is a corresponding action of the \textit{$n$-fruit cactus group} on $n$-fold tensor products of crystals. See \cite[\S2.4]{Lenart} for details. We will not require the specifics of these constructions until \Cref{sec:crystals-involutions}.

Lenart \cite{Lenart} interpreted the local rules of van Leeuwen \cite{vanLeeuwen} in terms of crystal commutors. Specifically, we have the following.

\begin{theorem}[{\cite[Thm.~4.4]{Lenart}}]\label{thm:Lenart}
  The bijection on highest weight elements induced by
    \[ \sigma_{\mathcal{B}(\bigwedge\nolimits^{\underline{c}} V), \mathcal{B}\left(\bigwedge\nolimits^{\underline{d}} V\right)} \colon \mathcal{B}\left(\bigwedge\nolimits^{\underline{c}} V\right) \otimes \mathcal{B}\left(\bigwedge\nolimits^{\underline{d}} V\right) \too{\sim} \mathcal{B}\left(\bigwedge\nolimits^{\underline{d}} V\right) \otimes \mathcal{B}\left(\bigwedge\nolimits^{\underline{c}} V\right) \]
  is given by the $|\underline{c}| \times |\underline{d}|$ growth diagram
  \begin{equation*}
  \begin{tikzcd}
    \lambda \rar{\underline{d}}
      & \nu \\
    \varnothing \uar{\underline{c}} \rar[swap]{\underline{d}}
      & \mu \uar[swap]{\underline{c}} \\
  \end{tikzcd}.
  \end{equation*}
\end{theorem}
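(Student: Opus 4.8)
The plan is to reconstruct Lenart's argument by peeling the growth diagram one box at a time and matching each step with an elementary crystal commutor. Recall that the Henriques--Kamnitzer commutor is built from the Lusztig (Sch\"utzenberger) involution $\xi$, which on $\mathcal{B}(\bigwedge^k V)$ and $\mathcal{B}(\bigwedge^k V^*)$ acts by the explicit ``column complementation'' $\{i_1 < \cdots < i_k\} \mapsto \{r+1-i_k < \cdots < r+1-i_1\}$ (and dually), and from the coboundary/cactus relations expressing $\sigma$ for a multi-factor tensor product as an ordered composite of the elementary commutors of adjacent factors. On the combinatorial side, filling the $|\underline c| \times |\underline d|$ square $\varnothing \too{\underline c} \lambda \too{\underline d} \nu \rightsquigarrow \varnothing \too{\underline d} \mu \too{\underline c} \nu$ is, by construction, a composite of single local-rule squares; by \Cref{lem:standard_welldef} this composite is independent of the order in which the squares are filled, and (exactly as in the proof that $\promotion = \BK_{n-1} \circ \cdots \circ \BK_1$) it can be organized into the same ordered pattern of adjacent-factor moves used on the crystal side. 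So the theorem reduces to two claims: a base case identifying one local-rule square with one elementary commutor, and a compatibility statement that the two composites are genuinely the same.

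For the base case I would take $\underline c = (c)$ and $\underline d = (d)$ single exterior powers and show that $\sigma_{\mathcal{B}(\bigwedge^c V), \mathcal{B}(\bigwedge^d V)}$, restricted to highest-weight elements, is the local rule $\varnothing \too{c} \lambda \too{d} \nu \rightsquigarrow \varnothing \too{d} \mu \too{c} \nu$ with $\mu = \sort(\nu - \lambda)$. By \Cref{sec:crystals.tableaux}, a highest-weight element $b \otimes c$ of $\mathcal{B}(\bigwedge^c V) \otimes \mathcal{B}(\bigwedge^d V)$ is precisely a two-step lattice word, i.e.\ a fluctuating tableau $\varnothing \too{c} \lambda \too{d} \nu$ with $\nu$ dominant; there are only finitely many such for fixed weight, and the bracketing rule, the action of $\xi$, and the translation between Kashiwara's and Henriques--Kamnitzer's tensor conventions are all completely explicit, so this is a direct verification. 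One could also streamline it by first reducing to oscillating type via \Cref{sec:oscillization} (the inclusion $\mathcal{B}(\bigwedge^k V) \hookrightarrow \mathcal{B}(V)^{\otimes k}$ is a strict crystal morphism and $\sigma$ is natural) and by using the $\toggle$ isomorphisms to trade $V^*$ factors for $V$ factors, leaving only the single-box commutor to compute.

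The main obstacle is the general-$\kappa$ version of the base case: when the decomposition of $\sigma_{\mathcal{B}(\bigwedge^{\underline c} V), \mathcal{B}(\bigwedge^{\underline d} V)}$ is carried out, each elementary commutor acts on two adjacent factors inside a larger tensor product, and the element it sees, restricted to those two factors, is generally \emph{not} highest weight --- rather, the entire word is highest weight (it is a fluctuating tableau) and the two factors carry an ``ambient context'' given by the shape $\kappa$ already built by the preceding factors. I would handle this exactly as van Leeuwen's local rules are designed to: the elementary commutor in context is computed by the local rule $\kappa \too{c} \lambda \too{d} \nu \rightsquigarrow \kappa \too{d} \mu \too{c} \nu$ with this $\kappa$, which one proves by inducting on the number of factors, peeling off the leftmost factor using the coboundary relation $\sigma_{A \otimes B, C} = (\sigma_{A,C} \otimes \mathrm{id}_B)(\mathrm{id}_A \otimes \sigma_{B,C})$ and using that $\xi$ on a tensor product restricts compatibly onto the highest-weight fluctuating-tableau picture. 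Once this is in place, matching the two ordered composites of local rules and elementary commutors and invoking \Cref{lem:standard_welldef} to see they agree completes the proof; as an alternative to the explicit $\xi$-computation throughout, one could instead show that the growth-diagram map is a natural crystal isomorphism compatible with $\xi$ and appeal to the resulting characterization of the Henriques--Kamnitzer commutor.
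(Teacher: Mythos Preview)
The paper's proof is far shorter than what you propose: it simply uses the $\toggle$ involutions (which are natural crystal isomorphisms, as explained just before \Cref{sec:crystals_promotion}) together with naturality of the commutor to reduce the general fluctuating statement to the transpose semistandard case, and then cites Lenart's original theorem directly. No reconstruction of Lenart's argument is attempted.

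Your proposal instead tries to reprove Lenart's result from scratch, and it contains a genuine gap. The ``coboundary relation'' you write,
\[
  \sigma_{A \otimes B, C} = (\sigma_{A,C} \otimes \mathrm{id}_B)\,(\mathrm{id}_A \otimes \sigma_{B,C}),
\]
is one of the hexagon axioms for a \emph{braided} monoidal category. The Henriques--Kamnitzer commutor does not satisfy this identity: crystals form only a \emph{coboundary} category, and the cactus axiom is a different relation (relating two routes $A \otimes B \otimes C \to C \otimes B \otimes A$, not $A \otimes B \otimes C \to C \otimes A \otimes B$). Consequently, there is no expression of $\sigma_{\mathcal{B}(\bigwedge^{\underline c} V),\,\mathcal{B}(\bigwedge^{\underline d} V)}$ as an ordered composite of adjacent elementary commutors in the way your induction requires, and the ``peeling'' step as written does not go through.

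Your parenthetical alternative at the end --- verifying that the growth-diagram map is a natural crystal isomorphism compatible with Lusztig's involution $\xi$ and then invoking the characterization of the Henriques--Kamnitzer commutor via $\xi$ --- is in fact essentially Lenart's actual approach and would succeed. But that is a different argument from the factor-by-factor matching that forms the body of your proposal, and it bypasses rather than resolves the obstacle you identified.
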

\begin{proof}
One may reduce to the transpose semistandard case using the $\toggle$ involutions and the naturality of the crystal commutor. In that setting, the theorem is a direct restatement of \cite[Thm.~4.4]{Lenart} into our terminology. 
\end{proof}

In particular, promotion has the following crystal-theoretic description, as observed in \cite[\S4]{Pfannerer-Rubey-Westbury}. Recall we may identify a fluctuating tableau $T$ with its lattice word $L(T)$. 

\begin{corollary}\label{cor:crystalPromotion}
  Let $\underline{c} = (c_1, \underline{d})$ and $\underline{c}' = (\underline{d}, c_1)$. The bijection on highest weight elements induced by:
    \[ \sigma_{\mathcal{B}\left(\bigwedge\nolimits^{c_1} V\right), \mathcal{B}\left(\bigwedge\nolimits^{\underline{d}} V\right)} \colon \mathcal{B}\left(\bigwedge\nolimits^{\underline{c}} V\right) \to \mathcal{B}\left(\bigwedge\nolimits^{\underline{c}'} V\right) \]
  is given by promotion $\promotion$ on fluctuating tableaux of type $\underline{c}$.
\end{corollary}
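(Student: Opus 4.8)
\textbf{Proof plan for \Cref{cor:crystalPromotion}.}
The plan is to derive this statement as a direct specialization of \Cref{thm:Lenart}, using only the defining recursion of $\Pdiagram(T)$ and the elementary fact that the crystal commutor on a one-step factor agrees with the jeu de taquin / Bender--Knuth description. First I would set $\underline{d} = (c_2, \ldots, c_n)$, so that $\underline{c} = (c_1, \underline{d})$ and $\underline{c}' = (\underline{d}, c_1)$, and recall from the identification of $\mathcal B(\bigwedge^{\underline c} V)$ with lattice words that highest-weight elements of $\mathcal B(\bigwedge^{c_1} V) \otimes \mathcal B(\bigwedge^{\underline d} V)$ are exactly the lattice words of fluctuating tableaux of type $\underline c$, i.e.\ the elements of $\ft(r,n,\underline c)$ after passing to $L(T)$. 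The same holds for type $\underline c'$ on the target side.

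The key step is to instantiate \Cref{thm:Lenart} with the factorization $\bigwedge^{\underline c} V = \bigwedge^{c_1} V \otimes \bigwedge^{\underline d} V$ (so the ``$\underline c$'' of the theorem is the singleton $(c_1)$ and the ``$\underline d$'' of the theorem is our $\underline d$). The theorem then says the induced bijection on highest weight elements,
  \[ \sigma_{\mathcal B(\bigwedge^{c_1} V),\, \mathcal B(\bigwedge^{\underline d} V)} \colon \mathcal B\!\left(\bigwedge\nolimits^{c_1} V\right) \otimes \mathcal B\!\left(\bigwedge\nolimits^{\underline d} V\right) \too{\sim} \mathcal B\!\left(\bigwedge\nolimits^{\underline d} V\right) \otimes \mathcal B\!\left(\bigwedge\nolimits^{c_1} V\right), \]
is computed by the growth diagram with left edge $\varnothing \too{c_1} \lambda^1$, top edge the fluctuating tableau $\lambda^1 \too{c_2} \cdots \too{c_n} \lambda^n$ (a shift of $T$ by one step), and the bottom and right edges filled by local rules. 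But that is precisely $\Pdiagram(T)$ read with $\lambda^{00} = \varnothing$: the definition of the promotion diagram recursively fills the bottom row $\lambda^{11} \too{c_2} \cdots \too{c_n} \lambda^{1n} \too{c_1} \lambda^{1,n+1}$ subject to exactly those boundary conditions, and $\promotion(T)$ is by definition that bottom row. So the growth-diagram output of \Cref{thm:Lenart} in this case is $\promotion(T)$, as a fluctuating tableau of type $(\underline d, c_1) = \underline c'$; translating back through the lattice-word identification gives the claim.

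The one point requiring care — and the main (though mild) obstacle — is bookkeeping the tensor-order convention: the excerpt uses Kashiwara's convention (noted in the remark after the tensor product definition), under which lattice words read left-to-right correspond to $w_1 \otimes \cdots \otimes w_n$, and the commutor $\sigma_{\mathcal B,\mathcal C}\colon \mathcal B\otimes\mathcal C \to \mathcal C\otimes\mathcal B$ moves the \emph{first} factor $\bigwedge^{c_1} V$ past the rest; one must check this matches the promotion diagram's geometry (the first step $c_1$ being ``slid past'' the remaining steps to the end), rather than the reverse. Once the convention is pinned down this is immediate from the shape of $\Pdiagram(T)$, and no further computation is needed. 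I would therefore present the proof in essentially two sentences: invoke \Cref{thm:Lenart} with the stated factorization, observe the resulting growth diagram \emph{is} $\Pdiagram(T)$, and conclude that the induced bijection on highest weight elements is $\promotion$.
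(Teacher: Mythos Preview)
Your proposal is correct and matches the paper's own approach: the corollary is stated as an immediate consequence (``In particular'') of \Cref{thm:Lenart}, obtained exactly as you describe by specializing the $\underline{c}$ of that theorem to the singleton $(c_1)$ and observing that the resulting $1 \times (n-1)$ growth diagram is precisely $\Pdiagram(T)$, with bottom-row-plus-right-column equal to $\promotion(T)$. Your care about the tensor-order convention is appropriate, and the paper does not elaborate further beyond the attribution to \cite[\S4]{Pfannerer-Rubey-Westbury}.
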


\begin{remark}\label{rem:other_types}
Let $G$ be a Lie group with a representation $U$ and corresponding crystal graph $\mathcal{B}$. A more general version of \Cref{cor:isolatedVertices} (cf.\ \cite{Westbury}) states that a basis of $\Inv_{G}(U^{\otimes n})$ is indexed by the isolated vertices in the crystal $\mathcal{B}^{\otimes n}$, which are often identified with various kinds of tableaux. 
In particular, 
\begin{itemize}
    \item for the adjoint representation of $\GL_r$, these vertices are \emph{Stembridge's alternating tableaux} \cite{Stembridge:rational};
    \item  for the vector representation of 
$\Sp_{2r}$ these are \emph{Sundaram's $r$-symplectic oscillating tableaux} \cite{Sundaram};
\item for the vector representation of $\SO(2r+1)$ these are \emph{vacillating tableaux} \cite{Jagenteufel};
\item and the spin representation of $\Spin(2r+1)$ these are \emph{$r$-fans of Dyck paths} \cite{Pappe-Pfannerer-Schilling-Simone}.
\end{itemize}
  
\Cref{cor:crystalPromotion} motivates a definition of promotion on such kinds of tableaux using the crystal commutor $\sigma_{\mathcal{B},\mathcal{B}^{\otimes (n-1)}}$. A description of promotion in terms of local rules using \Cref{thm:Lenart} is then possible whenever the representation $U$ can be embedded in a tensor product of minuscule representations. In the definition from Equation~\eqref{eq:local_rule.sort} of the local rules, $\sort$ needs to be replaced with the function that maps a weight to the unique dominant representative in its Weyl group orbit.
All of the families of tableaux listed above may be embedded in the set of fluctuating tableaux using identifications coming from virtual crystal morphisms. Hence, promotion on fluctuating tableaux extends promotion on all of the other families listed above.
See \cite{Westbury, Pfannerer-Rubey-Westbury, Pappe-Pfannerer-Schilling-Simone, Henrickson.Stokke.Wiebe} for further discussion of promotion in other Lie types.
\end{remark}

It will be convenient to $0$-index some lattice words, contrary to our earlier convention.

\begin{proposition}\label{prop:prom_raising}
  Let $w_0 \ldots w_{n-1} \in \ft(n, r, \underline{c})$. To compute $\promotion(w_0 \ldots w_{n-1})$, do the following.
  \begin{enumerate}[(i)]
    \item Delete $w_0$.
    \item Apply raising operators $e_i$ to $w_1 \ldots w_{n-1}$ to reach a highest weight element $w_1' \ldots w_{n-1}'$.
    \item Append the unique element $w_n' \in \mathcal{B}(\bigwedge\nolimits^{c_1} V)$ such that the weight of $w_0 \ldots w_{n-1}$ agrees with the weight of $w_1' \ldots w_{n-1}' w_n'$.
  \end{enumerate}
\end{proposition}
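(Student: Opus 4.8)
The plan is to reduce this to the crystal-theoretic description of promotion already established in \Cref{cor:crystalPromotion}, namely that $\promotion$ on fluctuating tableaux of type $\underline{c} = (c_1, \underline{d})$ is the restriction to highest-weight elements of the crystal commutor $\sigma_{\mathcal{B}(\bigwedge^{c_1} V), \mathcal{B}(\bigwedge^{\underline{d}} V)}$. The three steps of the proposition are really just a concrete recipe for computing where that commutor sends a highest-weight element, so the proof amounts to unwinding the definition of $\sigma_{\mathcal{B},\mathcal{C}}$ in this setting.

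First I would recall the standard construction of the crystal commutor in terms of the Sch\"utzenberger/Lusztig involution: for crystals $\mathcal{B}, \mathcal{C}$, one has $\sigma_{\mathcal{B},\mathcal{C}}(x \otimes y) = \xi(\xi(y) \otimes \xi(x))$ (with the appropriate flip), where $\xi$ denotes the Lusztig involution on the relevant connected component. Rather than invoke $\xi$ directly, I would use the characterization that the commutor is the \emph{unique} crystal isomorphism $\mathcal{B}(\bigwedge^{c_1}V) \otimes \mathcal{B}(\bigwedge^{\underline{d}}V) \to \mathcal{B}(\bigwedge^{\underline{d}}V) \otimes \mathcal{B}(\bigwedge^{c_1}V)$, so it suffices to show the map described by (i)--(iii) is a well-defined crystal isomorphism and agrees with $\sigma$ on one element of each connected component (equivalently: sends highest-weight elements to highest-weight elements compatibly with weights, since the source is a direct sum of irreducibles each with a unique highest-weight vector).

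The key steps, in order: (1) Given a highest-weight word $w_0 w_1 \cdots w_{n-1}$ of type $\underline{c}$, deleting $w_0$ leaves $w_1 \cdots w_{n-1}$, which need not be highest weight; step (ii) moves it to the highest-weight element $w_1' \cdots w_{n-1}'$ in its component of $\mathcal{B}(\bigwedge^{\underline{d}} V)$, and by the theory of crystals this highest weight is well-defined independent of the choice of raising sequence. (2) The element $w_1' \cdots w_{n-1}' \otimes w_n'$ should again be highest weight of type $\underline{c}' = (\underline{d}, c_1)$: since $w_1' \cdots w_{n-1}'$ is highest weight, the bracketing rule shows $e_i(w_1' \cdots w_{n-1}' w_n')$ can only act on $w_n'$, and $w_n'$ is forced to be the highest-weight vector of $\mathcal{B}(\bigwedge^{c_1} V)$ of the right weight — here I would check that the weight constraint in (iii) uniquely pins down $w_n'$ among the columns of size $|c_1|$, which follows because the weights of basis elements of $\mathcal{B}(\bigwedge^{c_1}V)$ are distinct. (3) Weight preservation is immediate from the construction, so the map sends the highest-weight element indexing a given irreducible summand to the highest-weight element of that same summand in the target, hence agrees with $\sigma$. (4) Finally I would note this matches \Cref{cor:crystalPromotion}: the commutor is characterized by naturality and its action on highest-weight elements, and steps (i)--(iii) compute exactly that action. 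A brief compatibility check with the $\toggle$ involutions lets us reduce, if desired, to the transpose semistandard case where this recipe is classical (cf.~the Pfannerer--Rubey--Westbury description in \cite{Pfannerer-Rubey-Westbury}).

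The main obstacle is step (2): verifying that appending $w_n'$ genuinely lands in the highest-weight locus, i.e.\ that no $e_i$ applied to $w_1'\cdots w_{n-1}' w_n'$ is nonzero. This requires carefully applying the bracketing rule across the tensor boundary between $w_1'\cdots w_{n-1}'$ (highest weight, so contributing no unmatched $]$) and $w_n'$, and arguing that because $w_n'$ is itself chosen to be the highest-weight column of its weight, the combined word has no unmatched $]$ either. The subtlety is that $w_n'$ is specified by a weight condition rather than directly as ``the highest-weight column,'' so one must confirm these coincide — which they do precisely because the total weight $\wt(w_0\cdots w_{n-1})$ of a rectangular-or-not highest-weight element, minus $\wt(w_1'\cdots w_{n-1}')$, is a dominant weight expressible as a single column, forcing $w_n'$ to be $\{1,2,\ldots,c_1\}$-type (or its dual). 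Everything else is routine bookkeeping with weights and the tensor product rule.
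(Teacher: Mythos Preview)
Your overall strategy---reduce to \Cref{cor:crystalPromotion} and then describe explicitly how the crystal commutor acts on highest-weight elements---is sound, and is in fact exactly what the paper does, though the paper simply cites \cite[Cor.~4.19]{Pfannerer-Rubey-Westbury} rather than rederive it. However, your execution of step~(2) contains a concrete error.

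You claim that the weight difference $\wt(w_0\cdots w_{n-1}) - \wt(w_1'\cdots w_{n-1}')$ is dominant and hence that $w_n'$ is forced to be the highest-weight column $\{1,2,\ldots,c_1\}$ (or its dual). This is false: look at the paper's running example, where $\promotion(T)$ has lattice word ending in $\{34\}$, not $\{12\}$. The appended letter $w_n'$ is the column of weight $\lambda-\mu$, where $\lambda$ is the shape of $T$ and $\mu$ is the shape of the highest-weight element reached after raising; this difference is a column weight but is certainly not dominant in general. Consequently your bracketing argument---``$w_n'$ is itself the highest-weight column, so contributes no unmatched $]$''---does not go through as stated.

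The correct verification uses two ingredients you gesture at but do not assemble. First, naturality of the commutor: $\sigma$ respects the decomposition of $\mathcal{B}(\bigwedge^{\underline{d}} V)$ into connected components, so $\sigma(w_0\cdots w_{n-1})$ lies in $\mathcal{C}\otimes\mathcal{B}(\bigwedge^{c_1} V)$ where $\mathcal{C}$ is the component of $w_1\cdots w_{n-1}$. Second, the standard fact that in any highest-weight element $x\otimes y$ of a tensor product, the left factor $x$ must itself be highest weight. Together these force the first factor of $\sigma(w_0\cdots w_{n-1})$ to be the unique highest-weight element $w_1'\cdots w_{n-1}'$ of $\mathcal{C}$, and weight considerations then determine $w_n'$. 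Note that this also handles the multiplicity issue your uniqueness argument glosses over: there may be several highest-weight elements of weight $\lambda$ in the target crystal, and naturality is what singles out the correct one.
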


\begin{proof}
\Cref{cor:crystalPromotion} explains the relation between promotion of fluctuating tableaux and crystal commutors. The rest of the proposition is a special case of the description from \cite[Cor.~4.19]{Pfannerer-Rubey-Westbury} of crystal commutors on highest weight elements of tensor products of crystals.
\end{proof}

The intermediate steps of the ``raising algorithm'' in \Cref{prop:prom_raising} do not obviously correspond to the intermediate steps in the computation of promotion via Bender--Knuth involutions as in \Cref{ex:running.BK_P}. Nonetheless, using the bracketing rule, we may give a more explicit combinatorial description of this crystal-theoretic promotion algorithm when $|c_1| = 1$ .

\begin{definition}\label{def:balance}
  Let $w_0 \ldots w_{n-1} \in \ft(n, r, \underline{c})$ be the lattice word of a fluctuating tableau. An \emph{$i$-balance point} is an index $0 \leq j \leq n-1$ such that, in the subword $w_0 \ldots w_j$, the number of $i$'s minus the number of $\overline{i}$'s equals the number of $i+1$'s minus the number of $\overline{i+1}$'s. We call the difference $\# i - \# \overline{i} - \#(i+1) + \#\overline{i+1}$ the \emph{slack} of the the index $j$ with respect to $i$. The index $j$ is an $i$-balance point if and only if its slack with respect to $i$ is $0$.
\end{definition}

\begin{proposition}\label{prop:first_balance}
Let $w \coloneqq w_0 \ldots w_{n-1} \in \ft(n, r, \underline{c})$ with $|c_1| = 1$.
  Suppose the raising operators in \Cref{prop:prom_raising} acting on $w_1 \ldots w_{n-1}$ and resulting in the highest weight element $w_1' \ldots w_{n-1}'$ are, in order, $e_{i_1}, \ldots, e_{i_k}$, acting on positions $j_1, \ldots, j_k$. 

  Then we have the following:
  \begin{itemize}
    \item If $c_1 = 1$, then $i_1, \ldots, i_k = 1, 2, \ldots, k$ and $w_n' = k+1$.
    \item If $c_1 = -1$, then $i_1, \ldots, i_k = r-1, r-2, \ldots, r-k$ and $w_n' = \overline{r-k}$.
  \end{itemize}
  
  For convenience, set $j_0 \coloneqq 0$ and $i_{k+1} \coloneqq k+1$ if $c_1 = 1$ and $i_{k+1} \coloneqq r-k-1$ if $c_1 = -1$.
  Then, for each $1 \leq h \leq k$,
  $j_h$ is the first $h$-balance point of $w$ weakly after $j_{h-1}$. There is no $(k+1)$-balance point of $w$ weakly after $j_k$. In particular, the sequence $1 \leq j_1 \leq \cdots \leq j_k \leq n-1$ weakly increases, and in the rectangular case, $k=r-1$.
\end{proposition}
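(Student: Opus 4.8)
The statement has several intertwined parts: (1) identifying the sequence of raising operators $e_{i_1}, \ldots, e_{i_k}$ as $e_1, e_2, \ldots, e_k$ (resp.\ $e_{r-1}, \ldots, e_{r-k}$) with the final appended letter being $k+1$ (resp.\ $\overline{r-k}$); and (2) characterizing the positions $j_1 \leq j_2 \leq \cdots \leq j_k$ as successive ``first balance points.'' The plan is to first dispose of part (1), then establish part (2) by a careful analysis of the bracketing rule, and finally read off the rectangular case as a corollary.

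For part (1): since $|c_1| = 1$, the deleted letter $w_0$ is a single box $\boxed{a}$ or $\boxed{\overline{a}}$, and because $w$ is a lattice word (highest weight), $w_0$ must be $\boxed{1}$ if $c_1 = 1$ or $\boxed{\overline{r}}$ if $c_1 = -1$ (this is the content of the highest-weight characterization proved just before \Cref{cor:isolatedVertices}). After deleting $w_0 = \boxed{1}$, the word $w_1 \ldots w_{n-1}$ has weight $\wt(w) - \mathbf{e}_1$; I would argue that the highest weight element in its connected component has a dominant weight $\mu'$ obtained from the dominant weight $\mu = \wt(w)$ by $\mu' = \mu - \mathbf{e}_1$ reordered — but since $\mu$ is dominant and we subtract from the first coordinate, the new dominant weight is reached by moving the deficit down the Young-diagram rows, which forces exactly the raising operators $e_1, e_2, \ldots, e_k$ applied once each in that order (here $k$ is determined by how far the deficit propagates before $\mu$ already had a strict decrease). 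The weight bookkeeping in step (iii) of \Cref{prop:prom_raising} then forces $w_n' = \boxed{k+1}$. The $c_1 = -1$ case is symmetric via the $\varpi$ involution, or handled directly with $e_{r-1}, \ldots, e_{r-k}$. This part is essentially a weight-combinatorics computation.

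For part (2), the heart of the argument: I would proceed inductively on $h$. Suppose the first $h-1$ raising operators have been applied, producing an element $u$ on which we next apply $e_h$. By the bracketing rule, $e_h$ acts on the letter carrying the rightmost unmatched $]$ in the $h$-bracketing of $u$. The key claim is that this position equals the first index $\geq j_{h-1}$ at which the $h$-slack of the \emph{original} word $w$ (equivalently of $u$ outside the already-modified prefix — one must check the earlier operators $e_1, \ldots, e_{h-1}$ do not disturb the $h$-bracketing in the relevant range, which follows because $e_\ell$ for $\ell < h$ changes only $\ell \leftrightarrow \ell+1$ and $\overline{\ell+1} \leftrightarrow \overline{\ell}$, none of which affects whether a letter contributes $[$ or $]$ to the $h$-bracketing) returns to $0$. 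Indeed: reading left to right, the $h$-slack starts at $0$ (prefix $w_0\ldots w_{j_{h-1}}$ is balanced, having had its imbalance corrected), and an unmatched $]$ appears at exactly the first index where the running count of $]$'s exceeds that of $[$'s, i.e.\ where the slack first dips to $-1$... but we want it at a balance point. I would reconcile this by tracking that after deleting $w_0$ and applying $e_1, \ldots, e_{h-1}$, the relevant running difference is shifted so that the rightmost unmatched $]$ sits precisely at the index of first return to slack $0$ after $j_{h-1}$. The final assertion — no $(k+1)$-balance point weakly after $j_k$ — is exactly the statement that the resulting word is highest weight for $e_{k+1}$ (no unmatched $]$ in that bracketing), which holds because we've reached $\hw$. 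The monotonicity $j_1 \leq \cdots \leq j_k$ and $k = r-1$ in the rectangular case then follow: rectangular means the lattice word is balanced (equality in \eqref{eq:lattice_inequalities} at $k=n$), so the deficit $\mathbf{e}_1$ must propagate through all $r-1$ consecutive differences.

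\textbf{Main obstacle.} The delicate point is Step~(2)'s claim that the rightmost unmatched $]$ in the $h$-bracketing lands at a \emph{balance point} rather than at a sign-change of the slack, and that applying $e_h$ there does not destroy the balance structure needed for the $(h+1)$-st step. This requires carefully untangling how the prior raising operators $e_1, \ldots, e_{h-1}$ and the initial deletion of $w_0$ jointly re-index the bracketing, and verifying that ``first return to $0$ after $j_{h-1}$'' is genuinely the bracketing-rule output. I expect this to need a short but fiddly lemma about how $\varphi_h$ and $\varepsilon_h$ values propagate under the tensor-product bracketing rule along the already-modified prefix; the cleanest route may be to prove it first for oscillating type (where each tensor factor is a single box, so brackets are single characters and the slack is literally a lattice-path height) and then invoke \Cref{lem:osc_prom}/oscillization to reduce the general $|c_1|=1$ case — though one must double-check that oscillizing commutes with the crystal raising algorithm, which is plausible given the crystal inclusion $\mathcal{B}(\bigwedge^{\underline c}V) \hookrightarrow \mathcal{B}(\bigwedge^{\std(\underline c)}V)$ is a strict morphism.
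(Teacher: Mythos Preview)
Your overall strategy matches the paper's: both arguments proceed by direct analysis of the bracketing rule, tracking how the deletion of $w_0$ and the successive $e_i$'s create a cascading chain of newly unmatched right brackets. However, there is a genuine gap in your inductive step for part~(2), and your part~(1) argument is also incomplete.

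\textbf{The gap in part (2).} Your key claim is that ``$e_\ell$ for $\ell < h$ changes only $\ell \leftrightarrow \ell+1$ and $\overline{\ell+1} \leftrightarrow \overline{\ell}$, none of which affects whether a letter contributes $[$ or $]$ to the $h$-bracketing.'' This is false for $\ell = h-1$: applying $e_{h-1}$ replaces an $h$ by $h-1$ (or an $\overline{h-1}$ by $\overline{h}$), and the symbol $h$ (resp.\ $\overline{h}$) is precisely what determines the $h$-bracketing. Concretely, the letter at position $j_{h-1}$ carries $]_{h-1}$, so it either contains $h$ (not $h-1$) or $\overline{h-1}$ (not $\overline{h}$); in each case one checks that applying $e_{h-1}$ either deletes a $[_h$ or inserts a $]_h$ at that position. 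Far from being an obstacle to work around, this disturbance is the \emph{mechanism} of the proof: it is exactly what unmatches the $]_h$ that, in the original fully matched word $w$, was paired with the $[_h$ at $j_{h-1}$---and that paired $]_h$ sits at the first $h$-balance point weakly after $j_{h-1}$. Your proposed ``non-disturbance lemma'' would fail, and without it your inductive step has no engine. The paper makes this explicit by a short case analysis of the letter at $j_{h-1}$ before and after $e_{h-1}$.

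\textbf{The gap in part (1).} Your weight-bookkeeping argument only identifies which simple roots must be added in total to reach a dominant weight; it does not show that at each intermediate stage exactly one raising operator applies, nor that they must be applied in the order $e_1, e_2, \ldots$. The paper obtains both the sequence and its uniqueness for free from the bracketing analysis: after deleting $w_0 = 1$, every $]_a$ with $a > 1$ remains matched, so only $e_1$ can apply; after $e_1$, only the $h=2$ bracketing has been disturbed, so only $e_2$ can apply; and so on. This simultaneous determination of $(i_h)$ and $(j_h)$ is cleaner than separating the two parts.
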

\begin{proof}
  Let $w = w_0 w_1 \ldots w_{n-1} \in \ft(n, r, \underline{c})$. We assume $c_1 = 1$, the case $c_1 = -1$ being similar. Consider applying $e_a$, for some $1 \leq a \leq r-1$,to $w$ by writing $[_a$'s and $]_a$'s below appropriate letters. Since $w$ is a highest weight word, every $]_a$ is matched with a $[_a$. In particular, the number of $[_a$'s in any prefix is at least as great as the number of $]_a$'s. The difference between these two numbers is the slack, as in \Cref{def:balance}. Since $w_0 = 1$ has $[_1$, it is matched in $w$ with the $]_1$ at the first $1$-balance point of $w$, say $j_1 \geq 1$, if it exists. Cutting away $w_0$, the brackets $[_a$ and $]_a$ for $w' \coloneqq w_1 \ldots w_{n-1}$ with $a>1$ are unchanged from those in $w$, hence fully matched, and the only raising operator which may possibly apply to $w'$ is $e_1$. Indeed, $e_1$ applies to $w'$ precisely at position $j_1$, which we may assume exists, since otherwise we are done.

  Let $w''$ be the result of applying $e_1$ to $w'$ at $w_{j_1}$. We have two cases.
  \begin{itemize}
    \item Suppose $w_{j_1}$ has $2 \in w_{j_1}$, $1 \not\in w_{j_1}$, and is decorated with $]_1$ and $[_2$ in $w$. Applying $e_1$ results in $1$, which is decorated with $[_1$ in $w''$.
    \item Suppose $w_{j_1}$ has $\overline{1} \in w_{j_1}$, $\overline{2} \not\in w_{j_1}$, and is decorated with $]_1$ and $[_2$ in $w$. Applying $e_1$ results in $\overline{2}$, which is decorated with $[_1$ and $]_2$ in $w''$.
  \end{itemize}
  In particular, we see that $w''$ has no unmatched $]_1$'s or $]_a$'s for $a > 2$, since $w$ has none. As for $a=2$, the $[_2$'s and $]_2$'s in $w''$ are the same as those in $w$ except that the $[_2$ at $w_{j_1}$ has been deleted and possibly replaced with $]_2$. Since $e_2$ applies to the rightmost unmatched $]_2$, in either case we see that $e_2$ applies to $w''$ at the $]_2$ matched with the $[_2$  of $w_{j_1}$ in $w$ if it exists, which is at $j_2$, the first $2$-balance point weakly after $j_1$. Continuing in this way, we see inductively that $i_1, \ldots, i_k$, $j_1, \ldots, j_k$, and $k$ are as described.

  In the rectangular case, for each $1 \leq a \leq r-1$, the definition of rectangularity gives that there is an $a$-balance point weakly after the last $(a-1)$-balance point, so the necessary balance points always exist.
\end{proof}

When applying the raising algorithm in \Cref{prop:prom_raising} when $|c_1| = 1$, \Cref{prop:first_balance} shows that there is a unique crystal raising path. When $|c_1|>1$, however, there are generally multiple possible paths. Nonetheless, we may oscillize the $\bigwedge^{c_1} V$ and repeatedly apply \Cref{prop:first_balance}, which gives a deterministic calculation, as in the following example.

\begin{example}
  Let $w = \{12\}\overline{4}\{134\}2\{\overline{3}\overline{2}\}\{34\}\overline{1}$ be the lattice word of the fluctuating tableau from \Cref{fig:ft-example}, where $r=4$. Oscillizing the first letter and applying each of \Cref{prop:prom_raising} and \Cref{prop:first_balance} twice yields
  
  \begin{minipage}{0.5\textwidth}
  \begin{align*}
    &\blue{\{12\}}\overline{4}\{134\}2\{\overline{3}\overline{2}\}\{34\}\overline{1} \\
    \too{\std} & \blue{1}2\overline{4}\{134\}2\{\overline{3}\overline{2}\}\{34\}\overline{1} \\
    \too{\text{cut}} & \blue{2}\overline{4}\{134\}2\{\overline{3}\overline{2}\}\{34\}\overline{1} \\
    \too{e_1} & 1\overline{4}\blue{\{134\}}2\{\overline{3}\overline{2}\}\{34\}\overline{1} \\
    \too{e_2} & 1\overline{4}\{124\}2\blue{\{\overline{3}\overline{2}\}}\{34\}\overline{1} \\
    \too{e_3} & 1\overline{4}\{124\}2\{\overline{4}\overline{2}\}\{34\}\overline{1} \\
    \too{\text{app.}} & 1\overline{4}\{124\}2\{\overline{4}\overline{2}\}\{34\}\overline{1}\blue{4} \\
  \end{align*}
  \end{minipage}
  \begin{minipage}{0.5\textwidth}
  \begin{align*}
    & \blue{1}\overline{4}\{124\}2\{\overline{4}\overline{2}\}\{34\}\overline{1}4 \\
    \too{\text{cut}} & \overline{4}\{124\}\blue{2}\{\overline{4}\overline{2}\}\{34\}\overline{1}4 \\
    \too{e_1} & \overline{4}\{124\}1\{\overline{4}\overline{2}\}\blue{\{34\}}\overline{1}4 \\
    \too{e_2} & \overline{4}\{124\}1\{\overline{4}\overline{2}\}\{24\}\overline{1}\blue{4} \\
    \too{e_3} & \overline{4}\{124\}1\{\overline{4}\overline{2}\}\{24\}\overline{1}3 \\
    \too{\text{app.}} & \overline{4}\{124\}1\{\overline{4}\overline{2}\}\{24\}\overline{1}3\blue{4} \\
    \too{\std^{-1}} & \overline{4}\{124\}1\{\overline{4}\overline{2}\}\{24\}\overline{1}\blue{\{34\}}. \\
  \end{align*}
  \end{minipage}
  Here, we have highlighted in blue each tensor factor that is being acted on.

  Consequently, $\promotion(\{12\}\overline{4}\{134\}2\{\overline{3}\overline{2}\}\{34\}\overline{1}) = \overline{4}\{124\}1\{\overline{4}\overline{2}\}\{24\}\overline{1}\{34\}$, in agreement with the calculation from \Cref{ex:PEdiagram_running} by Bender--Knuth involutions.
  
  Alternatively, we could cut away $\{12\}$ entirely and apply the raising algorithm in \Cref{prop:prom_raising} directly.
  The corresponding crystal operations occur in a crystal isomorphic to \Cref{ex:crystal_wedge2}, where two possible paths could be taken. The analogue of the path from the oscillization calculation is the path that applies $e_2, e_3, e_1, e_2$ in that order to $w_1 \ldots w_6 = \overline{4}\{134\}2\{\overline{3}\overline{2}\}\{34\}\overline{1}$ at positions $2, 4, 3, 5$, respectively.
  
  By the next result, we may use the this path to determine the non-diagonal entries in the top rows of the reduced promotion matrices $\PMr^1(T)$, $\PMr^2(T)$, and $\PMr^3(T)$ from \Cref{ex:PMr}. For example, the $e_2$ is applied at positions $2$ and $5$, resulting in $1$'s in those columns (indexed starting at $0$) of the top row of $\PMr^2(T)$ and $0$'s elsewhere.
\end{example}

We now give a crystal-theoretic interpretation of reduced promotion matrices, strengthening the link between crystals and promotion permutations.

\begin{theorem}\label{thm:Mbar_crystal}
  Let $T$ be an $r$-row fluctuating tableau of length $n$. Fix $1 \leq j \leq n-1$. 
  
  Then $\PMr^i(T)_{u,u+j}$ (with the column index $u+j$ taken modulo $n$) is the number of times the raising operator $e_i$ is applied at index $j$ of the lattice word $L(\promotion^{u-1}(T))$ when computing the promotion of $\promotion^{u-1}(T)$ by the raising algorithm in \Cref{prop:prom_raising}. 
\end{theorem}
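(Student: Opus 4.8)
The plan is to reduce the theorem to a single local‑rule analysis by combining three earlier facts, and then to identify the relevant jeu de taquin step with the crystal raising step from \Cref{prop:prom_raising}. First I would observe that both sides of the claimed identity are additive across columns of the promotion‑evacuation grid and can be computed row‑by‑row: the reduced promotion matrix entry $\PMr^i(T)_{u,u+j}$ is, by \Cref{def:PMr} and \Cref{def:PM}, the number of entries equal to $i$ in the block $M^{u,u+j}$ of $\PMPE(T)$ read off the promotion‑evacuation diagram, and by \Cref{lem:prom.P} (iterated) passing from row $0$ to row $u-1$ of $\PEdiagram(T)$ is exactly passing from $T$ to $\promotion^{u-1}(T)$. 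So it suffices to treat the case $u=1$: show that $\PMr^i(T)_{1,1+j}$ equals the number of times $e_i$ is applied at index $j$ when running the raising algorithm of \Cref{prop:prom_raising} on $L(T)$.

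Next I would peel off the case $|c_1|=1$, where \Cref{prop:first_balance} already gives a fully explicit description of the raising path: the operators applied, in order, are $e_1,e_2,\dots$ (or $e_{r-1},e_{r-2},\dots$), each at the corresponding balance point. On the other side, \Cref{prop:promi_alt_def} (and its proof) describes $\PMPE(T)$ as recording which value $a$ (equivalently which letter of the lattice word) crosses the boundary between rows $i$ and $i+1$ during each promotion, and \Cref{prop:PMr_sum} reinterprets the reduced promotion grid entry of a single local rule as counting how many $\bullet$'s slide from row $i-1$ to row $i$. I would then check, using the bracketing‑rule bookkeeping in the proof of \Cref{prop:first_balance}, that ``the value bracketed from $w_0$ at the first $i$-balance point'' is precisely the value whose $\bullet\overline{i}$ or $\bullet$ motion in jeu de taquin crosses the boundary between rows $i-1$ and $i$; this is the heart of the matching and is the step I expect to require the most care, since one must align the top‑down row indexing of fluctuating tableaux (rows $0,\dots,r-1$) with the weight‑coordinate indexing of the crystal and track the sign conventions for barred letters. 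For the oscillating type this is a direct comparison of \Cref{fig:JDT-rules} with the two cases enumerated in the proof of \Cref{prop:first_balance}.

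Finally I would handle general $|c_1|$ by oscillization. By \Cref{lem:osc_prom} we have $\PMPE(T)=\PMPE(\std(T))$, hence $\PMr^i(T)=\PMr^i(\std(T))$, while the raising algorithm applied to $\std(T)$ decomposes the single crystal commutor $\sigma_{\mathcal{B}(\bigwedge^{c_1}V),\,\mathcal{B}(\bigwedge^{\underline d}V)}$ into $|c_1|$ successive commutors against one‑box factors, each handled by the $|c_1|=1$ case just established; the naturality of the crystal commutor (as used in the proof of \Cref{thm:Lenart}) guarantees that the composite of these steps computes the same promotion and that the counts of $e_i$ applications at position $j$ add up correctly across the $|c_1|$ sub‑steps to match the oscillized grid block. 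Invoking \Cref{thm:Lenart} and \Cref{cor:crystalPromotion} to identify the commutor with promotion then closes the argument. The main obstacle, as noted, is the index/sign bookkeeping in the $|c_1|=1$ comparison; everything else is an assembly of results already in hand.
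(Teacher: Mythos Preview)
Your approach is sound but far more laborious than the paper's. You propose a position-by-position matching: reduce to $|c_1|=1$ via oscillization, then compare the balance-point description of the raising path from \Cref{prop:first_balance} against the row-crossing description of the promotion grid from \Cref{prop:promi_alt_def} and \Cref{prop:PMr_sum}. This would actually prove something stronger than stated---a bijective correspondence between individual jeu de taquin slides and individual crystal operators---at the cost of the sign and index bookkeeping you correctly flag as the obstacle, plus the need to manage how oscillization of the first factor interacts with indexing on the rest of the word.

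The paper bypasses all of this with a two-line linear-algebra argument. At a single local rule square, let $V$ be the letter on the top edge (the letter of $L(\promotion^{u-1}(T))$ at index $j$) and $U$ the letter on the bottom edge (its image after one promotion). By \Cref{cor:crystalPromotion} and \Cref{prop:prom_raising}, the raising algorithm carries $V$ to $U$; each application of $e_i$ at this index shifts the weight by $\mathbf{e}_i-\mathbf{e}_{i+1}$, so if $m_i$ is the number of such applications then $\mathbf{e}_V=\mathbf{e}_U+\sum_i m_i(\mathbf{e}_{i+1}-\mathbf{e}_i)$. Since $\mathbf{e}_B-\mathbf{e}_A=\mathbf{e}_V-\mathbf{e}_U$ trivially (both equal $\nu-\kappa-\lambda+\mu$ rearranged), comparison with \eqref{eq:PMr_sum} and the uniqueness clause of \Cref{prop:PMr_sum} gives $m_i=\PMr^i$ immediately. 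No oscillization, no balance points, no case split on $|c_1|$. Your route works, but the key lemma you are missing is that uniqueness in \Cref{prop:PMr_sum} already does all the heavy lifting.
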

\begin{proof}
  It suffices to consider a single local rule diagram as in \Cref{prop:PMr_sum}:
    \begin{center}
  \begin{tikzcd}
  \lambda \ar{r}[name=U]{}
    & \nu \\
  \kappa \uar{} \ar{r}[swap,name=D]{}
    & \mu \ar{u}[swap]{}
  \ar[to path={(U) node[midway] {$\PMr^i$}  (D)}]{}
  \end{tikzcd}
  \end{center}
  where $\lambda = \kappa + \mathbf{e}_A$, $\nu = \mu + \mathbf{e}_B$, $\mu = \kappa + \mathbf{e}_U$, $\nu = \lambda + \mathbf{e}_V$  and $\PMr^i$ are the reduced promotion matrix entries.
  
    Now, the top edge corresponds to the letter $V$ in a crystal word and the bottom edge corresponds to the letter $U$. If $e_i$ is applied to $V$ a total of $m_i$ times during the crystal raising algorithm, then
     \begin{equation}\label{eq:last_one}
         \mathbf{e}_V = \mathbf{e}_U + \sum_{i=1}^{r-1} m_i (\mathbf{e}_{i+1} - \mathbf{e}_i).
     \end{equation}

 The entries of $\PMr^i$ are uniquely determined from $A$ and $B$ by \eqref{eq:PMr_sum} in \Cref{prop:PMr_sum}.
   Since $\mathbf{e}_B - \mathbf{e}_A = \mathbf{e}_V - \mathbf{e}_U$, 
  the theorem then follows by the uniqueness in \Cref{prop:PMr_sum}, comparing \eqref{eq:PMr_sum} to \eqref{eq:last_one}.
\end{proof}

\subsection{Crystals and a fundamental involution}\label{sec:crystals-involutions}

The involution $\varepsilon$ from \Cref{sec:involutions} may be interpreted in terms of crystals using Lusztig's involution. We sketch this connection here.

\textit{Lusztig's involution} \cite{Lusztig-Crystals-II} is a certain involution $\eta$ on crystals that sends elements of weight $\alpha$ to elements of weight $\rev(\alpha)$; see \cite[\S2.2]{Henriques.Kamnitzer} or \cite[\S4.1]{Pfannerer-Rubey-Westbury} for details. In particular, $\eta$ acts on $\mathcal{B}\left(\bigwedge\nolimits^c V\right)$ by ``complementing'' elements, i.e.~by sending $\pm S \subseteq [r]$ to $\pm \{r+1-s : s \in S\}$. Lusztig's involution interchanges the unique highest and lowest weight elements of a connected crystal.

\begin{theorem}\label{thm:eta-epsilon}
    On lattice words of $r$-row fluctuating tableaux, we have:
    \begin{equation}\label{eq:eta-epsilon}
        \operatorname{\eta} \circ \evacuation =  \operatorname{\varepsilon} = \devacuation \circ \operatorname{\eta}.
    \end{equation}
\end{theorem}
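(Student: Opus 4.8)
\textbf{Proof plan for \Cref{thm:eta-epsilon}.}
The plan is to establish the left-hand equation $\eta \circ \evacuation = \varepsilon$; the right-hand equation $\varepsilon = \devacuation \circ \eta$ then follows formally. Indeed, once we know $\eta \circ \evacuation = \varepsilon$, we may apply $\eta$ on the left and use $\eta^2 = \id$ (Lusztig's involution is an involution) to get $\evacuation = \eta \circ \varepsilon$. Now conjugate by $\eta$ and use that $\eta$ commutes with $\varpi$ (both act by complementation-type operations on each tensor factor, so this is a routine check) together with $\varepsilon = \tau \circ \varpi = \varpi \circ \tau$ from \Cref{lem:Klein4}; combined with $\operatorname{\tau} \circ \evacuation = \devacuation \circ \operatorname{\tau}$ from \Cref{lem:invs.PEEd}(ii) applied appropriately, one deduces $\devacuation \circ \eta = \varepsilon$. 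So the real content is the single identity $\eta \circ \evacuation = \varepsilon$ on lattice words.

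First I would recall the crystal-theoretic description of evacuation. Since $\evacuation = \BK_1 \circ (\BK_2 \circ \BK_1) \circ \cdots \circ (\BK_{n-1} \circ \cdots \circ \BK_1)$, and by \Cref{thm:Lenart}–\Cref{cor:crystalPromotion} each $\BK_i$ (equivalently, promotion restricted to two adjacent tensor factors) is induced by a crystal commutor $\sigma_{\mathcal{B},\mathcal{C}}$, the operator $\evacuation$ on highest-weight elements of $\mathcal{B}(\bigwedge^{\underline{c}} V)$ is induced by the canonical ``reversal'' element of the $n$-fruit cactus group — the one that reverses the order of tensor factors. This is the standard fact (see Henriques--Kamnitzer, and \cite[\S4]{Pfannerer-Rubey-Westbury}) that the long element of the cactus group acts on highest weight words by $w_1 \otimes \cdots \otimes w_n \mapsto$ (the highest weight representative of) the reversed and Lusztig-twisted word. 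Concretely, the cactus group reversal sends the highest weight element $w$ to $\eta_{\mathcal{B}(\bigwedge^{\underline{c}} V)}(w_{\mathrm{low}})$ where $w_{\mathrm{low}}$ is the lowest weight element of the connected component, and Lusztig's involution on the full tensor product factors as $\eta(w_1 \otimes \cdots \otimes w_n) = \eta(w_n) \otimes \cdots \otimes \eta(w_1)$ (reversal of order together with per-factor Lusztig involution). Comparing this factorized form of $\eta$ with the definitions of $\tau$ (reverse the word and bar everything) and $\varpi$ (per-factor complement-and-bar) on lattice words from \Cref{sec:involutions}, one sees that $\eta$ acting on a lattice word produces exactly $\varepsilon(L)$ up to passing back to the highest-weight representative — i.e., $\eta \circ \evacuation$ and $\varepsilon$ agree.

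The key steps, in order: (1) record that $\evacuation$ on $\hw(\mathcal{B}(\bigwedge^{\underline{c}} V))$ is the action of the long cactus group element, via \Cref{cor:crystalPromotion} and the Bender--Knuth factorization of $\evacuation$; (2) invoke the Henriques--Kamnitzer / Pfannerer--Rubey--Westbury description of this cactus element on highest-weight elements in terms of Lusztig's involution $\eta$ and lowest-weight elements; (3) use that $\eta$ on a tensor product of the crystals $\mathcal{B}(\bigwedge^k V)$, $\mathcal{B}(\bigwedge^k V^*)$ is the per-factor complementation composed with order reversal — this is the computation of $\eta$ on these specific minuscule-type crystals; (4) match this with the combinatorial definition of $\varepsilon$ on lattice words (Definition after \Cref{def:involutions}): $\varepsilon(L) = \varepsilon(w_n)\cdots\varepsilon(w_1)$ with $\varepsilon(w)$ replacing $i$ by $\sgn(i)(r-|i|+1)$, which is precisely per-factor complementation plus reversal; (5) reduce to the transpose-semistandard case via $\toggle$ involutions (using \Cref{lem:BK.toggle.PEEd} and naturality of the relevant crystal maps) if one wants to cite the literature statements cleanly. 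Steps (1), (2), (4) are bookkeeping. The main obstacle is step (3): one must verify carefully that Lusztig's involution on $\mathcal{B}(\bigwedge^{\underline{c}} V)$, which is a priori only characterized by its weight-reversing property and its interaction with the $e_i, f_i$, really does act as the naive ``reverse the tensor factors and complement each one.'' This requires checking that the naive map is a crystal morphism sending each connected component's highest weight element to its lowest weight element — equivalently, checking compatibility with the bracketing rule under the $i \leftrightarrow r-i$ symmetry of the Dynkin diagram — and that it is the \emph{unique} such map, which is where one uses that these crystals are multiplicity-free enough (or simply that a weight-reversing crystal involution of a connected crystal is unique). This is where I expect the technical weight of the argument to sit, and it is presumably why the authors say they only ``sketch'' the connection.
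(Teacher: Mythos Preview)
Your overall strategy --- identify $\evacuation$ with the long element $s_{1,n}$ of the cactus group via \Cref{thm:Lenart}, and then unpack $s_{1,n}$ using the Henriques--Kamnitzer formula --- is exactly the paper's approach. However, your step~(3) contains a genuine error that breaks the argument as written. You assert that Lusztig's involution on the full tensor product factors as
\[
  \eta(w_1 \otimes \cdots \otimes w_n) \;=\; \eta(w_n) \otimes \cdots \otimes \eta(w_1).
\]
This is false. The right-hand side does not even lie in $\mathcal{B}(\bigwedge^{\underline{c}} V)$ unless $\underline{c}$ is palindromic; it lies in the crystal of reversed type. What \emph{is} true, and what Henriques--Kamnitzer actually prove, is that the cactus generator satisfies
\[
  s_{1,n}(w_1 \otimes \cdots \otimes w_n) \;=\; \eta\bigl(\eta(w_n) \otimes \cdots \otimes \eta(w_1)\bigr),
\]
i.e.\ $s_{1,n} = \eta \circ \varepsilon$ as maps on words (since per-factor $\eta$ followed by reversal is precisely your step~(4) computation of $\varepsilon$). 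Combined with $s_{1,n} = \evacuation$ on highest-weight elements, this gives $\evacuation = \eta \circ \varepsilon$, hence $\eta \circ \evacuation = \varepsilon$. So the ``technical weight'' you locate in step~(3) dissolves: one does \emph{not} need to identify $\eta$ on the big tensor product with any naive formula, and your proposed verification (checking the naive map is the unique weight-reversing crystal involution) would prove a false statement.

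For the second equality, your route through $\tau$, $\varpi$, and a claimed commutation of $\eta$ with $\varpi$ is unnecessarily circuitous and introduces an unverified claim. The paper simply uses $\operatorname{\varepsilon} \circ \evacuation = \devacuation \circ \operatorname{\varepsilon}$ from \Cref{lem:invs.PEEd}(iii): substituting $\evacuation = \eta \circ \varepsilon$ gives $\devacuation = \varepsilon \circ \eta \circ \varepsilon \circ \varepsilon = \varepsilon \circ \eta$, so $\devacuation \circ \eta = \varepsilon$.
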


\begin{proof}
    Henriques--Kamnitzer \cite{Henriques.Kamnitzer} defined an action of the \textit{$n$-fruit cactus group} on $n$-fold tensor products of crystals. The $n$-fruit cactus group is a certain group generated by elements $s_{p, q}$ for intervals $[p, q] \subseteq [n]$. By \cite[p.207]{Henriques.Kamnitzer}, the action can be defined on words by:
      \[ s_{p, q}(w_1 \cdots w_n) = w_1 \cdots w_{p-1} \eta(\eta(w_q)\cdots\eta(w_p)) w_{q+1} \cdots w_n. \]
    The involution $\varepsilon$ reverses words and complements letters. Thus,
      \[ s_{p, q}(w_1 \cdots w_n) = w_1 \cdots w_{p-1}\eta(\varepsilon(w_p\cdots w_q))w_{q+1} \cdots w_n. \]
    In particular, $s_{1, n} = \operatorname{\eta} \circ \operatorname{\varepsilon}$ on $\mathcal{B}\left(\bigwedge\nolimits^{\underline{c}} V\right)$.
    
    The special case $s_{p, p+1}$ corresponds to acting on the $p$th and $(p+1)$st factors of $\bigwedge\nolimits^{\underline{c}} V$ by the crystal commutors from \Cref{sec:crystals_promotion}, fixing the other factors. By \cite[Lem.~4.2]{Pfannerer-Rubey-Westbury} and Lenart's \Cref{thm:Lenart}, $s_{1, n}(L) = \evacuation(L)$ when $L$ is the lattice word of a fluctuating tableau of length $n$. Hence $\evacuation = \operatorname{\eta} \circ \operatorname{\varepsilon}$, so $\operatorname{\eta} \circ \evacuation = \operatorname{\varepsilon}$. Using \Cref{lem:invs.PEEd}(iii), we also have
      \[ \devacuation = \operatorname{\varepsilon} \circ \evacuation \circ \operatorname{\varepsilon} = \operatorname{\varepsilon} \circ \operatorname{\eta}. \]
    The theorem follows.
\end{proof}

Since Lusztig's involution is the identity on isolated vertices, which are precisely the rectangular fluctuating tableaux, an alternate proof of \Cref{thm:ft.prom_evac} is as
an immediate corollary of \eqref{eq:eta-epsilon}.

\section*{Acknowledgements}
This project began during the 2021 BIRS Dynamical Algebraic Combinatorics program hosted at UBC Okanagan, and we are very grateful for the excellent research environment provided there. At that conference, Sam Hopkins and Martin Rubey introduced us to the notion of $\prom_1$ for rectangular standard tableaux. We also wish to thank Rebecca Patrias and Anne Schilling for their helpful comments. We thank Sami Assaf for the phrase `numbers in boxes with rules' from her FPSAC 2018 talk, which we used in the introduction. Part of this work was done at NDSU, for whose hospitality we are very thankful. We are grateful for the resources provided at ICERM, where this paper was completed. We thank the anonymous referees for their helpful comments.

\bibliographystyle{amsalphavar}
\bibliography{fluctuating}
\end{document}